\newtheorem{theorem}{Theorem}[section]
\newtheorem{lemma}[theorem]{Lemma}
\newtheorem{corollary}[theorem]{Corollary}
\theoremstyle{definition}
\newtheorem{example}[theorem]{Example}
\newtheorem{remark}[theorem]{Remark}
\numberwithin{equation}{section}
\def\H{\mathscr H}  
\def\K{\mathscr K} 
\def\D{\mathscr D}
\def\C{\mathscr C} 
\def\G{\mathscr G}
\def\R{\mathscr R}
\def\E{\mathsf E}
\def\B{\mathsf B}
\def\Z{\mathsf Z}
\def\Ex{\mathsf{Ext}}
\def\ExP{\mathsf{Ext}_0}
\def\ExM{\mathsf{Ext}_{\text{\rm M}}}
\def\ExN{\widetilde{\mathsf{Ext}}}
\def\RE{\mathbb R}
\def\Ran{\mathscr R}
\def\uno{\mathsf 1}
\def\zero{\mathsf 0}
\def\fh{\mathfrak h}
\def\t{\tilde}
\def\e{\text{\rm e}}
\def\ccdot{\!\cdot\!}
\def\max{\text{\rm max}}
\def\min{\text{\rm min}}
\def\diff{\nabla\!\cdot\! a\nabla}
\def\be{\begin{equation}}
\def\ee{\end{equation}}
\def\ba{\begin{align}}
\def\ea{\end{align}}
\begin{document}

\title[Markovian  Extensions of Elliptic Operators]{Markovian  Extensions of 
Symmetric Second Order Elliptic Differential Operators}
\author{Andrea Posilicano}
\address{DiSAT - Sezione di Matematica,  Universit\`a
dell'Insubria, I-22100 Como, Italy}

\email{posilicano@uninsubria.it}


\begin{abstract}
Let $\Omega\subset\RE^n$ be bounded with a smooth boundary $\Gamma$ and let $S$ be the symmetric operator in $L^2(\Omega)$ given by the minimal realization of a second order elliptic differential operator. We give a complete classification of the 
Markovian self-adjoint extensions of $S$ by providing an explicit one-to-one correspondence between such extensions and the class of Dirichlet forms in $L^2(\Gamma)$ which are additively decomposable by the bilinear form of the Dirichlet-to-Neumann operator plus a Markovian form.  By such a result two further equivalent classifications are provided: the first one is expressed in terms of an additive decomposition of the bilinear forms associated to the extensions, the second one uses the additive decomposition of the resolvents provided by Kre\u\i n's formula.  The Markovian part of the decomposition allows to characterize 
the operator domain of the corresponding extension in terms of Wentzell-type boundary conditions. Some properties of the extensions, and of the corresponding  Dirichlet forms, semigroups and heat kernels, like locality, regularity, irreducibility, recurrence, transience, ultracontractivity and Gaussian bounds are 
also discussed.  
\end{abstract}

{\maketitle }

\section{Introduction}
A negative self-adjoint
operator $A$ on the real Hilbert space $L^2(X)$ is said to be Markovian if the semi-group $e^{tA}$,
$t\ge 0$, is positivity-preserving and is a contraction in
$L^2(X)\cap L^\infty(X)$. In 1959, in the seminal paper
\cite{[BD]}, Beurling and Deny discovered the connection between the
Markov property for symmetric semi-groups and the contractivity
property for Dirichlet spaces. Later, in 1970, Fukushima (see \cite{[Fu70]}) found the connection between regular Dirichlet forms and
symmetric Hunt Markov  processes, thus opening the way to the deep
interplay between Dirichlet spaces and probability theory,  and
providing the analogue, in a  Hilbert $L^2(X)$ space setting, of the
well known connection between Feller Markov processes and Feller
(i.e. strongly continuous, positivity preserving and contracting)
semi-groups in $C_b(X)$, the Banach space of bounded continuous
functions on  $X$ (see e.g. \cite{[Silv1]}, \cite {[Fu]} and
\cite{[FOT]} for a thorough introduction to Dirichlet forms and
symmetric Markov processes). Beside the probabilistic side, the
Markovian property helps the study of the deep connections between analytic properties of the
semi-groups and their generators as  logarithmic Sobolev inequalities, ultracontractivity and 
heat kernel
estimates (see e.g. \cite{[Dv]}, \cite{[Gr]}, \cite{[Wang]} and references therein). Therefore it
is worthwhile to find conditions guaranteeing the Markovian nature of
a given self-adjoint operator. In particular,  when $X=\Omega\subset
\RE^n$ is a bounded domain with a  smooth boundary $\Gamma$, we are interested in characterizing Markovian
self-adjoint extensions of the minimal realization of a given
symmetric second order (by positive maximum  principle, for Markov
generators the order can not be higher) elliptic operator in terms of
boundary conditions.\par The connections between boundary conditions
and Markov property have a long history. Here a brief abstract.\par In
1957, in the paper \cite{[Fel]}, Feller classified all Markovian
self-adjoint realizations of symmetric (generalized) second order
differential  operators in $L^2(a,b)$, $(a,b)\subset\RE$, in terms of
boundary conditions.  Such conditions (see \cite{[Fel]}, Theorem 10.2)
are explicitly expressed in terms of certain inequalities on the
coefficients of the $2\times 2$ real symmetric matrix $B$ describing the boundary 
conditions at $\{a\}\cup\{b\}$. It is easy to
check (see Section 3 below) that such inequalities coincide, in such 
simple 2-dimensional Dirichlet spaces setting, with the necessary and
sufficient conditions guaranteeing that the bilinear form associated with $B$ is a Dirichlet form on $\RE^2$. Thus Feller's results can be
re-phrased in terms of a correspondence between Markovian
self-adjoint extensions of a symmetric second order differential
operators on an interval and Dirichet forms on its boundary.\par In
1959, in the paper \cite{[we]}, Wentzell, aiming at extending Feller's
results to higher dimensions, sought the most general boundary
conditions which  restrict a given elliptic second order differential
operator in a bounded domain $\Omega\subset\RE^n$ to a generator of a
Feller Markov  process, and hence of a Feller semigroup, in
$C_b(\Omega)$. Wentzell's results, in the realm of Feller
semi-groups, have been extended and clarified in a lot of successive
papers by many  authors (see e.g. \cite{[U1]}, \cite{[U2]},
\cite{[BCP]}, \cite{[Ta]} and references therein).  Since Wentzell's
framework is a Banach space one, his results can not  be directly
re-phrased in a Dirichlet space language, which requires an  Hilbert
space setting. However looking back at Feller's results, and noticing
that the boundary operator entering in Wentzell's conditions (see
\cite{[we]}, formula (3)) appears to have (in the
$L^2(\Omega)$-symmetric case) an associated bilinear form resembling
the ones furnished by the Beurling-Deny decomposition for regular
Dirichlet forms, the
suggestion is clear: there should be a correspondence
between Markovian self-adjoint extensions of a symmetric second order
differential  operators in $L^2(\Omega)$ and Dirichet forms in
$L^2(\Gamma)$.  The self-adjoint  operators associated with  the
Dirichlet forms on the boundary should  then realize Wentzell-type
boundary conditions. \par  In 1969, Fukushima (see the paper
\cite{[Fu69]}), given the resolvent density $R^D_\lambda(x,y)$
corresponding to absorbing barrier Brownian motion on a 
bounded domain $\Omega$ (i.e. the resolvent kernel of the Dirichlet
Laplacian on $\Omega$), considered the family of all conservative symmetric Markovian
resolvent densities on $\Omega$ of the kind 
\begin{equation}\label{FR}R_\lambda(x,y)=R^D_\lambda(x,y)+H_\lambda(x,y)\,,
\end{equation}  
where  $H_\lambda(x,y)$ is a positive function of $\lambda>0$, $\lambda$-harmonic in $x$ for each $\lambda$ and $y$, such that for any compact $K\subset\Omega$, $\sup_{x\in K,y\in\Omega}H_\lambda(x,y)$ is finite.
By Dirichlet spaces and potential theory
analysis, Fukushima found a correspondence between such a
family and a class of Dirichlet spaces on the
Martin boundary of  $\Omega$ (Martin boundary coincides with the topological one in the case $\Gamma$ is Lipschitz). The generalized  Laplacians
corresponding to such family of resolvents turn out to be
characterizable in terms of boundary conditions involving the notion
of (generalized) normal derivative in Doob's sense (see formula 6.8 in
\cite{[Fu69]}). Fukushima's results have been extended to more general
elliptic operators by Kunita (see \cite{[kun]}) and successively, by
Silverstein (see \cite{[Silv1]}, \cite{[Silv]}, also see \cite{[ES1]} and \cite{[ES2]}) and LeJan (see
\cite{[LJ]}), to general Markovian operators. In particular
Silverstein found a characterization, again in terms of Dirichlet
spaces on the boundary,  of the Markovian resolvents $R_\lambda\ge
R^0_\lambda$ dominating a given one $R^0_\lambda$. For recent
developments of boundary theory  of Dirichlet forms and symmetric
Markov processes, we refer to the book \cite{[CF]} by Chen and Fukushima.\par
Here our approach is different from the ones described above:
we build on the theory of self-adjoint extensions as initiated  by
Kre\u\i n {[K]}, Vi\u sik \cite{[V]}, Birman \cite{[B]} and Grubb
\cite{[G68]}. In particular Grubb characterized all self-adjoint
extensions  of a given symmetric elliptic differential operator on a
domain $\Omega$ with a smooth boundary in terms of (non-local)
boundary conditions. Thus Wentzell-type boundary conditions, and their
generalization due to the Dirichlet space  approach initiated by
Fukushima, should be part of Grubb's results. In recent years there
has been a renovated interest for the connection between theory of
self-adjoint extensions and boundary conditions for partial
differential operators due to its re-formulation in terms of Kre\u\i
n's resolvent formula (see \cite{[P08]}, \cite{[MM]}, \cite{[Ry]}, \cite{[BMNW]},
\cite{[P10]}, \cite{[G08]}, \cite{[BGW]}, \cite{[MMM]},  
\cite{[GM3]} and references therein). Let us notice
that \eqref{FR} (taking into account the characterization of the
harmonic part $H_\lambda$ given by formulae (4.5)-(4.7) in
\cite{[Fu69]}) has the same structure as Kre\u\i n's formula for the
resolvent of a self-adjoint extensions of the minimal Laplacian. This
indicates that Fukushima's results can be re-phrased in terms of such a 
resolvent formula (see Theorem \ref{finaleres} and Remark \ref{4.25}).  
\par The content of this paper is the
following. Section 2 is of preliminary nature. Here at first we recall
the theory of self-adjoint extensions of a given symmetric operator $S$,
following the simple approach presented in \cite{[P08]} (building on
previous paper \cite{[P01]}), to which we refer for proofs and
relations with other equivalent methods. Then we provide the bilinear
forms associated  with  the self-adjoint extensions and recall the
connection between Markovian generators and Dirichlet forms, thus
reducing the problem of the search of Markovian extensions to the
characterization of self-adjoint extensions having associated bilinear
forms which are  Dirichlet forms.    The section is concluded recalling the correspondence between  regular Dirichlet forms and Hunt Markov processes and the connections between path properties of such processes and analytical properties of the corresponding Dirichlet forms like 
conservativeness, transience, recurrence and irreducibility.
    \par In Section 3, to enhance reader's intuition, we consider the toy example given by
$\frac{d^2}{dx^2}$ on the real interval $(0,\ell)$, re-obtaining, by
straightforward considerations about Dirichlet forms on $\RE^2$,
Feller's results (as given in \cite{[Fel]}, Theorem 10.2). This simple
example is instructive since it permits to introduce, in a simpler
finite dimensional setting, many of  the results that will be then obtained in the successive section.
\par In Section 4  we extend the construction given in the
previous one to  the case $S=A_{\min}$, where $A_{\min}$ denotes the minimal realization of an elliptic second order differential
operator on a bounded domain $\Omega$ with a smooth boundary $\Gamma$. By a result due 
to Fukushima and Watanabe (which we recall in Theorem \ref{maximum}) the maximal element (with respect to the semi-order induced by the
associated bilinear forms) of the 
set $\ExM(A_{\min})$ of Markovian self-adjoint extensions of $A_{\min}$ is the Neumann realization $A_{N}$. As an immediate consequence of such a result any Markovian self-adjoint extension of $A_{\min}$ satisfies a logarithmic Sobolev inequality; hence its semigroup is ultracontractive and Gaussian heat kernel estimates hold  (see Corollaries  \ref{ultra} and \ref{gaussian}). By Fukushima-Watanabe theorem  we are led to  consider 
the set $\widetilde{\Ex}(A_{\min})\supseteq  {\ExM}(A_{\min})$ of
self-adjoint extensions sandwiched 
 between the Dirichlet realization $A_{D}$ (the minimun element of  ${\ExM}(A_{\min})$) and the
Neumann one $A_{N}$. 
In Theorem \ref{sub} we give a simple recipe to define, in terms of bilinear forms $f_{\Pi,B}$ on $L^{2}(\Gamma)$, bilinear forms corresponding to 
extensions in $\widetilde{\Ex}(A_{\min})$. Such extensions belong to $\ExM(A_{\min})$ whenever the corresponding forms $f_{\Pi,B}$ are Dirichlet forms. 
By combining  Theorem \ref{sub} with its converse (see Theorem \ref{converse}), we finally obtain Theorem \ref{finale} which provides a one-to-one correspondence between 
$\ExM(A_{\min})$ and the class of Dirichlet forms of $L^{2}(\Gamma)$ which admit a decomposition in terms of the Dirichlet form corresponding to (minus) the Dirichlet-to-Neumann 
operator on $\Gamma$ plus a Markovian form \footnote{Our definition of Markovian form is 
stronger than the usual one; the two definitions coincide in the case of a Dirichlet form (see Remark \ref{coincide} below)}. Such a result is the analogue, in our framework, of the correspondence established by Fukushima in \cite{[Fu69]}. In the case one could prove that the Markovian component is always closable (we conjecture that this is the case) one should obtain an even simpler correspondence (see Remark \ref{closed}).  In any case by Theorem \ref{sub} this latter correspondence holds in one direction, thus providing simple sufficient conditions leading to Markovian extensions. Theorem \ref{finale} has a simpler,  equivalent version   in terms of bilinear forms (see Theorem \ref{finale2}): 
\par\noindent
{\it Let $F$ be a closed bilinear form on $L^{2}(\Omega)$. Then $F=F_{A}$, $A\in\ExM(A_{\min})$, if and only if $\D(F)\subseteq H^{1}(\Omega)$ and there exists a Markovian form $f_{b}$ on  $L^{2}(\Gamma)$ such that 
$F(u,v)=F_N(u,v)+f_{b}(\gamma_{0}u,\gamma_{0}u)$.} 
\par\noindent
Here $F_{N}$ is the bilinear form associated to the Neumann realization $A_{N}$ and $\gamma_{0}$ is the trace (evaluation) map at $\Gamma$. Moreover, by Kre\u\i n's formula, a version of Theorem \ref{finale2} in terms of resolvents can also be given  (see Theorem \ref{finaleres}). This provides our version of Fukushima's \eqref{FR} (see Remark \ref{4.25}).\par
In section 5 we look for the boundary conditions associated to a Markovian extension. By Theorem \ref{tww} (also see Corollary \ref{cww}) these are defined in terms of the Markovian 
form $f_{b}$ appearing in the decomposition provided by Theorem \ref{finale}. In the case such a Markovian component is a regular Dirichlet form, then, by Beurling-Deny decomposition, these boundary conditions resemble 
the ones obtained in Wentzell's seminal paper \cite{[we]} (see Remark \ref{rww}). Some examples are provided.\par
We conclude with a remark about our regularity assumptions.  The hypothesis on the smoothnes of the boundary of $\Omega$ can be relaxed to $C^{1,1}$ (i.e. to boundaries which locally are the graph of a differentiable function having Lipschitz derivatives): by using the results contained in \cite{[P10]} and \cite{[G08]} all the statements here presented hold in this more general setting, the proof being essentially the same. By using the results contained in \cite{[GM3]}, \cite{[GMN]} and \cite{[BM]},  we expect that our results can be further generalized to hold on domains with a Lipschitz boundary. 
Lastly let us notice that the Fukushima-Watanabe theorem has been recently extended by Robinson and Sikora (see \cite{[RS]}, Theorem 1.1) to the case of an elliptic operator on an arbitrary open bounded set $\Omega$; the final goal should be to generalize the results here presented to the case of such sets, replacing the topological boundary with the Martin one.
\section{Preliminaries} 
\subsection{Notations}
\begin{itemize}
\item 
$\H$, $\fh$ denote Hilbert spaces with scalar products
  $\langle\cdot,\cdot\rangle$,  $(\cdot,\cdot)$ and corresponding
  norms $\|\cdot\|$, $|\cdot|$;
\item 
Given a linear operator $L$ we denote by
$\D(L)$, $\K(L)$, $\R(L)$, $\G(L)$, $\rho(L)$ its
domain, kernel, range, graph and resolvent set respectively; 
\item
For any $z\in\rho(L)$ we pose $R^L_z:=(-L+z)^{-1}$;
\item 
$L|{\mathscr V}$ denotes the restriction of $L$ to the subspace
  ${\mathscr V}\subset\D(L)$ and we pose $L{\mathscr V}:=
  \R(L|{\mathscr V})$;
\item
$\Ex(S)$ denotes the (potentially empty) set of all self-adjoint extensions of the 
symmetric operator $S$ and $\ExM(S)\subseteq \Ex(S)$ denotes the subset of Markovian self-adjoint extensions;
\item 
Given an orthogonal projector $\Pi:\fh\to\fh$, we use the same symbol
$\Pi$ to denote both the injection $\Pi|\R(\Pi):\R(\Pi)\to\fh$  and
the surjection $(\Pi|\R(\Pi))^*:\fh\to\R(\Pi)$;
\item 
$\E(\fh)$ denotes the set of couples $(\Pi,\Theta)$, where 
$\Pi$ is an orthogonal projection in $\fh$ and $\Theta$ 
is a self-adjoint operator in the Hilbert space $\R(\Pi)$;
\item
Given $v\in\fh$, $|v|=1$, the orthogonal projector $v\otimes v$ is defined by $[v\otimes v] (u):=(v,u)\,v$;
\item
Given a sesquilinear (bilinear in the real case) 
form $F$,  we pose $F(\phi):=F(\phi,\phi)$ for the corresponding
quadratic form;
\item 
$F_A$ denotes the symmetric sesquilinear (bilinear in the real case) form associated  with  the self-adjoint operator $-A$ in the Hilbert space $\H$;
\item 
$f_{\Pi,\Theta}$, $(\Pi,\Theta)\in\E(\fh)$, denotes the
  symmetric sesquilinear (bilinear in the real case) form associated  with  the self-adjoint operator $\Theta$ in the Hilbert space $\R(\Pi)$ and $f_{\Theta}\equiv f_{\uno,\Theta}$;
\item
$\B(X,Y)$ denotes the set of linear bounded operator on 
$X$ to $Y$ and $\B(X)\equiv\B(X,X)$;
\item
$1_B$ denotes the characteristic function of the set $B$;
\item 
$u\wedge v:=\min\{u,v\}$, $u\vee v:=\max\{u,v\}$, $u_\#:=(0\vee u)\wedge 1$. 
\end{itemize}
\subsection{Self-adjoint Extensions}
Let  
\be
S:\D(S)\subseteq\H\to\H
\ee
be a  symmetric linear operator such that $-S\ge \lambda_0>0$, i.e.  
\be
\forall\,\phi\in\D(S)\,,\quad\langle -S\phi,\phi\rangle\ge \lambda_0\|\phi\|^2\,.
\ee 
By Friedrichs' theorem $S$ 
has a self-adjoint extension 
\be 
A_0:\D(A_0)\subseteq\H\to\H
\ee 
such that $-A_0\ge\lambda_0$. $A_0$ is the unique self-adjoint 
extension such that $\D(A_0)\subseteq \D(F_0)$, where $\D(F_0)$ denotes the domain of $F_0$, the closure of the positive sesquilinear form 
$F^{\circ}(\phi,\psi):=\langle -S\phi,\psi\rangle$.\par 
From now on we suppose that $S$ is not
essentially self-adjoint. Thus, by von Neumann's theory of
self-adjoint extensions, there exists an unitary operator 
$U_{0}:\K(S^*-i)\to\K(S^*+i)$, such that 
\be
\H_{0}=\D(S^{**})\oplus\G(U_{0})\,,
\ee
\be
A_0(\phi+\psi+U_{0}\psi)=S\phi+i\psi-iU_{0}\psi\,,
\ee
where $\H_0$ denotes the Hilbert space given by $\D(A_0)$ equipped with the scalar product (giving rise to the graph norm)
\be
\langle\phi,\psi\rangle_{0}:=\langle A_0\phi,A_0\psi\rangle+\langle\phi,\psi\rangle\,.
\ee
Therefore the closure of $S$, i.e. $S^{**}$, 
is the restriction of $A_0$ to the kernel of the orthogonal projection from $\H_{0}$ onto $\K(S^*-i)$. Thus, since 
this gives some advantages in practical applications, we will look for the
self-adjoint extensions of $S$ by considering the equivalent 
problem of the search of the self-adjoint extensions of the
restriction of $A_0$ to the kernel $\K(\tau)$, which we suppose coinciding with 
$\D(S^{**})$, 
of a surjective bounded linear operator
\be
\tau:\H_{0}\to\fh \,.
\ee
Typically $A_0$ is an elliptic differential operator, $\tau$ 
is some trace (restriction) operator along a null subset $N$ 
and $\fh$ is some Hilbert space of functions on $N$.\par 
For notational convenience let us introduce, for any $z\in\rho(A_0)$, the following bounded linear operators:
\be
R^0_z:\H\to\H_0\,,\quad R^0_z:=(-A_0+z)^{-1}
\ee
and 
\be
G_z:\fh\to\H\,,\quad G_z:=\left(\tau R^0_{\bar z}\right)^*\,.
\ee
By the results provided in \cite{[P08]} (building on previous results
obtained in \cite{[P01]} and \cite{[P03]}) one has the following
\begin{theorem}\label{estensioni} 
(i) The set $\Ex(S)$ is parametrized by 
$\E(\fh)$. \par\noindent (ii) Let $A_{(\Pi,\Theta)}$ be the self-adjoint  extension corresponding to $(\Pi,\Theta)\in \E(\fh)$.  Then 
\begin{equation}\label{A}
A_{(\Pi,\Theta)}:\D(A_{(\Pi,\Theta)})\subseteq\H\to\H\,,\quad
A_{(\Pi,\Theta)}\phi:=A_0\phi_0\,,
\end{equation}
\begin{align}\label{D}
\D(A_{(\Pi,\Theta)})
:=\left\{\phi=\phi_0+G_0
\xi_\phi\,,\ \phi_0\in \D(A_0)\,,\, \xi_\phi\in
\D(\Theta)\,,\,\Pi\tau
\phi_0=\Theta\xi_\phi
\right\}.
\end{align}
(iii) The resolvent $R_z^{(\Pi,\Theta)}:=(-A_{(\Pi,\Theta)}+z)^{-1}$ of $A_{(\Pi,\Theta)}$ is given, for any 
$z\in\rho(A_{0})\cap\rho(A_{(\Pi,\Theta)})$, by the Kre\u\i n's type formula
\begin{align}\label{R}
R_z^{(\Pi,\Theta)}=R^0_z+G_z
\Pi(\Theta+z\Pi G^*_0 G_z\Pi)^{-1}\Pi G^*_{\bar z}\,.
\end{align}
\end{theorem}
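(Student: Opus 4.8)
The plan is to reduce everything to the algebra of the operators $G_z$ and then run an abstract Kre\u\i n-type resolvent argument. Concretely, I would (a)~record the elementary identities satisfied by the $G_z$; (b)~for a fixed parameter $(\Pi,\Theta)\in\E(\fh)$, check that $A_{(\Pi,\Theta)}$ as defined by \eqref{A}--\eqref{D} is symmetric and construct its resolvent, thereby obtaining self-adjointness and \eqref{R}; (c)~verify that $(\Pi,\Theta)\mapsto A_{(\Pi,\Theta)}$ is injective; and (d)~prove it is surjective onto $\Ex(S)$, the step I expect to be the main obstacle.

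\emph{Step (a).} From $G_z=(\tau R^0_{\bar z})^*$, the surjectivity of $\tau$, the fact that $R^0_{\bar z}:\H\to\D(A_0)$ is a bijection, and the first resolvent identity for $A_0$, one checks that each $G_z$ is bounded and injective, that $G^*_z=\tau R^0_{\bar z}$ is surjective, that $\tau R^0_z=G^*_{\bar z}$, that $G_z=G_0-z\,R^0_zG_0$ (whence $G^*_0G_z=G^*_{\bar z}G_0$), and that $\langle G_z\xi,(-A_0+\bar z)\phi\rangle=(\xi,\tau\phi)$ for every $\phi\in\D(A_0)$. Two consequences are used throughout. (i) $\R(G_z)=\K(S^*-z)$; this subspace is closed, so $G_z$, being bounded and injective with closed range, is bounded below, and in particular $\R(G_0)\cap\D(A_0)=\{0\}$ since $-A_0\ge\lambda_0>0$ makes $A_0$ injective. (ii) Setting $N_z:=z\,\Pi G^*_0G_z\Pi\in\B(\R(\Pi))$, one has $N_z^*=N_{\bar z}$, $N_0=0$, $z\mapsto N_z$ holomorphic on $\rho(A_0)$, and, by the spectral theorem for $-A_0$ (whose spectrum lies in $[\lambda_0,\infty)$), $\langle\Im N_z\,\xi,\xi\rangle=\Im z\int\lambda^2|\lambda+z|^{-2}\,d\langle E_\lambda G_0\xi,G_0\xi\rangle$; since $G_0$ is bounded below and $\lambda^2|\lambda+z|^{-2}$ is bounded away from $0$ on $[\lambda_0,\infty)$ (this is where $\lambda_0>0$ is used), $\Im N_z$ is, for non-real $z$, uniformly of the sign of $\Im z$, so that $\Lambda_z:=\Theta+N_z$ --- a bounded perturbation of the self-adjoint $\Theta$ on $\R(\Pi)$ with both $\Lambda_z$ and $\Lambda_z^*=\Lambda_{\bar z}$ bounded below --- is boundedly invertible for $z\in\CO\setminus\RE$.

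\emph{Step (b).} Uniqueness of the decomposition $\phi=\phi_0+G_0\xi_\phi$ in \eqref{D} follows from (i), so $A_{(\Pi,\Theta)}$ is well defined; taking $\phi_0\in\K(\tau)$ and $\xi_\phi=0$ shows $A_{(\Pi,\Theta)}\supseteq S^{**}$. Symmetry follows from the Green-type identity $\langle A_0\phi_0,\psi\rangle-\langle\phi,A_0\psi_0\rangle=(\xi_\phi,\tau\psi_0)-(\tau\phi_0,\xi_\psi)$ (obtained from $\langle G_0\xi,-A_0\phi\rangle=(\xi,\tau\phi)$), which vanishes once the boundary conditions $\Pi\tau\phi_0=\Theta\xi_\phi$, $\Pi\tau\psi_0=\Theta\xi_\psi$ are inserted and $\xi_\phi,\xi_\psi\in\R(\Pi)$ together with the self-adjointness of $\Theta$ are used. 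By (ii), $R_z:=R^0_z+G_z\Pi\Lambda_z^{-1}\Pi G^*_{\bar z}$ is a bounded operator for $z\in\CO\setminus\RE$; a direct but lengthy computation --- combining the identities of Step (a) with the algebraic identity for $\Lambda_z^{-1}-\Lambda_w^{-1}$, where most of the bookkeeping lies --- yields $R_z^*=R_{\bar z}$ and $R_z-R_w=(w-z)R_zR_w$, so $R_z$ is the resolvent of a self-adjoint operator. That this operator equals $A_{(\Pi,\Theta)}$ is seen by putting $\phi:=R_zf$, $\xi_\phi:=\Pi\Lambda_z^{-1}\Pi G^*_{\bar z}f$ and $\phi_0:=R^0_z(f-zG_0\xi_\phi)$: then $\phi=\phi_0+G_0\xi_\phi$ by $G_z=G_0-zR^0_zG_0$, the condition $\Pi\tau\phi_0=\Theta\xi_\phi$ reduces --- via $\tau R^0_z=G^*_{\bar z}$ and $G^*_{\bar z}G_0=G^*_0G_z$ --- precisely to $\Lambda_z\xi_\phi=\Pi G^*_{\bar z}f$, and $A_0\phi_0=z\phi-f$, i.e.\ $(-A_{(\Pi,\Theta)}+z)\phi=f$; since every element of the domain of the self-adjoint operator is such an $R_zf$, running this backwards gives the reverse inclusion of domains, so (ii) and (iii) hold.

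\emph{Steps (c)--(d).} Injectivity is read off \eqref{R}: it determines $C_z:=\Pi\Lambda_z^{-1}\Pi$ (since $G_z$ is injective and $G^*_{\bar z}$ surjective), hence $\R(\Pi)=\R(C_z)$ and then $\Theta=(C_z|\R(\Pi))^{-1}-N_z$. For surjectivity, the main obstacle: given $A\in\Ex(S)$ and $z\in\CO\setminus\RE$, since $A$ and $A_0$ are both restrictions of $S^*$ one has $(R^A_z-R^0_z)\H\subseteq\K(S^*-z)=\R(G_z)$; because $G_z$ is injective with closed range and $G^*_z$ is surjective this forces $R^A_z-R^0_z=G_zC_zG^*_{\bar z}$ for a unique $C_z\in\B(\fh)$, necessarily with $C_z^*=C_{\bar z}$. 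Feeding this into the resolvent identity for $R^A_z$ and comparing with the one for $R^0_z$ constrains the holomorphic family $z\mapsto C_z$ so rigidly that it must take the form $C_z=\Pi(\Theta+z\,\Pi G^*_0G_z\Pi)^{-1}\Pi$ for a fixed orthogonal projection $\Pi$ (the range projection of $C_0$) and a fixed self-adjoint operator $\Theta$ on $\R(\Pi)$; by Step (b) then $A=A_{(\Pi,\Theta)}$. The technical heart is precisely this extraction --- showing that the parameter forced by the resolvent identity and the symmetry $C_z^*=C_{\bar z}$ is genuinely an \emph{orthogonal projection together with a self-adjoint operator on its range}, rather than merely some closed symmetric relation. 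This is the abstract Kre\u\i n-formula content of \cite{[P01]} and \cite{[P08]}; one may alternatively route it through von Neumann's classification, identifying $\K(S^*\mp i)$ with $\fh$ via the bijections $G_{\mp i}:\fh\to\K(S^*\mp i)$ (bounded below by (i)) and translating von Neumann's unitary parameter into the pair $(\Pi,\Theta)$. Granting this, part (i) follows, and everything else reduces to routine manipulation with the identities of Step (a).
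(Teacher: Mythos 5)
The paper itself gives no proof of Theorem \ref{estensioni}: it is imported from \cite{[P08]} (building on \cite{[P01]}, \cite{[P03]}), so there is no internal argument to compare yours against line by line. With that understood, your Steps (a)--(c) are sound and in fact supply detail the paper omits: the identities for $G_z$ (boundedness below as adjoint of the surjective $\tau R^0_{\bar z}$, $G_z=G_0-zR^0_zG_0$, $G_0^*G_z=G^*_{\bar z}G_0$), the lower bound on $\Im\,(z\,\Pi G_0^*G_z\Pi)$ coming from $-A_0\ge\lambda_0>0$, the symmetry of $A_{(\Pi,\Theta)}$ via the Green-type identity, and the computation showing that $R^0_z+G_z\Pi(\Theta+z\Pi G_0^*G_z\Pi)^{-1}\Pi G^*_{\bar z}$ inverts $-A_{(\Pi,\Theta)}+z$ are all correct, and together with the symmetry argument they yield self-adjointness, parts (ii)--(iii) for non-real $z$, and injectivity of $(\Pi,\Theta)\mapsto A_{(\Pi,\Theta)}$. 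One caveat: the statement asserts \eqref{R} for every $z\in\rho(A_0)\cap\rho(A_{(\Pi,\Theta)})$, including real points, so you still need the invertibility of $\Theta+z\,\Pi G_0^*G_z\Pi$ there (e.g.\ by analytic continuation of $(-A_{(\Pi,\Theta)}+z)R_z=\uno$ from $\CO\setminus\RE$, or a direct argument); your sketch only covers non-real $z$.

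The genuine gap is Step (d), i.e.\ the substance of part (i). After the (correct) factorization $R^A_z-R^0_z=G_zC_zG^*_{\bar z}$ with $C_z^*=C_{\bar z}$, you assert that the resolvent identity forces $C_z=\Pi(\Theta+z\,\Pi G_0^*G_z\Pi)^{-1}\Pi$ for some $(\Pi,\Theta)\in\E(\fh)$, and you yourself identify the extraction of a genuine orthogonal projection together with a self-adjoint operator on its range (equivalently, a self-adjoint rather than merely symmetric boundary relation) as the technical heart --- and then refer it to \cite{[P01]}, \cite{[P08]}. That is exactly what the paper does, so your proposal is no less complete than the paper's own treatment, but as a self-contained proof the surjectivity remains unproven; the suggested detour through von Neumann's classification is likewise only gestured at, since translating the unitary parameter into the pair $(\Pi,\Theta)$ is essentially the content of the cited parametrization theorem again.
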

\begin{remark}
The operator $G_z$ is injective (by surjectivity of $\tau$) 
and  by Lemma 2.1 in \cite{[P03]}, given the surjectivity hypothesis
$\R(\tau)=\fh$ the density one $\overline{\K(\tau)}=\H$ is 
equivalent to
\be\label{1.1}
\forall z\in\rho(A_0)\,,\quad 
\R(G_z)\cap\D(A_0)=\{0\}\,.
\ee
So the decomposition 
appearing in  $\D(A_{(\Pi,\Theta)})$ is unique. 
Moreover by first resolvent identity one obtains  
\begin{equation}\label{1.2}
\forall\,w,z\in\rho(A_0)\,,\quad 
G_w-G_{z}=(z-w)\,R_w^0G_{z}\,.
\end{equation}
So  
\be
\R(G_w-G_z)\subseteq
\D(A_0)
\ee
and
\begin{equation}\label{1.2.1}
z\,G^*_0 G_z=z\,G^*_zG_0=z\,G^*_0(\uno-z\,R^0_z)G_0=\tau(G_0-G_z)\,.
\end{equation}
\end{remark}
\begin{remark}\label{aggiunto} Notice that the knowledge of 
the adjoint $S^*$ is not required. However it can be
readily calculated: by \cite{[P04]}, Theorem 3.1, one has 
\be
S^*:\D(S^*)\subseteq\H\to\H\,,\qquad S^*\phi=A_0\phi_0\,,
\ee
\be
\D(S^*)=\{\phi\in\H\,:\,\phi=\phi_0+G_0\xi_{\phi},\  \phi_0\in
\D(A_0),\ \xi_{\phi}\in\fh\}
\ee
and the Green-type formula
\begin{equation}\label{Green0}
\langle\phi,S^*\psi\rangle-\langle S^*\phi,\psi\rangle
=(\hat\tau\phi,\hat\rho\psi)
-(\hat\rho\phi,\hat\tau\psi)
\end{equation}
holds true. Here  the operators $\hat\tau$ and $\hat\rho$ 
are defined by
\be
\hat\tau:\D(S^*)\to\fh\,,\qquad\hat\tau\phi :=\tau 
\phi_0
\ee
and 
\be
\hat\rho:\D(S^*)\to\fh\,,\qquad\hat\rho\phi :=\xi_\phi\,.
\ee
Also notice that $G_z\xi$ solves the boundary value type 
problem
\begin{align}\label{Gz}
\begin{cases}
S^*G_z\xi=zG_z\xi\,,&\\
\hat\rho G_z\xi=\xi\,.&
\end{cases}
\end{align}
Hence, by Theorem \ref{estensioni}, 
\begin{equation}\label{DA1}
A_{(\Pi,\Theta)}=S^{*}|\{\phi\in\D(S^{*})\,:\,\hat\rho\phi\in\D(\Theta)\,,\ \Pi\hat\tau\phi=\Theta\hat\rho\phi\}\,.
\end{equation}
 By \cite{[P03]}, Theorem 3.1, $(\fh,\hat\tau,\hat\rho)$ is a boundary
triple for $S^*$, with corresponding Weyl 
function 
\begin{equation}\label{WF}
M_z=z\,G^*_0G_z\equiv z\,G^*_zG_0\,.
\end{equation}
We refer to \cite{[DM]}, \cite{[MM]} and references therein for boundary triplets theory.
\end{remark}
\subsection{Sesquilinear Forms}
Here we determine the symmetric sesquilinear form $F_A$ of $A\in\Ex(S)$.  From now on we assume the following additional hypothesis on $S$: 
\begin{equation}\label{hyp}
\K(S^*)\cap\D(F_0)=\{0\}\,,
\end{equation}
where $F_0$ 
is the sesquilinear form associated  with  $-A_0$, i.e. is the
sesquilinear form of the Friedrichs extension of $-S$. By Remark \ref{aggiunto} one has $\K(S^*)=\R(G_0)$, and so, by \eqref{1.1},  
(\ref{hyp}) is equivalent, since $\D(A_0)\subseteq\D(F_0)$, to
\begin{equation}\label{hypp}
\R(G_0)\cap\D(F_0)=\{0\}\,.
\end{equation}
Notice that hypotheses (\ref{hyp}), or better its equivalent (\ref{hypp}), 
ensures that the decomposition appearing in 
$\D(F_{(\Pi,\Theta)})$ below is unique and so 
$F_{(\Pi,\Theta)}$ is well-defined. In the case $S$ is the minimal realization of a 2nd order elliptic differential operator on  a bounded domain, \eqref{hyp} always holds true (see Remark \ref{regularity} below). \par Next theorem is our version 
of Theorem 1.2 in \cite{[G70]} (also see Theorem 1 in \cite{[MMM92]}). There the more general 
case of (not necessarily self-adjoint) 
coercive extensions was considered. Our 
simpler framework allows for a straightforward proof, 
which we provide for reader's convenience.
\begin{theorem} \label{qf} 
Let
\be
F_0:\D(F_0)\times \D(F_0)\subseteq \H
\times \H \to\RE
\ee
be the positive sesquilinear form corresponding to $-A_0$ and suppose
that \eqref{hypp} holds true. Let
$A_{(\Pi,\Theta)}\in\Ex(S)$  and let 
\be
f_{\Pi,\Theta}:\D(f_{\Pi,\Theta})\times\D(f_{\Pi,\Theta})\subseteq\R(\Pi)\times\R(\Pi)\to\RE
\ee
be the sesquilinear form 
corresponding to $(\Pi,\Theta)\in\E(\fh)$. 
Then  
the sesquilinear form $F_{(\Pi,\Theta)}$ corresponding to 
$-A_{(\Pi,\Theta)}$ is given by 
\be
F_{(\Pi,\Theta)}:\D(F_{(\Pi,\Theta)})\times
\D(F_{(\Pi,\Theta)})\subseteq \H\times\H \to\RE\,,
\ee
\be
F_{(\Pi,\Theta)}(\phi,\psi)=F_0(\phi_0,\psi_0)+f_{\Pi,\Theta}(\xi_\phi,\xi_\psi)\,,
\ee
\be
\D(F_{(\Pi,\Theta)}):=\{\phi=\phi_0+G_0\xi_\phi
\,,\ \phi_0\in \D(F_0)\,,\ \xi_\phi\in\D(f_{\Pi,\Theta})\}\,.
\ee
\end{theorem}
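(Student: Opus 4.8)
The plan is to introduce the candidate form $\tilde F$, defined on
\[
\D(\tilde F):=\{\phi=\phi_0+G_0\xi_\phi:\ \phi_0\in\D(F_0),\ \xi_\phi\in\D(f_{\Pi,\Theta})\}
\]
by $\tilde F(\phi,\psi):=F_0(\phi_0,\psi_0)+f_{\Pi,\Theta}(\xi_\phi,\xi_\psi)$, and to prove that $\tilde F$ is a closed symmetric form whose representing self-adjoint operator is exactly $-A_{(\Pi,\Theta)}$; by the uniqueness in the first representation theorem this yields $F_{(\Pi,\Theta)}=\tilde F$. That $\tilde F$ is well posed follows from \eqref{hypp}: it gives $\D(F_0)\cap\R(G_0)=\{0\}$, and $G_0$ is injective by surjectivity of $\tau$, so for every $\phi\in\D(\tilde F)$ the components $\phi_0\in\D(F_0)$ and $\xi_\phi\in\D(f_{\Pi,\Theta})\subseteq\R(\Pi)$ in the decomposition $\phi=\phi_0+G_0\xi_\phi$ are uniquely determined; hence $\phi_0$ and $\xi_\phi$ are well-defined linear functions of $\phi$ and $\tilde F$ is unambiguous and symmetric. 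Since $\D(A_0)\subseteq\D(F_0)$ and $\D(\Theta)\subseteq\D(f_{\Pi,\Theta})$, comparison with \eqref{D} shows $\D(A_{(\Pi,\Theta)})\subseteq\D(\tilde F)$, so in particular $\D(\tilde F)$ is dense in $\H$.

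The core computation is a Green-type identity. For $\phi\in\D(A_{(\Pi,\Theta)})$ and $\psi=\psi_0+G_0\xi_\psi\in\D(\tilde F)$, \eqref{A} gives
\[
\langle -A_{(\Pi,\Theta)}\phi,\psi\rangle=\langle -A_0\phi_0,\psi_0\rangle+\langle -A_0\phi_0,G_0\xi_\psi\rangle .
\]
The first term equals $F_0(\phi_0,\psi_0)$ by the first representation theorem applied to the Friedrichs extension $A_0$ (legitimate since $\phi_0\in\D(A_0)$, $\psi_0\in\D(F_0)$). For the second term, from $G_0=(\tau R^0_0)^*$ — that is, $G_0^*=\tau R^0_0$ — and $R^0_0(-A_0)\phi_0=\phi_0$ one gets $\langle -A_0\phi_0,G_0\xi_\psi\rangle=(\tau\phi_0,\xi_\psi)$; since $\xi_\psi\in\R(\Pi)$ this equals $(\Pi\tau\phi_0,\xi_\psi)$, and the boundary condition $\Pi\tau\phi_0=\Theta\xi_\phi$ from \eqref{D} together with the first representation theorem for $\Theta$ turns it into $(\Theta\xi_\phi,\xi_\psi)=f_{\Pi,\Theta}(\xi_\phi,\xi_\psi)$. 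Hence $\langle -A_{(\Pi,\Theta)}\phi,\psi\rangle=\tilde F(\phi,\psi)$ for all $\psi\in\D(\tilde F)$.

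It remains to verify closedness and conclude. On $\D(\tilde F)$ transport, through the (by the previous step bijective) map $(\phi_0,\xi_\phi)\mapsto\phi_0+G_0\xi_\phi$, the product of the form norms of $F_0$ and of $f_{\Pi,\Theta}$; since the Friedrichs form $F_0$ and the form $f_{\Pi,\Theta}$ of the self-adjoint operator $\Theta$ are closed, this norm is complete, and since $G_0\in\B(\fh,\H)$ the associated embedding of $\D(\tilde F)$ into $\H$ is continuous, so $\tilde F$ is a closed symmetric form. Let $T$ be the self-adjoint operator it represents. By the Green-type identity, every $\phi\in\D(A_{(\Pi,\Theta)})\subseteq\D(\tilde F)$ satisfies $\tilde F(\phi,\cdot)=\langle -A_{(\Pi,\Theta)}\phi,\cdot\rangle$ on $\D(\tilde F)$, hence $\phi\in\D(T)$ with $T\phi=-A_{(\Pi,\Theta)}\phi$; thus the self-adjoint operator $-A_{(\Pi,\Theta)}$ is contained in the symmetric operator $T$, so $T=-A_{(\Pi,\Theta)}$, and therefore $\tilde F=F_{(\Pi,\Theta)}$.

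The step I expect to be the main obstacle is the closedness of $\tilde F$ — equivalently, that the algebraic direct-sum decomposition of $\D(\tilde F)$ is also topologically direct and that the transported norm is genuinely the form norm; here the transversality hypothesis \eqref{hypp} (which says that the form domain $\D(F_0)$ of $A_0$ and the deficiency space $\R(G_0)=\K(S^*)$ meet only in $0$) and the closedness of the two building blocks $F_0$ and $f_{\Pi,\Theta}$ are what is used. Once closedness is in hand, identifying the operator of $\tilde F$ with $-A_{(\Pi,\Theta)}$ is immediate from the Green-type identity and the uniqueness in the first representation theorem; in the semibounded case — the one relevant to Markovian extensions — this is the standard Kato argument, and the general case follows from its extension to non-semibounded forms.
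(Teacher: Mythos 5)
Your identification of the operator is the same computation as the paper's: you test the candidate form against $\psi_0\in\D(F_0)$ and against $G_0\xi$, and use $G_0^*(-A_0\phi_0)=\tau R^0_0(-A_0\phi_0)=\tau\phi_0$ together with the boundary condition $\Pi\tau\phi_0=\Theta\xi_\phi$ from \eqref{D}. Only the packaging differs: the paper computes, directly from the definition, the operator $L$ associated with the candidate form and shows that $\phi\in\D(L)$ if and only if $\phi\in\D(A_{(\Pi,\Theta)})$, with $L\phi=-A_0\phi_0$, and it never claims or uses closedness; you instead prove the single inclusion $-A_{(\Pi,\Theta)}\subseteq T$ and upgrade it by maximal symmetry, which is correct and a bit slicker. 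But your plan then needs $\tilde F$ to be closed in order to invoke uniqueness in the representation theorem and conclude $\tilde F=F_{(\Pi,\Theta)}$ with the stated domain: knowing only that the associated operator of $\tilde F$ is $-A_{(\Pi,\Theta)}$ does not pin $\tilde F$ down (a non-closed restriction of the true form to a form core containing the operator domain has the same associated operator).

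That closedness step is where your argument has a real gap. Completeness of the transported product norm plus continuity of the embedding into $\H$ does not give closedness of $\tilde F$: closedness is completeness of the form norm $\left(\tilde F(\phi)+K\|\phi\|^2\right)^{1/2}$, and for that you need the transported norm to be equivalent to the form norm, i.e. the component maps $\phi\mapsto\phi_0$ and $\phi\mapsto\xi_\phi$ to be bounded in the form norm. Hypothesis \eqref{hypp} gives only algebraic directness of the decomposition, and closedness of $F_0$ and $f_{\Pi,\Theta}$ by themselves do not yield the equivalence, so attributing the fix to these, as you do in your closing comment, does not close the gap. What does produce the equivalence (say when $f_{\Pi,\Theta}\ge 0$, the case relevant to $\ExP(S)$ and to the Markovian applications) are two facts you never invoke: first, $-A_0\ge\lambda_0>0$, so $\|\phi_0\|^2\le\lambda_0^{-1}F_0(\phi_0)$; second, $G_0=(\tau R^0_0)^*$ is bounded below, being the adjoint of the bounded surjection $\tau R^0_0:\H\to\fh$, so $|\xi_\phi|\le c^{-1}\|G_0\xi_\phi\|\le c^{-1}\left(\|\phi\|+\|\phi_0\|\right)$. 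These two estimates dominate the product norm by the form norm, and closedness (and, along the way, the topological directness you asked for) follows. Note finally that for non-semibounded $\Theta$ the representation-theorem framework you appeal to does not apply as stated; the paper's proof is formulated purely as the computation of the operator associated with the candidate form precisely so that this issue never arises.
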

\begin{proof}
Let 
\be
L:\D(L)\subseteq \H\to \H
\ee 
be the linear operator associated  with  $F_{(\Pi,\Theta)}$, 
i.e. 
\be
\D(L):=\{\phi\in\D(F_{(\Pi,\Theta)}): 
\exists\varphi\in\H\ \text{\rm s.t.}\
\forall \psi\in\D(F_{(\Pi,\Theta)})\,,\ F_{(\Pi,\Theta)}
(\phi,\psi)=\langle\varphi,\psi\rangle \}\,,
\ee
\be
L\phi:=\varphi\,.
\ee
Since $\D(F_0)\subseteq\D(F_{(\Pi,\Theta)})$, 
$\D(F_{(\Pi,\Theta)})$ is dense and so $L$ is well-defined.\par
By the definition of $\D(F_{(\Pi,\Theta)})$ 
and by taking, in the definition of $\D(L)$, 
at first $\xi_\psi=0$ and then $\psi=G_0\xi$, 
one gets that $\phi=\phi_0+G_0\xi_\phi\in\D(L)$ 
if and only if there exists $\varphi$ such that
\be
\forall\psi_0\in\D(F_0)\,,\quad 
F_0(\phi_0,\psi_0)=
\langle\varphi,\psi_0\rangle
\ee
and
\be
\forall\xi\in\D(f_{\Pi,\Theta})\,,\quad f_{\Pi,\Theta}(\xi_\phi,\xi)
=\langle\varphi,G_0\xi\rangle\,.
\ee
Thus $\phi_0\in\D(A_0)$, $L\phi=-A_0
\phi_0$, and 
\be
\langle\varphi,G_0\xi\rangle
=\langle -A_0\phi_0,G_0\xi\rangle=
(\tau \phi_0,\xi)=(\Pi\tau\phi_0,\xi)\,.
\ee
Therefore
\be
\forall\,\xi\in\D(f_{\Pi,\Theta})\,,\quad f_{\Pi,\Theta}(\xi_\phi,\xi)=
(\Pi\tau\phi_0,\xi)\,,
\ee
i.e.
\be
\xi_\phi\in D(\Theta)\,,\quad \Theta\xi_\phi=\Pi\tau\phi_0\,.
\ee
In conclusion $\D(L)=\D(A_{(\Pi,\Theta)})$ and
\be
L\phi=-A_0\phi_0=
-A_{(\Pi,\Theta)}\phi\,.
\ee
\end{proof}
\begin{remark}\label{traslazioneforma}
By \eqref{1.2} and \eqref{WF}, by $(F_0+\lambda)(R^0_\lambda\phi,\psi)=\langle \phi,\psi\rangle$, $\lambda\ge 0$, and by Theorem \ref{qf}, one has, for all $\alpha\ge 0$,
\begin{align}
&\left(F_{(\Pi,\Theta)}+\alpha\right)(G_\alpha\xi_1,G_\alpha\xi_2)\\
=&
F_0(G_\alpha\xi_1-G_0\xi_1, G_\alpha\xi_2-G_0\xi_2)+
f_{\Pi,\Theta}(\xi_1,\xi_2)+\alpha(G_\alpha\xi_1,G_\alpha\xi_2)\\
=&-\alpha F_0(R^0_0G_\alpha\xi_1,G_\alpha\xi_2-G_0\xi_2)+
f_{\Pi,\Theta}(\xi_1,\xi_2)+\alpha(G_\alpha\xi_1,G_\alpha\xi_2)\\
=&
M_\alpha(\xi_1,\xi_2)+f_{\Pi,\Theta}(\xi_1,\xi_2)\,.
\end{align}
We make use of this relation in the proof of Theorem \ref{converse} below. 
\end{remark}
\begin{remark}\label{core}
Suppose that $f_{\Pi,\Theta}$ is lower bounded, so that 
$F_{(\Pi,\Theta)}$ is lower bounded. Then it is immediate to check that if
$\C(f_{\Pi,\Theta})$ is a core of $f_{\Pi,\Theta}$ and $\C(F_0)$ is a
core of $F_0$ then 
\be
\C(F_{(\Pi,\Theta)}):=\{\phi\in \H\,:\, \phi=\phi_0+G_0
\xi_\phi\,,\ 
\phi_0\in \C(F_0)\,,\ \xi_\phi\in\C(f_{\Pi,\Theta})\}
\ee
is a core of $F_{(\Pi,\Theta)}$\,.  
\end{remark}
\subsection{Maximal and minimal extensions}
Let us now define 
\be
\ExP(S):=\{A\in\Ex (S)\,:\, -A\ge 0\}\,,
\ee
By Theorem \ref{qf} one immediately gets the following well known result going back to Birman (see \cite{[B]}):
\be
\ExP(S)=\{A_{(\Pi,\Theta)}\,:\, \Theta\ge 0\}\,,
\ee
Now we define, on the set $
\ExP(S)$, the semi-order $\preceq$ by (see e.g. \cite{[Fu]}, Section 3.3) 
\be
A_1\preceq A_2\,\iff\,\begin{cases} &\D(F_{A_1})\subseteq\D(F_{A_2})\,,\\  &\forall\,\phi\in\D(F_{A_1})\,,\ F_{A_1}(\phi)\ge F_{A_2}(\phi)\,.
\end{cases}
\ee
By Theorem \ref{qf}, given  
$A_{(\Pi_1,\Theta_1)}$ and $A_{(\Pi_2,\Theta_2)}$ in $\ExP(S)$, one has 
\be
A_{(\Pi_1,\Theta_1)}\preceq A_{(\Pi_2,\Theta_2)}\,\iff\,(\Pi_1,\Theta_1)\preceq (\Pi_2,\Theta_2)
\ee
i.e.
\be
A_{(\Pi_1,\Theta_1)}\preceq A_{(\Pi_2,\Theta_2)}\,\iff\,\begin{cases} 
& \D(f_{\Pi_1,\Theta_1})\subseteq \D(f_{\Pi_2,\Theta_2})\,,\\  
&\forall\xi\in\D(f_{\Pi_1,\Theta_1})\,,\ f_{\Pi_1,\Theta_1}(\xi)\ge
f_{\Pi_2,\Theta_2}(\xi)\,.
\end{cases}
\ee
Thus the Friedrichs extension 
$A_0$ (corresponding to $\Pi=0$) is the minimal element of 
$\ExP(S)$ and $A_K:=A_{(\uno,0)}$ is the maximal one. 
The extension $A_K$, discovered by von Neuman 
in \cite{[vN]},  is named {\it Kre\u\i n's
  extension},  after Kre\u\i n's seminal paper \cite{[k]} characterizing the extremal elements of $\ExP(S)$.
\vskip 10pt
\noindent
{\bf Warning.} From now on all the Hilbert spaces we consider 
are {\it  real} Hilbert spaces.
\vskip 10pt
\subsection{Dirichlet Forms}
Let us now suppose that $\H=L^2(X,{\mathscr B},m)$ ($\equiv L^2(X)$ for short), where $X$ is a locally compact separable metric space and 
$m$ is a positive Radon measure on the Borel $\sigma$-algebra ${\mathscr B}$ of $X$ such that supp$(m)=X$, i.e. $m$ is finite on compact sets and is strictly positive on non-empty open sets. \par 
A negative self-adjoint operator 
$A$ is said to be {\it Markovian} if  
\be
0\le u\le 1\,,\ m\text{-a.e.}\quad\Longrightarrow\quad 
\forall t> 0\,,\quad 0\le e^{tA}u\le 1\,,\ m\text{-a.e.}\,.
\ee
By
\begin{equation}\label{Res-SG}
e^{tA}u=\lim_{\lambda\uparrow\infty}e^{-\lambda t}\sum_{n=0}^{\infty}\frac{(\lambda t)^{n}}{n!}\,(\lambda R^{A}_{\lambda})^{n}u\,,\quad R^{A}_{\lambda}u=\int_{0}^{\infty}e^{-\lambda t}e^{tA}u\,dt\,,
\end{equation}
this is equivalent to 
\be
0\le u\le 1\,,\ m\text{-a.e.}\quad\Longrightarrow\quad 
\forall \lambda>0\,,\quad 0\le\lambda R_\lambda^A u\le 1\,,\ m\text{-a.e.}\,.
\ee
By \cite{[Dv]}, Theorem 1.4.1, if $A$ is Markovian then $L^{1}(X)\cap L^{\infty}(X)$ is invariant under $e^{tA}$ and $e^{tA}|L^{1}(X)\cap L^{2}(X)$ can be extended  to a strongly continuous 
contractive semi-group on $L^{p}(X)$ for all $p\in [1,\infty)$.\par 
A function $\Phi:\RE\to\RE$ is said to be a {\it normal contraction} if 
\be
\Phi(0)=0\quad\text{and}\quad |\Phi(t)-\Phi(s)|\le |t-s|\,.
\ee
We will be mainly concerned with the normal contraction given by the {\it unit contraction} 
\be
\Phi(u)=u_{\#}:=(0\vee u)\wedge 1\,.
\ee
A positive symmetric bilinear form 
\be
F:\D(F)\times\D(F)\subseteq L^2(X)\times L^2(X)\to \RE 
\ee
is 
said to be a {\it Markovian form} 
if 
\begin{equation}\label{NC}
u\in\D(F)\,\Longrightarrow\,
\begin{cases} &u_{\#}\in\D(F)\,,\\  
&F(u_{\#})\le F(u)\,.
\end{cases}
\end{equation}
A closed Markovian form $F$ is said to be a {\it Dirichlet form}  and the Hilbert space $\H(F)$ given by the set $\D(F)$ equipped with the 
scalar product 
\be
\langle u,v\rangle_F:=F(u,v)+\langle u,v\rangle_{L^2(X)}
\ee
is the corresponding {\it Dirichlet space}. 
\begin{remark}\label{coincide} Notice that our definition of Markovian form is stronger than the usual one (as given e.g. in \cite{[Fu]},  Section 1.1). By \cite{[Fu]}, Theorem 1.4.1, the two definitions coincide whenever the form is closed. Moreover, if $F$ is a Dirichlet form then \eqref{NC} is equivalent to 
\begin{equation}
\text{$\forall\Phi$ normal contraction,} \qquad
u\in\D(F)\,\Longrightarrow\,
\begin{cases} &\Phi(u)\in\D(F)\,,\\  
&F(\Phi(u))\le F(u)\,.
\end{cases}
\end{equation}
\end{remark}
\vskip 10pt
\noindent
{\bf Warning.} Notice that, according to our definitions, Markovian and Dirichlet forms are not necessarily 
densely defined in $L^2(X)$. In particular, forms densely defined in $\R(\Pi)\not=L^{2}(X)$ of the kind $f_{\Pi,\Theta}$, $(\Pi,\Theta)\in\E(L^{2}(X))$, will be often regarded as forms on $L^2(X)$ 
with a not dense domain.
\vskip 10pt
\noindent
By \cite{[Fu]}, Theorem 2.1.1., one has
\begin{theorem}\label{closure} The closure of a closable Markovian form is a 
Dirichlet form.
\end{theorem}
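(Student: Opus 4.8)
The plan is to observe first that $\bar F$, being the closure of a closable form, is automatically closed, so the entire task reduces to showing that the unit-contraction condition \eqref{NC} is inherited by $\bar F$ from $F$ (positivity is trivially preserved under closure, since $\bar F(u)=\lim_n F(u_n)\ge 0$ for an approximating sequence). The workhorse will be a lower-semicontinuity lemma, valid for any closable positive form: \emph{if $v_n\in\D(F)$, $v_n\to v$ in $L^2(X)$ and $\liminf_n F(v_n)<\infty$, then $v\in\D(\bar F)$ and $\bar F(v)\le\liminf_n F(v_n)$.}

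First I would prove this lemma. Passing to a subsequence I may assume $F(v_n)\to c:=\liminf_n F(v_n)$, so $(v_n)$ is bounded for the form norm $\|w\|_F^2:=F(w)+\|w\|^2$, hence bounded in the Hilbert space $\H(\bar F)=(\D(\bar F),\langle\cdot,\cdot\rangle_{\bar F})$, which is complete precisely because $\bar F$ is closed. Extracting a further subsequence converging weakly in $\H(\bar F)$ to some $w$, the continuous injection $\H(\bar F)\hookrightarrow L^2(X)$ (injectivity is exactly closability) forces $w=v$, the $L^2$-limit. Weak lower semicontinuity of the norm of $\H(\bar F)$ then gives $\bar F(v)+\|v\|^2\le\liminf_k(\bar F(v_{n_k})+\|v_{n_k}\|^2)=c+\|v\|^2$, i.e. $v\in\D(\bar F)$ and $\bar F(v)\le c$. (A Mazur/Banach--Saks convex-combination argument would do the same job.)

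With the lemma available the theorem is immediate. Given $u\in\D(\bar F)$, choose $u_n\in\D(F)$ with $u_n\to u$ in $L^2(X)$ and $F(u_n)\to\bar F(u)$. Since $F$ is Markovian, $(u_n)_{\#}\in\D(F)$ and $F((u_n)_{\#})\le F(u_n)$; moreover the unit contraction is $1$-Lipschitz, so $|(u_n)_{\#}-u_{\#}|\le|u_n-u|$ pointwise and hence $(u_n)_{\#}\to u_{\#}$ in $L^2(X)$. Applying the lemma to $v_n=(u_n)_{\#}$ yields $u_{\#}\in\D(\bar F)$ and $\bar F(u_{\#})\le\liminf_n F((u_n)_{\#})\le\lim_n F(u_n)=\bar F(u)$, which is exactly \eqref{NC} for $\bar F$; together with closedness and positivity this says $\bar F$ is a Dirichlet form.

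The one genuinely non-routine point — the main obstacle — is the lemma, specifically the passage from ``bounded in the form norm along a sequence'' to ``actually belongs to $\D(\bar F)$'': since $(\D(F),\|\cdot\|_F)$ need not itself be complete, one cannot simply take a strong limit there, and it is the weak-compactness argument inside the complete space $\H(\bar F)$ that bridges the gap. Everything else — closedness of $\bar F$, preservation of positivity, the $L^2$-contractivity of the unit contraction, and the final chain of inequalities — is formal.
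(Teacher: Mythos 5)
Your proof is correct. Note, however, that the paper does not prove this statement at all: it simply quotes it from Fukushima's book (\cite{[Fu]}, Theorem 2.1.1), so what you have written is a self-contained replacement for that citation. Your argument is essentially the standard one underlying the cited theorem: the whole content is the lower-semicontinuity lemma (if $v_n\to v$ in $L^2$ with $\liminf_n F(v_n)<\infty$, then $v\in\D(\bar F)$ and $\bar F(v)\le\liminf_n F(v_n)$), proved via weak compactness of bounded sets in the complete Hilbert space $\H(\bar F)$, identification of the weak limit through the continuous embedding into $L^2$, and weak lower semicontinuity of the norm; Fukushima's proof runs on the same engine (there via a Banach--Saks/Ces\`aro-mean variant). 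The one genuine difference is that the classical statement is for the weaker notion of Markovianity and therefore works with the approximating contractions $\phi_\epsilon$ rather than the unit contraction itself; since the paper's definition \eqref{NC} is the stronger one (cf.\ Remark \ref{coincide}), your direct use of $u\mapsto u_{\#}$, together with its $1$-Lipschitz property to get $L^2$-convergence of $(u_n)_{\#}$ to $u_{\#}$, is legitimate and slightly streamlines the argument. One harmless slip: the parenthetical ``injectivity is exactly closability'' is off the mark --- the inclusion $\D(\bar F)\subseteq L^2(X)$ is tautologically injective, and what your argument actually uses is closability (so that $\bar F$ exists), completeness of $\H(\bar F)$ (closedness of $\bar F$), and continuity of the embedding; none of this affects the validity of the proof.
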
\noindent
\noindent 
By Theorem 10 in \cite{[An]} one has
\begin{theorem}\label{ancona}
Let $F$ be a Dirichlet form and  let $\Phi$ be any normal contraction. Then 
the map $u\mapsto\Phi(u) $ 
is continuous on the Dirichlet space 
$\H(F)$ to itself.
\end{theorem}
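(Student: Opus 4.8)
\medskip
\noindent\textbf{Proof proposal.} I would essentially reproduce (a streamlined version of) Ancona's argument. Since $F$ is a Dirichlet form, by Remark \ref{coincide} every normal contraction $\Phi$ operates on $\D(F)$ with $F(\Phi(u))\le F(u)$; moreover $\Phi(0)=0$ and $|\Phi(t)-\Phi(s)|\le|t-s|$ give $|\Phi(u)|\le|u|$ pointwise, hence $\|\Phi(u)\|_{L^2(X)}\le\|u\|_{L^2(X)}$, so, writing $\|v\|_F^2:=F(v)+\|v\|^2_{L^2(X)}$, the map $u\mapsto\Phi(u)$ sends $\H(F)$ into itself and is nonexpansive for $\|\cdot\|_F$. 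Because $\Phi$ is nonlinear this does not by itself yield continuity, so I argue with sequences: let $u_n\to u$ in $\H(F)$. Then $\Phi(u_n)\to\Phi(u)$ in $L^2(X)$ (from $|\Phi(u_n)-\Phi(u)|\le|u_n-u|$), and $\{\Phi(u_n)\}$ is bounded in the Hilbert space $\H(F)$, so any weakly convergent subsequence has a weak limit which, by continuity of $\H(F)\hookrightarrow L^2(X)$, equals the $L^2$-limit $\Phi(u)$; hence $\Phi(u_n)\rightharpoonup\Phi(u)$ weakly in $\H(F)$. In a Hilbert space, weak convergence together with $\limsup_n\|\Phi(u_n)\|_F\le\|\Phi(u)\|_F$ forces norm convergence, and since $\|\Phi(u_n)\|_{L^2(X)}\to\|\Phi(u)\|_{L^2(X)}$ everything reduces to
\[
\limsup_{n\to\infty}F(\Phi(u_n))\le F(\Phi(u)).
\]
The only a priori bound, $F(\Phi(u_n))\le F(u_n)\to F(u)$, gives only $\limsup_n F(\Phi(u_n))\le F(u)$, and the loss $F(u)-F(\Phi(u))$ can be strictly positive (already for the unit contraction): so the content of the theorem is that no energy is lost in the limit, and this — unreachable by the soft considerations above — is the main obstacle.

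\medskip
To overcome it I would first reduce to piecewise-linear $\Phi$: any normal contraction is a locally uniform limit of piecewise-linear normal contractions $\Phi_k$ (nodal interpolants), and since $F(\Phi_k(v))\le F(v)$ uniformly in $k$ the same weak-convergence argument gives $\Phi_k(v)\to\Phi(v)$ in $\H(F)$ for each fixed $v$; an approximation argument (not entirely routine because of the nonlinearity, but of a standard type) then reduces the claim to piecewise-linear $\Phi$, with kinks at a locally finite set $\{c_j\}\subset\RE$. Next, reducing to a regular Dirichlet form (as is the case in all the applications of this lemma) I would use the Beurling--Deny decomposition $F=F^{(c)}+F^{(j)}+F^{(k)}$, with $F^{(c)}$ strongly local, $F^{(j)}(v)=\tfrac12\iint(v(x)-v(y))^2\,J(dx\,dy)$ and $F^{(k)}(v)=\int v^2\,\kappa(dx)$. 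Each summand is a nonnegative form dominated by $F$, hence continuous along $\H(F)$-convergent sequences, so $F^{(c)}(u_n-u)$, $F^{(j)}(u_n-u)$, $F^{(k)}(u_n-u)\to0$. For the jumping and killing parts one then gets $F^{(j)}(\Phi(u_n)-\Phi(u))\to0$ and $F^{(k)}(\Phi(u_n)-\Phi(u))\to0$ by dominated convergence in $L^2(J)$, resp. $L^2(\kappa)$: use $|\Phi(u_n)(x)-\Phi(u_n)(y)|\le|u_n(x)-u_n(y)|$, the quasi-everywhere convergence of $u_n$ to $u$ along a subsequence (so $(x,y)\mapsto(u_n(x),u_n(y))$ converges $J$-a.e., since $J$ does not charge the product of a polar set with $X$), the subsequential domination furnished by $F^{(j)}(u_n-u)\to0$, and the standard ``pass to a further a.e.-convergent subsequence'' device to recover convergence of the full sequence.

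\medskip
The delicate part is the strongly local term $F^{(c)}$, which I expect to be the real difficulty. Here I would invoke the energy-measure calculus: for piecewise-linear (hence Lipschitz) $\Phi$ one has the chain rule $\mu^{(c)}_{\langle\Phi(v)\rangle}=(\Phi'\circ v)^2\,\mu^{(c)}_{\langle v\rangle}$ and the Leibniz/triangle estimate
\[
F^{(c)}\big(\Phi(u_n)-\Phi(u)\big)^{1/2}\le F^{(c)}(u_n-u)^{1/2}+\Big(\int|\Phi'\circ u_n-\Phi'\circ u|^2\,d\mu^{(c)}_{\langle u\rangle}\Big)^{1/2},
\]
whose first term tends to $0$; for the second one uses that the strongly local energy measure $\mu^{(c)}_{\langle u\rangle}$ puts no mass on the level sets $\{u=c_j\}$ — the only places where $\Phi'$ is discontinuous — so that $\Phi'\circ u_n\to\Phi'\circ u$ holds $\mu^{(c)}_{\langle u\rangle}$-a.e. (along a subsequence, using $u_n\to u$ $\mu^{(c)}_{\langle u\rangle}$-a.e.) and dominated convergence with dominant $\equiv1$ closes the argument. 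Thus the crux is the non-smooth chain rule together with the ``no charge on level sets'' property of the strongly local energy measure; the piecewise-linear approximation and the reduction to the regular case are routine by comparison.
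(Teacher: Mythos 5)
The paper offers no internal proof of this statement: it is quoted directly from Ancona (\cite{[An]}, Theorem 10), so your reconstruction has to stand on its own, and as written it does not. The first genuine gap is the reduction to piecewise-linear contractions. Your claim that ``the same weak-convergence argument gives $\Phi_k(v)\to\Phi(v)$ in $\H(F)$ for each fixed $v$'' is not correct: that argument yields only weak convergence, and upgrading it to norm convergence requires exactly the estimate $\limsup_k F(\Phi_k(v))\le F(\Phi(v))$, i.e.\ the ``no energy lost in the limit'' statement you yourself identified as the main obstacle; the uniform bound $F(\Phi_k(v))\le F(v)$ gives only $\limsup_k F(\Phi_k(v))\le F(v)$. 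Moreover, even granting continuity of each $\Phi_k$ and strong convergence $\Phi_k(v)\to\Phi(v)$ for each fixed $v$, continuity of $\Phi$ does not follow: a pointwise limit of continuous nonlinear maps need not be continuous, and the limit interchange one would need (bounding $\limsup_n\liminf_k F(\Phi_k(u_n))$ by $\liminf_k\limsup_n F(\Phi_k(u_n))$) fails for general double sequences. Some equicontinuity in $k$ must be produced, and producing it is essentially the theorem itself; calling this step ``of a standard type'' hides the crux.

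The second gap is one of generality and of the tools invoked. The theorem concerns an arbitrary Dirichlet form in the paper's sense (closed and Markovian, not necessarily densely defined, not necessarily regular), whereas your argument uses the Beurling--Deny decomposition, quasi-continuous versions, the fact that $J$ does not charge products of polar sets, and the energy-measure calculus (Lipschitz chain rule, $\mu^{(c)}_{\langle u\rangle}(\{u=c_j\})=0$), all of which are available only for regular, densely defined forms; reducing the general case to the regular one requires the transfer to a regular representation, which you neither state nor prove. One should also verify that the Lipschitz chain rule and related energy-measure facts are not themselves established in the literature by means of Ancona-type continuity of contractions, to avoid circularity. Your handling of the jumping and killing parts, and the idea for the strongly local part, are plausible in the regular setting, but the proposal as it stands does not prove the theorem in the stated generality, while the paper simply relies on Ancona's result.
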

\vskip 10pt
\noindent 
The connection between  Markovian operators and Dirichlet forms is  given by the {\it Beurling-Deny criterion} (see e.g. \cite{[Fu]}, Theorem 1.4.1, \cite{[Dv]}, Theorem 1.3.3):
\begin{theorem}\label{dirichlet}
$A$ is Markovian $\iff$ $F_A$ is a densely defined Dirichlet form.
\end{theorem}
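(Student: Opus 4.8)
\emph{Proof proposal.} This is the classical Beurling--Deny criterion; the plan is to prove the two implications separately, using for one direction the approximation of $F_A$ by the bounded forms built from the semigroup, and for the other the variational description of the resolvent.

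First I would show that $A$ Markovian implies $F_A$ is a densely defined Dirichlet form. Since $-A$ is non-negative and self-adjoint, $F_A$ is automatically closed and densely defined (its domain $\D(F_A)=\D((-A)^{1/2})$ contains $\D(A)$), so only the Markovian property \eqref{NC} of the form has to be checked. For $t>0$ put $\mathcal E^{(t)}(u):=\frac1t\langle u-e^{tA}u,u\rangle$; by the spectral theorem $\mathcal E^{(t)}(u)=\int_{[0,\infty)}\frac{1-e^{-t\lambda}}{t}\,d\langle E_\lambda u,u\rangle$ increases monotonically as $t\downarrow0$ to $F_A(u)\in[0,\infty]$, and $u\in\D(F_A)$ iff this supremum is finite. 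The core is the elementary inequality
\[
\langle(\uno-T)u_\#,u_\#\rangle\le\langle(\uno-T)u,u\rangle\qquad(u\in L^2(X)),
\]
valid for every self-adjoint contraction $T$ on $L^2(X)$ that maps $\{0\le v\le1\}$ into itself. To obtain it I would write $u=u_\#+w$ with $w:=u-u_\#$ (so $w\ge0$ on $\{u>1\}$, $w\le0$ on $\{u<0\}$, and $w=0$ elsewhere), expand the difference of the two sides as $2\langle(\uno-T)u_\#,w\rangle+\langle(\uno-T)w,w\rangle$, note $\langle(\uno-T)w,w\rangle\ge0$ because $T$ is a contraction, and check $\langle(\uno-T)u_\#,w\rangle\ge0$ pointwise: on $\{u>1\}$ one has $u_\#=1$, $Tu_\#\le1$, $w>0$, while on $\{u<0\}$ one has $u_\#=0$, $Tu_\#\ge0$, $w<0$. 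Applying this with $T=e^{tA}$ gives $\mathcal E^{(t)}(u_\#)\le\mathcal E^{(t)}(u)$ for every $t>0$, and letting $t\downarrow0$ yields $u_\#\in\D(F_A)$ whenever $u\in\D(F_A)$, together with $F_A(u_\#)\le F_A(u)$; this is \eqref{NC}.

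Conversely, suppose $F_A$ is a densely defined Dirichlet form. By \eqref{Res-SG} it suffices to show that $0\le f\le1$ implies $0\le\lambda R^A_\lambda f\le1$ for all $\lambda>0$. Fix $\lambda>0$ and $0\le f\le1$, set $u_\lambda:=\lambda R^A_\lambda f\in\D(A)\subseteq\D(F_A)$, and recall that $u_\lambda$ is the unique minimizer over $\D(F_A)$ of the strictly convex functional $\Phi_\lambda(v):=F_A(v)+\lambda\|v\|^2-2\lambda\langle f,v\rangle$, with $\Phi_\lambda(v)-\Phi_\lambda(u_\lambda)=(F_A+\lambda)(v-u_\lambda)\ge0$. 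Since $F_A$ is a Dirichlet form, $w:=(u_\lambda)_\#\in\D(F_A)$ and $F_A(w)\le F_A(u_\lambda)$, so it remains only to compare the lower-order terms. A pointwise computation of $w^2-u_\lambda^2-2f(w-u_\lambda)=(w-u_\lambda)(w+u_\lambda-2f)$ shows this integrand is $0$ on $\{0\le u_\lambda\le1\}$, strictly negative on $\{u_\lambda>1\}$ (where $w-u_\lambda<0$ and $w+u_\lambda-2f=1+u_\lambda-2f>0$), and strictly negative on $\{u_\lambda<0\}$ (where $w-u_\lambda>0$ and $w+u_\lambda-2f=u_\lambda-2f<0$), using $0\le f\le1$. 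Hence $\Phi_\lambda(w)\le\Phi_\lambda(u_\lambda)$, which by uniqueness of the minimizer forces $w=u_\lambda$, i.e.\ $0\le\lambda R^A_\lambda f\le1$ $m$-a.e.; the semigroup estimate then follows from \eqref{Res-SG}.

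The monotone convergence $\mathcal E^{(t)}\uparrow F_A$ and the variational characterization of $R^A_\lambda$ are routine; the one place that needs care is the sign bookkeeping in the two pointwise estimates (the algebraic inequality for the self-adjoint contraction $e^{tA}$, and the integrand $(w-u_\lambda)(w+u_\lambda-2f)$), since that is exactly where the hypotheses $0\le f\le1$ and the shape of the unit contraction $u_\#$ enter.
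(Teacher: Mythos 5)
The paper does not actually prove Theorem \ref{dirichlet}: it is quoted as the classical Beurling--Deny criterion, with references to \cite{[Fu]}, Theorem 1.4.1, and \cite{[Dv]}, Theorem 1.3.3. Your argument is a correct, self-contained proof of that classical result, and it follows the standard textbook route rather than anything specific to this paper. For the implication ``Markovian $\Rightarrow$ Dirichlet'' you use the semigroup approximations $\mathcal{E}^{(t)}(u)=t^{-1}\langle u-e^{tA}u,u\rangle$, monotone in $t$ by spectral calculus, together with the inequality $\langle(\uno-T)u_{\#},u_{\#}\rangle\le\langle(\uno-T)u,u\rangle$ for a self-adjoint contraction $T$ preserving $\{0\le v\le 1\}$; your pointwise sign check of the cross term is right, and the quadratic expansion uses that $T$ is self-adjoint on the \emph{real} space $L^2(X)$, consistent with the paper's convention. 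This is the same mechanism as in Fukushima's proof, except that the paper's auxiliary tool (Theorem \ref{Res-Form}) is phrased with the resolvent (Yosida) approximations $F^\lambda_A$, which would serve equally well. For the converse, you use the variational characterization of $u_\lambda=\lambda R^A_\lambda f$ as the unique minimizer of $\Phi_\lambda$ over $\D(F_A)$ (the identity $\Phi_\lambda(v)-\Phi_\lambda(u_\lambda)=(F_A+\lambda)(v-u_\lambda)$ follows from $(-A+\lambda)u_\lambda=\lambda f$), and the sign bookkeeping in $(w-u_\lambda)(w+u_\lambda-2f)$ is exactly where $0\le f\le 1$ enters; uniqueness of the minimizer then forces $(u_\lambda)_{\#}=u_\lambda$, and \eqref{Res-SG} transfers the sub-Markov property from the resolvent to the semigroup, as the paper itself notes. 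One last point in your favour: since the paper's definition of Markovian form involves only the unit contraction (see \eqref{NC} and Remark \ref{coincide}), verifying \eqref{NC} for $F_A$ is indeed sufficient, so no argument for general normal contractions is needed.
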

\vskip 10pt
\noindent
Thus, posing
\be
\ExM(S):=\{A\in\Ex(S)\,:\, \text{\rm $A$ is Markovian}\}\,,
\ee
one has, by Theorems \ref{estensioni} and \ref{qf},
\begin{equation}\label{ExM}
\ExM(S)=\{\text{$A_{(\Pi,\Theta)}\,:\,F_{(\Pi,\Theta)}$ is a Dirichlet form}\}\,.
\end{equation}
\subsection{Yosida approximations}
Let $F_A$ be the bilinear form associated  with  the positive self-adjoint 
operator $-A$ 
and let us consider the bounded bilinear symmetric form $F_A^\lambda$ associated with the Yosida approximation of $A$, i.e.
\be
F_A^\lambda:L^2(X)\times L^2(X)\to\RE\,,\quad
F_A^\lambda(u,v):=\langle u,\lambda(\uno  -\lambda R^A_\lambda) v\rangle_{L^2(X)}\,.
\ee
Then,
by \cite{[Fu]}, Lemma 1.3.4, formulae (1.4.7)-(1.4.9) and Theorem 1.4.2, one has
the following
\begin{theorem}\label{Res-Form} For any $u\in L^2(X)$, the function 
$\lambda\mapsto F_A^\lambda(u)$ is non-decreasing,
\be
\D(F_A)=\{u\in L^2(X)\,:\, \lim_{\lambda\uparrow\infty}\,F_A^\lambda(u)<\infty\}\,,
\ee
\be
F_A(u,v)=\lim_{\lambda\uparrow\infty}\,F_A^\lambda(u,v)\,.
\ee
Moreover, if $A$ is Markovian, i.e. if $F_A$ is a Dirichlet form, then $F^\lambda_A$ is a Dirichlet form and 
\be
F^\lambda_A=\breve F^\lambda_A+\check F^\lambda_A\,,
\ee
where the bounded Dirichlet forms $\breve F^\lambda_A$ and $\check F^\lambda_A$ are defined by  
\be
\breve F_A^\lambda(u,v):=\frac{\lambda}{2}\int_{X\times X}(u(x)-u(y))(v(x)-v(y))\,d\sigma_A^\lambda(x,y)
\ee
\be
\check F_A^\lambda(u,v):=\lambda\int_Xu(x)v(x)s_A^\lambda(x)\,dm(x)\,.
\ee
Here $0\le s_A^\lambda\le 1$ and $\sigma_A^\lambda$ is a positive, symmetric, Radon measure  on $X\times X$. 
\end{theorem}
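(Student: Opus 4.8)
The first three assertions are pure spectral theory, and the plan is simply to diagonalise $A$. Writing $-A=\int_{[0,\infty)}\mu\,dE_\mu$ (legitimate since $-A\ge 0$), the functional calculus gives
\[
F_A^\lambda(u)=\langle u,\lambda(\uno-\lambda R^A_\lambda)u\rangle=\int_{[0,\infty)}\frac{\lambda\mu}{\lambda+\mu}\,d\langle E_\mu u,u\rangle .
\]
For each fixed $\mu\ge 0$ the integrand $\lambda\mapsto\lambda\mu/(\lambda+\mu)$ is non-negative, non-decreasing in $\lambda$, and increases to $\mu$ as $\lambda\uparrow\infty$; hence $\lambda\mapsto F_A^\lambda(u)$ is non-decreasing and, by monotone convergence, $F_A^\lambda(u)\uparrow\int_{[0,\infty)}\mu\,d\langle E_\mu u,u\rangle$, which is finite exactly when $u\in\D((-A)^{1/2})=\D(F_A)$ and then equals $F_A(u)$. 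Polarising (we are now over $\RE$) gives $F_A(u,v)=\lim_{\lambda\uparrow\infty}F_A^\lambda(u,v)$ for $u,v\in\D(F_A)$.

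For the \emph{moreover} part I would set $T_\lambda:=\lambda R^A_\lambda$, so that $F_A^\lambda(u,v)=\lambda\langle u,(\uno-T_\lambda)v\rangle$. Since $A$ is Markovian and $-A\ge 0$, the operator $T_\lambda$ is bounded, self-adjoint, positivity preserving, and sub-Markovian: $0\le T_\lambda\le\uno$ and $T_\lambda$ maps $[0,1]$-valued functions to $[0,1]$-valued functions. Being bounded and everywhere defined, $F_A^\lambda$ is automatically closed; and it is Markovian by a short computation: with $w:=u-u_\#$ one has $F_A^\lambda(u)-F_A^\lambda(u_\#)=2\lambda\langle(\uno-T_\lambda)u_\#,w\rangle+\lambda\langle(\uno-T_\lambda)w,w\rangle$, the second summand being $\ge 0$ since $T_\lambda\le\uno$, and the first since $0\le T_\lambda u_\#\le\uno$ together with $u_\#=1$ on $\{w>0\}$ and $u_\#=0$ on $\{w<0\}$ forces $(\uno-T_\lambda)u_\#\cdot w\ge 0$ $m$-a.e. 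Thus $F_A^\lambda$ is a bounded Dirichlet form (this is Theorem \ref{dirichlet} specialised to bounded forms).

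It remains to exhibit the Beurling--Deny splitting $F_A^\lambda=\breve F_A^\lambda+\check F_A^\lambda$. The plan is: first represent the positivity preserving self-adjoint operator $T_\lambda$ by a kernel, starting from $\langle T_\lambda 1_{B_1},1_{B_2}\rangle\ge 0$ for disjoint Borel sets $B_1,B_2$ of finite $m$-measure and extending this, by a monotone-class / Carath\'eodory argument together with $\|T_\lambda\|\le 1$ and the Radon property of $m$, to a positive symmetric Radon measure on $X\times X$; then peel off its restriction to the diagonal, which corresponds to a multiplication $u\mapsto p\,u$, and let $\sigma_A^\lambda$ be the remaining off-diagonal part. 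Expanding $(u(x)-u(y))(v(x)-v(y))$ in the definition of $\breve F_A^\lambda$ and using the symmetry of $\sigma_A^\lambda$ one gets $\lambda\langle u,(\uno-T_\lambda)v\rangle=\breve F_A^\lambda(u,v)+\lambda\int_X u\,v\,(1-p-q)\,dm$, where $q(x)$ is the total mass of $\sigma_A^\lambda$ issuing from $x$; hence $\check F_A^\lambda(u,v)=\lambda\int_X u\,v\,s_A^\lambda\,dm$ with $s_A^\lambda:=1-p-q$. Finally $s_A^\lambda\le 1$ because $p,q\ge 0$, and $s_A^\lambda\ge 0$ because sub-Markovianity gives $T_\lambda\uno\le\uno$, i.e. $p+q\le 1$ $m$-a.e.

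I expect the main obstacle to be precisely this kernel representation of the bounded positivity preserving operator $T_\lambda$ and the clean separation of its diagonal part; everything else is spectral calculus and bookkeeping. All of this is, of course, classical: it is \cite{[Fu]}, Lemma~1.3.4 and Theorem~1.4.2, to which we defer.
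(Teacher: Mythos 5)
Your sketch is correct and coincides with the paper's treatment: the paper does not actually prove Theorem \ref{Res-Form} but quotes it from \cite{[Fu]} (Lemma 1.3.4, formulae (1.4.7)--(1.4.9) and Theorem 1.4.2), and your argument — spectral calculus for the Yosida approximation, the unit-contraction computation for $\lambda\langle u,(\uno-\lambda R^A_\lambda)u\rangle$ using $0\le T_\lambda\le\uno$ and sub-Markovianity, and the kernel representation of the positivity preserving operator $T_\lambda=\lambda R^A_\lambda$ yielding $\sigma_A^\lambda$ and $s_A^\lambda$ — is precisely the standard proof given in that reference. Deferring the kernel-representation step to the same source is therefore consistent with what the paper itself does.
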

\begin{remark}\label{DM0}
If $F_A$ is a Dirichlet form, by Theorem 1.4.2 in \cite{[Fu]},   
\be
u\in L^\infty(X,m)\cap\D(F_A)\quad\Longrightarrow\quad u^2\in\D(F_A)
\ee
and 
\be
u\in\D(F_A)\quad\Longrightarrow\quad \begin{cases}&u_{n}\in\D(F_A)\\ 
&F_A(u)=\underset{n\uparrow\infty}\lim\,F_A(u_{n})\,,\end{cases}
\ee
where $u_{n}:=((-n)\vee u)\wedge n$. Thus if $m(X)<\infty$ then 
\begin{equation}\label{DM1}
F_A(u)=\lim_{n\uparrow\infty}\lim_{\lambda\uparrow\infty }\left(
\breve F^{\lambda}_{A}(u_{n})
+F^\lambda_A(1,u_{n}^2)\right)
\end{equation}
and, if moreover $1\in\D(F_A)$,
\begin{equation}\label{DM2}
F_A(u)=\lim_{n\uparrow\infty}\left(\lim_{\lambda\uparrow\infty }\breve F^{\lambda}_{A}(u_{n})+F_A(1,u_{n}^2)\right)\,.
\end{equation}
 \end{remark}
\subsection{Capacity and quasi continuity}
Let $F$ be a densely defined Dirichlet form. For any open set 
${\mathcal O}\subseteq X$ we define its {\it $F$-capacity} by (see \cite{[Fu]}, section 3.1) 
\be
\text{cap}_{F}({\mathcal O}):=\inf\{\langle u,u\rangle_{F}\,,\ u\in\D(F)\,,\ u\ge 1\ \text{$m$-a.e. on ${\mathcal O}$}\}\,.
\ee
Here, as usual, one poses $\inf(\emptyset)=+\infty$. For an arbitrary set ${\mathcal B}\subseteq X$ one then defines
\be
\text{cap}_{F}({\mathcal B}):=\inf\{\text{cap}_{F}({\mathcal O})\,,\ {\mathcal O}\ \text{open},\ {\mathcal O}\supset {\mathcal B}\}\,.
\ee
Such definitions provide a Choquet capacity (see \cite{[Fu]}, Theorem 3.1.1).\par
A statement is said to hold {\it quasi everywhere} (q.e. for short) if there exists a set ${\mathcal N}$ of zero capacity such that the statement is true outside ${\mathcal N}$. Notice that 
\be
\text{cap}_{F}({\mathcal N})=0\quad\Rightarrow\quad 
m({\mathcal N})=0\,.
\ee
Also notice that if $X=\RE^n$ and $F(u,v)=\langle\nabla u,
\nabla v\rangle_{L^2(\RE^{n})}$ with 
\be
\D(F)=H^1(\RE^n)=\{u\in L^{2}(\RE^{n})\,:\,\|\nabla u\|\in L^{2}(\RE^{n})\}\,,
\ee
then $\text{cap}_{F}$ is the usual Newtonian capacity.\par
Given an extended real valued function $u$ on $X$, we call it {\it quasi continuous} if  
for any $\epsilon > 0$ there exists an open set ${\mathcal O}_{\epsilon}\subset X$ such that $\text{cap}_{F}({\mathcal O}_{\epsilon}) 
< \epsilon$ and the restriction of $u$ to $X\backslash {\mathcal O}_{\epsilon}$ is finite and continuous.\par
Given a function $u$, $\t u$ is said to be a 
{\it quasi continuous modification} of $u$  if $\t u$ is quasi continuous and $\t u=u$ $m$-a.e.. By \cite{[Fu]}, Theorem 3.1.3, one has 
\begin{theorem} 
Any $u\in\D(F)$ has a quasi continuos modification which is unique up to a set of zero $F$-capacity.
\end{theorem}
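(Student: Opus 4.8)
The plan is to prove that $\D(F)$ coincides with the class $\mathcal Q$ of those elements admitting a quasi continuous modification. The argument has three ingredients, to be assembled in order: a Chebyshev-type bound for the $F$-capacity of super-level sets, the density of $\mathcal Q$ in the Dirichlet space $\H(F)$, and a completeness step upgrading $\H(F)$-convergence to \emph{quasi everywhere} convergence. The uniqueness assertion will then be a one-line consequence of the Chebyshev bound.

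\medskip\noindent\textbf{The Chebyshev bound.} First I would show: if $w\in\D(F)$ has a quasi continuous modification $\t w$, then for every $\lambda>0$,
\be
\text{cap}_{F}\bigl(\{\,|\t w|\ge\lambda\,\}\bigr)\le\lambda^{-2}\,\langle w,w\rangle_{F}\,.
\ee
Here $|w|\in\D(F)$ with $F(|w|)\le F(w)$, since $t\mapsto|t|$ is a normal contraction. Given $\epsilon>0$, quasi continuity supplies an open set $\omega$ with $\text{cap}_{F}(\omega)<\epsilon$ outside which $\t w$ is continuous; then $\{|\t w|\ge\lambda\}$ is contained in an open set of the form $V\cup\omega$, on which $(\lambda-\epsilon)^{-1}|w|$ augmented by a near-optimal competitor for $\text{cap}_{F}(\omega)$ is $\ge 1$ $m$-a.e. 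Monotonicity of $\text{cap}_{F}$ and the defining infimum for $\text{cap}_{F}(V\cup\omega)$ give the bound after $\epsilon\downarrow 0$. (If preferred, the competitors may be taken to be equilibrium potentials — metric projections of the origin onto the closed convex sets $\{u\in\D(F):u\ge 1\ m\text{-a.e. on }\mathcal O\}$ — which by the Markovian property \eqref{NC} take values in $[0,1]$ and have squared $\langle\cdot,\cdot\rangle_{F}$-norm equal to $\text{cap}_{F}(\mathcal O)$.)

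\medskip\noindent\textbf{Density of $\mathcal Q$.} When $F$ is regular — $\D(F)\cap C_{0}(X)$ being dense in $\H(F)$, the case relevant to the applications of this paper — density of $\mathcal Q$ is immediate, continuous functions being quasi continuous. In general it is the delicate point, and I would establish it through potentials: for $g\in L^{2}(X)\cap L^{\infty}(X)$ with $g\ge 0$, the $1$-potential $U_{1}g\in\D(F)$, defined by $\langle U_{1}g,v\rangle_{F}=\langle g,v\rangle_{L^{2}(X)}$ for all $v\in\D(F)$, has a quasi continuous modification by a regularization argument exploiting the Markovian property, and the span of these potentials is dense in $\H(F)$ because $U_{1}$ carries the dense subspace $L^{2}(X)\cap L^{\infty}(X)$ into $\H(F)$ with dense image. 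This potential-theoretic step — the only place where the closed \emph{and} Markovian structure is used rather than mere Hilbert-space geometry — is the one I expect to be the main obstacle.

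\medskip\noindent\textbf{Completeness and conclusion.} Given $u\in\D(F)$, choose $u_{n}\in\mathcal Q$ with quasi continuous modifications $\t u_{n}$ and $\|u_{n+1}-u_{n}\|_{F}\le 4^{-n}$. By the Chebyshev bound $\text{cap}_{F}(\{|\t u_{n+1}-\t u_{n}|\ge 2^{-n}\})\le 4^{-n}$, a summable series; since $\text{cap}_{F}$ is countably subadditive and cap-null sets are $m$-null, there is a set $\N$ with $\text{cap}_{F}(\N)=0$ outside which $(\t u_{n})$ converges geometrically — hence uniformly on the complement of a suitable small-capacity open set — to a limit $\t u$, which is therefore quasi continuous; and $u_{n}\to u$ in $L^{2}(X)$ forces $\t u=u$ $m$-a.e. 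Thus $u\in\mathcal Q$, so $\D(F)=\mathcal Q$. For uniqueness, if $\t u,\t u'$ are quasi continuous with $\t u=\t u'=u$ $m$-a.e., then $\t u-\t u'$ is a quasi continuous modification of $0\in\D(F)$, so the Chebyshev bound gives $\text{cap}_{F}(\{|\t u-\t u'|\ge 1/k\})=0$ for every integer $k\ge 1$, and countable subadditivity yields $\text{cap}_{F}(\{\t u\neq\t u'\})=0$.
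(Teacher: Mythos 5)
The paper itself contains no proof of this statement: it is imported verbatim from \cite{[Fu]}, Theorem 3.1.3, and your proposal reconstructs essentially the argument given there — the Chebyshev-type estimate $\text{cap}_{F}(\{|\tilde w|\ge\lambda\})\le\lambda^{-2}\langle w,w\rangle_{F}$ for quasi continuous representatives, density in $\H(F)$ of the class $\mathcal Q$ of elements admitting quasi continuous versions, and a Borel--Cantelli/uniform-convergence step, with uniqueness read off from the Chebyshev bound applied to $w=0$. Your steps 1 and 3 and the uniqueness argument are correct as sketched: you legitimately use Remark \ref{coincide} to get $F(|w|)\le F(w)$, the infimum definition of capacity to pass from the bad set to an open superset, and the Choquet property (countable subadditivity, from \cite{[Fu]}, Theorem 3.1.1) in the completeness step. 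In the regular case your density step is also correct and immediate, so there your proof is complete and coincides with the cited one.

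The genuine gap is your treatment of the density step for non-regular forms, which you yourself flag as the main obstacle. The cited theorem is proved under the standing assumption that $F$ is regular, and that hypothesis is not removable: the claim that the potentials $U_{1}g$ of an arbitrary densely defined Dirichlet form admit quasi continuous modifications is false, so no regularization argument can deliver density of $\mathcal Q$ in general. For a counterexample, take $X=[0,1]$ with Lebesgue measure, let $Tx:=x+\tfrac12 \pmod 1$, and set $\D(F):=\{u\in L^{2}(0,1)\,:\,u\circ T\in H^{1}(0,1)\}$ with $F(u,v):=\langle (u\circ T)',(v\circ T)'\rangle_{L^{2}(0,1)}$; since $u\mapsto u\circ T$ is unitary on $L^{2}(0,1)$ and commutes with $u\mapsto u_{\#}$, this is a closed, densely defined Markovian form, and $\langle u,u\rangle_{F}=\|u\circ T\|^{2}_{H^{1}(0,1)}$. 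If $v\in H^{1}(0,1)$ satisfies $v\ge 1$ $m$-a.e. on a set of positive measure, then either $v\ge\tfrac12$ everywhere (so $\|v\|^{2}_{L^{2}}\ge\tfrac14$) or $v$ passes from below $\tfrac12$ to $1$ (so $\|v'\|^{2}_{L^{2}}\ge\tfrac14$ by Cauchy--Schwarz); since $T$ preserves $m$, this gives $\text{cap}_{F}(\mathcal O)\ge\tfrac14$ for every nonempty open $\mathcal O$, hence quasi continuity here forces continuity on all of $[0,1]$. Yet $u:=v\circ T^{-1}$ with $v(x)=x$ belongs to $\D(F)$ and has no continuous $m$-version, and generic potentials $U_{1}g$ suffer the same defect. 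So the statement must be read, as in the source, for regular Dirichlet forms (which covers every use the paper makes of quasi continuous versions); with that hypothesis added your proof stands, without it no proof exists.
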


 \subsection{The Beurling-Deny Decomposition} 
 A densely defined Dirichlet form 
$F$ is said to be {\it regular on $X$} if $\D(F)\cap C_c(X)$ is both $\H(F)$-dense in $\D(F)$ and  $L^{\infty}(X)$-dense in $C_c(X)$ (here $C_c(X)$ denotes the set of continuos function with compact support). For regular Dirichlet forms Beurling-Deny decomposition holds (see \cite{[BD]}, \cite{[Fu]}, Theorem 2.2.1, \cite{[CF]}, Theorem 4.3.3):
\begin{theorem} Any regular Dirichlet form $F$ on $L^2(X)$ 
admits, for any $u,v\in \D(F)$ the decomposition
\be
F(u,v)=F^{(c)}(u,v)+F^{(j)}(u,v)+F^{(k)}(u,v)\,.
\ee
Here $F^{(c)}$ is a Markovian form which satisfies the strongly local property, i.e. $F^{(c)}(u,v)=0$ whenever $u$ has a compact support and $v$ is constant on a neighborhood of the support of $u$; 
\be
F^{(j)}(u,v)=\int_{X\times X}(\t u(x)-\t u(y))(\t v(x)-\t v(y))\,dJ(x,y)\,,
\ee
\be
F^{(k)}(u,v)=\int_X\t u(x)\t v(x)\,d\kappa(x)\,,
\ee
where $\t u$ and $\t v$ denote quasi continuos versions of $u$ and $v$, $J$ is a symmetric positive Radon measure on $X\times X$ off the diagonal and $\kappa$ is a positive Radon measure on $X$.
\end{theorem}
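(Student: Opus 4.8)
\emph{Proof sketch.} The plan is to bootstrap from the decomposition of the bounded Yosida forms provided by Theorem \ref{Res-Form}. Writing $A$ for the generator of $F$ (which is Markovian by Theorem \ref{dirichlet}), for each $\lambda$ the bounded Dirichlet form $F^\lambda_A$ splits as $\breve F^\lambda_A+\check F^\lambda_A$, a jump-type part carried by a symmetric Radon measure $\sigma_A^\lambda$ on $X\times X$ plus a killing-type part carried by a density $0\le s_A^\lambda\le 1$; the idea is to let $\lambda\uparrow\infty$ while separating the mass that concentrates on the diagonal of $X\times X$ (which becomes the strongly local part $F^{(c)}$) from the mass that stays off the diagonal (which becomes $F^{(j)}$ and $F^{(k)}$). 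Throughout, every $u\in\D(F)$ is tacitly replaced by its quasi continuous modification $\t u$, which is legitimate by the quasi continuity theorem recalled above; and since $F$ is regular it suffices to build the three forms on the dense subspace $\D(F)\cap C_c(X)$ and then extend them to $\H(F)$ by continuity, invoking Theorem \ref{ancona}.

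First I would produce the jump measure. A standard consequence of the Markov property \eqref{NC} (the first Beurling--Deny criterion, applied to $u-v$) is that $F(u,v)\le 0$ whenever $u,v\in\D(F)\cap C_c(X)$ are nonnegative with disjoint supports; hence $(u,v)\mapsto -F(u,v)$ is a positive bilinear functional on such pairs, and approximating an arbitrary $w\in C_c(X\times X)$ vanishing near the diagonal by finite sums $\sum_i u_i\otimes v_i$ with $\mathrm{supp}\,u_i\cap\mathrm{supp}\,v_i=\emptyset$ produces, by Riesz--Markov, a unique positive Radon measure $J$ on $X\times X$ off the diagonal that represents it. One then sets $F^{(j)}(u,v):=\int_{X\times X}(\t u(x)-\t u(y))(\t v(x)-\t v(y))\,dJ(x,y)$ and checks, expanding the integrand, that $F^{(j)}$ agrees with $F$ on disjoint-support pairs and is a Markovian form; the $J$-integrability of $(\t u(x)-\t u(y))^2$ needed for this is secured by the uniform Yosida bound $\breve F^\lambda_A(u)\le F^\lambda_A(u)\le F(u)<\infty$ of Theorem \ref{Res-Form}.

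Next the killing measure. Since $F$ is regular, every $u\in\D(F)\cap C_c(X)$ admits some $w\in\D(F)\cap C_c(X)$ with $w\equiv 1$ on a neighbourhood of $\mathrm{supp}\,u$; put $\mu(u):=F(u,w)-F^{(j)}(u,w)$. This is independent of the choice of $w$, because two admissible cutoffs differ by a function supported away from $\mathrm{supp}\,u$, on which $F$ and $F^{(j)}$ coincide; positivity of $u\mapsto\mu(u)$ on nonnegative $u$ is again a Markov-property argument, so $\mu$ is represented by a positive Radon measure $\kappa$ on $X$, and one sets $F^{(k)}(u,v):=\int_X\t u\,\t v\,d\kappa$.

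Finally define $F^{(c)}:=F-F^{(j)}-F^{(k)}$ on $\D(F)\cap C_c(X)$. Positivity and the Markov property of $F^{(c)}$ are read off from the Yosida picture: the off-diagonal part of $\breve F^\lambda_A$ converges to $F^{(j)}$ and $\check F^\lambda_A$ converges to $F^{(k)}$, so the near-diagonal remainder of $\breve F^\lambda_A$, Markovian for every $\lambda$, converges to $F^{(c)}$, the Markov property being stable under this limit; strong locality is then verified directly from the defining formulas, since for $u$ of compact support and $v$ constant on a neighbourhood of $\mathrm{supp}\,u$ the only interaction between $u$ and $v$ that survives is the one already recorded by $F^{(j)}$ and $F^{(k)}$, which forces $F^{(c)}(u,v)=0$. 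Extending all three forms from $\D(F)\cap C_c(X)$ to $\D(F)$ is then routine. I expect the main obstacle to be exactly this limiting procedure: one must show that $\sigma_A^\lambda$ restricted to $\{(x,y):d(x,y)\ge\epsilon\}$ and $s_A^\lambda\,dm$ converge in the appropriate weak sense and, most delicately, that the near-diagonal remainder does not leak mass but converges to a genuine strongly local Markovian form rather than to zero. The capacity and quasi continuity apparatus recalled above is what makes all the integrals against $J$ and $\kappa$ meaningful (up to sets of zero capacity) and reconciles the form identities with their pointwise counterparts.
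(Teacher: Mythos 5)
You should first note that the paper does not prove this statement at all: it is recalled verbatim from the literature, with the proof delegated to \cite{[BD]}, \cite{[Fu]} (Theorem 2.2.1) and \cite{[CF]} (Theorem 4.3.3). So the only meaningful comparison is with the standard textbook proof, and your sketch does follow its general strategy (split the Yosida approximations $F^\lambda_A=\breve F^\lambda_A+\check F^\lambda_A$ of Theorem \ref{Res-Form} into an off-diagonal part, a killing part and a near-diagonal remainder, and pass to the limit $\lambda\uparrow\infty$). As a proof, however, what you wrote is a plan with the load-bearing steps left open, and you say so yourself.

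Concretely: (i) your construction of $J$ via Riesz--Markov needs the extended functional to be positive on \emph{all} nonnegative $w\in C_c(X\times X)$ supported off the diagonal, not merely on products $u\otimes v$ of nonnegative functions with disjoint supports; Stone--Weierstrass gives density of such sums but does not transfer positivity, which is why the standard argument instead exhibits $J$ as a vague limit of the manifestly positive measures $\tfrac{\lambda}{2}\sigma^\lambda_A$ restricted off the diagonal --- and the existence and subsequence-independence of that vague limit is precisely what you postulate rather than prove. (ii) The same applies to $\lambda s^\lambda_A\,dm\to\kappa$ and to the identification of $\lim_\lambda\check F^\lambda_A$; your alternative definition $\mu(u):=F(u,w)-F^{(j)}(u,w)$ is a nice device, but its positivity again rests on the unproven convergence of the off-diagonal parts. (iii) The positivity and Markovianity of the remainder $F^{(c)}:=F-F^{(j)}-F^{(k)}$, and the claim that the near-diagonal remainder of $\breve F^\lambda_A$ ``converges to $F^{(c)}$'', is exactly the mass-non-leakage problem you flag and leave open: the near-diagonal truncations are neither monotone nor convergent for a fixed cutoff, so Markovianity is not simply ``stable under this limit''. (iv) Finally, the extension from $\D(F)\cap C_c(X)$ to all of $\D(F)$ (quasi-continuous versions, $J$-integrability of the increments, the truncations $u_n=((-n)\vee u)\wedge n$ as in Remark \ref{DM0}) is not ``routine''; it occupies a substantial portion of the proof in \cite{[Fu]} and \cite{[CF]}. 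In short, the outline is the right one, but none of the genuinely hard steps is carried out; for the purposes of this paper the correct move is the one the paper makes, namely to cite \cite{[Fu]}, Theorem 2.2.1, or \cite{[CF]}, Theorem 4.3.3.
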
 
\vskip 5pt
\noindent
In the case $X$ has a differential structure, i.e. $X=\Omega\subseteq \RE^n$ more can be said about $F^{(c)}$ (see \cite{[Silv]}, Theorem 16.1, \cite{[FOT]}, Theorem  3.2.3):
\begin{theorem} Let $F$ a regular Dirichlet form on $L^{2}(\Omega)$. 
Then for any $u,v \in C_c^\infty(\Omega)\cap\D(F)$ one has
\be
F^{(c)}(u,v)=\sum_{1\le i,j\le n}\int_\Omega \frac{\partial u}{\partial x_{i}}\,\frac{\partial v}{\partial x_{j}}\,
d\nu_{ij}\,,
\ee
where the $\nu_{ij}$'s are positive Radon measures on $\Omega$ such that, for any $\xi\in\RE^n$ and for any compact $K\subset\Omega$,
\be
\sum_{1\le i,j\le n}\nu_{ij}(K)\xi_i\xi_j\ge 0\,,\qquad
\nu_{ij}(K)=\nu_{ji}(K)\,.
\ee
\end{theorem}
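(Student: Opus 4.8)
The plan is to reduce the statement to the classical result, Theorem 16.1 in \cite{[Silv]} (equivalently Theorem 3.2.3 in \cite{[FOT]}), by showing that the strongly local part $F^{(c)}$ of a regular Dirichlet form $F$ on $L^2(\Omega)$ acts, on the test space $C_c^\infty(\Omega)\cap\D(F)$, as a ``symmetric second order differential form without zeroth order term.'' First I would recall that, by the preceding theorem (the Beurling–Deny decomposition), $F^{(c)}$ is a Markovian form which is strongly local in the sense that $F^{(c)}(u,v)=0$ whenever $u$ has compact support and $v$ is constant in a neighborhood of $\mathrm{supp}(u)$. The key structural input is a Leibniz-type (derivation) identity for strongly local forms: for $u,v,w\in C_c^\infty(\Omega)\cap\D(F)$ (which is an algebra under multiplication since the unit contraction, and more generally all normal contractions, preserve $\D(F)$ by Remark \ref{coincide} together with boundedness of the product of bounded elements of $\D(F)$), one has
\be
F^{(c)}(uv,w)=F^{(c)}(u,vw)+F^{(c)}(v,uw)-F^{(c)}(uv,w)\cdot 0\,,
\ee
more precisely the energy-measure $\mu_{\langle u\rangle}^{(c)}$ associated with $F^{(c)}$ satisfies $d\mu_{\langle uv\rangle}^{(c)}=\tilde u^2\,d\mu_{\langle v\rangle}^{(c)}+2\tilde u\tilde v\,d\mu_{\langle u,v\rangle}^{(c)}+\tilde v^2\,d\mu_{\langle u\rangle}^{(c)}$; this is the standard consequence of strong locality (see \cite{[Fu]}, \cite{[FOT]}).

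Second, I would apply the derivation property to the coordinate functions. Fix a relatively compact open $\Omega'\Subset\Omega$ and choose $\chi\in C_c^\infty(\Omega)$ with $\chi\equiv 1$ on $\Omega'$; the truncated coordinate functions $x_i\chi$ lie in $\D(F)$ because $\D(F)\supseteq\D(F_0)\supseteq C_c^\infty(\Omega)$ — here I use that $F$ is a Dirichlet form sitting between the Dirichlet and Neumann forms, so in particular $C_c^\infty(\Omega)\subset\D(F)$ — wait, that containment is part of the setup in Section 4; in the abstract statement as given it is exactly the hypothesis $u,v\in C_c^\infty(\Omega)\cap\D(F)$ that we exploit. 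Define $\nu_{ij}$ on each $\Omega'$ by polarization from $\mu_{\langle x_i\chi\rangle}^{(c)}$, i.e. $\nu_{ij}:=\tfrac12\big(\mu^{(c)}_{\langle x_i\chi,x_j\chi\rangle}\big)$ restricted to $\Omega'$; by strong locality this does not depend on the cutoff $\chi$, so the $\nu_{ij}$ patch together to positive Radon measures on all of $\Omega$. Symmetry $\nu_{ij}(K)=\nu_{ji}(K)$ is immediate from the symmetry of $F^{(c)}$, and the nonnegativity $\sum\nu_{ij}(K)\xi_i\xi_j\ge 0$ follows because $\mu^{(c)}_{\langle\sum\xi_i x_i\chi\rangle}\ge 0$ as an energy measure.

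Third, I would verify the formula $F^{(c)}(u,v)=\sum_{i,j}\int_\Omega \partial_i u\,\partial_j v\,d\nu_{ij}$ for $u,v\in C_c^\infty(\Omega)\cap\D(F)$. The standard route is: by the chain rule for energy measures applied to $\Phi(x_1\chi,\dots,x_n\chi)$ with $\Phi\in C^\infty$, one gets $d\mu^{(c)}_{\langle u,v\rangle}=\sum_{i,j}\partial_i u\,\partial_j v\,d\nu_{ij}$ first for polynomial $u,v$, then by a density/approximation argument (uniform convergence of derivatives on compacts, continuity of $\Phi(u)$ in the Dirichlet-space norm by Theorem \ref{ancona}) for all $u,v\in C_c^\infty(\Omega)$; finally integrate, using $F^{(c)}(u,v)=\tfrac12\mu^{(c)}_{\langle u,v\rangle}(\Omega)=\tfrac12\big(\mu^{(c)}_{\langle u\rangle}(\Omega)$-polarization$\big)$ together with the fact that $u$ has compact support so only finitely many of the measures contribute on $\mathrm{supp}(u)$.

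The main obstacle I anticipate is establishing the chain rule / derivation identity for the energy measure of $F^{(c)}$ rigorously in this setting: one needs $C_c^\infty(\Omega)\cap\D(F)$ to be rich enough (an algebra, closed under smooth composition at the level needed) and one needs the locality to be genuinely \emph{strong} rather than merely local, since only strong locality yields the Leibniz rule with no jump corrections. In the present framework this is guaranteed by the Beurling–Deny theorem quoted just above, which asserts precisely that $F^{(c)}$ is strongly local; the remaining work — the chain rule, the independence of the $\nu_{ij}$ from cutoffs, and the passage from energy measures to the bilinear form — is the content of \cite{[Silv]}, Theorem 16.1 and \cite{[FOT]}, Theorem 3.2.3, so in the paper this statement is cited rather than reproved, and the ``proof'' is the invocation of those references. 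Accordingly my proposal is essentially to record the reduction and point to \cite{[Silv]}, \cite{[FOT]} for the analytic core.
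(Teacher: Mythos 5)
Your conclusion is exactly what the paper does: this theorem is stated as a recalled classical result, with no proof given beyond the citation of \cite{[Silv]}, Theorem 16.1, and \cite{[FOT]}, Theorem 3.2.3, so your proposal (a sketch of the standard energy-measure argument followed by deferral to those references) matches the paper's approach. The garbled Leibniz display (the spurious term $-F^{(c)}(uv,w)\cdot 0$) should be deleted, but since the analytic core is in any case delegated to the cited references this does not affect the correctness of your answer.
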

\subsection{Logarithmic Sobolev inequalities, ultracontractivity and heat kernel estimates}
Let $F_{A}$ be a densely defined Dirichlet Form on $L^{2}(X)$. Let us denote by $\kappa_{A}(t,\cdot,\cdot)$ the integral kernel of  $e^{tA}$. Here we briefly recall the connections between certain functional inequalities 
involving $F_{A}$, bondedness of $e^{tA}$ from $L^{2}(X)$ to $L^{\infty}(X)\cap L^{2}(X)$ and estimates on $\kappa_{A}$ (see e.g. \cite{[Dv]}, 
\cite{[Gr]}, \cite{[Wang]} for proofs, more details and further results) . 
\par
We say that $F_{A}$ satisfies a {\it logarithmic Sobolev inequality} (with function $\beta$) if there exist a continuous monotone decreasing function $\beta$ such that for all $\epsilon>0$ and for all positive $u\in\D(F_{A})\cap L^{1}(X)\cap L^{\infty}(X)$ there holds
\begin{equation}
\int_{X}\!\!\! u^{2}\log u\,dm\le \epsilon F_{A}(u)+\beta(\epsilon)\|u\|^{2}
_{L^2(X)}
+\|u\|^{2}_{L^2(X)}\log\|u\|_{L^2(X)}.
\end{equation}
If $F_{A}$ satisfies a logarithmic Sobolev inequality with function $\beta$ such that $m(t):=\frac1t\smallint_{0}^{t}\beta(\epsilon)\,d\epsilon$ is finite for any $t>0$ then (see \cite{[Dv]}, Corollary 2.2.8) $e^{tA}$ is {\it ultracontractive} (with function $m$), i.e. 
\be
\forall t>0\,,\ \forall u\in L^{2}(X)\,,\quad \|e^{tA}u\|_{L^{\infty}(X)}\le e^{m(t)}\|u\|_{L^2(X)}\,.
\ee
Conversely (see \cite{[Dv]}, Theorem 2.2.3) if $e^{tA}$ is ultracontractive with a continuous monotone decreasing 
function $m$ then $F_{A}$ satisfies a logarithmic Sobolev inequality with function $\beta=m$.
Moreover (see \cite{[Dv]}, Lemma  2.1.2) ultracontractivity with function $m$ implies the heat kernel estimate
\be
\forall t>0\,,\ \text{for $m$-a.e. $x$ and $y$,} \quad \kappa_{A}(t,x,y)\le e^{2m(t/2)}\,.
\ee
Conversely the estimate $\kappa_{A}(t,x,y)\le e^{m(t)}$ implies ultracontractivity (and hence a logarithmic Sobolev inequality) with function 
$m/2$. \par 
If $m(X)<\infty$ and $e^{tA}$ is ultracontractive then (see \cite{[Dv]}, Theorem 2.1.4)
\begin{equation}\label{trace} 
\forall t>0\,,\quad \text{trace}(e^{tA})<+\infty
\end{equation} 
and  (see \cite{[Dv]}, Theorem 2.1.5) $e^{tA}$ is compact on $L^{p}(X)$ for any 
$p\in [1,\infty]$. Moreover any eigenfunction $v_{n}$ of $A$ is in $L^{\infty}(X)$ and
\be
\kappa_{A}(t,x,y)=\sum_{n=1}^{\infty}e^{\lambda_{n}t}v_{n}(x)v_{n}(y)\,,
\ee
where $\lambda_{n}$ is the eigenvalue corresponding to $v_{n}$ and the series converges uniformly on $[t_{\circ},\infty)\times X\times X$ for any $t_{\circ}>0$.
\subsection{Dirichlet Forms and Hunt Processes} Here we briefly recall the one-to-one 
correspondence between regular Dirichlet forms and Markov processes. We refer to 
\cite{[Fu70]}, \cite{[Fu]}, \cite{[FOT]} and \cite{[CF]} for more details and proofs.\par
 Let the Dirichlet form $F_{A}$ be regular on $X$  and let $e^{tA}$, $t\ge 0$, be the semi-group on $L^{2}(X)$ generated by the 
corresponding Markovian operator $A$. Then there exists an (unique in law) $X_{\partial}$-valued, {\it Hunt Markov process} $\Z_{A}=(\{\Z_{t}\}_{t\ge 0}, \{{\mathsf P}_{\! x}\}_{x\in X})$ such that 
\begin{equation}\label{Hunt}
\forall u\in L^{2}(X)\,,\quad e^{tA}u(x)={\mathsf E}_{x}(u(\Z_{t}))\,,\ \text{for q.e. $x\in X$},
\end{equation}
where ${\mathsf E}_{x}$ denotes expectation with respect to the probability measure ${\mathsf P}_{\! x}$. 
Here $X_{\partial}:=X\cup\{\partial\}$ ($\partial$ is the 
``cemetery''), $\Z_{t}(\omega)=\partial$ for every $t\ge
\zeta(\omega)$, where the lifetime $\zeta$ is defined by 
$\zeta(\omega):=\inf\{t\ge 0: \Z_{t}(\omega)=\partial\}$, and $u(\partial):=0$. \par
The trajectories of  $\Z_{A}$ are almost surely right-continuos with
left limits.  
If $F_A$ has the {\it local property}, i.e. if $F_{A}(u,v)=0$ for any
couple $u,v\in\D(F_{A})$ with compact disjoint supports, 
then  $\Z_{A}$ is a {\it Diffusion}, i.e. its trajectories are almost surely continuous. Notice that 
by the Beurling-Deny decomposition $\Z_{A}$ is a diffusion if and only if $F_{A}^{(j)}=0$.
\subsection{Dirichlet Forms and paths behavior} 
Some analytical
  properties of the Dirichlet form $F_A$ and the corresponding
  resolvents $R^A_\lambda$ and semigroups $e^{tA}$ 
correlate with paths
  behavior of $\Z_A$. Here we recall the main results following 
\cite{[FOT]} and \cite{[CF]} to which we refer for more 
details and proofs.
For simplicity from now on in this subsection 
we suppose that $m(X)<\infty$.\par 
The Markovian operator $A$  is said to be {\it conservative} if
$\lambda R^{A}_{\lambda}1=1$ $m$-a.e. for all
$\lambda>0$.  By \eqref{Res-SG} this is
equivalent to $e^{tA}1=1$ $m$-a.e. for all $t>0$. 
\par
By \eqref{Hunt} and \cite{[FOT]}, exercise 4.5.1, 
one has
\begin{equation}\label{conserviff}
{A}\ \text{is conservative}\ \iff\ 
\forall x\in X\,,\quad {\mathsf P}_{\! x}(\zeta=+\infty)=1\,.
\end{equation}
\par
Denoting by $\t R_\lambda^A$, $\lambda>0$, the extension 
to $L^1(X)$ of the resolvent and posing, 
for $m$-a.e. $x\in X$, and for all positive $u\in L^1(X)$,  
\be
\t R^Au(x):=\lim_{\lambda\downarrow 0}\t R^A_{\lambda}u(x)\,,
\ee
$A$ is said to be {\it transient} if 
\be
\forall u\in L^1(X)\,,\ u\ge 0\ m\text{-a.e.}\,,\quad 
m(x\in X\,:\,\t R^Au(x)=+\infty)=0\,.
\ee
and is said to be {\it recurrent} if 
\be
\forall u\in L^1(X)\,,\ u\ge 0\ m\text{-a.e.}\,,\quad 
m(x\in X\,:\,0<\t R^Au(x)<+\infty)=0\,.
\ee
By Theorem \ref{Res-Form}, if $A$ is conservative then $1\in\D(F_A)$ and $F_A(1)=0$. By \cite{[FOT]}, Theorem 1.6.3, this implies that $A$ is recurrent. Since, by \cite{[CF]}, Theorem 2.1.10, recurrence implies conservativeness, in conclusion we get (here the hypothesis $m(X)<\infty$ is essential) 
\begin{equation}\label{conserviff2}
A\ \text{conservative}\iff A\ \text{recurrent}\iff 
\text{$1\in\D(F_A)$ and $F_A(1)=0$.}
\end{equation}
 By \cite{[CF]}, Theorems 2.1.5 and 3.5.2, if $A$ is 
transient then 
\begin{equation}
{\mathsf P}_{\! x}(\zeta=+\infty\,,\ \lim_{t\uparrow\infty}\Z_{t}=\partial)=
{\mathsf P}_{\! x}(\zeta=+\infty)\,,\quad \text{for q.e. $x\in X$}\,.
\end{equation}
A Markov operator $A$ is said to be {\it irreducible} if 
\be
\forall u\in L^2(X)\,,\forall t>0\,,\ \, e^{tA}(1_{\mathcal B}u)=1_{\mathcal B}e^{tA}u
\ \Longrightarrow\ m(\mathcal B)m(\mathcal B^{c})=0\,.
\ee
If ${A}$ is irreducible and if the bottom of its spectrum is an eigenvalue, then such an eigenvalue is simple and the corresponding eigenfunction is strictly positive $m$-a.e.  (see e.g. \cite{[Dv]}, Proposition 1.4.3).\par
By \cite{[FOT]}, Lemma 1.6.4, 
\begin{equation}\label{irreduc1}
 {A}\ \text{irreducible}\quad\Longrightarrow\quad {A}\ \text{either recurrent or transient}.
 \end{equation}
 By \cite{[CF]}, Theorem 2.1.11, if $A$ is recurrent then 
 \begin{equation}\label{irreduc2}
 {A}\ \text{irreducible}\ \iff\ \text{$u$ is $m$-a.e. 
constant whenever $F_{A}(u)=0$  
 .}
 \end{equation}
 By \cite{[CF]}, Theorem 3.5.6, if ${A}$ is irreducible then
 \be
 {\mathsf P}_{\! x}(\sigma_{\mathcal B}<+\infty)>0\,,\quad\text{for q.e. $x\in X$}
 \ee
 and if ${A}$ is irreducible and recurrent then
 \be
 {\mathsf P}_{\! x}(\sigma_{\mathcal B}\circ\theta_{n}<+\infty \ \text{for every $n\ge 0$})=1\,,\quad\text{for q.e. $x\in X$}\,.
 \ee
 Here $\mathcal B$ is any Borel set with strictly positive $F_{A}$-capacity, $\sigma_{\mathcal B}$ denotes the first hitting time of $\mathcal B$, i.e. 
 $\sigma_{\mathcal B}:=\inf\{t>0 : \Z_{t}\in {\mathcal B}\}$,
 and $\theta_{n}$ is the time shift $\Z_{t}\circ\theta_{s}=\Z_{t+s}$.

\section{Markovian extensions - a toy example}
Let $S=A_\min$ be the negative, symmetric linear operator given by the minimal realization of 
$\frac{d^2}{dx^2}$ on the finite interval $(0,\ell)$:
\be
A_\min:C^\infty_c(0,\ell)\subseteq L^2(0,\ell)\to
L^2(0,\ell)\,,\qquad A_\min u=\frac{d^2u}{dx^2}\,.
\ee
Its Friedrichs' extension is given by  
\be
A_D:H^2(0,\ell)\cap H^1_0(0,\ell)\subseteq L^2(0,\ell)\to
L^2(0,\ell)\,,\quad A_D u=\frac{d^2u}{dx^2}\,,
\ee
with corresponding bilinear  form 
\be
F_D:H^1_0(0,\ell)\times H^1_0(0,\ell)\subseteq L^2(0,\ell)\times L^2(0,\ell)\to \RE\,,\quad
\ee
\be
F_D(u,v)=\left\langle \frac{du}{dx},\frac{dv}{dx}\right\rangle\equiv \int_0^\ell \frac{du}{dx}\frac{dv}{dx}\,dx\,.
\ee
Here the index $D$ stands for Dirichlet boundary conditions. $H^n(0,\ell)$ denotes the usual Sobolev-Hilbert space of square integrable functions with square integrable distributional derivatives 
up to the $n$-th order and 
\be
H^1_0(0,\ell):=\left\{u\in H^1(0,\ell)\,:\, \gamma_0 u=0\right\}\,,
\ee
\be
\gamma_0:H^1(0,\ell)\to\RE^2\,,\qquad \gamma_0 u:=\left(u(0),u(\ell)\right)\,,
\ee
By Sobolev embedding theorems one has 
$H^n(0,\ell)\subset C^{n-1}[0,\ell]$. \par
The closure of $A_\min$ is given by 
\be
A_\min^{**}:H^2_0(0,\ell)\subseteq L^2(0,\ell)\to
L^2(0,\ell)\,,\qquad A_{\min}^{**}u=\frac{d^2u}{dx^2}\,,
\ee 
where
\be
H^2_0(0,\ell):=\left\{u\in H^2(0,\ell)\,:\, \gamma_0u=\gamma_1u=0\right\}\,,
\ee
\be
\gamma_1:H^2(0,\ell)\to\RE^2\,,\qquad \gamma_1 u:=\left(\frac{du}{dx}(0),-\frac{du}{dx}(\ell)\right)\,,
\ee
and in order to find all self-adjoint extensions of $A_\min$ 
together with
the corresponding bilinear  forms
we can apply 
Theorems \ref{estensioni} and \ref{qf} with 
\be
A_0=A_D\,,\quad F_0=F_D\,,\quad \fh=\RE^2\,,\quad 
\tau=\gamma_1|H^2(0,\ell)\cap H^1_0(0,\ell)\,.
\ee
By  
\begin{align}
&R^{D}_{0}u(x)\equiv\left(-A_D\right)^{-1}u(x)=
\frac{\ell-x}{\ell}\int_0^x y\,u(y)\,dy+
\frac{x}{\ell}\int_x^\ell(\ell-y)\,u(y)\,dy\,,
\end{align}
and, for $\lambda>0$,
\begin{align}
&R^{D}_{\lambda}u(x)\equiv\left(-A_{D}+\lambda\right)^{-1}u(x)\\
=&
\frac{\sinh(\sqrt \lambda\,(\ell-x))}{\sqrt {\lambda}\sinh(\sqrt {\lambda}\,\ell)}
\int_0^x{\sinh(\sqrt {\lambda}\,y)}\,u(y)\,dy
+\frac{\sinh(\sqrt {\lambda}\,x)}{\sqrt {\lambda}\sinh(\sqrt {\lambda}\,\ell)}
\int_x^\ell{\sinh(\sqrt {\lambda}\,(\ell-y))}\,u(y)\,dy\,,
\end{align}
one obtains (here $\xi\equiv(\xi_1,\xi_2)$)
\begin{equation}
G_0:\RE^2\to L^2(0,\ell)\,,\quad G_0\xi(x)=
\frac{x}{\ell}\,(\xi_2-\xi_1)+\xi_1\,,
\end{equation}
and, for $\lambda>0$,
\begin{equation}
G_\lambda:\RE^2\to L^2(0,\ell)\,,\quad
G_\lambda\xi(x)=\frac{\sinh(\sqrt
    {\lambda}\,(\ell-x))}{\sinh(\sqrt {\lambda}\,\ell)}\ \xi_1+
\frac{\sinh(\sqrt{\lambda}\,x)}{\sinh(\sqrt {\lambda}\,\ell)}\ \xi_2 \,.
\end{equation}
Since $G_0\xi\notin H^1_0(0,\ell)$ for any $\xi\not=0$, hypothesis \eqref{hypp} holds. Thus, by Theorem \ref{qf}, the bilinear  forms corresponding to the self-adjoint extensions of $A_\min$ are of the kind
\be
F_{(\Pi,\Theta)}:\D(F_{(\Pi,\Theta)})\times\D(F_{(\Pi,\Theta)})\subseteq
L^2(0,\ell)\times L^2(0,\ell)\to\RE\,,
\ee
\be
F_{(\Pi,\Theta)}(u,v)=\left\langle \frac{du_0}{dx},\frac{dv_0}{dx}\right\rangle +f_{\Pi,\Theta}(\xi_u,\xi_v,)\,,
\ee
\be
\D(F_{(\Pi,\Theta)})=\{u\in H^1(0,\ell)\,:\,u=u_0+G_0\xi_u\,,\ u_0\in H_0^1(0,\ell)\,,\ \xi_u\in\R(\Pi)\}\,,
\ee
where
\be
f_{\Pi,\Theta}:\R(\Pi)\times\R(\Pi)\to\RE\,,
\ee
is the bilinear  form on $\R(\Pi)$ corresponding to $\Theta$ and $(\Pi,\Theta)\in\E(\RE^2)$. By straightforward calculations, integrating by parts, by $\frac{d^2}{dx^2}(G_0\xi)=0$ and by 
\be
\gamma_0u=\gamma_0 (u_0+G_0\xi_u)=\gamma_0G_0\xi_u=\xi_u\,,
\ee 
one obtains
\begin{align}
\left\langle \frac{du_0}{dx},\frac{dv_0}{dx}\right\rangle=
\left\langle \frac{du}{dx},\frac{dv}{dx}\right\rangle+ P_{0}\xi_u\ccdot\xi_v\,,
\end{align}
where 
\be
P_{\lambda}:\RE^2\to\RE^2\,,\quad\lambda\ge 0\,,
\ee
is the Dirichlet-to-Neumann operator 
\be
P_{\lambda}\xi:=\gamma_1 u_\xi\,,\quad
\begin{cases}
\frac{d^{2}}{dx^{2}}\,u_\xi=\lambda u_{\xi}&\\
\gamma_0u_\xi=\xi\,,&
\end{cases}
\ee
i.e.
\be
P_{0}\equiv\frac{1}{\ell}
\left(\begin{matrix}-1&{\ \ }1\\
{\ \ }1&-1
\end{matrix}\right)\,,
\ee
\be
P_{\lambda}\equiv\frac{\sqrt\lambda}{\sinh\sqrt\lambda\,\ell}\left(\begin{matrix}-\cosh\sqrt\lambda\,\ell&{\ \ }1\\
{\ \ }1&-\cosh\sqrt\lambda\,\ell
\end{matrix}\right)\,,\quad\lambda>0\,.
\ee
Noticing that any $u\in H^1(0,\ell)$ can be decomposed as $u=u_0+G_0\gamma_0u$, where $u_0\in H^1_0(0,\ell)$ is defined by $u_0:=
u-G_0\gamma_0u$, in conclusion one has that, for any $(\Pi,\Theta)\in \E(\RE^2)$,
\be
F_{(\Pi,\Theta)}(u,v)=F_N(u,v) + f_{\Pi,B_\Theta}(\gamma_0u,\gamma_0v)\,,
\ee
\be
\D(F_{(\Pi,\Theta)})=\{u\in H^1(0,\ell)\,:\gamma_0u\in\R(\Pi)\}\,,
\ee
where
\be
F_N:H^1(0,\ell)\times H^1(0,\ell)\subseteq L^2(0,\ell)\times L^2(0,\ell)\to\RE\,,\quad
F_N(u,v)=\left\langle \frac{du}{dx},\frac{du}{dx}\right\rangle\,,
\ee
and 
\be
B_\Theta:\R(\Pi)\to\R(\Pi)\,,\quad B_\Theta:=\Pi P_{0}\Pi+\Theta\,.
\ee
Here the index N stands for Neumann, since the self-adjoint operator $A_N$ associated  with  $F_N$ corresponds to Neumann boundary conditions.
\par
Then, posing 
\be
\t A_{(\Pi,B)}:=
A_{(\Pi,\Theta_B)}\,,\qquad\Theta_B:=B-\Pi P_{0}\Pi\,,
\ee
by Theorem \ref{estensioni} one obtains (see  Example 5.1  in \cite{[P08]}) the following 
result, which is nothing but our version of results that be extracted from the theory of self-adjoint extension of symmetric Sturm-Liouville operators (see e.g. \cite{[Weid]}, Section 4):  
\begin{theorem} $\Ex (A_\min)=\{\t A_{(\Pi,B)}\,,\ (\Pi,B)\in \E(\RE^2)\}$, where 
\be
\t A_{(\Pi,B)}:\D(\t A_{(\Pi,B)})\subseteq L^2(0,\ell)\to
L^2(0,\ell)\,,\qquad \t A_{(\Pi,B)}u=\frac{d^{2}u}{dx^2}\,,
\ee
\be
\D(\t A_{(\Pi,B)})=\left\{u\in
H^2(0,\ell)\,:\,\gamma_0u\in\Ran(\Pi)\,,\quad \Pi\gamma_1 u=
B \gamma_0 u
\right\}\,.
\ee
\end{theorem}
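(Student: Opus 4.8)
The plan is to deduce the statement directly from Theorem \ref{estensioni} applied with the identifications $A_0=A_D$, $\fh=\RE^2$ and $\tau=\gamma_1|(H^2(0,\ell)\cap H^1_0(0,\ell))$, after checking that these satisfy the standing hypotheses of Section 2.2. Concretely: $-A_{\min}\ge\pi^2/\ell^2>0$ by the one-dimensional Poincar\'e inequality on $C^\infty_c(0,\ell)$; $A_{\min}$ is not essentially self-adjoint, its defect spaces being the (two-dimensional each) solution spaces of $u''=\pm i u$; $\tau$ is bounded from $\H_0=\D(A_D)$ with the graph norm (equivalent on $H^2\cap H^1_0$ to the $H^2$-norm, on which $\gamma_1$ is continuous by $H^2\subset C^1$) to $\RE^2$, and is surjective since any prescribed pair of endpoint derivatives is realized by a smooth function vanishing at $0$ and $\ell$; and $\K(\tau)=\{u\in H^2:\gamma_0u=\gamma_1u=0\}=H^2_0(0,\ell)=\D(A_{\min}^{**})$, which is dense in $L^2(0,\ell)$. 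Then Theorem \ref{estensioni}(i) gives $\Ex(A_{\min})=\{A_{(\Pi,\Theta)}:(\Pi,\Theta)\in\E(\RE^2)\}$ and parts (ii)--(iii) the operator and its domain.

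Next I would rewrite \eqref{D} in the concrete form claimed. Since $\fh=\RE^2$ is finite-dimensional, $\D(\Theta)=\R(\Pi)$, so $\D(A_{(\Pi,\Theta)})=\{\phi=\phi_0+G_0\xi_\phi:\phi_0\in\D(A_D),\ \xi_\phi\in\R(\Pi),\ \Pi\gamma_1\phi_0=\Theta\xi_\phi\}$. Because $G_0\xi$ is the affine function displayed above (hence in $H^2$) and $\frac{d^2}{dx^2}(G_0\xi)=0$, every such $\phi$ lies in $H^2(0,\ell)$, and $A_{(\Pi,\Theta)}\phi=A_D\phi_0=\phi_0''=\phi''$; conversely any $u\in H^2(0,\ell)$ with $\gamma_0u\in\R(\Pi)$ decomposes uniquely as $u=u_0+G_0\gamma_0u$ with $u_0:=u-G_0\gamma_0u\in H^2\cap H^1_0=\D(A_D)$, uniqueness coming from hypothesis \eqref{hypp}, already verified in the text ($G_0\xi\notin H^1_0$ for $\xi\ne0$), and using $\gamma_0G_0=\mathrm{id}_{\RE^2}$. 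It remains to turn $\Pi\gamma_1\phi_0=\Theta\xi_\phi$ into a condition on $\phi$ itself: with $\xi_\phi=\gamma_0\phi$ and $\gamma_1(G_0\xi)=P_0\xi$ (the very definition of the Dirichlet-to-Neumann operator at $\lambda=0$, since $u_\xi=G_0\xi$), one has $\gamma_1\phi_0=\gamma_1\phi-P_0\gamma_0\phi$, so the constraint becomes $\Pi\gamma_1\phi=(\Pi P_0\Pi+\Theta)\gamma_0\phi$, where $\Pi P_0\gamma_0\phi=\Pi P_0\Pi\gamma_0\phi$ under the projector convention.

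Finally, setting $B:=B_\Theta=\Pi P_0\Pi+\Theta$: as $\Pi P_0\Pi$ is a fixed self-adjoint operator on the finite-dimensional space $\R(\Pi)$, the assignment $\Theta\mapsto B_\Theta$ is a bijection of the self-adjoint operators on $\R(\Pi)$ onto themselves, with inverse $B\mapsto\Theta_B=B-\Pi P_0\Pi$; hence $(\Pi,\Theta)\mapsto(\Pi,B_\Theta)$ is a bijection of $\E(\RE^2)$ onto itself. Defining $\t A_{(\Pi,B)}:=A_{(\Pi,\Theta_B)}$ then yields exactly the asserted parametrization $\Ex(A_{\min})=\{\t A_{(\Pi,B)}:(\Pi,B)\in\E(\RE^2)\}$, the action $u\mapsto u''$, and the domain $\{u\in H^2(0,\ell):\gamma_0u\in\R(\Pi),\ \Pi\gamma_1u=B\gamma_0u\}$.

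Since all the analytic input (explicit resolvent kernels, the formulas for $G_0$ and $P_0$, and the verification of \eqref{hypp}) has already been established before the statement, there is no deep obstacle; the points one must not skip are the identification $\K(\tau)=\D(A_{\min}^{**})$ together with the surjectivity of $\tau$, and the careful handling of the convention whereby $\Pi$ denotes simultaneously the injection $\R(\Pi)\hookrightarrow\RE^2$ and the orthogonal surjection $\RE^2\to\R(\Pi)$ (so that $\Pi P_0\Pi$, $B\gamma_0u$ and $\gamma_0\phi$ are read in the appropriate space). Everything else is bookkeeping with the displayed formulas.
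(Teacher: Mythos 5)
Your proposal is correct and follows essentially the paper's own route: the theorem is obtained there by applying Theorem \ref{estensioni} with $A_0=A_D$, $\fh=\RE^2$, $\tau=\gamma_1|(H^2(0,\ell)\cap H^1_0(0,\ell))$ and the reparametrization $\Theta_B=B-\Pi P_{0}\Pi$ (equivalently your $B_\Theta=\Pi P_0\Pi+\Theta$), the verification details you spell out being exactly those the paper delegates to Example 5.1 of \cite{[P08]} and to the preceding computations of $G_0$, $P_0$ and hypothesis \eqref{hypp}. Nothing needs to be corrected.
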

\begin{remark}
Notice that the case $\Pi=0$ corresponds to Dirichlet boundary conditions, the case $\Pi=\uno$, posing
\be
B=\left(\begin{matrix}b_{11}&b_{12}\\
b_{12}&b_{22}
\end{matrix}\right)\,,\quad b_{ij}\in\RE\,,
\ee
corresponds to the boundary conditions 
\begin{align}\label{bc11}
\frac{du}{dx}(0)&=b_{11}\,u(0)+b_{12}\,u(\ell)\,, 
\qquad -\frac{du}{dx}(\ell)=b_{12}\,u(0)+b_{22}\,u(\ell)\,,
\end{align}
and the case $\Pi=v\otimes v$,
$v\equiv(v_1,v_2)\in\RE^2$, $v_1^2+v_2^2=1$, $B=b\in\RE$, 
corresponds to the boundary conditions
\begin{align}\label{bc12}
v_2\,u(0)&=v_1\,u(\ell)\,,\qquad
v_1\,\left(\frac{du}{dx}(0)-b\,u(0)\right)
=v_2\,\left(\frac{du}{dx}(\ell)+b\,u(\ell)\right)\,.
\end{align}
The boundary conditions \eqref{bc12} can be re-written (when $bv_1v_2\not=0$) as
\begin{align}\label{bc12feller}
\frac{b}{v_1}\,u(0)=\frac{b}{v_2}\,u(\ell)=
v_1\frac{du}{dx}(0)-v_2\frac{du}{dx}(\ell)\,.
\end{align}
\end{remark}
\begin{remark}
The boundary conditions \eqref{bc11} 
and \eqref{bc12feller} 
coincide with the ones obtained (in the case of more general second order differential operators) by Feller in \cite{[Fel]}, Theorem 10.1. 
\end{remark}

\begin{lemma}\label{Diff} 1. In the case $\Pi=\uno$ 
\be
f_B:\RE^2\times\RE^2\to\RE\,,\qquad
f_{B}(\xi,\zeta):=b_{11}\xi_1\zeta_1+b_{12}(\xi_1\zeta_2+\zeta_{1}\xi_{2})+b_{22}
\xi_2\zeta_2
\ee
is Dirichlet form on $\RE^2$ if and only if
\be
b_{11}+b_{12}\ge 0\,,\quad b_{12}+b_{22}\ge 0\,,\quad b_{12}\le 0\,.
\ee
2. In the case $\Pi=v\otimes v$ 
\be
f_{\Pi,B}:\R(\Pi)\times\R(\Pi)\subset\RE^2\times\RE^2\to\RE\,,\qquad
f_{\Pi,B}(\xi v,\zeta v):=b\,\xi\zeta
\ee
is a Dirichlet form  on $\RE^2$ with domain $\R(\Pi)$ if and only if $b\ge 0$ and $v$ is one of the following unit vectors:
\be
\e_0\equiv\left(\frac{1}{\sqrt 2}\,,\frac{1}{\sqrt 2}\right)\,,\quad 
\e_1\equiv(1,0)\,,\quad \e_2\equiv(0,1)\,.
\ee
\end{lemma}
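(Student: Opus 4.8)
The plan is to exploit that on the two-point ``boundary'' $\{0,\ell\}$ the ambient $L^2$ space is $\RE^2$ and that a nonnegative quadratic form on a finite dimensional space is automatically closed (its domain, a linear subspace, is complete in the form norm). Hence ``Dirichlet form'' here means exactly ``positive symmetric Markovian form'', i.e.\ property \eqref{NC} with the unit contraction acting coordinatewise: $\xi_\#=\bigl((\xi_1)_\#,(\xi_2)_\#\bigr)$.

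\emph{Part 1.} Since $\D(f_B)=\RE^2$, only positivity and the inequality $f_B(\xi_\#)\le f_B(\xi)$ are at stake. For sufficiency I would observe that the three stated inequalities are exactly equivalent to the discrete Beurling--Deny decomposition
\[
f_B(\xi)=c\,(\xi_1-\xi_2)^2+d_1\,\xi_1^2+d_2\,\xi_2^2
\]
with $c:=-b_{12}$, $d_1:=b_{11}+b_{12}$, $d_2:=b_{22}+b_{12}$ all nonnegative; then $f_B\ge 0$ is immediate, and since $t\mapsto t_\#$ is a normal contraction with $|t_\#|\le|t|$ we get $\bigl|(\xi_1)_\#-(\xi_2)_\#\bigr|\le|\xi_1-\xi_2|$ and $(\xi_i)_\#^2\le\xi_i^2$, whence $f_B(\xi_\#)\le f_B(\xi)$. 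For necessity I would test \eqref{NC} on three families. Taking $\xi=(1+\epsilon,1)$ and $\xi=(1,1+\epsilon)$ with $\epsilon>0$, both of which have $\xi_\#=(1,1)$, dividing the resulting inequalities by $\epsilon$ and letting $\epsilon\downarrow 0$ yields $b_{11}+b_{12}\ge 0$ and $b_{22}+b_{12}\ge 0$. Taking $\xi=(1,-\epsilon)$ with $\epsilon>0$, for which $\xi_\#=(1,0)$, the same procedure yields $b_{12}\le 0$.

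\emph{Part 2.} Here $\D(f_{\Pi,B})=\R(\Pi)=\RE v$, viewed as a non-dense subspace of $\RE^2$, so \eqref{NC} requires first that this line be invariant under the coordinatewise unit contraction, and secondly that $b\,\eta^2\le b\,\xi^2$ whenever $(\xi v)_\#=\eta v$; positivity of the form alone forces $b\ge 0$. The core of the proof is the classification of admissible $v=(v_1,v_2)$ with $v_1^2+v_2^2=1$. If $v_1,v_2$ are both nonzero and of equal sign, choosing $|\xi|$ large with $\mathrm{sgn}\,\xi=\mathrm{sgn}\,v_i$ gives $(\xi v)_\#=(1,1)$, which lies on $\RE v$ only if $v_1=v_2$; since $\Pi=v\otimes v$, hence $f_{\Pi,B}$, is unchanged under $v\mapsto -v$, this forces $v=\e_0$. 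If $v_1,v_2$ have opposite signs, taking $|\xi|$ large gives $(\xi v)_\#\in\{(1,0),(0,1)\}$, which is not proportional to $v$, a contradiction. If one coordinate of $v$ vanishes then $v=\e_1$ or $v=\e_2$. Conversely, for $v\in\{\e_0,\e_1,\e_2\}$ the line $\RE v$ is manifestly stable under coordinatewise truncation, one has $(\xi v)_\#=\eta v$ with $|\eta|\le|\xi|$, and therefore $f_{\Pi,B}(\xi v)=b\,\xi^2$ does not increase under truncation once $b\ge 0$; together with automatic closedness this makes $f_{\Pi,B}$ a Dirichlet form.

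The computations are elementary throughout; the only point that needs some care is the direction classification in Part 2, where each $v$ outside the three listed must be ruled out by a judicious choice of $\xi$ exploiting the corners of $t\mapsto t_\#$ at $0$ and $1$, and where one must remember that $v$ and $-v$ define the same projector, hence the same form, so that $\e_0$ really represents the whole diagonal line.
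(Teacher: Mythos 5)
Your proof is correct. For part 2 it is essentially the paper's own argument: the paper likewise reduces the question to invariance of $\R(\Pi)$ under the unit contraction, ruling out $v$ with $v_1v_2<0$ by truncating $v$ itself, ruling out $v$ with $v_1v_2>0$, $v_1\not=v_2$ by truncating the multiple $cv$ with $c=\text{sign}(v_1)/(|v_1|\wedge |v_2|)$ (your ``$|\xi|$ large'' choice plays the same role), and extracting $b\ge 0$ from positivity alone; your remarks that closedness is automatic in finite dimensions and that $v$ and $-v$ give the same projector, hence the same form, are consistent with the paper's formulation of the converse (it assumes $v\notin\R(\e_i\otimes\e_i)$). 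For part 1, however, the paper does not compute anything: it simply invokes the Beurling--Deny criterion (Theorem \ref{dirichlet}) together with the classical characterization of generators of Markovian semigroups on a finite set (\cite{[Dv2]}, Theorem 12.3.2). Your route is self-contained: sufficiency via the discrete decomposition $f_B(\xi)=-b_{12}(\xi_1-\xi_2)^2+(b_{11}+b_{12})\,\xi_1^2+(b_{22}+b_{12})\,\xi_2^2$, which exhibits the three inequalities as exactly the nonnegativity of the jump and killing coefficients, and necessity by testing \eqref{NC} on $(1+\epsilon,1)$, $(1,1+\epsilon)$ and $(1,-\epsilon)$ and letting $\epsilon\downarrow 0$. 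What the citation buys is brevity; what your computation buys is independence from outside references, and it is worth noting that precisely these test configurations reappear, lifted to $H^1(0,\ell)$ through $G_0$, in the paper's proof of Theorem \ref{lemmaintervallo}, so your elementary argument is fully in the spirit of the paper.
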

\begin{proof}
1. Let $\Pi=\uno$. The thesis follows by Theorem \ref{dirichlet} and 
by the well known characterization of 
generators of Markovian semigroups on a finite (or countable) set (see e.g. \cite{[Dv2]}, Theorem 12.3.2).\par
2. Let $\Pi=v\otimes v$, $v\equiv(v_1,v_2)\in\RE^2$, $v_1^2+v^2_2=1$. If $v=\e_i$, $i=0,1,2$ then 
$(cv)_\#\in\R(\Pi)$ for any $c\in\RE$. Conversely suppose 
that $v\notin\R(\e_i\otimes \e_i)$, $i=0,1,2$, so that $v_1v_2\not=0$ and
$v_1\not=v_2$. If $v_1v_2<0$ then $v_\#\notin\R(\Pi)$; if $v_1v_2>0$ 
then, posing $c=\text{sign}(v_1)/(|v_1|\wedge |v_2|)$, $(cv)_\#\equiv(1,1)\notin\R(\Pi)$. Finally notice that 
$f_{\Pi,B}\ge 0\iff b\ge 0$.
\end{proof}
\begin{theorem}\label{lemmaintervallo} Let $\t F_{(\Pi,B)}$ be the bilinear  form associated  with  $-\t A_{(\Pi,B)}$. Then $\t F_{(\Pi,B)}$ is a Dirichlet form if and only if $f_{\Pi,B}$ is a Dirichlet form  on $\RE^2$. 
\end{theorem}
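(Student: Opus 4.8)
The plan is to work entirely with the associated bilinear forms. By the computations already made in this section,
\[
\t F_{(\Pi,B)}(u,v)=F_{N}(u,v)+f_{\Pi,B}(\gamma_{0}u,\gamma_{0}v)\,,\qquad
\D(\t F_{(\Pi,B)})=\{u\in H^{1}(0,\ell):\gamma_{0}u\in\R(\Pi)\}\,,
\]
and I would use three elementary facts. First, $\gamma_{0}$ is evaluation at the endpoints, hence commutes with the unit contraction: $\gamma_{0}(u_{\#})=(\gamma_{0}u)_{\#}$. Second, $|(u_{\#})'|\le|u'|$ a.e., so that $F_{N}(u)-F_{N}(u_{\#})=\int_{\{u\notin(0,1)\}}|u'|^{2}\ge 0$. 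Third, given $\eta\in\RE^{2}$, for each $\epsilon>0$ there is a piecewise linear $w\in H^{1}(0,\ell)$ with $\gamma_{0}w=\eta$ and $\int_{\{w\notin(0,1)\}}|w'|^{2}<\frac{1}{\ell}\bigl(|\eta_{1}-(\eta_{1})_{\#}|+|\eta_{2}-(\eta_{2})_{\#}|\bigr)^{2}+\epsilon$ (let $w$ run linearly from each out-of-range endpoint value down to the nearer of $0,1$ over intervals whose lengths are chosen optimally and whose union exhausts $(0,\ell)$ up to an arbitrarily short interior arc).

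For the sufficiency direction, if $f_{\Pi,B}$ is a Dirichlet form on $\RE^{2}$ then $\t F_{(\Pi,B)}$ is positive ($F_{N}\ge0$ and $f_{\Pi,B}\ge0$), densely defined ($\D(\t F_{(\Pi,B)})\supseteq H^{1}_{0}(0,\ell)$) and closed (its domain is a closed subspace of $H^{1}(0,\ell)$ and, $\gamma_{0}$ being bounded, the form norm is equivalent there to the $H^{1}$-norm). Moreover, for $u\in\D(\t F_{(\Pi,B)})$ one has $u_{\#}\in H^{1}(0,\ell)$ and $\gamma_{0}(u_{\#})=(\gamma_{0}u)_{\#}\in\R(\Pi)$ by Markovianity of $f_{\Pi,B}$, so $u_{\#}\in\D(\t F_{(\Pi,B)})$ and
\[
\t F_{(\Pi,B)}(u_{\#})=F_{N}(u_{\#})+f_{\Pi,B}((\gamma_{0}u)_{\#})\le F_{N}(u)+f_{\Pi,B}(\gamma_{0}u)=\t F_{(\Pi,B)}(u)\,,
\]
so $\t F_{(\Pi,B)}$ is a Dirichlet form.

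For the converse, assume $\t F_{(\Pi,B)}$ is a Dirichlet form. Its domain is then stable under $u\mapsto u_{\#}$; since every $\eta\in\R(\Pi)$ is the trace of some element of $\D(\t F_{(\Pi,B)})$ (e.g.\ of the linear interpolant) and traces commute with the unit contraction, it follows that $\eta\in\R(\Pi)\Rightarrow\eta_{\#}\in\R(\Pi)$ — equivalently, arguing as in the proof of Lemma \ref{Diff}, $\Pi$ is one of $0$, $\uno$, or $v\otimes v$ with $v\in\{\e_{0},\e_{1},\e_{2}\}$ — which is precisely the domain requirement for $f_{\Pi,B}$ to be a Markovian form. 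For the contractivity inequality I would fix $\eta\in\R(\Pi)$, set $v:=\eta-\eta_{\#}\in\R(\Pi)$, and note that componentwise $(\eta_{\#}+sv)_{\#}=\eta_{\#}$ for every $s\ge0$. Given $s>0$ and $\epsilon>0$, pick (third fact) $u\in\D(\t F_{(\Pi,B)})$ with $\gamma_{0}u=\eta_{\#}+sv$ and $\int_{\{u\notin(0,1)\}}|u'|^{2}<\frac{s^{2}}{\ell}(|v_{1}|+|v_{2}|)^{2}+\epsilon$. Then $\gamma_{0}(u_{\#})=\eta_{\#}$, so the Markovian inequality $\t F_{(\Pi,B)}(u_{\#})\le\t F_{(\Pi,B)}(u)$ reads
\[
f_{\Pi,B}(\eta_{\#})-f_{\Pi,B}(\eta_{\#}+sv)\le F_{N}(u)-F_{N}(u_{\#})<\frac{s^{2}}{\ell}(|v_{1}|+|v_{2}|)^{2}+\epsilon\,.
\]
Letting $\epsilon\downarrow0$, expanding $f_{\Pi,B}(\eta_{\#}+sv)=f_{\Pi,B}(\eta_{\#})+2s\,f_{\Pi,B}(\eta_{\#},v)+s^{2}f_{\Pi,B}(v)$, dividing by $s$, and letting $s\downarrow0$, one obtains $f_{\Pi,B}(\eta_{\#},\eta-\eta_{\#})\ge0$ for every $\eta\in\R(\Pi)$. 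When $\Pi=\uno$, specialising this to $\eta=(1+x,1+y)$ and to $\eta=(1+x,-y)$ with $x,y>0$ yields $f_{\uno,B}((1,1),(x,y))\ge0$ and $f_{\uno,B}((1,0),(x,-y))\ge0$, i.e.\ $b_{11}+b_{12}\ge0$, $b_{12}+b_{22}\ge0$ and $b_{12}\le0$; when $\Pi=v\otimes v$, taking $\eta$ a multiple of $\e_{i}$ yields $b\ge0$; when $\Pi=0$ there is nothing to prove. In each case these are exactly the conditions of Lemma \ref{Diff}, hence $f_{\Pi,B}$ is a Dirichlet form on $\RE^{2}$.

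The main obstacle is the contractivity inequality in the converse: for a \emph{fixed} boundary datum the interior term $F_{N}$ always contributes a strictly positive slack $\int_{\{u\notin(0,1)\}}|u'|^{2}$ that cannot be removed, and it is only the homogeneity of the quadratic form $f_{\Pi,B}$, exploited via the rescaling $\eta_{\#}+sv$ and the limit $s\downarrow0$, that makes this slack negligible and produces the sharp inequalities on $B$. (Alternatively one could bypass the form computation and invoke the classical maximum-principle classification of the Markovian Sturm--Liouville realizations, i.e.\ Feller's Theorem 10.1 in \cite{[Fel]}, whose boundary conditions were identified above with \eqref{bc11}--\eqref{bc12feller}.)
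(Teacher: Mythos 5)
Your proposal is correct and takes essentially the same approach as the paper: the sufficiency direction is the paper's argument verbatim (traces commute with the unit contraction), and your necessity argument exploits the very same mechanism the paper uses — boundary data slightly outside $[0,1]$ carried by (near-)linear interior profiles, so that the slack $F_N(u)-F_N(u_\#)$ is of order $s^2/\ell$ while the change of $f_{\Pi,B}$ under truncation is first order in $s$ — before reducing to the inequalities of Lemma \ref{Diff}. The only difference is presentational: you package the paper's case-by-case test functions into the single first-order inequality $f_{\Pi,B}(\eta_\#,\eta-\eta_\#)\ge 0$ and then specialize, whereas the paper argues by contraposition with an explicit function for each violated condition.
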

\begin{proof}
At first notice that the bilinear  form $F_N$ is a Dirichlet form. Thus if $f_{\Pi,B}$ is a Dirichlet form then, since $(\gamma_0u)_\#=
\gamma_0u_\# $ by $H^1(0,\ell)\subset C[0,\ell]$, one has 
\be
u\in\D(\t F_{(\Pi,B)})\quad\Longrightarrow\quad u_\#\in\D(\t 
F_{(\Pi,B)})
\ee
and
\be
\t F_{(\Pi,B)}(u_\#)=F_N(u_\#)+f_{\Pi,B}((\gamma_0 u)_\#)\le 
F_N(u)+ f_{\Pi,B}({\gamma_0 u})=\t F_{(\Pi,B)}(u)\,.
\ee
Therefore 
$\t F_{(\Pi,B)}$ is a Dirichlet form. \par 
Suppose now that 
$f_{\Pi,B}$ is not a Dirichlet form.\par
Case $\Pi=\uno$.  Let 
$u=G_0\xi$, with $\xi\equiv(1+\epsilon,1)$, $\epsilon>0$. Then 
\be
f_{B}(\xi_\#)-f_{B}(\xi)
=-2(b_{11}+b_{12})\epsilon-b_{11}\epsilon^2
\ee
and
\be
F_N(u)-F_N(u_\#)=F_N(u)=\frac{\epsilon^2}{\ell}\,.
\ee
Thus if $b_{11}+b_{12}<0$ then $\t F_{(\Pi,B)}(u_\#)>\t 
F_{(\Pi,B)}(u)$ by taking $\epsilon$ sufficiently small. The same kind of reasoning holds in the case $b_{12}+b_{22}<0$ taking 
$\xi\equiv(1,1+\epsilon)$. Let 
\be
u(x)=\begin{cases}1-\frac{2}{\ell}\,x\,,&0<x\le 
\frac{\ell}{2}\\
\epsilon\,\left(1-\frac{2}{\ell}\,x\right)\,,&
\frac{\ell}{2}<x< \ell\,.\end{cases}
\ee
Then $\gamma_0u\equiv(1,-\epsilon)$,    
\be
f_{B}(\gamma_0u_\#)-f_{B}(\gamma_0u)=2b_{12}\epsilon-b_{22}\epsilon^2
\ee
and
\be
F_N(u)-F_N(u_\#)=\frac{2}{\ell}\,\epsilon^2\,.
\ee
Thus if $b_{12}>0$ then $\t F_{(\Pi,B)}(u_\#)>\t F_{(\Pi,B)}(u)$ by taking $\epsilon$ sufficiently small. \par
Case $\Pi=v\otimes v$. Suppose $\xi\not= \e_i$, $i=0,1,2$. Then 
taking $u=G_0\xi$, one has $\gamma_0 u_\#=\xi\not\in\R(\Pi)$, and 
so $u_\#\notin \D(\t F_{(\Pi,B)})$. 
\par
Suppose $u=G_0\e_0$ and $b<0$. Then
\be
\t F_{(\Pi,B)}(u)=f_{\Pi,B}(e_0)=b<0\,.
\ee
Let 
\be
u(x)=\begin{cases}\frac{2}{\ell}\,x\,,&0<x\le 
\frac{\ell}{2}\\
\epsilon\,\left(\frac{2}{\ell}\,x-1\right)+1\,,&
\frac{\ell}{2}<x< \ell\,.\end{cases}
\ee
Then $\gamma_0u=(1+\epsilon)\e_2$, $u\in\D(\t F_{(\Pi,B)})$,   
and
\be
f_{\Pi,B}(\gamma_0u_\#)-f_{\Pi,B}(\gamma_0u)=-b(2+\epsilon)\epsilon
\ee
\be
F_N(u)-F_N(u_\#)=\frac{2}{\ell}\,\epsilon^2\,.
\ee
Thus if $b<0$ then $\t F_{(\Pi,B)}(u_\#)>\t F_{(\Pi,B)}(u)$ by taking
$\epsilon$ sufficiently small. 
The case in which $v=\e_1$ is treated similarly.
\end{proof}
\begin{remark}\label{toyregular}
If $\Pi=\zero$ then $\t F_{(\zero)}=F_{D}$ and such a Dirichlet form is regular on $X=(0,\ell)$. When $\Pi\not=\zero$, by the continuous embedding $H^{1}(0,\ell)\subset C[0,\ell]$ and by 
\be
\D(\t F_{(\Pi,B)})=\{u=u_{0}+G_{0}\xi_{u}\,,\ u_{0}\in H^{1}_{0}(0,\ell)\,,\ \xi_{u}\in\R(\Pi)\}\,,
\ee
\be
\t F_{(\Pi,B)}(u,v)=F_{N}(u,v)+B\xi_{u}\!\cdot\!\xi_{v}\,,
\ee
it is immediate to check that  $\t F_{(\Pi,B)}$ is always regular on $X=[0,\ell]$. It suffices to approximate (with respect to $H^{1}(0,\ell)$-convergence) the component $u_{0}$ by a sequence in $C^{\infty}_{0}(0,\ell)$.  
\end{remark}
\begin{corollary}\label{toy2} A bilinear form $F$ on $L^{2}(0,\ell)$ is the bilinear form 
of a Markovian self-adjoint extension of $A_{\min}$  if and only if $\D(F)\subseteq H^{1}(0,\ell)$ and there exists a Dirichlet form $f_{b}$ on $\RE^{2}$ such that 
\be
\D(F)=\{u\in H^{1}(0,\ell):\gamma_{0}u\in\D(f_{b})\}
\ee 
and 
\be 
F(u,v)=F_{N}(u,v)+f_{b}(\gamma_{0}u,\gamma_{0}v)\,.
\ee
\end{corollary}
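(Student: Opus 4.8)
The plan is to obtain the corollary as an immediate repackaging of the explicit form computation of this section, combined with Theorems \ref{qf}, \ref{lemmaintervallo} and \ref{dirichlet}. Recall that the computation preceding Theorem \ref{lemmaintervallo} shows that, for $S=A_{\min}$ on $(0,\ell)$, every self-adjoint extension is of the form $\t A_{(\Pi,B)}$, $(\Pi,B)\in\E(\RE^2)$, with
\[
\D(\t F_{(\Pi,B)})=\{u\in H^1(0,\ell):\gamma_0u\in\R(\Pi)\}\,,\qquad
\t F_{(\Pi,B)}(u,v)=F_N(u,v)+f_{\Pi,B}(\gamma_0u,\gamma_0v)\,.
\]
Since $F_N$ is a Dirichlet form and $H^1_0(0,\ell)\subseteq\D(\t F_{(\Pi,B)})$ for every $\Pi$, each $\t F_{(\Pi,B)}$ is densely defined in $L^2(0,\ell)$; hence by Theorem \ref{dirichlet} the extension $\t A_{(\Pi,B)}$ is Markovian exactly when $\t F_{(\Pi,B)}$ is a Dirichlet form, which by Theorem \ref{lemmaintervallo} happens exactly when $f_{\Pi,B}$ is a Dirichlet form on $\RE^2$.

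For the \emph{only if} implication I would take $F=F_A$ with $A\in\ExM(A_{\min})$, write $A=\t A_{(\Pi,B)}$ for the corresponding $(\Pi,B)\in\E(\RE^2)$ furnished by Theorem \ref{estensioni}, and read off from the above displayed formulae that $\D(F)\subseteq H^1(0,\ell)$ and $F(u,v)=F_N(u,v)+f_{\Pi,B}(\gamma_0u,\gamma_0v)$. Because $A$ is Markovian, $F=F_A$ is a Dirichlet form, so Theorem \ref{lemmaintervallo} gives that $f_b:=f_{\Pi,B}$ is a Dirichlet form on $\RE^2$; since $\D(f_b)=\R(\Pi)$, this yields $\D(F)=\{u\in H^1(0,\ell):\gamma_0u\in\D(f_b)\}$, as required.

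For the \emph{if} implication I would start from a Dirichlet form $f_b$ on $\RE^2$. Its domain, being the domain of a bilinear form, is a linear subspace of $\RE^2$, hence equals $\R(\Pi)$ for a unique orthogonal projector $\Pi$; and on the finite-dimensional space $\R(\Pi)$ the symmetric bilinear form $f_b$ is automatically closed and equals $f_{\Pi,B}$ for a unique self-adjoint $B$ (Lemma \ref{Diff} specifies which $\Pi$ and $B$ can occur). Setting $\Theta:=B-\Pi P_0\Pi$ one has $(\Pi,\Theta)\in\E(\RE^2)$ and $A_{(\Pi,\Theta)}=\t A_{(\Pi,B)}$; by the form computation its bilinear form is precisely the $F$ of the statement, with $\D(F)=\{u\in H^1(0,\ell):\gamma_0u\in\R(\Pi)\}=\{u\in H^1(0,\ell):\gamma_0u\in\D(f_b)\}$. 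Since $f_b=f_{\Pi,B}$ is a Dirichlet form, Theorem \ref{lemmaintervallo} shows $\t F_{(\Pi,B)}$ is a Dirichlet form, and it is densely defined as above; by Theorem \ref{dirichlet} the extension $\t A_{(\Pi,B)}$ is therefore Markovian, i.e. $F=F_A$ with $A\in\ExM(A_{\min})$.

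I do not expect a real obstacle: the only point that needs a word is the two-dimensional dictionary between Dirichlet forms on $\RE^2$ and pairs $(\Pi,B)\in\E(\RE^2)$ --- that such a form always has domain $\R(\Pi)$, is closed, and comes from a self-adjoint $B$ --- but this is elementary linear algebra (with the finer bookkeeping done in Lemma \ref{Diff}), so the corollary is essentially Theorem \ref{lemmaintervallo} restated in intrinsic, coordinate-free terms.
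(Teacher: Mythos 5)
Your proposal is correct and follows exactly the route the paper intends: the corollary is stated without a separate proof precisely because it is the explicit form computation $\t F_{(\Pi,B)}=F_N+f_{\Pi,B}\circ\gamma_0$ on $\{u\in H^1(0,\ell):\gamma_0u\in\R(\Pi)\}$ combined with Theorem \ref{lemmaintervallo} and the Beurling--Deny criterion (Theorem \ref{dirichlet}), plus the elementary dictionary identifying a Dirichlet form on $\RE^2$ with a pair $(\Pi,B)\in\E(\RE^2)$. Nothing essential is missing.
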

By Kre\u\i n's resolvent formula \eqref{R} and by 
\begin{align}
\Theta_{B}+\lambda \Pi G_{0}^{*}G_{\lambda}\Pi=\Theta_{B}+\Pi\gamma_{1}(G_{0}-G_{\lambda})\Pi
=\Theta_{B}+\Pi(P_{0}-P_{\lambda})\Pi=B-\Pi P_{\lambda}\Pi\,,
\end{align}
Corollary \ref{toy2} has an equivalent version in terms of resolvents:
\begin{corollary}\label{toyresolvent}
$R_{\lambda}\in \B(L^{2}(0,\ell))$, $\lambda> 0$, is the resolvent of a Markovian extension of $A_{\min}$ if and only if  
\be
R_{\lambda}=R^{D}_{\lambda}+G_{\lambda}\Pi(B-\Pi P_{\lambda}\Pi)^{-1}\Pi G^{*}_{\lambda}\,,
\ee
where $f_{\Pi,B}$ is a Dirichlet form on $\RE^{2}$.
\end{corollary}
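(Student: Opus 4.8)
The plan is to substitute the description of the Markovian extensions of $A_{\min}$ --- as given by Theorem~\ref{lemmaintervallo} (equivalently, packaged in Corollary~\ref{toy2}) together with the parametrization $\Ex(A_{\min})=\{\t A_{(\Pi,B)}\,,\ (\Pi,B)\in\E(\RE^{2})\}$ --- into Kre\u\i n's resolvent formula \eqref{R}, and then to recognize the operator appearing there as $B-\Pi P_{\lambda}\Pi$ by the very computation displayed immediately before the statement.

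First I would recall that, by Theorem~\ref{lemmaintervallo} and Theorem~\ref{dirichlet}, the Markovian self-adjoint extensions of $A_{\min}$ are exactly the operators $\t A_{(\Pi,B)}=A_{(\Pi,\Theta_{B})}$, with $\Theta_{B}=B-\Pi P_{0}\Pi$, for which $f_{\Pi,B}$ is a Dirichlet form on $\RE^{2}$ (note $\D(\t F_{(\Pi,B)})\supseteq H^{1}_{0}(0,\ell)$ is dense). Any such $A$ is negative self-adjoint, so $(0,\infty)\subseteq\rho(A)\cap\rho(A_{D})$ and Theorem~\ref{estensioni}(iii) applies with $A_{0}=A_{D}$ and any real $z=\lambda>0$, giving
\be
R_{\lambda}^{(\Pi,\Theta_{B})}=R^{D}_{\lambda}+G_{\lambda}\,\Pi\bigl(\Theta_{B}+\lambda\,\Pi G_{0}^{*}G_{\lambda}\Pi\bigr)^{-1}\Pi G_{\lambda}^{*}\,.
\ee
By \eqref{1.2.1}, \eqref{WF} and the identifications $\tau=\gamma_{1}$, $G_{\lambda}\xi=u_{\xi}$ of this section one has $\lambda\,G_{0}^{*}G_{\lambda}=\gamma_{1}(G_{0}-G_{\lambda})=P_{0}-P_{\lambda}$, hence $\Theta_{B}+\lambda\,\Pi G_{0}^{*}G_{\lambda}\Pi=(B-\Pi P_{0}\Pi)+\Pi(P_{0}-P_{\lambda})\Pi=B-\Pi P_{\lambda}\Pi$, which is precisely the identity displayed before the statement. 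This yields the announced formula in the ``only if'' direction; the invertibility of $B-\Pi P_{\lambda}\Pi$ on $\R(\Pi)$ is automatic from the validity of \eqref{R}, and may also be read off directly from $B\ge 0$ (as $f_{\Pi,B}$ is positive) together with $-\Pi P_{\lambda}\Pi>0$ on $\R(\Pi)$, the matrix $P_{\lambda}$ being negative definite for $\lambda>0$.

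For the converse, if $R_{\lambda}$ has the displayed form with $f_{\Pi,B}$ a Dirichlet form on $\RE^{2}$, the same computation run in reverse identifies $R_{\lambda}$ with $R_{\lambda}^{(\Pi,\Theta_{B})}$, the resolvent at $\lambda$ of the Markovian extension $\t A_{(\Pi,B)}$; since a self-adjoint operator is determined by its resolvent at a single point, $R_{\lambda}$ is the resolvent of a Markovian extension of $A_{\min}$. The corollary is thus a direct substitution into \eqref{R}; the only point requiring a little care is the bookkeeping of the Section~3 identifications --- in particular checking $\gamma_{1}G_{\lambda}=P_{\lambda}$ with the correct sign and that $G_{\lambda}^{*}=G_{\bar\lambda}^{*}$ for real $\lambda$ --- but these are routine given the explicit formulas for $G_{\lambda}$ and $P_{\lambda}$ already recorded above.
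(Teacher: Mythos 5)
Your proposal is correct and follows essentially the same route as the paper: the paper also obtains this corollary by substituting into Kre\u\i n's formula \eqref{R} and using the identity $\Theta_{B}+\lambda\,\Pi G_{0}^{*}G_{\lambda}\Pi=\Theta_{B}+\Pi\gamma_{1}(G_{0}-G_{\lambda})\Pi=B-\Pi P_{\lambda}\Pi$ displayed just before the statement, combined with the characterization of Markovian extensions from Theorem \ref{lemmaintervallo} and Corollary \ref{toy2}. Your additional remarks on the invertibility of $B-\Pi P_{\lambda}\Pi$ and on reversing the computation for the converse are accurate bookkeeping of what the paper leaves implicit.
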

By combining Theorem \ref{dirichlet} with Lemma \ref{Diff} one obtains the following
\begin{theorem}\label{teointervallo} $A\in \ExM(A_\min)$ if and only if 
$A=\t A_{(\Pi,B)}$, with $(\Pi,B)\in\E(\RE^2)$ 
satisfying one of the
  following conditions:
\begin{enumerate}
\item $\Pi=0$; 
\item 
$\Pi=\uno\,,\quad B=\left(\begin{matrix}b_{12}&b_{12}\\
b_{12}&b_{22}
\end{matrix}\right),\ 
b_{11}+b_{12}\ge 0\,,\ b_{12}+b_{22}\ge 0\,,\ b_{12}\le 0\,;
$
\item
$\Pi=v\otimes v\,,\quad v\in\{\e_0\,,\, \e_1\,,\, \e_2\}\,,\quad B=b\ge 0\,. $ 
\end{enumerate}
\end{theorem}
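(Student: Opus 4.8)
The statement is essentially an assembly of the three results already in place: the Beurling--Deny criterion (Theorem \ref{dirichlet}), the identification of which $\t F_{(\Pi,B)}$ are Dirichlet forms (Theorem \ref{lemmaintervallo}), and the explicit list in Lemma \ref{Diff}. Concretely, I would first note that for every $A=\t A_{(\Pi,B)}\in\Ex(A_\min)$ the form $\t F_{(\Pi,B)}$ is automatically densely defined: by the explicit description of $\D(\t F_{(\Pi,B)})$ (see Remark \ref{toyregular}) one has $H^1_0(0,\ell)\subseteq\D(\t F_{(\Pi,B)})$, and $H^1_0(0,\ell)$ is dense in $L^2(0,\ell)$. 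Hence, in this setting, Theorem \ref{dirichlet} reads simply ``$\t A_{(\Pi,B)}$ is Markovian $\iff$ $\t F_{(\Pi,B)}$ is a Dirichlet form'', and by Theorem \ref{lemmaintervallo} this is in turn equivalent to ``$f_{\Pi,B}$ is a Dirichlet form on $\RE^2$''. So the whole task reduces to listing all $(\Pi,B)\in\E(\RE^2)$ for which $f_{\Pi,B}$ is a Dirichlet form on $\RE^2$.

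Next I would split according to the rank of the orthogonal projection $\Pi$ on $\RE^2$, which can only be $0$, $1$ or $2$. If $\Pi=\zero$, then $f_{\Pi,B}$ is the zero form on $\R(\Pi)=\{0\}$, which is trivially a closed Markovian form, hence a Dirichlet form; this is case (1) (and here $\t A_{(\zero)}=A_D$). If $\Pi=\uno$, then $f_{\Pi,B}=f_B$, and Lemma \ref{Diff}.1 gives that $f_B$ is a Dirichlet form on $\RE^2$ exactly when $b_{11}+b_{12}\ge 0$, $b_{12}+b_{22}\ge 0$ and $b_{12}\le 0$; this is case (2). If $\Pi=v\otimes v$ has rank one, then Lemma \ref{Diff}.2 gives that $f_{\Pi,B}$ is a Dirichlet form on $\RE^2$ exactly when $B=b\ge 0$ and $v\in\{\e_0,\e_1,\e_2\}$; this is case (3). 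Since rank $0,1,2$ exhaust the projections on $\RE^2$, conditions (1)--(3) together describe precisely the pairs $(\Pi,B)$ with $f_{\Pi,B}$ a Dirichlet form on $\RE^2$.

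Finally I would record the two implications. If $A\in\ExM(A_\min)$ then in particular $A\in\Ex(A_\min)$, so $A=\t A_{(\Pi,B)}$ for some $(\Pi,B)\in\E(\RE^2)$ by the parametrization of $\Ex(A_\min)$, and the reduction above forces $(\Pi,B)$ to satisfy one of (1)--(3). Conversely, if $(\Pi,B)$ satisfies one of (1)--(3) then $f_{\Pi,B}$ is a Dirichlet form on $\RE^2$, so $\t F_{(\Pi,B)}$ is a Dirichlet form by Theorem \ref{lemmaintervallo}, and $\t A_{(\Pi,B)}\in\ExM(A_\min)$ by Theorem \ref{dirichlet}. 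I do not expect any genuine obstacle: every ingredient has been prepared, and the only points deserving a word are the automatic density of $\D(\t F_{(\Pi,B)})$ in $L^2(0,\ell)$ and the observation that the rank-$0,1,2$ trichotomy exhausts the projection component of $\E(\RE^2)$, so that Lemma \ref{Diff} indeed covers all cases.
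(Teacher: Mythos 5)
Your proposal is correct and follows essentially the same route as the paper: the paper obtains the theorem precisely by combining the Beurling--Deny criterion (Theorem \ref{dirichlet}) with Theorem \ref{lemmaintervallo} and the case list of Lemma \ref{Diff}, which is exactly your chain of reductions. The extra remarks you add (density of $\D(\t F_{(\Pi,B)})$, the rank $0,1,2$ trichotomy for projections on $\RE^2$, and the trivial $\Pi=\zero$ case giving $A_D$) are correct and simply make explicit what the paper leaves implicit.
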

\begin{remark}
Notice that $A_N$ is the maximal element of $\ExM(A_\min)$ and that it does not coincide with the Kre\u\i n's extension $A_K$.
\end{remark}
\begin{remark}
The above theorem reproduces, if $\Pi=\uno$, the results obtained (in the case of more
general second order differential operators) by Feller in \cite{[Fel]}, Theorem 10.2. If $\Pi=v\otimes v$ our results differ from the ones 
stated by Feller in the same theorem. However looking at the proof of Feller's theorem, the arguments there seems to lead to our results. A thorough study of the diffusion processes related to Dirichlet forms on an interval by means of boundary conditions is given in the recent paper \cite{[Fu13]}.
\end{remark}
\begin{remark}
Notice that the cases $\Pi=\e_i\otimes\e_i$, $B=0$, $i=1,2$, 
give mixed (Dirichlet-Neumann) boundary conditions, while the case
$\Pi=\e_0\otimes\e_0$, $B=0$ gives the self-adjoint extension 
corresponding to the Laplacian on the circle of radius $\ell/2\pi$. 
\end{remark}
\begin{remark}
By \eqref{conserviff2}, Lemma \ref{Diff} and Theorem \ref{lemmaintervallo} one has that 
$\t A_{(\Pi,B)}$ is conservative (equivalently recurrent) if and only if either $(\Pi,B)=(\uno\,,-bP_0)$, $b\ge 0$, or $(\Pi,B)=(\e_{0}\otimes\e_{0},0)$.
\end{remark}
\begin{remark}
Notice that while 
\be
f_{\Pi,B}\quad \text{Dirichlet}\quad 
\Longrightarrow\quad f_{\Pi,\Theta_B} \quad\text{Dirichlet},
\ee
the converse is not true: for example (here for simplicity we pose $\ell=1$), taking $\Pi=\uno$ and 
\be
\Theta=\left(\begin{matrix}2&-1/2\\
-1/2&2\end{matrix}\right)\,, 
\ee 
$f_\Theta$ is a Dirichlet form whereas $f_{B_\Theta}$ is positive but is not a Dirichlet form. 
Hence $f_{\Pi,\Theta}$ 
Dirichlet is a necessary but not sufficient condition for $A_{(\Pi,\Theta)}$ being Markovian. This elementary example, 
together with Theorem \ref{lemmaintervallo}, shows that the correct description of the set 
of Markovian extensions $\ExM(A_\min)$ in terms of the parametrizing couples $(\Pi,\Theta)$ is the following:
\end{remark}
\begin{theorem}\label{toy} $A_{(\Pi,\Theta)}\in \ExM(A_\min)$ if and only if  
\be f_{\Pi,\Theta}:\R(\Pi)\times\R(\Pi)\to\RE
\ee
is a Dirichlet form on $\RE^2$ which admits the decomposition  
\be
f_{\Pi,\Theta}(\xi_1,\xi_2)=f_{\Pi,B}(\xi_1,\xi_2)-P_0\xi_1\ccdot\xi_2\,,
\ee
where 
\be
f_{\Pi,B}:\R(\Pi)\times\R(\Pi)\to\RE
\ee
is a Dirichlet form on $\RE^2$.
\end{theorem}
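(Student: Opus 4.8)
The plan is to reduce the statement to Theorem~\ref{lemmaintervallo} through the change of parameter $B=B_\Theta:=\Pi P_{0}\Pi+\Theta$, i.e.\ $\Theta=\Theta_{B_\Theta}$, which is precisely the one realizing $\t A_{(\Pi,B_\Theta)}=A_{(\Pi,\Theta)}$, hence $F_{(\Pi,\Theta)}=\t F_{(\Pi,B_\Theta)}$. First I would recall, from the computations carried out earlier in this section, that $\D(F_{(\Pi,\Theta)})\supseteq H^{1}_{0}(0,\ell)$ is dense in $L^{2}(0,\ell)$, so that, by Theorem~\ref{dirichlet} (equivalently by \eqref{ExM}), $A_{(\Pi,\Theta)}\in\ExM(A_\min)$ if and only if $\t F_{(\Pi,B_\Theta)}$ is a Dirichlet form, and by Theorem~\ref{lemmaintervallo} this holds exactly when $f_{\Pi,B_\Theta}$ is a Dirichlet form on $\RE^{2}$.

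Next I would isolate the algebraic link between the two parametrizations. For $\xi_{1},\xi_{2}\in\R(\Pi)$ one has $\Pi P_{0}\Pi\,\xi_{1}\ccdot\xi_{2}=P_{0}\xi_{1}\ccdot\xi_{2}$, since $\Pi$ is an orthogonal projector and $\xi_{i}=\Pi\xi_{i}$, whence
\be
f_{\Pi,\Theta}(\xi_{1},\xi_{2})=f_{\Pi,B_\Theta}(\xi_{1},\xi_{2})-P_{0}\xi_{1}\ccdot\xi_{2}\,.
\ee
The quadratic form $\xi\mapsto-P_{0}\xi\ccdot\xi=\frac1\ell(\xi_{1}-\xi_{2})^{2}$ is nonnegative, trivially closed, and Markovian on every subspace of $\RE^{2}$ stable under the unit contraction, because $t\mapsto t_{\#}$ is a normal contraction and hence $|u_{\#}-v_{\#}|\le|u-v|$.

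Then I would argue the two implications. If $A_{(\Pi,\Theta)}\in\ExM(A_\min)$, the reduction above gives that $f_{\Pi,B_\Theta}$ is a Dirichlet form; in particular $\R(\Pi)=\D(f_{\Pi,B_\Theta})$ is stable under $\xi\mapsto\xi_{\#}$, so by the previous paragraph $f_{\Pi,\Theta}$, which by the displayed identity equals $f_{\Pi,B_\Theta}$ plus the form $\xi\mapsto-P_{0}\xi\ccdot\xi$, is a sum of two Markovian forms on the finite-dimensional space $\R(\Pi)$ (where every form is closed), hence is itself Markovian, thus a Dirichlet form; taking $B:=B_\Theta$ yields the asserted decomposition. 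Conversely, if $f_{\Pi,\Theta}$ is a Dirichlet form and $f_{\Pi,\Theta}(\xi_{1},\xi_{2})=f_{\Pi,B}(\xi_{1},\xi_{2})-P_{0}\xi_{1}\ccdot\xi_{2}$ on $\R(\Pi)$ for some Dirichlet form $f_{\Pi,B}$, comparison with the displayed identity forces $B\xi_{1}\ccdot\xi_{2}=(\Theta+\Pi P_{0}\Pi)\xi_{1}\ccdot\xi_{2}$ for all $\xi_{i}\in\R(\Pi)$, i.e.\ $f_{\Pi,B}=f_{\Pi,B_\Theta}$; hence $f_{\Pi,B_\Theta}$ is a Dirichlet form and, by the reduction, $A_{(\Pi,\Theta)}\in\ExM(A_\min)$.

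I do not expect a genuine obstacle here: the statement is essentially a repackaging of Theorem~\ref{lemmaintervallo} in terms of the parameter $\Theta$ rather than $B$, and all the analytic input (the classification of Dirichlet forms on $\RE^{2}$ in Lemma~\ref{Diff}, the identity $\gamma_{0}u_{\#}=(\gamma_{0}u)_{\#}$, the regularity of $F_{N}$) is already available. The only point requiring attention is that one must actually assert that $f_{\Pi,\Theta}$ is a Dirichlet form, and not merely that it admits the decomposition: this is what separates the present statement from the remark preceding it, and it rests precisely on the observation that $-P_{0}\ccdot$ is not just positive but Markovian on $\R(\Pi)$, so that subtracting $P_{0}\ccdot$ from the Dirichlet form $f_{\Pi,B}$ again produces a Dirichlet form.
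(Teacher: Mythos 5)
Your proposal is correct and takes essentially the route the paper intends (it states Theorem \ref{toy} without a separate proof, as a repackaging of Theorem \ref{lemmaintervallo} via the parameter change $B_\Theta=\Pi P_0\Pi+\Theta$ and the identity $F_{(\Pi,\Theta)}=\t F_{(\Pi,B_\Theta)}$): your reduction, the identification $f_{\Pi,B}=f_{\Pi,B_\Theta}$ in the converse, and the appeal to Theorem \ref{dirichlet} are exactly the intended ingredients. The one point you add explicitly — that $-P_0\xi\ccdot\xi=\frac1\ell(\xi_1-\xi_2)^2$ is Markovian on any subspace stable under the unit contraction, so that $f_{\Pi,\Theta}$ itself is a Dirichlet form in the forward direction — is correct and is precisely the content of the paper's unproved assertion that $f_{\Pi,B}$ Dirichlet implies $f_{\Pi,\Theta_B}$ Dirichlet.
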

\section{Markovian extensions of elliptic operators}
Let $\Omega\subset\RE^n$, $n>1$,  a bounded open set 
with a smooth
boundary $\Gamma$. We suppose that $\Omega$ is connected, otherwise we work on each connected component separately.
\par 
Given the differential expression
\be
\nabla\!\cdot\! a\nabla\equiv\sum_{1\le i,j\le n}\frac{\partial\,}{\partial x_{i}}\left(a_{ij}\frac{\partial\,}{\partial x_j}\right)\,.
\ee
we suppose that the real-valued matrix $a(x)\equiv(a_{ij}(x))$ 
is symmetric, that $a_{ij}\in C^\infty(\Omega)$ and that there exist $0<\mu_0\le \mu_{1}<+\infty$  such that
\be
\forall x\in\Omega\,,\ \forall\xi\in\RE^n\,,\qquad  \mu_{0}\|\xi\|^{2}\le\sum_{1\le i,j\le n}a_{ij}(x)\xi_i\xi_j\le \mu_1\|\xi\|^2\,.
\ee  
We denote by $H^k(\Omega)$ the Sobolev-Hilbert  space given
by closure of  $C^{\infty}(\bar\Omega)$ with respect to the norm
\be
\|u\|_{H^k(\Omega)}^2=\sum_{0\le \alpha_1+\dots+\alpha_n\le k} \left\|
\frac{\partial^{\alpha_1}}{\partial x_1^{\alpha_1}}\dots \frac{\partial^{\alpha_n}u}{\partial  x_n^{\alpha_n}}\right\|^2_{L^2(\Omega)}\,.
\ee
Then the spaces $H^s(\Omega)$, $s\ge 0$ real, can be  defined by interpolation as $H^s(\Omega):=[H^k(\Omega),L^2(\Omega)]_\theta$, 
$(1-\theta)k=s$, $0<\theta<1$. Alternatively  $H^s(\Omega)$ can be defined as the space of restrictions to $\Omega$ of the elements of $H^s(\RE^n)$, the latter defined by Fourier transform.
\par
$H^s_0(\Omega)$, $s>0$, denotes  the closure
of $C_c^{\infty}(\Omega)$ with respect to the $H^s(\Omega)$-norm. If $0<s\le 1/2$ then $H^s_0(\Omega)=H^s(\Omega)$, otherwise 
$H^s_0(\Omega)\subsetneq H^s(\Omega)$. 
Here we use also the alternative definitions 
\be
H^s_0(\Omega):=\{u\in H^s(\Omega)\,:\, \gamma_k u=0\,,\ 0\le k<s-1/2\}
\ee
where 
\be
\gamma_k:H^s(\Omega)\to H^{s-k-\frac12}(\Gamma )\,,\quad s> k+1/2\,,
\ee
is defined as the 
unique continuous and
surjective linear maps
 such that
\be
\gamma_k u\,(x)=\frac{\partial^k u}{\partial \nu^k_a}\,(x)
\,,\qquad u\in C^\infty(\bar\Omega)\,,\quad x\in\Gamma\,.
\ee  
Here $\frac{\partial^k\ }{\partial \nu^k_a}$ denotes the $k$-th order 
directional derivative along  the vector $\nu_a:=a\nu$, where $\nu$ is the inward normal vector on
$\Gamma $. \par
The vector spaces $H^s(\Gamma )$, $s\in\RE$, are the Sobolev-Hilbert spaces, defined, since $\Gamma $ can be made a smooth compact Riemannian
manifold, as the
completion of $C^\infty(\Gamma )$ with respect of the scalar
product 
\be
\langle h_1,h_2\rangle_{H^s(\Gamma )}
:=\langle h_1,(-\Delta_{LB}+1)^{s}h_2\rangle_
{L^2(\Gamma )}\,.
\ee
Here the self-adjoint
operator $\Delta_{LB}$ is the Laplace-Beltrami operator in 
$L^2(\Gamma )$; $(-\Delta_{LB}+1)^{s}$ 
can be extended to a unitary map, 
which we denote by the same
symbol, 
\be
(-\Delta_{LB}+1)^{s}:H^{r}(\Gamma )
\to H^{r-2s}(\Gamma )\,.
\ee
For successive notational convenience we pose
\be
\Lambda:=(-\Delta_{LB}+1)^{\frac14}:H^{s}(\Gamma )
\to H^{s-\frac12}(\Gamma )\,,\quad \Sigma:=\Lambda^{-1}
\ee
and we denote by $(\cdot,\cdot)_{-s,s}$ the duality between $H^{-s}(\Gamma)$ and $H^s(\Gamma)$, i.e 
\be
(h_1,h_2)_{-s,s}=\langle\Sigma^{2s}h_1,\Lambda^{2s}h_2\rangle_{L^2(\Gamma)}\,.
\ee
\vskip 10pt
\noindent
{\bf Warning.} Notice that in \cite{[P08]} and \cite{[P10]}  
$\Lambda$ has been defined as 
$(-\Delta_{LB}+1)^{\frac12}$. Such a change in notation is due to a different choice of the space $\fh$: 
$L^2(\Gamma )$ instead of $H^{\frac12}(\Gamma )$.  
\vskip 10pt
\noindent
\begin{remark}\label{norm1/2}
In the following we use also the equivalent Besov-type norm  on $H^s(\Gamma)$, $0<s<1$, defined by
\be
\|h\|^2_{H^s(\Gamma)}=
\|h\|^2_{L^2(\Gamma)}+\int_{\Gamma\times\Gamma}\frac{|h(x)-h(y)|^2}{\|x-y\|^{n+2s-1}}\,
d\sigma(x)d\sigma(y)\,,
\ee
where $\sigma$ denotes surface measure. By 
$|\,|a|-|b|\,|\le |a-b|$, one immediately gets 
$\|\,|h|\,\|_{H^s(\Gamma)}\le \|h\|_{H^s(\Gamma)}$. By such an inequality, $H^{s}(\Gamma)$ is a Dirichlet space for any  $0<s< 1$. By $\|\,|h|\,\|_{H^1(\Gamma)}= 
\|h\|_{H^1(\Gamma)}$, also $H^{1}(\Gamma)$ is a Dirichlet space.
\end{remark}
The symmetric operator $S=-A_\min$,
\be
A_\min :C_c^\infty(\Omega)\subseteq L^2(\Omega)\to L^2(\Omega)\,,\quad A_\min u:=\nabla\!\cdot\! a\nabla u\,, 
\ee
is positive and its Friedrichs' extensions $A_D$ (here the index $D$ stands for Dirichlet boundary conditions) is 
given by
\be
A_D: H^2(\Omega)\cap H^1_0(\Omega)\subseteq L^2(\Omega)\to L^2(\Omega)\,,
\qquad A_Du=\nabla\!\cdot\! a\nabla u
\ee
with corresponding bilinear  form
\be
F_D:H^1_0(\Omega)\times H^1_0(\Omega)\subseteq 
L^2(\Omega)\times L^2(\Omega)\to\RE\,,
\ee
\be
F_D(u,v):= \langle\nabla u,a\nabla v\rangle_{L^2(\Omega)}
\equiv\sum_{1\le i,j\le n}\int_\Omega a_{ij}\,\frac{\partial u}{\partial x_i}\,
\frac{\partial u}{\partial x_{j}}\,dx\,.
\ee
$A_D$ has a compact resolvent and its spectrum
consists of an infinite sequence of negative eigenvalues, each having finite multiplicity.\par
The closure of $A_\min$ is given by 
\be
A_\min^{**}:H^2_0(\Omega)\subseteq L^2(\Omega)\to
L^2(\Omega)\,,\qquad A_\min^{**}u=\nabla\!\cdot\! a\nabla u\,,
\ee 
and in order to find all self-adjoint extensions of $A_\min$ 
we can apply 
Theorems \ref{estensioni} and \ref{qf} with 
\be
A_0=A_D\,,\quad F_0=F_D\,,\quad \fh=L^2(\Gamma )
\,,\quad \tau=\Lambda\gamma_1|(H^2(\Omega)\cap H^1_0(\Omega))\,.
\ee
Notice that $\K(\tau)=H^2_0(\Omega)$ and 
that $\tau$ is surjective by the
surjectivity of 
\be
\gamma: H^{2}(\Omega)\to H^{\frac32}(\Gamma )\times 
H^{\frac12}(\Gamma )\,,\quad \gamma u:=(\gamma_0u,\gamma_1u)\,.
\ee
In order to write down 
the extensions of $A_\min$ together
with their resolvents, we make explicit the operator $G_0$. 
One has $A_\min^*=A_\max$, where $A_{\max}$, the maximal realization of $\diff$, is defined by
\be
A_{\max}:\D(A_\max)\subseteq L^2(\Omega)\to L^2(\Omega)\,,\qquad
A_{\max}u:=\diff u\,,
\ee
\be
\D(A_\max):=\{u\in L^2(\Omega)\,:\, \diff u\in L^2(\Omega)\}\,.
\ee
The maps $\gamma_0$ and $\gamma_1$ can be extended to (see \cite{[LM]}, Chapter 2, Section 6.5)
\be
 \hat\gamma_0:\D(A_{\max})\to H^{-\frac12}(\Gamma )\,,
\ee
\be
\hat\gamma_1:\D(A_{\max})\to H^{-\frac32}(\Gamma )\,,
\ee 
and Green's formula can be
extended to the case in which $u\in \D(A_\max)$, $v\in H^2(\Omega)\cap H^1_0(\Omega)$:
\begin{equation}\label{Green}
\langle A_\max u,v\rangle_{L^2(\Omega)}= \langle
u,A_Dv\rangle_{L^2(\Omega)} +(\hat\gamma_0
u,\gamma_1v)_{-\frac12,\frac12}\,.
\end{equation}
Moreover for any $u\in \D(A_\max)\cap H^1(\Omega)$ one has $\hat\gamma_1
u\in H^{-\frac12}(\Gamma)$ and then for any $v\in H^1(\Omega)$ the "half" Green's formula holds (see e.g. \cite{[McL]}, Theorem 4.4):
\begin{equation}\label{halfGreen}
\langle -A_\max u,v\rangle_{L^2(\Omega)}= \langle
\nabla u,a\nabla v\rangle_{L^2(\Omega)} +(\hat\gamma_1
u,\gamma_0v)_{-\frac12,\frac12}\,.
\end{equation}
By Remark \ref{aggiunto}, since $A_\max=A_\min^*$, we have 
$A_\max G_0h=0$ and so by (\ref{Green}) there follows,
for all $h\in L^{2}(\Gamma )$ and for all $u\in\D(A_D)$,
\be
\langle G_0h,A_Du\rangle_{L^2(\Omega)}= -(\hat \gamma_0
G_0h,\gamma_1u)_{-\frac12,\frac12}\,.
\ee
Since, by (\ref{Green0}),
\begin{align} 
\langle G_0h,A_Du\rangle_{L^2(\Omega)}= 
\langle G_0h,A_\max u\rangle_{L^2(\Omega)}=
\langle G_0h,A_\min^*u\rangle_{L^2(\Omega)}=
-\langle
h,\Lambda\gamma_1 u\rangle_{L^{2}(\Gamma )}\,,
\end{align} 
one obtains $\hat \gamma_0G_0h=\Lambda h$. Thus $G_0h$ is the unique solution of
the Dirichlet boundary value problem
\begin{equation}\label{dbvp}
\begin{cases}
A_\max G_0h=0\,,\\ \hat\gamma_0\, G_0h=\Lambda h\,,
\end{cases}
\end{equation}
i.e. 
\be
G_0\Sigma=K_0\,,
\ee 
where 
\be
K_\lambda :H^{-\frac12}(\Gamma )\to\D(A_\max)\,,\qquad \lambda\ge 0\,,
\ee 
denotes the Poisson operator which provides the unique solution of the Dirichlet problem with boundary data in $H^{-\frac12}(\Gamma )$ (see \cite{[LM]}, Chapter 2, Section 6):
\begin{equation}\label{Klambda}
\begin{cases}
A_\max K_{\lambda}h=\lambda K_{\lambda}h\,,\\ 
\hat\gamma_0\, K_{\lambda}h=h\,.
\end{cases}
\end{equation}
Posing 
\be
R_{\lambda}^{D}:=(-A_{D}+\lambda)^{-1}\,,
\ee
by \eqref{1.2} and by 
\begin{equation}\label{KL}
K_{\lambda}=K_{0}-\lambda R^{D}_{\lambda}K_{0}\,,
\end{equation}
one has 
\be
G_\lambda\Sigma=K_\lambda\,.
\ee
\begin{remark}\label{regularity}
By elliptic regularity  (see e.g. \cite{[G68]}, Proposition III 5.2), $K_\lambda$ is a topological isomorphism from $H^s(\Gamma)$ onto $H^{s+\frac12}(\Omega)$ for any  
$s\ge-\frac12$, 
so that, for all $s\ge 0$, 
\be
G_\lambda h\in H^s(\Omega)\iff h\in H^s(\Gamma)\,.
\ee
Since $\R(G_0)=\K(A_\min^*)=\K(A_\max)$, $G_0h\in H^1_0(\Omega)$
implies $G_0h\in H^2(\Omega)\cap H^1_0(\Omega)=\D(A_D)$. However by
(\ref{dbvp}) this implies $h=0$. Thus (\ref{hyp}) always holds true in
this case and we can apply Theorem \ref{qf} to $S=A_\min$.
\end{remark}
By $\Sigma\hat\gamma_0\, G_0h=h$, for any $(\Pi,\Theta)\in\E(L^2(\Gamma
))$ one has (this is our version of Theorem 2.2 in \cite{[G70]})

\begin{theorem}\label{lemmaforma} Let 
$A_{(\Pi,\Theta)}\in\Ex(A_\min)$ and let $F_{(\Pi,\Theta)}$ the 
symmetric bilinear form associated  with  $-A_{(\Pi,\Theta)}$. Then
\be
F_{(\Pi,\Theta)}:\D(F_{(\Pi,\Theta)})\times \D(F_{(\Pi,\Theta)})\subseteq
L^2(\Omega)\times L^2(\Omega)\to\RE\,,
\ee
\begin{align}
\D(F_{(\Pi,\Theta)})
=\{u\in L^2(\Omega):
u=u_0+K_0\t\gamma_0u\,,\ u_0\in H^1_0(\Omega)\,,\  \t\gamma_0u\in\Lambda\D(f_{\Pi,\Theta})\}\,,
\end{align}
\be
F_{(\Pi,\Theta)}(u,v)=F_D(u_0,v_0)+
f_{\Pi,\Theta}(\Sigma\t\gamma_0u,\Sigma\t\gamma_0v)\,,
\ee
where
\be
\t\gamma_0:H^1_0(\Omega)+\R(K_0)\to H^{-\frac12}(\Gamma )\,,\quad
\t\gamma_0u\equiv\t\gamma_0(u_0+K_0h):=
\gamma_0u_0+\hat\gamma_0K_0h=h\,.
\ee
\end{theorem}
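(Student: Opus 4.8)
The plan is to obtain Theorem \ref{lemmaforma} as a direct translation of the abstract form result, Theorem \ref{qf}, applied to the concrete data $A_0=A_D$, $F_0=F_D$, $\fh=L^2(\Gamma)$ and $\tau=\Lambda\gamma_1|(H^2(\Omega)\cap H^1_0(\Omega))$, the dictionary between the abstract objects and the boundary-trace objects being furnished by the identity $G_0\Sigma=K_0$ established above.

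First I would check that Theorem \ref{qf} is applicable, i.e. that hypothesis \eqref{hypp}, $\R(G_0)\cap\D(F_0)=\{0\}$, holds. This is exactly the content of Remark \ref{regularity}: since $\R(G_0)=\K(A_\max)$ and $\D(F_D)=H^1_0(\Omega)$, a nonzero element of $\R(G_0)$ belongs to $H^1_0(\Omega)$ only if it lies in $H^2(\Omega)\cap H^1_0(\Omega)=\D(A_D)$, which by the Dirichlet problem \eqref{dbvp} forces it to vanish. Hence by Theorem \ref{qf}, for every $(\Pi,\Theta)\in\E(L^2(\Gamma))$ one has
\[
F_{(\Pi,\Theta)}(u,v)=F_D(u_0,v_0)+f_{\Pi,\Theta}(\xi_u,\xi_v),
\]
on $\D(F_{(\Pi,\Theta)})=\{u=u_0+G_0\xi_u:\ u_0\in H^1_0(\Omega),\ \xi_u\in\D(f_{\Pi,\Theta})\}$, the decomposition $u=u_0+G_0\xi_u$ being unique by \eqref{hypp}.

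Next I would substitute $G_0=K_0\Lambda$ (equivalently $G_0\Sigma=K_0$) and change variables $h:=\Lambda\xi_u$, i.e. $\xi_u=\Sigma h$. Since $\Lambda$ is a unitary isomorphism $H^s(\Gamma)\to H^{s-\frac12}(\Gamma)$, and in particular $\Lambda:L^2(\Gamma)\to H^{-\frac12}(\Gamma)$ is onto, the condition $\xi_u\in\D(f_{\Pi,\Theta})$ is equivalent to $h\in\Lambda\D(f_{\Pi,\Theta})\subseteq H^{-\frac12}(\Gamma)$, and $G_0\xi_u=K_0h$; moreover $\R(K_0)=\R(G_0)$, so the sum $H^1_0(\Omega)+\R(K_0)$ is direct and the map $\t\gamma_0$ on it is well defined. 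For $u=u_0+K_0h$ with $u_0\in H^1_0(\Omega)$ one gets $\t\gamma_0u=\gamma_0u_0+\hat\gamma_0K_0h=0+h=h$, using $\gamma_0u_0=0$ and the boundary condition $\hat\gamma_0K_0h=h$ from \eqref{Klambda}. Feeding $\xi_u=\Sigma\t\gamma_0u$ back into the displayed identities yields exactly the claimed description of $\D(F_{(\Pi,\Theta)})$ and the formula $F_{(\Pi,\Theta)}(u,v)=F_D(u_0,v_0)+f_{\Pi,\Theta}(\Sigma\t\gamma_0u,\Sigma\t\gamma_0v)$.

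As for difficulties, there is essentially no analytic obstacle: the statement is a bookkeeping reformulation of Theorem \ref{qf}. The only point needing a little care is the tracking of Sobolev indices through $\Lambda$ and $\Sigma$ — one must verify that, for $\xi_u\in L^2(\Gamma)$, the boundary datum $h=\Lambda\xi_u$ lands in $H^{-\frac12}(\Gamma)$, the natural domain on which the Poisson operator $K_0$ of \eqref{Klambda} is the topological isomorphism supplied by Remark \ref{regularity}, so that all the identifications, the equality $\R(K_0)=\R(G_0)$, and the directness of the decomposition are legitimate.
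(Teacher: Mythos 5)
Your proposal is correct and follows exactly the paper's route: the theorem is obtained by applying the abstract Theorem \ref{qf} (with $A_0=A_D$, $F_0=F_D$, $\fh=L^2(\Gamma)$, $\tau=\Lambda\gamma_1|(H^2(\Omega)\cap H^1_0(\Omega))$), with hypothesis \eqref{hypp} guaranteed by Remark \ref{regularity}, and then translating via $G_0\Sigma=K_0$, i.e. $\hat\gamma_0 G_0h=\Lambda h$, so that $\xi_u=\Sigma\t\gamma_0u$. Your bookkeeping of the change of variables $h=\Lambda\xi_u$ and of the identity $\t\gamma_0(u_0+K_0h)=h$ is exactly the identification the paper uses, so there is nothing to add.
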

\begin{remark}\label{remarkforma} Notice that, for any $s\in(0,1]$,
\be
\D(F_{(\Pi,\Theta)})\subseteq H^s(\Omega)\quad\iff\quad 
\D(f_{\Pi,\Theta})\subseteq H^s(\Gamma )\,.
\ee
Indeed, 
since $u=u_0+G_0h$ with $u_0\in H^1_0(\Omega)$ and $h\in \D(f_{\Pi,\Theta})$, one has that $u\in H^s(\Omega)$ if and only if 
$G_0 h\in H^s(\Omega)$. By Remark \ref{regularity} 
$G_0 h\in H^s(\Omega)$ if and only if  $h\in H^{s}(\Gamma )$.
\end{remark}
By Theorem \ref{lemmaforma} and Remark \ref{remarkforma} one immediately obtains the following
\begin{corollary}\label{lemmaforma1} If $A_{(\Pi,\Theta)}\in \Ex_{0}(A_{\min})$ and if $\D(F_{(\Pi,\Theta)})\subseteq H^1(\Omega)$ then
\begin{align}
&\D(F_{(\Pi,\Theta)})=\{u\in H^1(\Omega):\gamma_0u\in\D(f_{\Pi_\Lambda,\Theta_{\Sigma}})\}\\
=&\{u\in L^2(\Omega):
u=u_0+K_0\gamma_0u\,,\ u_0\in H^1_0(\Omega)\,,\  \gamma_0u\in\D(f_{\Pi_\Lambda,\Theta_{\Sigma}})\}\,,
\end{align}
\be
F_{(\Pi,\Theta)}(u,v)=F_D(u_0,v_0)+
f_{\Pi_\Lambda,\Theta_{\Sigma}}(\gamma_0u,\gamma_0v)\,.
\ee
Here $\Pi_\Lambda$ denotes the orthogonal projection onto the $L^2(\Gamma)$-closure of 
$\Lambda\D(f_{\Pi,\Theta})$ and $\Theta_{\Sigma}$ is the positive self-adjoint operator in $\R(\Pi_\Lambda)$ associated with the closed, densely defined, positive symmetric bilinear form 
\be
f_{\Pi_\Lambda,\Theta_{\Sigma}}:\D(f_{\Pi_\Lambda,\Theta_{\Sigma}})\times\D(f_{\Pi_\Lambda,\Theta_{\Sigma}})\subseteq \R(\Pi_\Lambda)\times \R(\Pi_\Lambda)\to\RE\,,
\ee
\be
\D(f_{\Pi_\Lambda,\Theta_{\Sigma}}):=\Lambda\D(f_{\Pi,\Theta})\,,\qquad f_{\Pi_\Lambda,\Theta_{\Sigma}}(h_1,h_2):=f_{\Pi,\Theta}(\Sigma h_1,\Sigma h_2)
\ee
\end{corollary}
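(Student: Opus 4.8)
The plan is to read the statement off Theorem~\ref{lemmaforma}, using the extra hypothesis $\D(F_{(\Pi,\Theta)})\subseteq H^1(\Omega)$ to replace the abstract boundary objects by the genuine trace on $H^1(\Omega)$. First I would record, via Remark~\ref{remarkforma} with $s=1$, that the hypothesis is equivalent to $\D(f_{\Pi,\Theta})\subseteq H^1(\Gamma)$; since $\Lambda$ maps $H^1(\Gamma)$ continuously into $H^{\frac12}(\Gamma)$, it follows that $\Lambda\D(f_{\Pi,\Theta})\subseteq H^{\frac12}(\Gamma)$, and hence, by the isomorphism property of $K_0$ from Remark~\ref{regularity}, every $u=u_0+K_0\t\gamma_0u\in\D(F_{(\Pi,\Theta)})$ (in the notation of Theorem~\ref{lemmaforma}) satisfies $\t\gamma_0u\in H^{\frac12}(\Gamma)$ and $K_0\t\gamma_0u\in H^1(\Omega)$.

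Next I would identify $\t\gamma_0$ with the ordinary trace $\gamma_0\colon H^1(\Omega)\to H^{\frac12}(\Gamma)$ on $\D(F_{(\Pi,\Theta)})$. For $h\in H^{\frac12}(\Gamma)$ the datum $K_0h$ lies in $H^1(\Omega)$ and its extended trace $\hat\gamma_0K_0h=h$ coincides with $\gamma_0K_0h$ (the standard consistency of $\hat\gamma_0$ with $\gamma_0$ on $\R(K_0)\cap H^1(\Omega)$, obtained from $K_0h\in C^\infty(\bar\Omega)$ for smooth $h$ and passing to the limit); since $\gamma_0u_0=0$, this gives $\gamma_0u=\t\gamma_0u$ for all $u\in\D(F_{(\Pi,\Theta)})$, whence the inclusion $\D(F_{(\Pi,\Theta)})\subseteq\{u\in H^1(\Omega):\gamma_0u\in\Lambda\D(f_{\Pi,\Theta})\}$ together with the representation $u=u_0+K_0\gamma_0u$. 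Conversely, given $u\in H^1(\Omega)$ with $\gamma_0u\in\Lambda\D(f_{\Pi,\Theta})$, setting $u_0:=u-K_0\gamma_0u$ yields $u_0\in H^1_0(\Omega)$ and exhibits $u$ in the form required by Theorem~\ref{lemmaforma}, so $u\in\D(F_{(\Pi,\Theta)})$. Since $\D(f_{\Pi_\Lambda,\Theta_\Sigma}):=\Lambda\D(f_{\Pi,\Theta})$, this produces both displayed descriptions of the domain.

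It then remains to check that $f_{\Pi_\Lambda,\Theta_\Sigma}(h_1,h_2):=f_{\Pi,\Theta}(\Sigma h_1,\Sigma h_2)$ really defines a closed, densely defined, positive symmetric form on $\R(\Pi_\Lambda)$, so that the self-adjoint operator $\Theta_\Sigma$ is well posed. Density is built into the definition of $\Pi_\Lambda$, positivity and symmetry are inherited from $f_{\Pi,\Theta}$ (which is closed because $\Theta\ge 0$, as $A_{(\Pi,\Theta)}\in\ExP(A_{\min})$), and closedness follows because $\Sigma\in\B(L^2(\Gamma))$: a sequence which is $L^2$-convergent and $f_{\Pi_\Lambda,\Theta_\Sigma}$-Cauchy is mapped by $\Sigma$ to an $L^2$-convergent, $f_{\Pi,\Theta}$-Cauchy sequence, whose limit therefore lies in $\D(f_{\Pi,\Theta})$, i.e. the original limit lies in $\D(f_{\Pi_\Lambda,\Theta_\Sigma})$. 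Finally, substituting $\t\gamma_0u=\gamma_0u$ into the formula of Theorem~\ref{lemmaforma} gives $F_{(\Pi,\Theta)}(u,v)=F_D(u_0,v_0)+f_{\Pi,\Theta}(\Sigma\gamma_0u,\Sigma\gamma_0v)=F_D(u_0,v_0)+f_{\Pi_\Lambda,\Theta_\Sigma}(\gamma_0u,\gamma_0v)$, which is the asserted formula.

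The argument is thus essentially bookkeeping on top of Theorem~\ref{lemmaforma}, Remark~\ref{remarkforma} and the mapping properties of $K_0$, $\Lambda$, $\Sigma$. The one point deserving a touch of care — what I would regard as the genuinely delicate step — is the consistency of the extended trace $\hat\gamma_0$ on $\R(K_0)$ with the usual $H^1(\Omega)$-trace $\gamma_0$ once the boundary datum is known to lie in $H^{\frac12}(\Gamma)$, together with the verification that the pulled-back form $f_{\Pi_\Lambda,\Theta_\Sigma}$ is genuinely closed (and not merely closable), so that $\Theta_\Sigma$ is defined as a bona fide positive self-adjoint operator.
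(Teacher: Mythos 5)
Your argument is correct and is essentially the paper's own route: the corollary is presented there as an immediate consequence of Theorem \ref{lemmaforma} and Remark \ref{remarkforma} (with $s=1$), and your write-up just supplies the bookkeeping — identifying $\t\gamma_0$ with the ordinary trace $\gamma_0$ once the boundary data are known to lie in $H^{\frac12}(\Gamma)$, checking the converse inclusion via $u_0:=u-K_0\gamma_0u$, and verifying closedness of the pulled-back form using the boundedness of $\Sigma$ on $L^2(\Gamma)$. No gaps; nothing further is needed.
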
 
Let us now define the bounded linear map 
\be
\hat\Pi_{\Lambda}:H^{-\frac12}(\Gamma)\to H^{-\frac12}(\Gamma)\,,\quad \hat\Pi_{\Lambda}:=\Lambda\Pi\Sigma\,.
\ee
It is immediate to check that $\hat \Pi_{\Lambda}$ is the orthogonal projector in the Hilbert space $H^{-\frac12}(\Gamma)$ such that $\R(\hat \Pi_{\Lambda})=\Lambda\R(\Pi)$. Since $\R(\Pi_{\Lambda})\subseteq\Lambda\R(\Pi)$, we can define the injection
\be
\t\Pi_{\Lambda}:\R(\Pi_{\Lambda})\to H^{-\frac12}(\Gamma)\,,\quad 
\t\Pi_{\Lambda}:=\hat\Pi_{\Lambda}|\R(\Pi_{\Lambda})
\equiv\Lambda\Pi\Sigma|\R(\Pi_{\Lambda})
\ee 
and, by using the duality $(\cdot,\cdot)_{-\frac12,\frac12}$, its dual
\be
\t \Pi_{\Lambda}^{\star}:H^{\frac12}(\Gamma)\to \R(\Pi_{\Lambda})
\ee
By using the same duality we denote by 
\be
K^{\star}_{\lambda}:L^{2}(\Omega)\to H^{\frac12}(\Gamma)
\ee
the dual of the linear operator
\be
K_{\lambda}:H^{-\frac12}(\Gamma)\to L^{2}(\Omega)\,.
\ee
Notice that, by the definition of $G_{\lambda}$ and by the relation $G_{\lambda}\Sigma=  K_{\lambda}$, one has
\begin{equation}\label{K*}
K_{\lambda}^{\star}=\Sigma G_{\lambda}^{*}=\gamma_{1}R^{D}_{\lambda}\,.
\end{equation}
Having introduced these notation, we can state the following result, which provides an alternative Kre\u\i n's formula (of the kind provided in \cite{[BGW]}) for the resolvent of  $A_{(\Pi,\Theta)}$ in the case $\D(F_{(\Pi,\Theta)})  \subseteq H^1(\Omega)$: 
  
\begin{lemma}\label{lemmarisolvente} If $\D(F_{(\Pi,\Theta)})\subseteq H^1(\Omega)$ then  
the resolvent $R_{\lambda}^{(\Pi,\Theta)}$ of $A_{(\Pi,\Theta)}$ in $\Ex_{0}(A_{\min})$ is given by 
\be
R_{\lambda}^{(\Pi,\Theta)}= R_{\lambda}^{D}+K_{\lambda}\t\Pi_{\Lambda}(\Theta_{\Sigma}+\lambda \t\Pi^{\star}_{\Lambda}K^{\star}_{0}K_{\lambda}\t\Pi_{\Lambda})^{-1}\t\Pi^{\star}_{\Lambda}K^{\star}_{\lambda}\,.
\ee
In particular, taking  $\Pi=\uno$, the resolvent $R^{(\Theta)}_{\lambda}$ of $A_{(\Theta)}$
is given by 
\be
R_{\lambda}^{(\Theta)}= R_{\lambda}^{D}+K_{\lambda}(\Theta_{\Sigma}+\lambda K^{\star}_{0}K_{\lambda})^{-1}K^{\star}_{\lambda}\,.
\ee
\end{lemma}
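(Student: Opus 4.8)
The plan is to obtain the formula by rewriting Kre\u\i n's general resolvent formula \eqref{R} of Theorem \ref{estensioni}(iii): replace $G_\lambda,G_\lambda^*,G_0^*$ by the Poisson operators $K_\lambda,K_0$ and conjugate the couple $(\Pi,\Theta)$ into $(\Pi_\Lambda,\Theta_\Sigma)$ by means of $\Lambda$ and $\Sigma$. Since $A_{(\Pi,\Theta)}\in\Ex_{0}(A_{\min})$ one has $-A_{(\Pi,\Theta)}\ge0$ and $-A_D\ge0$, so $\lambda>0$ lies in $\rho(A_D)\cap\rho(A_{(\Pi,\Theta)})$ and \eqref{R} applies with $z=\bar z=\lambda$. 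Inserting $G_\lambda=K_\lambda\Lambda$ (from $G_\lambda\Sigma=K_\lambda$) and $G_\mu^*=\Lambda K_\mu^\star$ (from \eqref{K*}) turns \eqref{R} into
\[
R_\lambda^{(\Pi,\Theta)}=R_\lambda^{D}+K_\lambda\,\Lambda\Pi\left(\Theta+\lambda\,\Pi\Lambda K_0^\star K_\lambda\Lambda\Pi\right)^{-1}\Pi\Lambda\,K_\lambda^\star ,
\]
so it remains to prove that, as bounded operators on $L^2(\Omega)$, the right-hand side here coincides with $R_\lambda^{D}+K_\lambda\t\Pi_\Lambda\left(\Theta_\Sigma+\lambda\t\Pi_\Lambda^\star K_0^\star K_\lambda\t\Pi_\Lambda\right)^{-1}\t\Pi_\Lambda^\star K_\lambda^\star$. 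The special case $\Pi=\uno$ then follows, since by Remark \ref{remarkforma} the hypothesis $\D(F_{(\Pi,\Theta)})\subseteq H^{1}(\Omega)$ forces $\D(f_\Theta)\subseteq H^{\frac12}(\Gamma)$, which is $L^2(\Gamma)$-dense, whence $\Pi_\Lambda=\uno$, $\t\Pi_\Lambda=\uno$.

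I would prove this equality by testing on an arbitrary $f\in L^2(\Omega)$. By Theorem \ref{estensioni}(iii) the element $\eta:=\left(\Theta+\lambda\,\Pi G_0^*G_\lambda\Pi\right)^{-1}\Pi G_\lambda^*f$ is well defined and lies in $\D(\Theta)\subseteq\D(f_{\Pi,\Theta})$, and the correction term of the first formula at $f$ equals $G_\lambda\eta$. Put $\zeta:=\Lambda\eta$. Because $\eta\in\D(f_{\Pi,\Theta})$ we get $\zeta\in\Lambda\D(f_{\Pi,\Theta})=\D(f_{\Pi_\Lambda,\Theta_\Sigma})\subseteq\R(\Pi_\Lambda)$, and since $\R(\Pi_\Lambda)\subseteq\Lambda\R(\Pi)=\R(\hat\Pi_\Lambda)$ the projector $\hat\Pi_\Lambda$ acts as the identity on $\R(\Pi_\Lambda)$, so $\t\Pi_\Lambda\zeta=\zeta$ in $H^{-\frac12}(\Gamma)$ and $K_\lambda\zeta=K_\lambda\Lambda\eta=G_\lambda\eta$. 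Hence the problem reduces to showing $\zeta\in\D(\Theta_\Sigma)$ with $\left(\Theta_\Sigma+\lambda\t\Pi_\Lambda^\star K_0^\star K_\lambda\t\Pi_\Lambda\right)\zeta=\t\Pi_\Lambda^\star K_\lambda^\star f$; bounded invertibility of the bracketed operator for $\lambda>0$ (as for \eqref{R}: $\Theta_\Sigma\ge0$ and the added term is positive) then makes the correction term of the second formula at $f$ equal to $K_\lambda\t\Pi_\Lambda\zeta=G_\lambda\eta$ as well.

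That last identity I would check weakly, testing against $h\in\D(f_{\Pi_\Lambda,\Theta_\Sigma})=\Lambda\D(f_{\Pi,\Theta})$ and writing $k:=\Sigma h\in\D(f_{\Pi,\Theta})\subseteq\R(\Pi)$. One uses, in turn: the definition of $\Theta_\Sigma$, which gives $f_{\Pi_\Lambda,\Theta_\Sigma}(\zeta,h)=f_{\Pi,\Theta}(\Sigma\zeta,\Sigma h)=f_{\Pi,\Theta}(\eta,k)=\langle\Theta\eta,k\rangle$ (as $\eta\in\D(\Theta)$); the relations $K_\mu^\star=\Sigma G_\mu^*$ and the self-adjointness of $\Sigma$ on $L^2(\Gamma)$; and the fact that, under the duality $(\cdot,\cdot)_{-\frac12,\frac12}$, $\t\Pi_\Lambda^\star$ is $\Pi_\Lambda$ restricted to $H^{\frac12}(\Gamma)$, so that $\t\Pi_\Lambda$ and $\t\Pi_\Lambda^\star$ may be dropped when paired with elements of $\R(\Pi_\Lambda)$. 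After these substitutions the tested identity becomes $\langle\left(\Theta+\lambda\,\Pi G_0^*G_\lambda\Pi\right)\eta,k\rangle=\langle\Pi G_\lambda^*f,k\rangle$ for all $k\in\D(f_{\Pi,\Theta})$, which is precisely the weak form of the relation defining $\eta$. Since $f_{\Pi_\Lambda,\Theta_\Sigma}$ is closed and $h\mapsto\langle\t\Pi_\Lambda^\star K_\lambda^\star f,h\rangle_{L^2(\Gamma)}$ is bounded, this upgrades to $\zeta\in\D(\Theta_\Sigma)$ with the asserted equation, completing the proof.

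The main delicacy is the bookkeeping of the Sobolev scales: $\Lambda$ is an isomorphism only between the shifted spaces $H^s(\Gamma)\to H^{s-\frac12}(\Gamma)$ and not on $L^2(\Gamma)$, so $\Theta$ and $\Theta_\Sigma$ are \emph{not} literally similar; each insertion of $\Sigma\Lambda=\uno$ must be kept inside the form domains, where $\Sigma\Lambda$ does carry $\D(f_{\Pi_\Lambda,\Theta_\Sigma})$ onto $\D(f_{\Pi,\Theta})$, which is why the verification is carried out at the level of the closed forms rather than of the operators. Ancillary points, both used above, are that $\t\Pi_\Lambda^\star=\Pi_\Lambda|_{H^{\frac12}(\Gamma)}$ under the duality and that $\hat\Pi_\Lambda$ restricts to the identity on $\R(\Pi_\Lambda)$ (immediate from $\R(\Pi_\Lambda)\subseteq\Lambda\R(\Pi)$ and the $L^2(\Gamma)$-density of $\Lambda\D(f_{\Pi,\Theta})$ in $\R(\Pi_\Lambda)$), and that all operators in the final formula --- $K_\lambda$, $K_\lambda^\star=\gamma_1R^D_\lambda$, $K_0^\star K_\lambda$, $\t\Pi_\Lambda$, $\t\Pi_\Lambda^\star$ --- are bounded between the indicated spaces.
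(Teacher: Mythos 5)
Your argument is correct in substance and follows the paper's own route: it starts from Kre\u\i n's formula \eqref{R} and converts $(\Pi,\Theta)$ into $(\Pi_\Lambda,\Theta_\Sigma)$ by means of $\Lambda$ and $\Sigma$, exactly as the paper does; where the paper disposes of this with the one-line operator identity $(\Theta+M)^{-1}=\Sigma(\Theta_\Sigma+\Sigma M\Sigma)^{-1}\Sigma$, you verify the same conjugation weakly, vector by vector, at the level of the closed forms. That is a legitimate (indeed more careful) way to handle the fact that $\Theta_\Sigma$ is defined through the form $f_{\Pi,\Theta}(\Sigma\cdot,\Sigma\cdot)$ and is not literally similar to $\Theta$; your identifications $\t\Pi_\Lambda^\star=\Pi_\Lambda|H^{\frac12}(\Gamma)$ and $\t\Pi_\Lambda\zeta=\zeta$ on $\R(\Pi_\Lambda)$ are correct, and the reduction of the tested identity to the defining relation for $\eta$ checks out.

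Two points should be tightened. First, ``bounded invertibility \dots since $\Theta_\Sigma\ge 0$ and the added term is positive'' is not an argument: a sum of two nonnegative operators need not be boundedly invertible, nor even injective. What your proof actually needs is only that $T:=\Theta_\Sigma+\lambda\t\Pi_\Lambda^\star K_0^\star K_\lambda\t\Pi_\Lambda$ is injective (together with the range membership $\t\Pi_\Lambda^\star K_\lambda^\star f=T\zeta$, which you prove), and this does follow from material you already use: by Remark \ref{traslazioneforma} the quadratic form of $T$ at $h$ equals $(F_{(\Pi,\Theta)}+\lambda)(K_\lambda h)\ge\lambda\|K_\lambda h\|^2_{L^2(\Omega)}$, which is strictly positive for $h\ne 0$ since $K_\lambda$ is injective; bounded invertibility is neither needed nor evidently true, so drop that claim and state injectivity with this one-line reason. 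Second, your justification of the case $\Pi=\uno$ is a non sequitur: $\Pi_\Lambda$ projects onto the $L^2(\Gamma)$-closure of $\Lambda\D(f_\Theta)$, not of $\D(f_\Theta)$, and density of $\D(f_\Theta)$ inside $H^{\frac12}(\Gamma)$ does not yield density of $\Lambda\D(f_\Theta)$ in $L^2(\Gamma)$. The paper's ``in particular'' tacitly assumes $\Pi_\Lambda=\uno$ (i.e. $\Lambda\D(f_\Theta)$ dense), so either make that assumption explicit or keep $\t\Pi_\Lambda$, $\t\Pi_\Lambda^\star$ in the special case; as written, your claimed implication is not proved.
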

\begin{proof} The thesis is consequence of formula \eqref{R}, of the relation $G_{\lambda}\Sigma=K_{\lambda}$ and of the definition of $\hat\Pi_{\Lambda}$, by noticing that, for any 
bounded linear operator $M$ such that $0\in\rho(\Theta+M)$, one has
\be
(\Theta+M)^{-1}=\Sigma (\Theta_{\Sigma}+\Sigma M\Sigma)^{-1}\Sigma\,.
\ee
\end{proof}
Let us now denote by $A_N$ the
self-adjoint extension corresponding to Neumann boundary condition, and by 
$F_N$ the symmetric bilinear form associated  with  $-A_N$, i.e.  
\be
F_N:H^1(\Omega)\times H^1(\Omega)\subseteq L^2(\Omega)\times
L^2(\Omega)\to\RE\,,\quad
F_N(u,v):=\langle \nabla u,a\nabla v\rangle_{L^2(\Omega)}\,.
\ee
Decomposing any $u\in H^1(\Omega)$ 
as $u=u_0+K_0\gamma_0 u$, $u_0\in H^1_0(\Omega)$, one gets, by \eqref{halfGreen}, 
\begin{align}\label{FNFD}
F_N(u,v)
=F_D(u_0,v_0)
-(P_{0}\gamma_0u,\gamma_0v)_{-\frac12,\frac12}
\,,
\end{align}
where $P_{0}$ denotes the Dirichlet-to-Neumann operator over $\Gamma $ defined by
\be 
P_{0}:H^{s}(\Gamma )\to H^{s-1}(\Gamma )\,,\quad s\ge-\frac12\,,\quad
P_{0}:=\hat\gamma_1\, K_0\,.
\ee
Notice that (see e.g. \cite{[G68]}, Theorem III 1.1)
\be
\forall s\ge-\frac{1}{2}\,,\quad 
P_{0}\in \B(H^s(\Gamma ),H^{s-1}(\Gamma ))\,.
\ee
Moreover $P_{0}$ is $L^2(\Gamma )$-symmetric 
(by Green's formula) and 
\be
P_{0}:H^1(\Gamma)\subseteq L^2(\Gamma)\to L^2(\Gamma)
\ee
is a negative self-adjoint operator. By Corollary \ref{lemmaforma1}, $A_N=A_{(\uno,\Theta)}$ with 
\be
\Theta=-\Lambda P_{0}\Lambda :H^2(\Gamma)\subseteq L^2(\Gamma)\to L^2(\Gamma)
\ee
and 
\begin{align}
f_{\Pi_\Lambda,\Theta_\Sigma}(h_1,h_2)=-f_{P_{0}}(h_1,h_2)
=
-(P_{0}h_1,h_2)_{-\frac12,\frac12}=F_N(K_0h_1,K_0h_2)
\end{align}
with $\D(f_{P_{0}})=H^\frac12(\Gamma)$.\par
\begin{remark}\label{regular}
It is easy to check that both $F_{D}$ and $F_{N}$ are Dirichlet forms on $L^{2}(\Omega)$, see e.g. \cite{[Fu]}, Examples 1.2.1 and 1.2.3. Both are local and irreducible  (see e.g. next Corollary \ref{irr}),  $F_{D}$ 
is transient since $1\notin \D(F_{D})=H_{0}^{1}(\Omega)$ while $F_{N}$ is recurrent since 
$1\in \D(F_{N})=H^{1}(\Omega)$ and $F_{N}(1)=0$. Moreover $F_{D}$ is regular on $\Omega$, while $F_{N}$ is regular on $\bar\Omega$. The corresponding  diffusions are, in the case $A_{\min}=\Delta|C^{\infty}_{c}(\Omega)$,  the absorbing Brownian motion on $\Omega$ and the reflecting Brownian motion on $\bar\Omega$ respectively (see e.g. Example 3.5.9 in \cite{[CF]}). 
\end{remark}
By the following result $A_N$ is a fundamental object  as regards our purposes (for the proof see \cite{[Fu69]}, Theorem 5.1, \cite{[Fu]}, Theorem 2.3.1 and \cite{[FOT]}, Theorem 3.3.1):
\begin{theorem}\label{maximum} $A_N$ is the maximal element of $\ExM(A_\min)$.
\end{theorem}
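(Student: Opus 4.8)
The plan is to establish the statement in two steps: (a) $A_N\in\ExM(A_\min)$, and (b) $A\preceq A_N$ for every $A\in\ExM(A_\min)$, where $\preceq$ is the semi-order on $\ExP(A_\min)\supseteq\ExM(A_\min)$ defined above; then $A_N$ is an upper bound of $\ExM(A_\min)$ belonging to it, hence the maximum. Step (a) is the easy half and is essentially Remark~\ref{regular}: $F_N(u)=\langle\nabla u,a\nabla u\rangle_{L^2(\Omega)}$ on $H^1(\Omega)$ is closed, positive and densely defined, and $\nabla u_{\#}=1_{\{0<u<1\}}\nabla u$ a.e.\ gives $F_N(u_{\#})\le F_N(u)$; thus $F_N$ is a densely defined Dirichlet form and $A_N$ is Markovian by the Beurling--Deny criterion (Theorem~\ref{dirichlet}). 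By the definition of $\preceq$, step (b) means: for every $A\in\ExM(A_\min)$ one has $\D(F_A)\subseteq\D(F_N)=H^1(\Omega)$ and $F_A(u)\ge F_N(u)$ for all $u\in\D(F_A)$.

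Fix $A\in\ExM(A_\min)$, so $F_A$ is a densely defined Dirichlet form on $L^2(\Omega)$. Since $A_\min\subseteq-A$, the form $\phi,\psi\mapsto\langle-A_\min\phi,\psi\rangle=\langle\nabla\phi,a\nabla\psi\rangle$ on $C_c^\infty(\Omega)$ is a restriction of $F_A$; being closed, $F_A$ extends the closure $F_D$ of this form, so $H^1_0(\Omega)\subseteq\D(F_A)$, $F_A=F_D$ on $H^1_0(\Omega)$, and in particular $F_A(\phi)=\langle\nabla\phi,a\nabla\phi\rangle$ for $\phi\in C_c^\infty(\Omega)$.

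The core is then a structural analysis of $F_A$ through the Yosida approximation of Theorem~\ref{Res-Form}: $F_A(u)=\lim_{\lambda\uparrow\infty}F_A^\lambda(u)$ with $F_A^\lambda=\breve F_A^\lambda+\check F_A^\lambda$, where $\breve F_A^\lambda$ carries a positive symmetric Radon measure $\sigma_A^\lambda$ on $\Omega\times\Omega$ and $\check F_A^\lambda$ a density $0\le s_A^\lambda\le1$, both nonnegative. Testing on $\phi\in C_c^\infty(\Omega)$, where $F_A^\lambda(\phi)\to F_A(\phi)=\langle\nabla\phi,a\nabla\phi\rangle$, one shows that (i) the interior killing measures $\lambda s_A^\lambda\,dm$ converge vaguely to $0$ on $\Omega$, and (ii) the measures $\lambda\,\sigma_A^\lambda$ concentrate on the diagonal of $\Omega\times\Omega$ so that $\breve F_A^\lambda(\phi)\to\langle\nabla\phi,a\nabla\phi\rangle$ and no off-diagonal jump part survives inside $\Omega\times\Omega$; hence $F_A$ is strongly local on $\Omega$, its strongly-local part equals $\langle\nabla\cdot,a\nabla\cdot\rangle_{L^2(\Omega)}$, and the only further Beurling--Deny data of $F_A$ is a nonnegative form supported on $\Gamma$. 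Passing to the limit for a general $u\in\D(F_A)$ and using lower semicontinuity together with the positivity of the boundary contribution yields $u\in H^1(\Omega)$ and $F_A(u)\ge\langle\nabla u,a\nabla u\rangle_{L^2(\Omega)}=F_N(u)$, i.e.\ $A\preceq A_N$. This is precisely the Fukushima--Watanabe theorem (\cite{[Fu69]}, Theorem~5.1; \cite{[Fu]}, Theorem~2.3.1; \cite{[FOT]}, Theorem~3.3.1); equivalently it is Silverstein's identification of $F_N$, the Dirichlet form of the reflecting diffusion, with the reflected Dirichlet space --- hence the maximal Silverstein extension --- of the transient form $F_D$ (cf.\ \cite{[CF]}).

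The main obstacle I expect is the inclusion $\D(F_A)\subseteq H^1(\Omega)$. A priori $F_A$ is only abstractly closed, and one must rule out that a Markovian extension of $A_\min$ has a strictly larger form domain while still agreeing with $F_D$ on $H^1_0(\Omega)$. The delicate point is the behaviour of $u\in\D(F_A)$ near $\Gamma$: the Beurling--Deny decomposition of $F_A$ may contain Wentzell-type jump and killing terms living on $\Gamma$, and one must show these are nonnegative and cannot cancel any interior Dirichlet energy --- which is where the strong locality of $F_A$ inside $\Omega$ (forced by $F_A\supseteq F_D$), quasi-continuity and capacity arguments on $\bar\Omega$ come into play.
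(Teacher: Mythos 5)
Your proposal follows essentially the same route as the paper: the paper offers no proof of its own but quotes this statement as the Fukushima--Watanabe theorem, citing precisely the sources you invoke for your step (b) (\cite{[Fu69]}, Theorem 5.1, \cite{[Fu]}, Theorem 2.3.1, \cite{[FOT]}, Theorem 3.3.1), while your step (a) --- that $F_N$ is a Dirichlet form, hence $A_N\in\ExM(A_\min)$ --- is the easy observation already recorded in Remark \ref{regular}. Your Yosida/Beurling--Deny sketch of step (b) is only heuristic as written (in particular the Beurling--Deny decomposition requires a regularity of $F_A$ you do not establish), but since you ultimately rest the hard inclusion $\D(F_A)\subseteq H^1(\Omega)$ and the inequality $F_A\ge F_N$ on the same cited theorem as the paper does, the two approaches coincide.
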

Since  $A_{N}=A_{(\uno,-\Lambda P_{0}\Lambda)}$, $A_{K}=A_{(\uno,\zero)}$ and 
 $(\uno,-\Lambda P_{0}\Lambda)\prec(\uno,\zero)$, one has  
 \begin{corollary} \label{CK} The Kre\u\i n extension $A_{K}$ of $A_{\min}$ is never Markovian.
\end{corollary}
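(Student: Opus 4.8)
The plan is to read this off directly from Theorem \ref{maximum} by comparing $A_K$ with $A_N$ in the semi-order $\preceq$ on $\ExP(A_{\min})$. Recall that $A_K=A_{(\uno,\zero)}$, while, as established just above, $A_N=A_{(\uno,-\Lambda P_0\Lambda)}$. Both couples lie in $\E(L^2(\Gamma))$ with non-negative second entry: indeed $-\Lambda P_0\Lambda\ge 0$ since $P_0$ is a negative self-adjoint operator, so $\langle-\Lambda P_0\Lambda\,\xi,\xi\rangle=\langle-P_0\Lambda\xi,\Lambda\xi\rangle\ge0$. Hence $A_K,A_N\in\ExP(A_{\min})$ and the equivalence recalled above (between $\preceq$ on extensions and $\preceq$ on the parametrizing couples) applies: $A_N\preceq A_K$ iff $(\uno,-\Lambda P_0\Lambda)\preceq(\uno,\zero)$, i.e. iff $\D(f_{\uno,-\Lambda P_0\Lambda})\subseteq\D(f_{\uno,\zero})$ and $f_{\uno,-\Lambda P_0\Lambda}(\xi)\ge f_{\uno,\zero}(\xi)$ on the smaller domain.

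First I would verify that in fact $A_N\prec A_K$ \emph{strictly}. The form $f_{\uno,\zero}$ is the one of the everywhere-defined bounded operator $\zero$ on $\R(\uno)=L^2(\Gamma)$, so $\D(f_{\uno,\zero})=L^2(\Gamma)$ and $f_{\uno,\zero}\equiv0$. On the other hand, by the computation recorded after Corollary \ref{lemmaforma1}, $f_{\uno,-\Lambda P_0\Lambda}(\xi)=-f_{P_0}(\Lambda\xi,\Lambda\xi)=-(P_0\Lambda\xi,\Lambda\xi)_{-\frac12,\frac12}\ge0$, with domain $\D(f_{\uno,-\Lambda P_0\Lambda})=\{\xi:\Lambda\xi\in\D(f_{P_0})=H^{\frac12}(\Gamma)\}=H^1(\Gamma)$. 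Since $n>1$, $\Gamma$ is a compact manifold of dimension $n-1\ge1$, so $H^1(\Gamma)\subsetneq L^2(\Gamma)$. Thus both defining conditions for $(\uno,-\Lambda P_0\Lambda)\preceq(\uno,\zero)$ hold and the domain inclusion is proper, giving $A_N\prec A_K$.

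Now suppose, for contradiction, that $A_K\in\ExM(A_{\min})$. By Theorem \ref{maximum}, $A_N$ is the (greatest) element of $\ExM(A_{\min})$, so $A_K\preceq A_N$; combined with $A_N\preceq A_K$ and the antisymmetry of the semi-order $\preceq$ on $\ExP(A_{\min})$, this forces $A_K=A_N$, contradicting $A_N\prec A_K$. Hence $A_K\notin\ExM(A_{\min})$, which is the claim.

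I do not expect a genuine obstacle here: the statement is an immediate consequence of Theorem \ref{maximum} together with the explicit identification $A_N=A_{(\uno,-\Lambda P_0\Lambda)}$. The only points deserving a little care are that the argument really uses $n>1$ (it is the strict inclusion $H^1(\Gamma)\subsetneq L^2(\Gamma)$ that is exploited), and that, if one prefers to avoid the semi-order on parameters altogether, the contradiction can be read off directly in $L^2(\Omega)$: by Theorem \ref{lemmaforma}, $\D(F_{A_K})$ contains $K_0h=G_0\Sigma h$ for every $h\in\Lambda L^2(\Gamma)=H^{-\frac12}(\Gamma)$, whereas by Remark \ref{regularity} such an element lies in $\D(F_{A_N})=H^1(\Omega)$ only when $\Sigma h\in H^1(\Gamma)$, i.e. $h\in H^{\frac12}(\Gamma)$; choosing $h\in H^{-\frac12}(\Gamma)\setminus H^{\frac12}(\Gamma)$ (possible since $\dim\Gamma\ge1$) gives $\D(F_{A_K})\not\subseteq\D(F_{A_N})$, so $A_K\not\preceq A_N$.
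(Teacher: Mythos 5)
Your argument is correct and is essentially the paper's own: the paper deduces the corollary from the identifications $A_N=A_{(\uno,-\Lambda P_0\Lambda)}$, $A_K=A_{(\uno,\zero)}$, the strict inequality $(\uno,-\Lambda P_0\Lambda)\prec(\uno,\zero)$ in the parametrizing semi-order, and the maximality of $A_N$ in $\ExM(A_{\min})$ from Theorem \ref{maximum}. You simply spell out the domain computation ($H^1(\Gamma)\subsetneq L^2(\Gamma)$) that makes the comparison strict, which is a fine elaboration of the same proof.
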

By the definition of logarithmic Sobolev inequality (see Subsection 2.9) as immediate consequence of Theorem \ref{maximum} one has 
\begin{corollary}\label{ultra}
If $A\in \ExM(A_\min)$ then $F_{A}$ satisfies a logarithmic Sobolev inequality. Hence the semigroup $e^{tA}$ is ultracontractive.
\end{corollary}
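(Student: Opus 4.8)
The plan is to use the Neumann extension $A_N$ as a universal majorant and to transfer everything by monotonicity of the associated bilinear forms. The first — and essentially only substantial — step is to verify that the Neumann form $F_N$ itself satisfies a logarithmic Sobolev inequality. Since $\Omega$ is bounded with a smooth boundary it is a Sobolev extension domain, so the Sobolev embedding theorem gives a continuous inclusion $H^1(\Omega)\hookrightarrow L^q(\Omega)$ with $q=\frac{2n}{n-2}$ for $n\ge 3$ (and any finite $q$ for $n\le 2$); since $F_N(u)\ge\mu_0\|\nabla u\|^2_{L^2(\Omega)}$, the quantity $F_N(u)+\|u\|^2_{L^2(\Omega)}$ is comparable to $\|u\|^2_{H^1(\Omega)}$, whence one obtains a Sobolev inequality $\|u\|^2_{L^q(\Omega)}\le c\,(F_N(u)+\|u\|^2_{L^2(\Omega)})$ valid on $\D(F_N)=H^1(\Omega)$. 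By the classical chain of equivalences between Sobolev inequalities, ultracontractivity of the semigroup, and logarithmic Sobolev inequalities (see \cite{[Dv]}, Sections 2.2--2.3), $F_N$ then satisfies a logarithmic Sobolev inequality with a function $\beta$ which may be taken of the form $\beta(\epsilon)=a-b\log\epsilon$ for $\epsilon$ small and bounded for $\epsilon$ large; in particular $m(t):=\frac1t\int_0^t\beta(\epsilon)\,d\epsilon$ is finite for every $t>0$.

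Next I would invoke Theorem \ref{maximum}: every $A\in\ExM(A_\min)$ satisfies $A\preceq A_N$, which by the definition of the semi-order means $\D(F_A)\subseteq\D(F_N)=H^1(\Omega)$ and $F_N(u)\le F_A(u)$ for all $u\in\D(F_A)$. The transfer is then immediate: for any positive $u\in\D(F_A)\cap L^1(\Omega)\cap L^\infty(\Omega)$, which then lies in $\D(F_N)\cap L^1(\Omega)\cap L^\infty(\Omega)$, the logarithmic Sobolev inequality for $F_N$ gives, for every $\epsilon>0$,
\be
\int_\Omega u^2\log u\,dm\le\epsilon\,F_N(u)+\beta(\epsilon)\|u\|^2_{L^2(\Omega)}+\|u\|^2_{L^2(\Omega)}\log\|u\|_{L^2(\Omega)}\,,
\ee
and since $\epsilon>0$ and $F_N(u)\le F_A(u)$ the right-hand side is $\le\epsilon\,F_A(u)+\beta(\epsilon)\|u\|^2_{L^2(\Omega)}+\|u\|^2_{L^2(\Omega)}\log\|u\|_{L^2(\Omega)}$. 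Hence $F_A$ satisfies a logarithmic Sobolev inequality with the very same $\beta$, and therefore with the very same $m$. Since $m(t)<\infty$ for all $t>0$, \cite{[Dv]}, Corollary 2.2.8, then yields that $e^{tA}$ is ultracontractive (with function $m$), which is the assertion.

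The main obstacle — such as it is — sits entirely in the first step: one must produce a logarithmic Sobolev inequality for the Neumann form on the bounded smooth domain $\Omega$ and check that the accompanying $\beta$ has the integrability demanded by \cite{[Dv]}, Corollary 2.2.8. This is classical, and it is the only place where the geometry of $\Omega$ enters. Everything downstream is pure monotonicity, and this is precisely what makes the comparison with the maximal Markovian extension $A_N$ useful; note in passing that the resulting $m$ does not depend on $A$, so ultracontractivity (and the ensuing Gaussian heat-kernel bounds) holds uniformly over $\ExM(A_\min)$.
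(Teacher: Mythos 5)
Your proposal is correct and follows essentially the same route as the paper: both arguments rest on Theorem \ref{maximum} ($A\preceq A_N$ for every $A\in\ExM(A_\min)$), transfer the logarithmic Sobolev inequality from $F_N$ to $F_A$ by the monotonicity $\D(F_A)\subseteq\D(F_N)$, $F_N(u)\le F_A(u)$, and then invoke the equivalences of Subsection 2.9 (\cite{[Dv]}) to conclude ultracontractivity. The only (minor) difference is the justification of the base case: the paper gets the logarithmic Sobolev inequality for $F_N$ from the Neumann heat kernel bound $\kappa_N(t,x,y)\le c\,(t^{-n/2}\vee 1)$ of \cite{[Dv]}, Theorem 3.2.9, whereas you derive it from the Sobolev embedding of $H^1(\Omega)$ and the classical Sobolev--ultracontractivity--log-Sobolev equivalences, which is an equally valid and standard argument.
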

\begin{proof}
By \cite{[Dv]}, Theorem 3.2.9, for the heat kernel $\kappa_{N}$ of $A_{N}$ one has
\be
\kappa_{{N}}(t,x,y)\le c\,\left({t^{-\frac n2}}\vee 1\right)\,.
\ee
Hence, see Subsection 2.9, $F_{N}$ satisfies a logarithmic Sobolev inequality. Since 
$A\preceq A_{N}$, $F_{A}$ satisfies a logarithmic Sobolev inequality (with the same function) and so $e^{tA}$ is ultracontractive.
\end{proof}
Theorem \ref{maximum} also gives  heat kernel estimates for any Markovian extension 
(upper Gaussian and lower bounds on $\kappa_{D}$ and $\kappa_{N}$ can be found in \cite{[Dv]} and references therein):
\begin{corollary}\label{gaussian} If $A\in \ExM(A_\min)$ then 
\be
\kappa_{D}\le \kappa_{A}\le\kappa_{N}\,,
\ee
where $\kappa_{D}$ and $\kappa_{N}$ denote the heat kernels of $A_{D}$ and $A_{N}$ 
respectively. 
\end{corollary}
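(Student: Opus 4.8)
The plan is to derive the two–sided bound from the order relation $A_{D}\preceq A\preceq A_{N}$ together with a domination argument for the associated semigroups. First I would record that relation: $A_{D}$ is the minimal element of $\ExP(A_{\min})$, being the Friedrichs extension, and it lies in $\ExM(A_{\min})$ since $F_{D}$ is a Dirichlet form, so it is also the minimal element of $\ExM(A_{\min})$; by Theorem \ref{maximum} the maximal one is $A_{N}$. Hence every $A\in\ExM(A_{\min})$ satisfies $A_{D}\preceq A\preceq A_{N}$, which by the definition of $\preceq$ and by Theorem \ref{lemmaforma} reads
\be
H^{1}_{0}(\Omega)=\D(F_{D})\subseteq\D(F_{A})\subseteq\D(F_{N})=H^{1}(\Omega)\,,
\ee
with $F_{D}(u)\ge F_{A}(u)$ for $u\in\D(F_{D})$ and $F_{A}(u)\ge F_{N}(u)$ for $u\in\D(F_{A})$; moreover, again by Theorem \ref{lemmaforma}, the restriction of $F_{A}$ to $H^{1}_{0}(\Omega)$ coincides with $F_{D}$, so that $H^{1}_{0}(\Omega)$ is a closed ideal of the Dirichlet space $\H(F_{A})$ on which $F_{A}=F_{D}$.

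Next I would upgrade this form order to the pointwise order of semigroups
\be
0\le e^{tA_{D}}f\le e^{tA}f\le e^{tA_{N}}f\qquad m\text{-a.e.}\,,\quad t>0\,,\ 0\le f\in L^{2}(\Omega)\,.
\ee
All three operators are Markovian, hence the semigroups are positivity preserving and, by Theorem \ref{dirichlet}, the three forms are Dirichlet forms, in particular (Remark \ref{coincide}) satisfying $F(|u|)\le F(u)$. The passage from the form order to the pointwise order is then the content of the domination criterion for submarkovian semigroups (see e.g.\ \cite{[Dv]}, \cite{[FOT]}, \cite{[CF]}). For the lower inequality this is clean: $F_{D}$ is the restriction of $F_{A}$ to the closed ideal $H^{1}_{0}(\Omega)$, i.e.\ $F_{D}$ is the part of $F_{A}$ over $\Omega$, so $e^{tA_{D}}f\le e^{tA}f$ for $f\ge 0$ — in probabilistic terms, when $F_{A}$ is regular, $e^{tA_{D}}f(x)={\mathsf E}_{x}(f(\Z_{t});\,t<\sigma_{\Gamma})\le e^{tA}f(x)$, with $\Z$ the Hunt process of $A$. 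For the upper inequality one feeds $F_{N}\le F_{A}$, together with the ideal structure of $\D(F_{A})$ inside $\D(F_{N})=H^{1}(\Omega)$, into the same criterion. Equivalently, by \eqref{Res-SG} it suffices to establish the analogous pointwise ordering of the resolvents, and Kre\u\i n's formula \eqref{R} — or Lemma \ref{lemmarisolvente} — writes the differences $R^{A}_{\lambda}-R^{A_{D}}_{\lambda}$ and $R^{A_{N}}_{\lambda}-R^{A}_{\lambda}$ explicitly in terms of the boundary data, which one could try to use to check positivity directly.

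Finally I would convert the semigroup order into the stated kernel order. By Corollary \ref{ultra} each of $e^{tA_{D}},e^{tA},e^{tA_{N}}$ is ultracontractive, hence for $t>0$ has a bounded symmetric integral kernel $\kappa_{D}(t,\cdot,\cdot),\kappa_{A}(t,\cdot,\cdot),\kappa_{N}(t,\cdot,\cdot)$; since $m(\Omega)<\infty$, ultracontractivity moreover forces a discrete spectrum with bounded eigenfunctions, and by the eigenfunction expansion recalled in Subsection 2.9 (together with interior elliptic regularity of the eigenfunctions) these kernels have continuous versions on $\Omega\times\Omega$. Testing the operator inequality against $f=1_{B}$ and integrating against $1_{B'}$ for arbitrary Borel sets $B,B'\subseteq\Omega$, and then using the arbitrariness of $B,B'$ together with continuity of the kernels, one obtains $\kappa_{D}(t,x,y)\le\kappa_{A}(t,x,y)\le\kappa_{N}(t,x,y)$ for all $t>0$ and all $x,y$.

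The step I expect to be the main obstacle is the domination estimate, i.e.\ upgrading $A_{D}\preceq A\preceq A_{N}$ to the \emph{pointwise} ordering of the semigroups (or resolvents). Plain form monotonicity yields only the much weaker quadratic–form inequality $\langle R^{A_{D}}_{\lambda}u,u\rangle\le\langle R^{A}_{\lambda}u,u\rangle\le\langle R^{A_{N}}_{\lambda}u,u\rangle$ for all $u$, which controls only a positive–definite combination of the kernels and not their off–diagonal values, hence does not suffice; it is exactly the Markov/positivity structure of all three extensions, through the Beurling--Deny (Ouhabaz) domination theory, that bridges the gap. Checking the hypotheses of that theory is immediate for the lower bound — the ideal inclusion $H^{1}_{0}(\Omega)\subseteq\D(F_{A})$ is clear and $F_{A}$ restricts to $F_{D}$ there — but is the delicate point for the upper bound, where one must read off from the structure of the boundary Markovian form $f_{b}$ appearing in Theorem \ref{lemmaforma} (equivalently Corollary \ref{lemmaforma1}) that $\D(F_{A})$ behaves as an order ideal inside $H^{1}(\Omega)$.
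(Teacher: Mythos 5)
Your overall strategy --- the order relation $A_{D}\preceq A\preceq A_{N}$, upgraded to pointwise domination of the semigroups, then transferred to the kernels --- is the same as the paper's, and your final step is only a harmless variant: the paper, instead of indicators and continuity of the kernels, notes that by Corollary \ref{ultra} and \eqref{trace} the three semigroups are trace class, hence integral operators, and that a positivity preserving integral operator has a nonnegative kernel (citing \cite{[GMN]}, Theorem 2.3); your route would give the a.e.\ inequality as well, and the continuity claim is more than the statement needs. The decisive difference is in the middle step: the paper obtains the positivity of $e^{tA}-e^{tA_{D}}$ and of $e^{tA_{N}}-e^{tA}$ in one stroke by invoking \cite{[GMN]}, Theorem 2.12, applied to $A_{D}\preceq A\preceq A_{N}$, whereas you only point to the Ouhabaz/Beurling--Deny domination criterion and explicitly leave its hypotheses unverified for the upper bound. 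That is precisely where your proposal stops short of a proof.

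The gap is genuine, and your own diagnosis of it cannot be waved through by ``reading off the ideal structure from $f_{b}$''. To apply the domination criterion to the pair $(F_{A},F_{N})$ you would need (i) that $\D(F_{A})=\{u\in H^{1}(\Omega):\gamma_{0}u\in\D(f_{b})\}$ is an order ideal of $H^{1}(\Omega)$, and (ii) that $F_{N}(u,v)\le F_{A}(u,v)$, i.e.\ $f_{b}(\gamma_{0}u,\gamma_{0}v)\ge 0$, for nonnegative $u,v$. Neither follows formally from $A\preceq A_{N}$: (i) fails whenever $\D(f_{b})$ is a proper (say, higher-order Sobolev) subspace of $H^{\frac12}(\Gamma)$, since $0\le v\le u$ gives no extra boundary regularity for $\gamma_{0}v$; and (ii) fails whenever $f_{b}$ has a nontrivial jumping part, because then $f_{b}(h_{1},h_{2})<0$ for nonnegative $h_{1},h_{2}$ with disjoint supports. (Your two-line remark that plain form monotonicity is insufficient is correct, but the criterion you propose to use does not repair this for a general Markovian $f_{b}$.) So, as written, your argument establishes only the lower bound $\kappa_{D}\le\kappa_{A}$, where the ideal structure of $H^{1}_{0}(\Omega)$ inside $\D(F_{A})$ and the coincidence of $F_{A}$ with $F_{D}$ there are indeed immediate; the upper bound requires either restricting the class of boundary forms for which the Ouhabaz hypotheses can be checked, or citing the domination theorem the paper relies on (\cite{[GMN]}, Theorem 2.12), which is exactly how the published proof closes the argument.
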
 
\begin{proof} Here we follow the same kind of reasonings as in \cite{[GMN]}. 
By $A_{D}\preceq A\preceq A_{N}$ one gets (see \cite{[GMN]}, Theorem 2.12) that both
$e^{tA}-e^{tA_{D}}$ and $e^{tA_{N}}-e^{tA}$ are positivity preserving. By Corollary \ref{ultra} and by \eqref{trace}, $e^{tA_{D}}$, $e^{tA_{N}}$ and $e^{tA}$ are trace-class operators and hence they are integral operators. The proof is then concluded by noticing that a positive preserving integral operator has a positive kernel (see \cite{[GMN]}, Theorem 2.3).
\end{proof}
Since $\kappa_{D}(t,x,y)>0$  for all $t>0$ and for all $x,y$ in compact subsets of $\Omega$ (see \cite{[Dv]}, Theorem 3.3.5), by Corollary \ref{gaussian} the same is true for $\kappa_{A}$ and so one gets the following 
\begin{corollary}\label{irr}
If $A\in \ExM(A_\min)$ then the semigroup $e^{tA}$ is irreducible. Hence $A$ is either recurrent or 
transient.
\end{corollary}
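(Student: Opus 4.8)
The plan is to leverage the sandwich estimate $\kappa_{D}\le\kappa_{A}\le\kappa_{N}$ furnished by Corollary \ref{gaussian} together with the strict positivity of the Dirichlet heat kernel, thereby reducing the statement to the elementary principle that an integral operator with an almost everywhere strictly positive kernel cannot leave a nontrivial measurable set invariant. Since $m(\Omega)<\infty$ the concluding assertion will then follow at once from \eqref{irreduc1}.

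First I would promote the pointwise positivity of $\kappa_{D}$ to almost everywhere positivity of $\kappa_{A}$. Because $\Omega$ is connected (as assumed at the beginning of this section), one has $\kappa_{D}(t,x,y)>0$ for every $t>0$ and every $x,y$ lying in a common compact subset of $\Omega$; writing $\Omega=\bigcup_{k}K_{k}$ as an increasing exhaustion by compacts, for Lebesgue-a.e.\ $(x,y)\in\Omega\times\Omega$ there is a $k$ with $x,y\in K_{k}$, hence $\kappa_{A}(t,x,y)\ge\kappa_{D}(t,x,y)>0$ by Corollary \ref{gaussian}. Thus for each $t>0$ the kernel $\kappa_{A}(t,\cdot,\cdot)$ is strictly positive $m\otimes m$-a.e.\ on $\Omega\times\Omega$, and $e^{tA}$ is genuinely an integral operator with this kernel (as already recorded in the proof of Corollary \ref{gaussian}). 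Next I would check irreducibility straight from the definition given in Subsection 2.11: suppose $\mathcal B\subseteq\Omega$ is Borel with $e^{tA}(1_{\mathcal B}u)=1_{\mathcal B}e^{tA}u$ for all $u\in L^{2}(\Omega)$ and all $t>0$. Taking $u=1$ (legitimate since $m(\Omega)<\infty$) gives $e^{tA}1_{\mathcal B}=1_{\mathcal B}e^{tA}1$, so $\langle 1_{\mathcal B^{c}},e^{tA}1_{\mathcal B}\rangle=\langle 1_{\mathcal B^{c}},1_{\mathcal B}e^{tA}1\rangle=0$. On the other hand, by the integral representation, $\langle 1_{\mathcal B^{c}},e^{tA}1_{\mathcal B}\rangle=\int_{\mathcal B^{c}}\int_{\mathcal B}\kappa_{A}(t,x,y)\,dm(y)\,dm(x)$, and this is strictly positive whenever $m(\mathcal B)>0$ and $m(\mathcal B^{c})>0$, since then $\mathcal B\times\mathcal B^{c}$ has positive $m\otimes m$ measure while $\kappa_{A}(t,\cdot,\cdot)>0$ a.e. The contradiction forces $m(\mathcal B)m(\mathcal B^{c})=0$, i.e.\ $A$ is irreducible.

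Finally, the claim that $A$ is either recurrent or transient is immediate from \eqref{irreduc1}, which applies precisely because $m(\Omega)<\infty$. I do not anticipate a genuine obstacle; the only point demanding a little care is the passage from positivity of $\kappa_{D}$ "on compact subsets" to a.e.\ positivity of $\kappa_{A}$ on all of $\Omega\times\Omega$, which is exactly where connectedness of $\Omega$ is used and where the exhaustion argument above is required. Everything else is routine bookkeeping with the heat kernel and the definitions collected in Section 2.
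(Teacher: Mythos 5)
Your proposal is correct and follows essentially the same route as the paper: the comparison $\kappa_{D}\le\kappa_{A}$ from Corollary \ref{gaussian} together with the strict positivity of $\kappa_{D}$ on compact subsets of the connected set $\Omega$ yields a.e.\ positivity of $\kappa_{A}$, hence irreducibility, and the recurrent/transient dichotomy then follows from \eqref{irreduc1}. You merely spell out the exhaustion-by-compacts step and the standard positive-kernel-implies-irreducible argument, which the paper leaves implicit.
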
 
\begin{remark}\label{RS} By \cite{[RS]}, Theorem 1.1, Theorem \ref{maximum} hold true on 
arbitrary open bounded set. Since the results we used in Corollaries  \ref{CK}-\ref{irr} also hold under more general hypothesis, these results remains true without the smoothness hypothesis on $\Gamma$. In particular Corollaries \ref{CK} and \ref{irr} remain true in the case of $\Omega$ open and bounded while Corollaries \ref{ultra} and  \ref{gaussian} holds for any open bounded 
$\Omega$ which has the extension property (e.g. $\Omega$ has a Lipschitz boundary, in particular $\Omega$ is convex). For example, by using the upper Gaussian bound for $\kappa_{N}$ 
given in \cite{[Dv]}, Theorem 3.2.9, if $\Omega$ has the extension property  one gets, for any $A\in \ExM(A_{\min})$, the estimate 
(here $1<c_{1}<2$ and the constant $c_{\circ}$ depends on $\Omega$, $c_{1}$ and $\mu_{0}$)
\be
\kappa_{A}(t,x,y)\le c_{\circ}\left(\frac 1{t^{n/2}}\vee 1\right)
\exp\left(-\frac{\|x-y\|^{2}}{4c_{1}\mu_{1}t }\right)\,.
\ee
\end{remark}
\vskip10pt\noindent
Theorem \ref{maximum} suggests us to introduce the set 
\be
\ExN(A_\min):=
\{A\in \Ex(A_\min)\,:\,A_D\preceq A \preceq A_N\}\,,
\ee
so that 
\be
\ExM(A_\min)\subseteq\ExN(A_\min)\subseteq\ExP(A_\min)\,.
\ee
We also introduce a convenient subset of $\E(L^2(\Gamma))$:
\begin{align}
\t\E(L^2(\Gamma)):=
\{(\Pi,B)\in\E(L^2(\Gamma)): 
\D(f_{\Pi,B})\cap H^{\frac12}(\Gamma)\ \text{is a core of $f_{\Pi,B}\ge 0$}\}.
\end{align} 
Then, for any $(\Pi,B)\in\t\E(L^2(\Gamma))$, let us define the 
positive, symmetric, densely defined bilinear  form  $\t F_{(\Pi,B)}$ by
\be
\t F_{(\Pi,B)}:\D(\t F_{(\Pi,B)})\times \D(\t F_{(\Pi,B)})\subseteq
L^2(\Omega)\times L^2(\Omega)\to\RE\,,
\ee
\begin{align}\label{FT}
\t F_{(\Pi,B)}(u,v):=F_N(u,v)+f_{\Pi,B}(\gamma_0u,\gamma_0v)\,,
\end{align}
\be
\D(\t F_{(\Pi,B)}):=\{u\in H^1(\Omega)\,:\,\gamma_0u\in\D( f_{\Pi,B})\}\,.
\ee
\begin{remark}\label{dominioforma}
Notice that 
\be
\t F_{(\Pi,B)}=F_D\iff \D(f_{\Pi,B})\cap H^{\frac12}(\Gamma)=\{0\}
\ee  
and this, by our hypothesis on the core of $f_{\Pi,B}$, 
implies $\Pi=0$. This is consistent with Corollary \ref{lemmaforma1} which says that $F_D$ corresponds to $\Pi=0$. Indeed the core hypothesis  
was introduced in order to have 
$\t F_{(\Pi,B)}$ uniquely defined by $f_{\Pi,B}$ and hence by $(\Pi,B)$.
\end{remark}
One has the following 
\begin{theorem}\label{sub} 1. The symmetric bilinear form $\t F_{(\Pi,B)}$ is closed and, denoting by $-\t A_{(\Pi,B)}$ 
the self-adjoint operator associated  with $\t F_{(\Pi,B)}$, one has  
\be
\t A_{(\Pi,B)}\in \ExN(A_\min)\,.
\ee
2. Let $\Pi_\Sigma$ denote the orthogonal projector onto  the $L^2(\Gamma)$-closure of $\Sigma\R(\Pi)$.
Then
\be
f_{\Pi_\Sigma,\Theta_B}:\D(f_{\Pi_\Sigma,\Theta_B})\times \D(f_{\Pi_\Sigma,\Theta_B})
\subseteq \R(\Pi_\Sigma)\times 
\R(\Pi_\Sigma)\to \RE\,,
\ee
\be
\D(f_{\Pi_\Sigma,\Theta_B}):=\Sigma\D( f_{\Pi,B})\cap  
H^{1}(\Gamma)\,,
\ee
\be
f_{\Pi_\Sigma,\Theta_B}(h_1,h_2):=f_{\Pi,B}(\Lambda h_1,\Lambda h_2)-
(P_{0}\Lambda h_1,\Lambda h_2)_{-\frac12,\frac12}
\ee
is a symmetric, closed, densely defined, positive 
bilinear form. Denoting by $\Theta_B$  the positive
self-adjoint operator in $\R(\Pi_\Sigma)$ associated  with  $ f_{\Pi_\Sigma,\Theta_B}$ 
one has
\be
\t A_{(\Pi,B)}=A_{(\Pi_\Sigma,\Theta_B)}\,.
\ee
\par\noindent
3. If $f_{\Pi,B}$ is a Dirichlet form on $L^2(\Gamma)$ then 
\be
\t A_{(\Pi,B)}\in \ExM(A_\min)\,.
\ee
Moreover $\t A_{(\Pi,B)}$ is recurrent (equivalently conservative) if and only if  
\be
1\in\D(f_{\Pi,B})\quad \text{and}\quad  f_{\Pi,B}(1)=0 \,.
\ee
\end{theorem}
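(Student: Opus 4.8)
The plan is to treat the three parts in order; the technical core is Part 2, where the concrete boundary form must be matched with the abstract parametrization of Theorem \ref{estensioni}. For the closedness of $\t F_{(\Pi,B)}$, I would take $(u_n)$ with $u_n\to u$ in $L^2(\Omega)$ and $\t F_{(\Pi,B)}(u_n-u_m)\to0$, and note that then $F_N(u_n-u_m)\to0$ and $f_{\Pi,B}(\gamma_0u_n-\gamma_0u_m)\to0$ separately (both terms are non-negative); since $F_N$ is closed with form-domain $H^1(\Omega)$ one gets $u\in H^1(\Omega)$ and $u_n\to u$ in $H^1(\Omega)$, hence $\gamma_0u_n\to\gamma_0u$ in $H^{\frac12}(\Gamma)\hookrightarrow L^2(\Gamma)$, and closedness of $f_{\Pi,B}$ (the form of $B$ on the closed subspace $\R(\Pi)$) yields $\gamma_0u\in\D(f_{\Pi,B})$ and $\t F_{(\Pi,B)}(u_n-u)\to0$. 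That $\t A_{(\Pi,B)}$ extends $A_{\min}$ is seen by integration by parts on $C^\infty_c(\Omega)\subseteq H^1_0(\Omega)\subseteq\D(\t F_{(\Pi,B)})$ (no boundary term). Finally $A_D\preceq\t A_{(\Pi,B)}\preceq A_N$: the first inequality because $H^1_0(\Omega)=\D(F_D)\subseteq\D(\t F_{(\Pi,B)})$ (the trace vanishes, $0\in\D(f_{\Pi,B})$) and there $\t F_{(\Pi,B)}=F_N=F_D$; the second because $\D(\t F_{(\Pi,B)})\subseteq H^1(\Omega)=\D(F_N)$ and $\t F_{(\Pi,B)}\ge F_N$ since $f_{\Pi,B}\ge0$. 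Hence $\t A_{(\Pi,B)}\in\ExN(A_{\min})$.

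\textbf{Part 2.} By Part 1 and Theorem \ref{estensioni}, $\t A_{(\Pi,B)}=A_{(\Pi',\Theta')}$ for a unique $(\Pi',\Theta')\in\E(L^2(\Gamma))$ with $\Theta'\ge0$; the plan is to identify $(\Pi',\Theta')$ by computing $f_{\Pi',\Theta'}$ with the $\alpha=0$ instance of Remark \ref{traslazioneforma} (where $M_0=0$). For $\xi\in\Sigma\D(f_{\Pi,B})\cap H^1(\Gamma)$ one has $G_0\xi=K_0\Lambda\xi\in H^1(\Omega)$ with $\gamma_0G_0\xi=\Lambda\xi\in\D(f_{\Pi,B})$, so $G_0\xi\in\D(\t F_{(\Pi,B)})=\D(F_{(\Pi',\Theta')})$; by Theorem \ref{qf} and the uniqueness of the $G_0$-decomposition \eqref{hypp} this forces $\xi\in\D(f_{\Pi',\Theta'})$, and then Remark \ref{traslazioneforma} gives $f_{\Pi',\Theta'}(\xi_1,\xi_2)=\t F_{(\Pi,B)}(G_0\xi_1,G_0\xi_2)=F_N(K_0\Lambda\xi_1,K_0\Lambda\xi_2)+f_{\Pi,B}(\Lambda\xi_1,\Lambda\xi_2)$. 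Since \eqref{FNFD} with vanishing $H^1_0$-part reads $F_N(K_0g_1,K_0g_2)=-(P_0g_1,g_2)_{-\frac12,\frac12}$, this equals $f_{\Pi,B}(\Lambda\xi_1,\Lambda\xi_2)-(P_0\Lambda\xi_1,\Lambda\xi_2)_{-\frac12,\frac12}=f_{\Pi_\Sigma,\Theta_B}(\xi_1,\xi_2)$. The reverse inclusion $\D(f_{\Pi',\Theta'})\subseteq\D(f_{\Pi_\Sigma,\Theta_B})$ runs symmetrically: $\xi\in\D(f_{\Pi',\Theta'})\Rightarrow G_0\xi\in\D(F_{(\Pi',\Theta')})\subseteq H^1(\Omega)\Rightarrow\xi\in H^1(\Gamma)$ (Remark \ref{regularity}), with $\gamma_0G_0\xi=\Lambda\xi\in\D(f_{\Pi,B})$. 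Hence $f_{\Pi',\Theta'}=f_{\Pi_\Sigma,\Theta_B}$ as closed forms, which shows at once that $f_{\Pi_\Sigma,\Theta_B}$ is closed, symmetric and densely defined in $\R(\Pi_\Sigma)$, and that $(\Pi',\Theta')=(\Pi_\Sigma,\Theta_B)$, i.e. $\t A_{(\Pi,B)}=A_{(\Pi_\Sigma,\Theta_B)}$. Positivity of $f_{\Pi_\Sigma,\Theta_B}$ is independently clear, since $-(P_0g,g)_{-\frac12,\frac12}=F_N(K_0g)\ge0$ and $f_{\Pi,B}\ge0$; and the density of $\D(f_{\Pi_\Sigma,\Theta_B})=\Sigma(\D(f_{\Pi,B})\cap H^{\frac12}(\Gamma))$ in $\R(\Pi_\Sigma)=\overline{\Sigma\R(\Pi)}$ is exactly where the core hypothesis $(\Pi,B)\in\t\E(L^2(\Gamma))$ enters.

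\textbf{Part 3.} Assume $f_{\Pi,B}$ is a Dirichlet form. Since $\t F_{(\Pi,B)}$ is closed (Part 1) and densely defined (it contains $H^1_0(\Omega)$), by Theorem \ref{dirichlet} it suffices to check the Markovian property. For $u\in\D(\t F_{(\Pi,B)})$ one has $u_\#\in H^1(\Omega)$ ($H^1(\Omega)$ being the Dirichlet space of $F_N$), and $\gamma_0u_\#=(\gamma_0u)_\#$: approximate $u$ in $H^1(\Omega)$ by $u_k\in C^\infty(\bar\Omega)$, use Theorem \ref{ancona} for the $H^1(\Omega)$-continuity of the unit contraction (so $\gamma_0u_{k\#}\to\gamma_0u_\#$ in $L^2(\Gamma)$) and its $L^2(\Gamma)$-continuity (so $(\gamma_0u_k)_\#\to(\gamma_0u)_\#$), together with the elementary identity $\gamma_0u_{k\#}=(\gamma_0u_k)_\#$ valid for continuous $u_k$. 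Since $f_{\Pi,B}$ is Markovian, $(\gamma_0u)_\#\in\D(f_{\Pi,B})$ with $f_{\Pi,B}((\gamma_0u)_\#)\le f_{\Pi,B}(\gamma_0u)$, so $u_\#\in\D(\t F_{(\Pi,B)})$ and, using $F_N(u_\#)\le F_N(u)$, $\t F_{(\Pi,B)}(u_\#)\le\t F_{(\Pi,B)}(u)$. Hence $\t A_{(\Pi,B)}$ is Markovian and, being an extension of $A_{\min}$, lies in $\ExM(A_{\min})$. For the last assertion, since $m(\Omega)<\infty$, \eqref{conserviff2} reduces recurrence (equivalently conservativeness) to $1\in\D(\t F_{(\Pi,B)})$ and $\t F_{(\Pi,B)}(1)=0$; as $1\in H^1(\Omega)$ with $\gamma_01=1$ and $F_N(1)=0$, this is precisely $1\in\D(f_{\Pi,B})$ and $f_{\Pi,B}(1)=0$.

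\textbf{Main obstacle.} The technical heart is Part 2: fitting the concrete boundary form $f_{\Pi,B}$ into the abstract Kre\u\i n parametrization of Theorem \ref{estensioni}. This requires the two-sided domain identification (via Theorem \ref{qf}, the uniqueness \eqref{hypp} and the elliptic-regularity Remark \ref{regularity}), careful bookkeeping of $\Lambda,\Sigma,K_0,P_0$ across the Sobolev scale $H^s(\Gamma)$, and the verification that $f_{\Pi_\Sigma,\Theta_B}$ is densely defined on $\R(\Pi_\Sigma)$ --- the point where the hypothesis $(\Pi,B)\in\t\E(L^2(\Gamma))$ is used. Closedness of $f_{\Pi_\Sigma,\Theta_B}$ also admits a direct proof from closedness of $f_{\Pi,B}$ and $F_N$, using the isomorphism $K_0:H^{\frac12}(\Gamma)\to H^1(\Omega)$ onto harmonic functions and $\|K_0g\|_{L^2(\Omega)}\sim\|g\|_{H^{-\frac12}(\Gamma)}$ to upgrade an $L^2(\Gamma)$-Cauchy sequence to an $H^{\frac12}(\Gamma)$-Cauchy one.
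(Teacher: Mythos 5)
Your proposal is correct and follows essentially the same route as the paper: closedness of $\t F_{(\Pi,B)}$ from positivity of both summands together with closedness of $F_N$ and $f_{\Pi,B}$, identification of the parametrizing pair through Theorem \ref{qf}, \eqref{FNFD}, the uniqueness \eqref{hypp} and the core hypothesis, the trace identity $\gamma_0(u_{\#})=(\gamma_0u)_{\#}$ via Theorem \ref{ancona}, and \eqref{conserviff2} for the recurrence criterion. The only (immaterial) variations are that in Part 2 you first invoke Theorem \ref{estensioni} to write $\t A_{(\Pi,B)}=A_{(\Pi',\Theta')}$ and then identify $f_{\Pi',\Theta'}=f_{\Pi_\Sigma,\Theta_B}$, so its closedness comes for free, whereas the paper proves the form properties of $f_{\Pi_\Sigma,\Theta_B}$ directly (via continuity of $G_0$) and then applies Theorem \ref{qf}; and in Part 3 you use the elementary $L^2(\Gamma)$-contractivity of the unit contraction where the paper uses its $H^{\frac12}(\Gamma)$-continuity.
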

\begin{proof} 1. Since both $F_N$ and $f_{\Pi,B}$ are positive and $F_N$ is closed,  
\be
\|u_n-u\|_{L^2(\Gamma)}\to 0\quad \text{\rm and}\quad 
\t F_{(\Pi,B)}(u_n-u_m)\to 0
\ee
imply $u\in H^1(\Omega)$ and 
$\|u_n-u\|_{H^1(\Omega)}\to 0$. 
Since $\gamma_0:H^1(\Omega)\to L^{2}(\Gamma)$ is continuous and $\R(\Pi)\subseteq L^{2}(\Gamma)$ is closed, one has 
$\|\gamma_0u_n-\gamma_0u\|_{L^{2}(\Gamma)}\to 0$ and 
$\gamma_0u\in \R(\Pi)$. Thus, by $f_{\Pi,B}(\gamma_0 u_n-\gamma_0 u_m)\to 0$, since $f_{\Pi,B}$ is closed, one gets $\gamma_0u\in\D(f_{\Pi,B})$ 
and $f_{\Pi,B}(\gamma_0u_n-\gamma_0u)\to 0$. Thus $\t F_{(\Pi,B)}$ is closed. \par Since, for any $u_0\in C^\infty_c(\Omega)$ and for any 
$v_0\in H^1_0(\Omega)$, 
\be
\langle A_\min u_0,v_0\rangle_{L^2(\Omega)}=
F_D(u_0,v_0)=\t F_{(\Pi,B)}(u_0,v_0)=\langle \t A_{(\Pi,B)} u_0,v_0\rangle_{L^2(\Omega)}\,,
\ee
one has $\t A_{(\Pi,B)}|C^\infty_c(\Omega)=A_\min$ 
and so $\t A_{(\Pi,B)}\in\Ex(A_\min)$. Since $\t F_{(\Pi,B)}\ge 0$, 
$\t A_{(\Pi,B)}\in\ExP(A_\min)$ and so $A_D\preceq\t A_{(\Pi,B)}$. Then $\t A_{(\Pi,B)} \preceq A_N$ is consequence of the definition of $\t A_{(\Pi,B)}$.
\par\noindent
2. By Theorem \ref{lemmaforma} and Remark \ref{remarkforma}, one has
\begin{align}
&\D(\t F_{(\Pi,B)})=\{u=u_0+G_0h\,,
\ u_0\in H^1_0(\Omega)\,,\ 
h\in \Sigma (\D( f_{\Pi,B})\cap H^{\frac12}(\Gamma))\}\,.
\end{align}
Since $\D(f_{\Pi,B})\cap H^{\frac12}(\Gamma)$ is a core of 
$f_{\Pi,B}$, $\Sigma (\D( f_{\Pi,B})\cap H^{\frac12}(\Gamma))$
is dense in
$\R(\Pi_\Sigma)$. By \eqref{FNFD}, 
\be
\t F_{(\Pi,B)}(u,v)=F_D(u_0,v_0)+
f_{\Pi_\Sigma,\Theta_B}(\Sigma\gamma_0 u,\Sigma\gamma_0 v)
\ee
and hence $f_{\Pi_\Sigma,\Theta_B}(h)=\t F_{(\Pi,B)}(G_0h)\ge 0$. Let 
$\{h_n\}_1^\infty\subset \Sigma (\D( f_{\Pi,B})\cap
H^{\frac12}(\Gamma))$ such that $\|h_n-h\|_{L^2(\Gamma)}\to 0$ and 
$f_{\Pi_\Sigma,\Theta_B}(h_n-h_m)\to 0$. Since $G_0:L^2(\Gamma)\to 
L^2(\Omega)$ is continuous and 
$\t F_{(\Pi,B)}$ is closed, one has that 
$G_0h\in\D(\t F_{(\Pi,B)})$, and $\t F_{(\Pi,B)}(G_0h_n-G_0h)\to 0$. Thus $f_{\Pi_\Sigma,\Theta_B}$ is closed. Finally let us apply 
Theorem \ref{qf}.
\par\noindent
3. By 
\be
\|\,|u|\,\|_{H^1(\Omega)}=\|u\|_{H^1(\Omega)}\,,\qquad\|\,|h|\,\|_{H^{\frac12}(\Gamma)}\le\|h\|_{H^{\frac12}(\Gamma)}\,,
\ee
and
\be
a\vee b=\frac12\,(a+b+|a-b|)\,,\qquad a\wedge b=\frac12\,(a+b-|a-b|)\,,
\ee
both $H^1(\Omega)$ and $H^{\frac12}(\Gamma)$ are Dirichlet spaces. Hence, by 
Theorem \ref{ancona}, 
the maps $u\mapsto u_{\#}$ and $h\mapsto h_{\#}$ are $H^1(\Omega)$- and $H^{\frac12}(\Gamma)$-continuous respectively. Since 
$C^\infty(\bar\Omega)$ is dense in $H^1(\Omega)$,  and $\gamma_0:H^1(\Omega)\to H^\frac12(\Gamma)$ is continuous, taking $\{u_n\}_1^\infty\subset C^\infty(\bar\Omega)$ such that $\|u_n-u\|_{H^1(\Omega)}\to 0$, one has
\begin{equation}\label{gammacanc}
\gamma_0(u_{\#})= \lim_{n\to\infty}\gamma_0((u_n)_{\#})=\lim_{n\to\infty}
(\gamma_0u_n)_{\#}=
(\gamma_0 u)_{\#}\,.
\end{equation}
Let $f_{\Pi,B}$ be a Dirichlet form.  Then, by \eqref{gammacanc},  
\be u\in\D(\t F_{(\Pi,B)})\quad\Longrightarrow\quad 
u_{\#}\in\D(\t F_{(\Pi,B)})\,,
\ee
\begin{align}
\t F_{(\Pi,B)}(u_{\#})=
F_N(u_{\#})+f_{\Pi,B}((\gamma_0 u)_{\#})
\le 
F_N( u)+f_{\Pi,B}(\gamma_0u)
=
\t F_{(\Pi,B)}(u)
\end{align}
and so $\t F_{(\Pi,B)}$ is a Dirichlet form. Finally the result about recurrence is an immediate 
consequence of \eqref{conserviff2} and the definition of $\t F_{(\Pi,B)}$.
\end{proof}

To state the next result we introduce the family of Dirichlet-to-Neumann operators 
\be
P_{\lambda}:H^{\frac 12}(\Gamma)\to H^{-\frac 12}(\Gamma)\,,
\quad P_{\lambda} :=
\hat\gamma_{1}K_{\lambda}\,,\quad\lambda\ge 0\,.
\ee
By \eqref{KL} and \eqref{K*} one has
\begin{equation}\label{PL}
P_{\lambda} = P_{0} - \lambda K_{0}^{\star}K_{\lambda}=
P_{0} - \lambda \gamma_{1}R^{D}_{0}K_{\lambda}
\,.
\end{equation}
Then, by Lemma \ref{lemmarisolvente}, one obtains a Kre\u\i n's formula for the resolvent of 
$\t A_{(\Pi,B)}$:
\begin{lemma}\label{LRT}
Let $(\Pi,B)\in\t\E(L^{2}(\Gamma))$ and let $B\dotplus  \Pi(- P_{\lambda})\Pi $  be 
defined as a form-sum by the closed, densely defined positive bilinear form
\be
f_{\Pi,B,\lambda}:(\D(f_{\Pi,B})\cap H^{\frac 12}(\Gamma))\times 
(\D(f_{\Pi,B})\cap H^{\frac 12}(\Gamma))\subseteq
\R(\Pi)\times\R(\Pi)\to\RE\,,
\ee
\be
f_{\Pi,B,\lambda}(h_{1},h_{2}):=f_{\Pi,B}(h_{1},h_{2})-(P_{\lambda}h_{1},h_{2})_{-\frac 12,\frac 12}\,.
\ee
Then the resolvent $\t R^{(\Pi,B)}_{\lambda}$ of $\t A_{(\Pi,B)}$ is given  by
\begin{equation}\label{RT}
\t R^{(\Pi,B)}_{\lambda}=R^{D}_{\lambda}+K_{\lambda}\Pi(B\dotplus  \Pi(- P_{\lambda})\Pi )^{-1}\Pi K^{\star}_{\lambda}\,.
\end{equation}
\end{lemma}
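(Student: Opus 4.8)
The plan is to read off \eqref{RT} from Lemma \ref{lemmarisolvente}, applied to the extension $\t A_{(\Pi,B)}$, after rewriting the objects that appear there. First I would invoke Theorem \ref{sub}: parts 1 and 2 tell us that $\t F_{(\Pi,B)}$ is closed, that $\D(\t F_{(\Pi,B)})\subseteq H^1(\Omega)$ (immediate from the definition of $\t F_{(\Pi,B)}$), and that $\t A_{(\Pi,B)}=A_{(\Pi_\Sigma,\Theta_B)}$, where $\Theta_B$ is the positive self-adjoint operator in $\R(\Pi_\Sigma)$ associated with the closed form $f_{\Pi_\Sigma,\Theta_B}$. Hence the hypotheses of Lemma \ref{lemmarisolvente} hold for the pair $(\Pi_\Sigma,\Theta_B)$ in place of $(\Pi,\Theta)$, and, for $\lambda\in\rho(A_D)\cap\rho(\t A_{(\Pi,B)})$, it yields
\be
\t R^{(\Pi,B)}_{\lambda}=R^{D}_{\lambda}+K_{\lambda}\,\t\Pi_{\Lambda}\bigl((\Theta_B)_{\Sigma}+\lambda\,\t\Pi^{\star}_{\Lambda}K^{\star}_{0}K_{\lambda}\t\Pi_{\Lambda}\bigr)^{-1}\t\Pi^{\star}_{\Lambda}K^{\star}_{\lambda}\,,
\ee
with $\t\Pi_\Lambda$ and $(\Theta_B)_\Sigma$ the objects attached by Corollary \ref{lemmaforma1} to $(\Pi_\Sigma,\Theta_B)$.

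The core of the argument is then to identify these objects. Using $\Lambda\Sigma=\uno$ one has $\Lambda\D(f_{\Pi_\Sigma,\Theta_B})=\D(f_{\Pi,B})\cap H^{\frac12}(\Gamma)$, which by the core hypothesis built into the definition of $\t\E(L^2(\Gamma))$ is $L^2(\Gamma)$-dense in $\R(\Pi)$; consequently the orthogonal projector onto its $L^2(\Gamma)$-closure is $\Pi$, the range of $\hat\Pi_\Lambda$ is the $H^{-\frac12}(\Gamma)$-closure of $\R(\Pi)$, and $\t\Pi_\Lambda$ reduces to the inclusion $\R(\Pi)\hookrightarrow H^{-\frac12}(\Gamma)$ --- the operator denoted simply $\Pi$ in \eqref{RT}. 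In the same way, the definition of $f_{\Pi_\Sigma,\Theta_B}$ in Theorem \ref{sub} together with $\Lambda\Sigma=\uno$ show that $(\Theta_B)_\Sigma$ is the operator in $\R(\Pi)$ associated with the bilinear form $h_1,h_2\mapsto f_{\Pi,B}(h_1,h_2)-(P_0 h_1,h_2)_{-\frac12,\frac12}$ on $\D(f_{\Pi,B})\cap H^{\frac12}(\Gamma)$.

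It remains to assemble the inner term. Combining the last two identifications, the operator inside the inverse is the one associated with $f_{\Pi,B}(h_1,h_2)-(P_0 h_1,h_2)_{-\frac12,\frac12}+\lambda(K^{\star}_0 K_\lambda h_1,h_2)_{-\frac12,\frac12}$ on $\D(f_{\Pi,B})\cap H^{\frac12}(\Gamma)$; by \eqref{K*} and \eqref{PL} one has $\lambda K^{\star}_0 K_\lambda=P_0-P_\lambda$, so this form is exactly $f_{\Pi,B,\lambda}$, i.e.\ the closed, densely defined positive form defining the form-sum $B\dotplus\Pi(-P_\lambda)\Pi$. Dense definedness is the core hypothesis; positivity follows from the half-Green identity $-(P_\lambda h,h)_{-\frac12,\frac12}=F_N(K_\lambda h)+\lambda\|K_\lambda h\|^2_{L^2(\Omega)}$ (obtained from \eqref{halfGreen} with $u=v=K_\lambda h$), which is $\ge 0$; closedness holds because $f_{\Pi,B,\lambda}$ differs from the closed form $f_{\Pi_\Sigma,\Theta_B}$, transported by the isomorphism $\Sigma$, only by the $L^2(\Gamma)$-bounded symmetric term $\lambda(K^{\star}_0 K_\lambda\,\cdot\,,\cdot)$; and bounded invertibility for $\lambda\in\rho(A_D)\cap\rho(\t A_{(\Pi,B)})$ is automatic from Theorem \ref{estensioni}(iii). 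Substituting, and writing $K^{\star}_\lambda=\Sigma G^*_\lambda$ as in \eqref{K*}, yields \eqref{RT}. I expect the one genuine difficulty to be organizational: keeping straight the several projectors $\Pi$, $\Pi_\Sigma$, $\hat\Pi_\Lambda$, $\t\Pi_\Lambda$ and the Sobolev-scale shifts induced by $\Lambda$ and $\Sigma$; once $\t\Pi_\Lambda$ has been identified with (the $H^{-\frac12}(\Gamma)$-realization of) $\Pi$ and the form identity from \eqref{PL} is in place, what remains is a direct substitution.
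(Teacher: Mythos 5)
Your proposal is correct and follows essentially the same route as the paper: invoke Theorem \ref{sub}(2) to write $\t A_{(\Pi,B)}=A_{(\Pi_\Sigma,\Theta_B)}$, apply Lemma \ref{lemmarisolvente}, identify $(\Pi_\Sigma)_\Lambda=\Pi$ (via the core hypothesis), $\t\Pi_\Lambda=\Pi|\R(\Pi)$ and $(\Theta_B)_\Sigma$ with the form $f_{\Pi,B,0}$, and then absorb $\lambda\Pi K_0^\star K_\lambda\Pi$ using \eqref{PL}. Your additional verifications of positivity and closedness of $f_{\Pi,B,\lambda}$ are consistent with what the paper builds into the statement.
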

\begin{proof} By point 2 in Theorem \ref{sub} we know that $\t A_{(\Pi,B)}=
A_{(\Pi_{\Sigma},\Theta_{B})}$. Since $\R((\Pi_{\Sigma})_{\Lambda})$ is, by definition, 
the $L^{2}(\Gamma)$-closure of $\Lambda(\Sigma\D(f_{\Pi,B})\cap H^{1}(\Gamma))=
\D(f_{\Pi,B})\cap H^{\frac 12}(\Gamma)$ which, by our hypothesis, is a core, one obtains 
$(\Pi_{\Sigma})_{\Lambda}=\Pi$.  Then $\widehat{(\Pi_{\Sigma})}_{\Lambda}$ is the orthogonal projector onto the $H^{-\frac 12}(\Gamma)$-closure of $\R(\Pi)$ and so 
$\widetilde{(\Pi_{\Sigma})}_{\Lambda}=\Pi|\R(\Pi)$, here considered as map from $\R(\Pi)$ into $H^{-\frac 12}(\Gamma)$. Hence $\widetilde{(\Pi_{\Sigma})}_{\Lambda}^{\star}=\Pi|
H^{\frac 12}(\Gamma)$.
Therefore, by Lemma \ref{lemmarisolvente}, noticing that  
the bilinear form associated with $((\Pi_{\Sigma})_{\Lambda},(\Theta_{B})_{\Sigma})$ is given by $f_{\Pi,B,0}$, one gets
\begin{equation}\label{RTT}
\t R_{\lambda}^{(\Pi,B)}= R_{\lambda}^{D}+K_{\lambda}\Pi(B\dotplus \Pi(- P_{0})\Pi+
\lambda \Pi K^{\star}_{0}K_{\lambda}\Pi)^{-1}\Pi K^{\star}_{\lambda}\,.
\end{equation}
The proof is then concluded by \eqref{PL}.
\end{proof}
\begin{remark}\label{regular1} Given $B\equiv(\uno,B)\in\t\E(L^{2}(\Gamma))$, suppose that $f_{B}$ is a regular Dirichlet form with Beurling-Deny decomposition 
\be
f_{B}=f_{B}^{(c)}+f_{B}^{(j)}+f_{B}^{(k)}\,.
\ee
Then the Dirichlet form $\t F_{(B)}\equiv F_{(\uno,B)}$ has the decomposition 
\be
\t F_{(B)}=\t F_{(B)}^{(c)}+\t F_{(B)}^{(j)}+\t F_{(B)}^{(k)}\,,
\ee
where the strongly local component $\t F_{(B)}^{(c)}$ is given by 
\be
\t F_{(B)}^{(c)}(u,v)=F_{N}(u,v)+f_{B}^{(c)}(\gamma_{0}u,\gamma_{0}v)\,,
\ee
and
\begin{align}
\t F_{(B)}^{(j)}(u,v)=f_{B}^{(j)}(\gamma_{0}u,\gamma_{0}v)
=
\int_{\Gamma\times\Gamma}
(\widetilde{\gamma_{0}u}(x)-\widetilde{\gamma_{0}u}(y))(\widetilde{\gamma_{0}v}(x)-\widetilde{\gamma_{0}v}(y))\, dJ(x,y)\,,
\end{align}
\be
\t F_{(B)}^{(k)}(u,v)=f_{B}^{(k)}(\gamma_{0}u,\gamma_{0}v)=
\int_{\Gamma}
\widetilde{\gamma_{0}u}(x)\widetilde{\gamma_{0}v}(x)\, d\kappa(x)\,.
\ee
Hence the Dirichlet form $\t F_{(B)}$ is strongly local whenever $f_{B}$ is strongly local (i.e. 
$f_{B}^{(j)}=f_{B}^{(k)}=0$) and, 
in the case $\t F_{(B)}$ is regular (see next lemma for a criterion), $\Z_{\t A_{(B)}}$ is a Diffusion whenever $f_{B}^{(j)}=0$. 
\end{remark}

\begin{lemma}\label{Lregular}
Let $B\equiv(\uno,B)\in \t\E(L^2(\Gamma))$ such that either
\be
H^{\frac12}(\Gamma)\subseteq \D(f_{B})\quad\text{and}\quad 
f_{B}(h)\le c\,\|(-\Delta_{LB})^{\frac 14}h\|^{2}_{L^{2}(\Gamma)}+c_{0}\|h\|^{2}_{L^{2}(\Gamma)}
\,,
\ee
or 
\be
 \D(f_{B})\subseteq H^{\frac12}(\Gamma)\quad\text{and}\quad f_{B}(h)\ge c\,
 \|(-\Delta_{LB})^{\frac 14}h\|^{2}_{L^{2}(\Gamma)}
\,,
\ee
where $c>0$, $c_{0}\ge 0$. Then 
\be
\text{$f_{B}$ regular on $\Gamma$ $\quad\Longrightarrow\quad$ $\t F_{(B)}$ regular on $\bar\Omega$.}
\ee
\end{lemma}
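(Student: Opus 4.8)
The plan is to verify the two defining conditions of a regular Dirichlet form on $X=\bar\Omega$ (which is compact, so $C_c(\bar\Omega)=C(\bar\Omega)$): that $\D(\t F_{(B)})\cap C(\bar\Omega)$ be dense in the Dirichlet space $\H(\t F_{(B)})$ and uniformly dense in $C(\bar\Omega)$. Two observations are used throughout. First, $\t F_{(B)}$ is a densely defined Dirichlet form: it is a Dirichlet form by point 3 in Theorem \ref{sub} (a regular form being in particular a Dirichlet form on $L^2(\Gamma)$), and it is densely defined since $H^1_0(\Omega)\subseteq\D(\t F_{(B)})$. Second, for $u\in H^1(\Omega)$ one has $\gamma_0 u\in H^{\frac12}(\Gamma)$, so by Remark \ref{regularity} the Poisson operator gives the decomposition $u=u_0+K_0\gamma_0 u$ with $u_0\in H^1_0(\Omega)$, and $\|\cdot\|_{\H(\t F_{(B)})}$ is equivalent to the norm $u\mapsto\left(\|u\|^2_{H^1(\Omega)}+f_B(\gamma_0 u)\right)^{1/2}$, because $F_N(u)+\|u\|^2_{L^2(\Omega)}$ is equivalent to $\|u\|^2_{H^1(\Omega)}$ by uniform ellipticity and $f_B\ge 0$. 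The two hypotheses on $f_B$ are then treated separately.

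Under the first hypothesis one has $f_B(h)\le C\,\|h\|^2_{H^{\frac12}(\Gamma)}$ on $\D(f_B)\supseteq H^{\frac12}(\Gamma)$. Uniform density is then automatic: for $u\in C^\infty(\bar\Omega)$ one has $\gamma_0 u\in C^\infty(\Gamma)\subseteq H^{\frac12}(\Gamma)\subseteq\D(f_B)$, so $C^\infty(\bar\Omega)\subseteq\D(\t F_{(B)})\cap C(\bar\Omega)$, and $C^\infty(\bar\Omega)$ is uniformly dense in $C(\bar\Omega)$. For form density, given $u\in\D(\t F_{(B)})$ write $u=u_0+K_0\gamma_0 u$, take $\phi_n\in C^\infty_c(\Omega)$ with $\phi_n\to u_0$ in $H^1(\Omega)$ and $g_n\in C^\infty(\Gamma)$ with $g_n\to\gamma_0 u$ in $H^{\frac12}(\Gamma)$ (possible since $C^\infty(\Gamma)$ is dense in $H^{\frac12}(\Gamma)$), and set $u_n:=\phi_n+K_0 g_n$. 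By the isomorphism property of $K_0$, $u_n\in C^\infty(\bar\Omega)\cap\D(\t F_{(B)})$, $\gamma_0 u_n=g_n$, $u_n\to u$ in $H^1(\Omega)$, and $f_B(g_n-\gamma_0 u)\le C\,\|g_n-\gamma_0 u\|^2_{H^{\frac12}(\Gamma)}\to 0$, whence $u_n\to u$ in $\H(\t F_{(B)})$. Here the upper bound on $f_B$ is exactly what lets $H^{\frac12}$-convergence of the boundary data control the boundary part of the form norm.

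Under the second hypothesis, $\|\cdot\|^2_{\H(f_B)}$ dominates $\|\cdot\|^2_{H^{\frac12}(\Gamma)}$, so $\H(f_B)$ embeds continuously into $H^{\frac12}(\Gamma)$, and regularity of $f_B$ on $\Gamma$ is genuinely needed. I would first record that for $g\in C(\Gamma)\cap H^{\frac12}(\Gamma)$ the harmonic extension $K_0 g$ lies in $C(\bar\Omega)\cap H^1(\Omega)$ with $\gamma_0 K_0 g=g$: mollifying $g$ on $\Gamma$ (e.g. by the heat semigroup of $\Delta_{LB}$) gives $g_k\in C^\infty(\Gamma)$ with $g_k\to g$ both uniformly and in $H^{\frac12}(\Gamma)$, so $K_0 g_k\to K_0 g$ in $H^1(\Omega)$ and, by the maximum principle, uniformly on $\bar\Omega$. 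For form density, given $u\in\D(\t F_{(B)})$, decompose $u=u_0+K_0\gamma_0 u$, approximate $u_0$ by $\phi_n\in C^\infty_c(\Omega)$ in $H^1(\Omega)$ and, using regularity of $f_B$, pick $g_n\in\D(f_B)\cap C(\Gamma)$ with $\|g_n-\gamma_0 u\|_{\H(f_B)}\to 0$; the embedding $\H(f_B)\hookrightarrow H^{\frac12}(\Gamma)$ gives $K_0 g_n\to K_0\gamma_0 u$ in $H^1(\Omega)$, and $u_n:=\phi_n+K_0 g_n\in\D(\t F_{(B)})\cap C(\bar\Omega)$ converges to $u$ in $\H(\t F_{(B)})$. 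For uniform density, given $\phi\in C(\bar\Omega)$, use the uniform-density part of the regularity of $f_B$ to pick $g\in\D(f_B)\cap C(\Gamma)$ with $\|\gamma_0\phi-g\|_{L^\infty(\Gamma)}$ arbitrarily small, put $w:=K_0 g\in\D(\t F_{(B)})\cap C(\bar\Omega)$, and correct the interior error by $w+\eta\psi$, where $\psi\in C^\infty(\bar\Omega)$ approximates $\phi-w$ uniformly and $\eta\in C^\infty(\bar\Omega)$, $0\le\eta\le 1$, vanishes near $\Gamma$ and equals $1$ off a thin tubular neighbourhood of $\Gamma$; since $\phi-w$ is continuous and small on $\Gamma$ it is small on that neighbourhood, so $w+\eta\psi$ is uniformly close to $\phi$, while $\eta\psi\in C^\infty_c(\Omega)$ gives $\gamma_0(w+\eta\psi)=g\in\D(f_B)$.

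I expect the main obstacle to be the uniform-density step under the second hypothesis: one must keep the boundary trace of the approximant inside $\D(f_B)$ — which forces using the Poisson extension $K_0 g$ of a boundary approximant rather than some arbitrary continuous extension of $\phi$ — and then repair the interior discrepancy without disturbing that trace, which is exactly what the cutoff near $\Gamma$ together with the boundary maximum principle accomplish. Everything else, namely the $H^1$/form-norm bookkeeping and the entire first-hypothesis case, is routine once the right extension operators and norm equivalences are in place.
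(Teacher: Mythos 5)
Your proof is correct, and the $\H(\t F_{(B)})$-density half follows essentially the same lines as the paper: in the first case the upper bound $f_{B}(h)\le c\,\|(-\Delta_{LB})^{\frac14}h\|^{2}+c_{0}\|h\|^{2}$ lets $H^{\frac12}(\Gamma)$-convergence of the traces control the boundary term (the paper approximates $u\in H^{1}(\Omega)$ directly by $C^{\infty}(\bar\Omega)$ rather than through the decomposition $u=u_{0}+K_{0}\gamma_{0}u$, but this is immaterial), and in the second case you use, exactly as the paper does, the regularity of $f_{B}$ to approximate $\gamma_{0}u$ in the $\H(f_{B})$-norm together with the lower bound to convert this into $H^{\frac12}(\Gamma)$- and hence $H^{1}(\Omega)$-convergence of $K_{0}g_{n}$.

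Where you genuinely diverge is the uniform density in $C(\bar\Omega)$. The paper handles both cases at once by a Stone--Weierstrass argument: it shows that $\D(\t F_{(B)})\cap C(\bar\Omega)$ separates the points of $\bar\Omega$ (interior/interior via $C^{\infty}_{c}(\Omega)$, boundary/boundary via regularity of $f_{B}$ and the Poisson operator, mixed pairs by adjusting the interior part), and then invokes Remark 1.3.11 of \cite{[CF]}. You instead give a direct construction: in the first case $C^{\infty}(\bar\Omega)\subseteq\D(\t F_{(B)})\cap C(\bar\Omega)$ makes the claim trivial, and in the second case you approximate the boundary values uniformly by $g\in\D(f_{B})\cap C(\Gamma)$ (the $L^{\infty}$-density part of regularity), extend by $K_{0}g$, and repair the interior discrepancy with a smooth correction cut off near $\Gamma$, the maximum principle and uniform continuity guaranteeing that the error stays small in the tubular neighbourhood where the cutoff acts; your preliminary step showing $K_{0}g\in C(\bar\Omega)$ for $g\in C(\Gamma)\cap H^{\frac12}(\Gamma)$ (mollification by the heat semigroup of $\Delta_{LB}$ plus the maximum principle) makes explicit a continuity fact which the paper's identification of $\D(\t F_{(B)})\cap C(\bar\Omega)$ uses implicitly. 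The paper's route is shorter because the separation criterion dispenses with any quantitative boundary-to-interior bookkeeping; yours is more self-contained and elementary, avoiding the appeal to the abstract density criterion of \cite{[CF]} at the price of the cutoff construction.
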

\begin{proof}  Let us at first show that 
\begin{align}
\D(\t F_{(B)})\cap C(\bar\Omega)
\equiv
\{u=u_{0}+K_{0}h\,,\ u_{0}\in H^{1}_{0}(\Omega)\cap C_{0}(\Omega)\,,\ h\in\D(f_{B})\cap C(\Gamma)\}
\end{align}
is $L^{\infty}(\Omega)$-dense in $C(\bar\Omega)$. By Stone-Weirerstrass theorem this is equivalent to show that $\D(\t F_{(B)})\cap C(\bar\Omega)$ separates the points of $\bar\Omega$ (see \cite{[CF]}, Remark 1.3.11). If $x,y\in\Omega$ then one takes $h=0$ and $u_{0}\in C^{\infty}_{0}(\Omega)$ such that $u_{0}(x)\not=u_{0}(y)$. If $x,y\in\Gamma$ then, since $f_{B}$ is regular, there exists $h\in \D(f_{B})\cap C(\Gamma)$ such that $h(x)\not=h(y)$  and so $u(x)\not=u(y)$ by posing $u=K_{0}h$.  Suppose now that $x\in \Omega$ and $y\in\Gamma$. Then, given $h\in \D(f_{B})\cap C(\Gamma)$, it suffices to take $u=u_{0}+K_{0}h$, where $u_{0}\in C^{\infty}_{0}(\Omega)$ is such that $u_{0}(x)\not=h(y)-K_{0}h(x)$.\par
If $H^{\frac12}(\Gamma)\subseteq \D(f_{B})$  then $\D(\t F_{(B)})=H^{1}(\Omega)$. Taking a sequence $\{u_{n}\}_1^{\infty}\subset  C^{\infty}(\bar\Omega)$ converging in $H^{1}(\Omega)$ to 
$u$,  since  $\gamma_{0}u_{n}$ converges to $\gamma_{0}u$ in $H^{\frac12}(\Gamma)$ 
and  
\begin{align}
(\t F_{(B)}+1)(u_n-u)
\le (F_{N}+1)(u_{n}-u)+(c\vee c_{0})\,\|\gamma_{0}u_{n}-\gamma_{0}u\|^{2}_{H^{\frac12}(\Gamma)}\,,
\end{align} 
one gets $(\t F_{(B)}+1)(u_n-u)\to 0$.\par
Since $f_{B}$ is regular, for any $h\in \D(f_{B})$ there exists a sequence $\{h_{n}\}_{1}^{\infty}\subset \D(f_{B})\cap C(\Gamma)$ such that $(f_{B}+1)(h_{n}-h)\to 0$. If 
$f_{B}(h)\ge c\,\|(-\Delta_{LB})^{\frac14}h\|^{2}_{L^{2}(\Gamma)}$, then $\|h_{n}-h\|_{H^{\frac 12}(\Gamma)}\to 0$ and so $K_{0}h_{n}$ converges to $K_{0}h$ in $H^{1}(\Omega)$. Given $u=u_{0}+K_{0}h\in \D(\t F_{(B)})$, let $\{u_{0,n}\}_1^{\infty}\subset  C^{\infty}_{0}(\Omega)$ converge in $H^{1}(\Omega)$ to 
$u_{0}$. Then $u_{n}=u_{0,n}+K_{0}h_{n}\in \D(\t F_{(B)})\cap C(\bar\Omega)$,  converges in $H^{1}(\Omega)$ to 
$u$ and so $(\t F_{(B)}+1)(u_n-u)=(F_{N}+1)(u_{n}-u)+f_{B}(h_{n}-h)\to 0$.
\end{proof}
\begin{remark} A resolvent formula for $A_{(B)}$, where $R^{D}_{\lambda}$ is substituted by $R^{N}_{\lambda}:=(-A_{N}+\lambda)^{-1}$, can be given under the hypothesis $\D(A_{N})\subseteq \D(\t F_{(B)})$ (for example this holds when 
$H^{\frac12}(\Gamma)\subseteq \D(f_{B})$, so that $\D(\t F_{(B)})=H^{1}(\Omega)=\D(F_{N})$): defining the closed, densely defined  operator in $H^{1}(\Omega)$
\be
J_{B}:\D(\t F_{(B)})\subseteq H^{1}(\Omega)\to L^{2}(\Gamma)\,,\quad J_{B}u:=\sqrt B\,\gamma_{0}u\,.
\ee
one has 
\be
\t F_{(B)}(u,v)=F_{N}(u,v)+\langle J_{B}u,J_{B}v\rangle_{L^{2}(\Gamma)}
\ee
and so, by Lemma 3 in \cite{[Br]}, one gets
\be
\t R^{(B)}_{\lambda}=R^{N}_{\lambda}+(J_{B}R^{N}_{\lambda})^{*}
(\uno+J_{B}J_{B}^{*})^{-1}
J_{B}R^{N}_{\lambda}\,.
\ee
\end{remark}
Before stating the converse of Theorem \ref{sub} we give the following result concerning 
the Dirichlet-to-Neumann operator:  
\begin{lemma}\label{Plim}
For all $h\in L^{\infty}(\Gamma)\cap H^{\frac12}(\Gamma)$ one has
\begin{equation}
(P_{0}h,h)_{-\frac12,\frac12}=\lim_{\alpha\uparrow\infty}\alpha\left(\langle K_{0}h,
K_{\alpha}h\rangle_{L^2(\Omega)}-\langle{1},K_{\alpha}h^{2}\rangle_{L^2(\Omega)}\,\right)
\end{equation}
\end{lemma}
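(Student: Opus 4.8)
The plan is to first convert the statement into a purely boundary assertion on the Dirichlet-to-Neumann operators. By \eqref{K*} one has $K_{0}^{\star}=\gamma_{1}R^{D}_{0}$, so \eqref{PL} reads $\alpha K^{\star}_{0}K_{\alpha}=P_{0}-P_{\alpha}$; combined with the defining property of $K_{0}^{\star}$ as the dual of $K_{0}\colon H^{-\frac12}(\Gamma)\to L^{2}(\Omega)$ and the $L^{2}(\Gamma)$-symmetry of $P_{0},P_{\alpha}$ (Green's formula), this gives $\alpha\langle K_{0}h,K_{\alpha}h\rangle_{L^{2}(\Omega)}=(P_{0}h,h)_{-\frac12,\frac12}-(P_{\alpha}h,h)_{-\frac12,\frac12}$. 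Since $h\in H^{\frac12}(\Gamma)\cap L^{\infty}(\Gamma)$ and this space is an algebra under pointwise products, $h^{2}\in H^{\frac12}(\Gamma)$; hence, by Remark \ref{regularity}, $K_{\alpha}(h^{2})\in\D(A_{\max})\cap H^{1}(\Omega)$, and applying the half Green's formula \eqref{halfGreen} to $u=K_{\alpha}(h^{2})$ and $v=1$ gives $\alpha\langle 1,K_{\alpha}h^{2}\rangle_{L^{2}(\Omega)}=-(P_{\alpha}h^{2},1)_{-\frac12,\frac12}$. Thus the claimed identity is equivalent to
\[
\lim_{\alpha\uparrow\infty}\Big((P_{\alpha}h^{2},1)_{-\frac12,\frac12}-(P_{\alpha}h,h)_{-\frac12,\frac12}\Big)=0 .
\]

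Next I would carry this back into $\Omega$. Put $v_{\alpha}:=K_{\alpha}h$ and $w_{\alpha}:=K_{\alpha}(h^{2})-(K_{\alpha}h)^{2}$. Then $w_{\alpha}\in H^{1}_{0}(\Omega)$ (both summands have trace $h^{2}$), $w_{\alpha}\ge 0$ (Cauchy--Schwarz for the positivity-preserving Poisson operator $K_{\alpha}$, using $0\le K_{\alpha}1\le 1$), and the product rule gives $(-A_{D}+\alpha)w_{\alpha}=2\,a\nabla v_{\alpha}\ccdot\nabla v_{\alpha}+\alpha v_{\alpha}^{2}=:g_{\alpha}\ge 0$ in $L^{1}(\Omega)$. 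Applying \eqref{halfGreen} to $v_{\alpha}$ against itself, respectively to $K_{\alpha}(h^{2})$ against $1$, yields $-(P_{\alpha}h,h)_{-\frac12,\frac12}=\alpha\|v_{\alpha}\|^{2}_{L^{2}(\Omega)}+F_{N}(v_{\alpha})$ and $-(P_{\alpha}h^{2},1)_{-\frac12,\frac12}=\alpha\langle K_{\alpha}h^{2},1\rangle_{L^{2}(\Omega)}$; subtracting and recalling $F_{N}(v_{\alpha})=\langle a\nabla v_{\alpha}\ccdot\nabla v_{\alpha},1\rangle_{L^{2}(\Omega)}$,
\[
(P_{\alpha}h^{2},1)_{-\frac12,\frac12}-(P_{\alpha}h,h)_{-\frac12,\frac12}=F_{N}(K_{\alpha}h)-\alpha\langle w_{\alpha},1\rangle_{L^{2}(\Omega)} .
\]
So it remains to prove $F_{N}(K_{\alpha}h)-\alpha\langle w_{\alpha},1\rangle_{L^{2}(\Omega)}\to 0$; here both terms are non-negative and, as the one-dimensional model of Section 3 shows, each grows like $\alpha^{1/2}$, so the content is that the divergent parts cancel.

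For this last step the plan is a boundary-layer analysis. Writing $w_{\alpha}=\t R^{D}_{\alpha}g_{\alpha}$ and using $\alpha R^{D}_{\alpha}1=1-K_{\alpha}1$ (from \eqref{KL}) one gets $\alpha\langle w_{\alpha},1\rangle_{L^{2}(\Omega)}=\langle g_{\alpha},1-K_{\alpha}1\rangle_{L^{2}(\Omega)}$, whence
\[
F_{N}(K_{\alpha}h)-\alpha\langle w_{\alpha},1\rangle_{L^{2}(\Omega)}
=\langle a\nabla v_{\alpha}\ccdot\nabla v_{\alpha},\,2K_{\alpha}1-1\rangle_{L^{2}(\Omega)}-\alpha\langle v_{\alpha}^{2},\,1-K_{\alpha}1\rangle_{L^{2}(\Omega)} .
\]
Now $K_{\alpha}h$ and $K_{\alpha}1$ are concentrated in a layer of width $O(\alpha^{-1/2})$ about $\Gamma$, where, after straightening the boundary, they are approximated by the normal profiles $h(x')\,e^{-\sqrt{\alpha/b(x')}\,t}$ and $e^{-\sqrt{\alpha/b(x')}\,t}$ with $b=\langle a\nu,\nu\rangle$ on $\Gamma$; inserting these and evaluating the elementary exponential integrals in the normal variable, the $O(\alpha^{1/2})$ parts of the two terms on the right both equal $\tfrac12\int_{\Gamma}\sqrt{\alpha b}\,h^{2}\,d\sigma$ to leading order and hence cancel, while the remainder is bounded uniformly by $C\|h\|^{2}_{H^{\frac12}(\Gamma)}$ and tends to $0$ by dominated convergence in the boundary frequency. (Equivalently, one may observe that $F_{N}(K_{\alpha}h)-\alpha\langle w_{\alpha},1\rangle$ differs by the constant $F_{N}(K_{0}h)$ from $\breve F^{\alpha}_{D}(K_{0}h)-\alpha\langle K_{0}(h^{2})-(K_{0}h)^{2},\,K_{\alpha}1\rangle_{L^{2}(\Omega)}$ --- using that, by Theorem \ref{Res-Form}, the killing density of the Yosida approximation of $F_{D}$ is $K_{\alpha}1$ --- and then invoke the convergences $\breve F^{\alpha}_{D}(K_{0}h)\to F_{N}(K_{0}h)$ and $\alpha\langle K_{0}(h^{2})-(K_{0}h)^{2},K_{\alpha}1\rangle\to 2F_{N}(K_{0}h)$.) The main obstacle is precisely this step: making rigorous the cancellation of the $O(\alpha^{1/2})$ boundary-layer contributions and showing that the remaining error vanishes, which is where the borderline regularity $h\in H^{\frac12}(\Gamma)\cap L^{\infty}(\Gamma)$ is essential.
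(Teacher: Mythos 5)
Your reductions are correct and genuinely useful: passing to the boundary via \eqref{K*} and \eqref{PL}, the identity $\alpha\langle K_{0}h,K_{\alpha}h\rangle_{L^{2}(\Omega)}=(P_{0}h,h)_{-\frac12,\frac12}-(P_{\alpha}h,h)_{-\frac12,\frac12}$, the half Green's formula giving $\alpha\langle 1,K_{\alpha}h^{2}\rangle_{L^{2}(\Omega)}=-(P_{\alpha}h^{2},1)_{-\frac12,\frac12}$, and the elliptic computation $(-A+\alpha)\bigl(K_{\alpha}(h^{2})-(K_{\alpha}h)^{2}\bigr)=2\,a\nabla K_{\alpha}h\ccdot\nabla K_{\alpha}h+\alpha(K_{\alpha}h)^{2}$ all check out (modulo routine care with the $L^{1}$ pairing and the maximum principle giving $K_{\alpha}h\in L^{\infty}$). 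This correctly reduces the lemma to
\[
\lim_{\alpha\uparrow\infty}\Bigl(F_{N}(K_{\alpha}h)-\alpha\langle K_{\alpha}(h^{2})-(K_{\alpha}h)^{2},1\rangle_{L^{2}(\Omega)}\Bigr)=0 .
\]

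However, this last limit \emph{is} the entire analytic content of the lemma, and your proposal does not prove it: the boundary-layer computation with profiles $h(x')e^{-\sqrt{\alpha/b(x')}\,t}$ is heuristic, and precisely at the borderline regularity $h\in H^{\frac12}(\Gamma)\cap L^{\infty}(\Gamma)$ (where $K_{\alpha}h$ is not better than $H^{1}(\Omega)$) the claims that the two $O(\alpha^{1/2})$ contributions cancel and that "the remainder is bounded by $C\|h\|^{2}_{H^{1/2}(\Gamma)}$ and tends to $0$ by dominated convergence" are exactly what would have to be established; you acknowledge this yourself as the main obstacle. Your parenthetical alternative has the same status: the convergences $\breve F^{\alpha}_{D}(K_{0}h)\to F_{N}(K_{0}h)$ and $\alpha\langle K_{0}(h^{2})-(K_{0}h)^{2},K_{\alpha}1\rangle\to 2F_{N}(K_{0}h)$ are asserted, not proved, and they are essentially reformulations of the statement. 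The paper closes this gap by citing a nontrivial classical result: Theorem 2 of Fukushima's 1964 paper \cite{[Fu64]} (see also \cite{[CF]}, Theorem 5.5.9), which states that with the Feller kernel $U_{\alpha}(x,y)=\alpha\int_{\Omega}\frac{\partial g_{\alpha}}{\partial\nu_{a}}(x,z)\frac{\partial g_{0}}{\partial\nu_{a}}(z,y)\,dz$ one has $F_{N}(K_{0}h)=\frac12\lim_{\alpha\uparrow\infty}\int_{\Gamma\times\Gamma}(h(x)-h(y))^{2}U_{\alpha}(x,y)\,d\sigma(x)d\sigma(y)$; the lemma then follows from the elementary kernel identity $\alpha\langle K_{0}h,K_{\alpha}h\rangle=\alpha\langle 1,K_{\alpha}h^{2}\rangle-\frac12\int(h(x)-h(y))^{2}U_{\alpha}$, using $K_{0}1=1$ and the symmetry of $U_{\alpha}$. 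Your reduction is in fact equivalent to that cited convergence, so the cleanest repair is to invoke \cite{[Fu64]} (or \cite{[CF]}, Theorem 5.5.9) at that point rather than attempt the boundary-layer asymptotics.
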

\begin{proof}
By Theorem 2 in \cite{[Fu64]} (also use Theorem 5.5.9 in \cite{[CF]}),
\be
F_{N}(K_{0}h)=\frac12\,\lim_{\alpha\uparrow\infty}\int_{\Gamma\times\Gamma}
(h(x)-h(y))^{2}U_{\alpha}(x,y)\,d\sigma(x)d\sigma(y)
\,,
\ee
where the kernel $U_{\alpha}$ is defined by 
\be
U_{\alpha}(x,y):=\alpha\int_{\Omega}\frac{\partial g_{\alpha}}{\partial \nu_{a}}(x,z)
\,\frac{\partial g_{0}}{\partial \nu_{a}}(z,y)\,dz\,,
\ee
and $g_{\alpha}$ denotes the kernel of $(-A_{D}+\alpha)^{-1}$. The proof is then
concluded by $F_{N}(K_{0}h)=-(P_{0}h,h)_{-\frac12,\frac12}$, by $K_{0}1=1$ and by
\begin{align}
&\alpha\langle K_{0}h,
K_{\alpha}h\rangle_{L^2(\Omega)}=\int_{\Gamma\times\Gamma}
h(x)h(y)U_{\alpha}(x,y)\,d\sigma(x)d\sigma(y)\\
=&\alpha\langle {1},
K_{\alpha}h^{2}\rangle_{L^2(\Omega)}
-\frac12
\int_{\Gamma\times\Gamma}
(h(x)-h(y))^{2}U_{\alpha}(x,y)\,d\sigma(x)d\sigma(y)\,.
\end{align}

\end{proof}
Theorem \ref{sub} has the following converse:
\begin{theorem}\label{converse} Let 
$A_{(\Pi,\Theta)}\in\ExN(A_\min)$ 
. Then:
\par\noindent
1. \be
\D(F_{(\Pi,\Theta)})
=\{u\in H^1(\Omega)\,:\,\gamma_0u\in\D(f_{\Pi_{\Lambda},\Theta_{\Sigma}}) \}\,,
\ee
\be 
F_{(\Pi,\Theta)}(u,v)=F_N(u,v)+\t f^\Theta_{\Pi_\Lambda}(\gamma_0u,\gamma_0v)\,,
\ee
where the positive symmetric bilinear form $\t f^\Theta_{\Pi_\Lambda}$ is defined by 
\be
\t f^\Theta_{\Pi_\Lambda}:\D(f_{\Pi_\Lambda,\Theta_\Sigma})\times
\D(f_{\Pi_\Lambda,\Theta_\Sigma})\subseteq \R(\Pi_\Lambda)\times \R(\Pi_\Lambda)\to\RE\,,
\ee
\begin{align}
&\t f^\Theta_{\Pi_\Lambda}(h_1,h_2):= f_{\Pi_\Lambda,\Theta_\Sigma}(h_1, h_2)
+(P_{0}h_1,h_2)_{-\frac12,\frac12}\,.
\end{align}
2. If $\t f^\Theta_{\Pi_\Lambda}$ is closable with closure $f_{\Pi_\Lambda,B_\Theta}$ then 
$(\Pi_\Lambda,B_\Theta)\in\t\E(L^2(\Gamma))$ and 
$A_{(\Pi,\Theta)}=\t A_{\Pi_\Lambda,B_\Theta}$,
i.e.
\be
F_{(\Pi,\Theta)}(u,v)=F_{N}(u,v)+f_{\Pi_{\Lambda},B_{\Theta}}(\gamma_{0}u,\gamma_{0}v)
\ee
\par\noindent
3. If $A_{(\Pi,\Theta)}\in\ExM(A_\min)$ then: 
\item
3.1. $f_{\Pi_\Lambda,\Theta_\Sigma}$ is a Dirichlet form on $L^2(\Gamma)$;
\item 3.2. $\t f^\Theta_{\Pi_\Lambda}$ is a Markovian form on $L^2(\Gamma)$;
\item3.3 $f_{\Pi_\Lambda,B_\Theta}$ is a 
Dirichlet form on $L^2(\Gamma)$ whenever $\t f^\Theta_{\Pi_\Lambda}$ is closable.
\par\noindent
\item Moreover $A_{(\Pi,\Theta)}$ is recurrent (equivalently conservative) if and only if  
\be
1\in\D(\t f^\Theta_{\Pi_\Lambda})\quad \text{and}\quad \t f^\Theta_{\Pi_\Lambda}(1)=
0\,.
\ee
\end{theorem}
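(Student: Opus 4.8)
The statement has three parts, which I would treat in turn. \textbf{Part 1} is essentially bookkeeping: since $A_{(\Pi,\Theta)}\in\ExN(A_\min)$ gives $A_{(\Pi,\Theta)}\preceq A_N$, hence $\D(F_{(\Pi,\Theta)})\subseteq\D(F_N)=H^1(\Omega)$, Remark \ref{remarkforma} taken with $s=1$ yields $\D(f_{\Pi,\Theta})\subseteq H^1(\Gamma)$, so $\D(f_{\Pi_\Lambda,\Theta_\Sigma})=\Lambda\D(f_{\Pi,\Theta})\subseteq H^{\frac12}(\Gamma)$ and Corollary \ref{lemmaforma1} applies verbatim, giving $F_{(\Pi,\Theta)}(u,v)=F_D(u_0,v_0)+f_{\Pi_\Lambda,\Theta_\Sigma}(\gamma_0u,\gamma_0v)$ on $\{u\in H^1(\Omega):\gamma_0u\in\D(f_{\Pi_\Lambda,\Theta_\Sigma})\}$. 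Rewriting $F_D(u_0,v_0)$ by \eqref{FNFD} as $F_N(u,v)+(P_0\gamma_0u,\gamma_0v)_{-\frac12,\frac12}$ and collecting terms yields the asserted decomposition, while positivity of $\t f^\Theta_{\Pi_\Lambda}$ follows from $F_{(\Pi,\Theta)}\ge F_N$ together with the identity $\t f^\Theta_{\Pi_\Lambda}(h)=F_{(\Pi,\Theta)}(K_0h)-F_N(K_0h)$, valid for $h\in\D(\t f^\Theta_{\Pi_\Lambda})$ because $K_0h$ then lies in $\D(F_{(\Pi,\Theta)})$ with vanishing $H^1_0$-component. For \textbf{Part 2} note that $\D(\t f^\Theta_{\Pi_\Lambda})=\Lambda\D(f_{\Pi,\Theta})\subseteq H^{\frac12}(\Gamma)$ is, being the form one closes, a core of $f_{\Pi_\Lambda,B_\Theta}\ge0$, so $\D(f_{\Pi_\Lambda,B_\Theta})\cap H^{\frac12}(\Gamma)$ is a core and $(\Pi_\Lambda,B_\Theta)\in\t\E(L^2(\Gamma))$, whence $\t F_{(\Pi_\Lambda,B_\Theta)}$ of Theorem \ref{sub} is well defined. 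To identify $\t A_{(\Pi_\Lambda,B_\Theta)}$ with $A_{(\Pi,\Theta)}$ I would match Kre\u\i n formulas: Lemma \ref{lemmarisolvente} (applicable since $\D(F_{(\Pi,\Theta)})\subseteq H^1(\Omega)$) writes $R^{(\Pi,\Theta)}_\lambda$ with inner operator $\Theta_\Sigma+\lambda\tilde\Pi^\star_\Lambda K^\star_0K_\lambda\tilde\Pi_\Lambda$, and Lemma \ref{LRT} writes $\t R^{(\Pi_\Lambda,B_\Theta)}_\lambda$ with inner operator $B_\Theta\dotplus\Pi_\Lambda(-P_\lambda)\Pi_\Lambda$; by \eqref{PL} one has $\lambda K^\star_0K_\lambda=P_0-P_\lambda$, and since $\Lambda\D(f_{\Pi,\Theta})$ is a core the projection data agree and, by construction, $B_\Theta$ is the operator of the closure of $h\mapsto f_{\Pi_\Lambda,\Theta_\Sigma}(h)+(P_0h,h)_{-\frac12,\frac12}$, so the two inner operators coincide and the resolvents agree; the displayed form identity then follows from Part 1 of Theorem \ref{sub}.

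For \textbf{Part 3} assume $A_{(\Pi,\Theta)}\in\ExM(A_\min)$, so $F_{(\Pi,\Theta)}$ is a Dirichlet form by Theorem \ref{dirichlet}. For 3.1, given $h\in\D(f_{\Pi_\Lambda,\Theta_\Sigma})\subseteq H^{\frac12}(\Gamma)$ we have $f_{\Pi_\Lambda,\Theta_\Sigma}(h)=F_{(\Pi,\Theta)}(K_0h)$; the Dirichlet property gives $(K_0h)_\#\in\D(F_{(\Pi,\Theta)})$ with trace $(\gamma_0K_0h)_\#=h_\#$ by \eqref{gammacanc}, whence $h_\#\in\D(f_{\Pi_\Lambda,\Theta_\Sigma})$, and using that $K_0h_\#$ minimises $F_{(\Pi,\Theta)}$ over functions with trace $h_\#$ (because $F_D\ge0$) one gets $f_{\Pi_\Lambda,\Theta_\Sigma}(h_\#)=F_{(\Pi,\Theta)}(K_0h_\#)\le F_{(\Pi,\Theta)}((K_0h)_\#)\le F_{(\Pi,\Theta)}(K_0h)=f_{\Pi_\Lambda,\Theta_\Sigma}(h)$; with closedness (Corollary \ref{lemmaforma1}) this makes $f_{\Pi_\Lambda,\Theta_\Sigma}$ a Dirichlet form. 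Granting 3.2, statement 3.3 is immediate from Theorem \ref{closure}, and the recurrence criterion follows from \eqref{conserviff2}: since $1\in H^1(\Omega)=\D(F_N)$ with $F_N(1)=0$ and $\gamma_01=1$, one has $1\in\D(F_{(\Pi,\Theta)})$ iff $1\in\D(\t f^\Theta_{\Pi_\Lambda})$, and then $F_{(\Pi,\Theta)}(1)=\t f^\Theta_{\Pi_\Lambda}(1)$.

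\textbf{The main obstacle is 3.2}, namely that $\t f^\Theta_{\Pi_\Lambda}$ — which equals $f_{\Pi_\Lambda,\Theta_\Sigma}$ minus the Dirichlet form $h\mapsto-(P_0h,h)_{-\frac12,\frac12}=F_N(K_0h,K_0h)$ — is itself Markovian; the estimate one would naively read off from 3.1 goes the wrong way, since the difference of two Markovian forms need not satisfy the unit contraction property. My plan is to test the Dirichlet property of $F_{(\Pi,\Theta)}$ against \emph{all} $u\in\D(F_{(\Pi,\Theta)})$ with $\gamma_0u=h$: from $F_{(\Pi,\Theta)}(u_\#)\le F_{(\Pi,\Theta)}(u)$ and $F_{(\Pi,\Theta)}(w)=F_N(w)+\t f^\Theta_{\Pi_\Lambda}(\gamma_0w)$ one obtains \[\t f^\Theta_{\Pi_\Lambda}(h_\#)-\t f^\Theta_{\Pi_\Lambda}(h)\le F_N(u)-F_N(u_\#)=\int_{\{u\le0\}\cup\{u\ge1\}}\langle\nabla u,a\nabla u\rangle\,dx,\] so it suffices to make this last quantity arbitrarily small by a suitable choice of the $H^1_0(\Omega)$-component of $u$. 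Here I expect to need the finer structure available on the bounded domain $\Omega$: Lemma \ref{Plim}, which rewrites $F_N(K_0h,K_0h)$ as the Yosida-type limit $\lim_\alpha\alpha(\langle1,K_\alpha h^2\rangle-\langle K_0h,K_\alpha h\rangle)$; the Beurling--Deny-type splitting of Remark \ref{DM0} of the Dirichlet form $F_{(\Pi,\Theta)}$ on the finite-measure space $\Omega$ into a jump-type part (non-increasing under $w\mapsto w_\#$) and a killing-type part (controlled by the pointwise bound $w_\#^2\le w_n^2$ for $w_n$ the truncation at level $n$); and the positivity $\kappa_D\le\kappa_{A_{(\Pi,\Theta)}}\le\kappa_N$ from Corollary \ref{gaussian}, which forces the corresponding difference of Yosida kernels to be signed. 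Subtracting the analogous expansions for $F_{(\Pi,\Theta)}$ and for $F_N$ should express $\t f^\Theta_{\Pi_\Lambda}(h)$ as a limit of quantities each non-increasing under the unit contraction of $h$; making this subtraction of two Yosida/Beurling--Deny expansions rigorous, while tracking the $H^{\frac12}(\Gamma)$-traces through the Dirichlet-space structure of Remark \ref{norm1/2}, is the step I expect to be genuinely delicate.
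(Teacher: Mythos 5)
Parts 1, 3.1, 3.3 and the recurrence criterion are essentially the paper's own arguments (Corollary \ref{lemmaforma1}, Remark \ref{remarkforma}, \eqref{FNFD}, \eqref{gammacanc}, the minimization over the $H^1_0(\Omega)$-component, Theorem \ref{closure}, \eqref{conserviff2}), and your proof of 3.1 is fine. The genuine gap is 3.2, which is the heart of the theorem and which you do not prove: you offer a plan and yourself flag the decisive step as open. Moreover the first half of that plan cannot work: for a fixed trace $h=\gamma_0u$ the quantity $F_N(u)-F_N(u_{\#})=\int_{\{0<u<1\}^c}\langle\nabla u,a\nabla u\rangle\,dx$ cannot be made arbitrarily small by varying the $H^1_0(\Omega)$-component; whenever $h$ leaves $[0,1]$ on a set of positive capacity, its infimum is a positive capacity-type quantity of exactly the same nature as $-(P_0h,h)_{-\frac12,\frac12}+(P_0h_{\#},h_{\#})_{-\frac12,\frac12}$, i.e.\ precisely the obstruction you are trying to beat. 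What the paper actually does is the quantitative version of your second idea, and it is a genuine computation rather than a remark: by Remark \ref{traslazioneforma} one introduces $f^{(\alpha)}_{\Pi_\Lambda,\Theta_\Sigma}(h)=f_{\Pi_\Lambda,\Theta_\Sigma}(h)+\langle M_\alpha\Sigma h,\Sigma h\rangle_{L^2(\Gamma)}=(F_{(\Pi,\Theta)}+\alpha)(K_\alpha h)$, shows each $f^{(\alpha)}$ is a Dirichlet form by the $\alpha$-shifted version of your 3.1 argument (with $u_\alpha=(K_\alpha h)_{\#}-K_\alpha h_{\#}\in H^1_0(\Omega)$), applies Theorem \ref{Res-Form} and Remark \ref{DM0} to the Yosida approximations $\breve f^{(\alpha),\lambda}$, $\check f^{\lambda}$, and uses Lemma \ref{Plim} to identify $\lim_{\alpha}\bigl(\langle M_\alpha\Sigma 1,\Sigma h_n^2\rangle_{L^2(\Gamma)}-\langle M_\alpha\Sigma h_n,\Sigma h_n\rangle_{L^2(\Gamma)}\bigr)$ with $-(P_0h,h)_{-\frac12,\frac12}$, arriving at $\t f^\Theta_{\Pi_\Lambda}(h)=\lim_n\lim_\alpha\lim_\lambda\bigl(\breve f^{(\alpha),\lambda}_{\Pi_\Lambda,\Theta_\Sigma}(h_n)+\check f^{\lambda}_{\Pi_\Lambda,\Theta_\Sigma}(h_n)\bigr)$, a limit of Markovian forms. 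Without carrying out this (or an equivalent) argument the statement 3.2, and hence the theorem, is not established.

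A secondary issue concerns your Part 2. Matching the Kre\u\i n formulas of Lemma \ref{lemmarisolvente} and Lemma \ref{LRT} requires identifying the operator defined by the form sum $B_\Theta\dotplus\Pi_\Lambda(-P_\lambda)\Pi_\Lambda$, whose form domain is $\D(f_{\Pi_\Lambda,B_\Theta})\cap H^{\frac12}(\Gamma)$, with $\Theta_\Sigma+\lambda\,\t\Pi_\Lambda^{\star}K_0^{\star}K_\lambda\t\Pi_\Lambda$, whose form domain is $\Lambda\D(f_{\Pi,\Theta})=\D(\t f^\Theta_{\Pi_\Lambda})$. The two closed forms agree on the smaller set, but for the operators to coincide you still need that set to be a core of the larger form \emph{including} the $-P_\lambda$ (equivalently $H^{\frac12}(\Gamma)$) part, and this does not follow from its being a core of $f_{\Pi_\Lambda,B_\Theta}$ alone, since convergence in the $f_{\Pi_\Lambda,B_\Theta}$-norm need not control the $H^{\frac12}(\Gamma)$-norm. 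The paper avoids this by staying at the level of forms, deducing $F_{(\Pi,\Theta)}=\t F_{(\Pi_\Lambda,B_\Theta)}$ directly from Part 1 together with the core property that gives $(\Pi_\Lambda,B_\Theta)\in\t\E(L^2(\Gamma))$; you should either supply the missing core argument for the $\lambda$-dependent forms or reroute Part 2 through the form identity as the paper does.
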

\begin{proof} 1. By Corollary \ref{lemmaforma1}, by Remark \ref{remarkforma}, 
and by \eqref{FNFD}, if $A_{(\Pi,\Theta)}\in \ExN(A_\min)$ then
\begin{align}
&\D(F_{(\Pi,\Theta)})
=\{u\in H^1(\Omega)\,:\,\gamma_0u\in\Lambda\D(f_{\Pi,\Theta}) \}
\\
=
&
\{u=u_0+K_0 h\,,u_0\in H_0^1(\Omega)\,,\ h\in\Lambda\D(f_{\Pi,\Theta})\}
\,,
\end{align}
\be
F_{(\Pi,\Theta)}(u,v)=F_N(u,v)+\t f^\Theta_{\Pi_\Lambda}(\gamma_0 u,\gamma_0 v)\,.
\ee
Even if $P_{0}$ is negative, $\t f^\Theta_{\Pi_\Lambda}$ is positive: since  $A_{(\Pi,\Theta)}\preceq A_N$, one has
\be
\t f^\Theta_{\Pi_\Lambda}(h)=
F_{(\Pi,\Theta)}(K_0h)-F_N(K_0h)
\ge 0\,.
\ee
2. Since $A_{(\Pi,\Theta)}\preceq A_N$, by Remark \ref{remarkforma} $\Lambda\D(f_{\Pi,\Theta})\subseteq H^\frac12(\Gamma)$. Hence $\D(f_{\Pi_\Lambda,B_\Theta})\cap H^\frac12(\Gamma)$ contains $\D(f^\Theta_{\Pi_\Lambda})$ and so is a core of $f_{\Pi_\Lambda,B_\Theta}$. Thus  $(\Pi_\Lambda,B_\Theta)\in\t\E(L^2(\Gamma))$ 
and $F_{(\Pi,\Theta)}=\t F_{(\Pi_\Lambda,B_\Theta)}$.\par\noindent
3.1. By Corollary \ref{lemmaforma1}, 
$f_{\Pi_\Lambda,\Theta_\Sigma}$ is a closed, positive bilinear form on $\R(\Pi_\Lambda)$ such that 
\be
f_{\Pi_\Lambda,\Theta_\Sigma}(h_1,h_2)=F_{(\Pi,\Theta)}(K_0h_1,K_0h_2)
\ee
for any $h_1,h_2\in\D(f_{\Pi_\Lambda,\Theta_\Sigma})$. 
For any $\alpha\ge 0$, let us define the closed, positive bilinear form on $\R(\Pi_\Lambda)$ 
with domain $\D(f_{\Pi_\Lambda,\Theta_\Sigma})$
\begin{equation}\label{falpha}
f^{(\alpha)}_{\Pi_\Lambda,\Theta_\Sigma}(h_1,h_2):=
f_{\Pi_\Lambda,\Theta_\Sigma}(h_1,h_2)+\langle M_\alpha\Sigma h_1,\Sigma h_2\rangle_{L^2(\Gamma)}\,.
\end{equation}
By Remark \ref{traslazioneforma} 
\begin{equation}
f^{(\alpha)}_{\Pi_\Lambda,\Theta_\Sigma}(h)=
\left(F_{(\Pi,\Theta)}+\alpha\right)(K_\alpha h)\,,\qquad K_\alpha:=G_\alpha\Sigma\,.
\end{equation}
Given $h\in \D(f_{\Pi_\Lambda,\Theta_\Sigma})$, suppose that  $h_{\#}\notin\D(f_{\Pi_\Lambda,\Theta_\Sigma})$. 
By \eqref{gammacanc} $\gamma_0(K_0h)_{\#}=(\gamma_0K_0h)_{\#}=h_{\#}$ and so, by Corollary \ref{lemmaforma1}, 
$(K_0h)_{\#}\notin\D(F_{(\Pi,\Theta)})$ which is impossible since $K_0h\in\D(F_{(\Pi,\Theta)})$ and $F_{(\Pi,\Theta)}$ is a Dirichlet form by hypothesis. 
Thus 
\be
h\in\D(f_{\Pi_\Lambda,\Theta_\Sigma})\quad\Longrightarrow\quad h_{\#}\in
\D(f_{\Pi_\Lambda,\Theta_\Sigma})\,.
\ee
Moreover, since $F_{(\Pi,\Theta)}+\alpha$ is a Dirichlet form and 
$u_\alpha:=(K_\alpha h)_{\#}-K_\alpha h_{\#}\in H^1_0(\Omega)$, by Corollary \ref{lemmaforma1} and Remark \ref{traslazioneforma} one gets
\begin{align}
&f^{(\alpha)}_{\Pi_\Lambda,\Theta_\Sigma}(h_{\#})\le \left(F_D+\alpha\right)(u_\alpha)+f^{(\alpha)}_{\Pi_\Lambda,\Theta_\Sigma}(h_{\#})
\\=&
\left(F_{(\Pi,\Theta)}+\alpha\right)((K_\alpha h)_{\#})
\le \left(F_{(\Pi,\Theta)}+\alpha\right)(K_\alpha h)\\
=&f^{(\alpha)}_{\Pi_\Lambda,\Theta_\Sigma}(h)\,.
\end{align}
Hence $f^{(\alpha)}_{\Pi_\Lambda,\Theta_\Sigma}$ is a Dirichlet form for any $\alpha\ge 0$.\par\noindent
3.2. Denoting by $f^{\lambda}_{\Pi_\Lambda,\Theta_\Sigma}$ and 
$f^{(\alpha),\lambda}_{\Pi_\Lambda,\Theta_\Sigma}$ the Yosida approximations 
of $f_{\Pi_\Lambda,\Theta_\Sigma}$ and $f^{(\alpha)}_{\Pi_\Lambda,\Theta_\Sigma}$ 
respectively, and posing $h_{n}:=((-n)\vee h)\wedge n$,  
by Theorem \ref{Res-Form}, by \eqref{falpha} and by Lemma \ref{Plim} one has (here we follow the same strategy as in the proof of point (i) in \cite{[Fu69]}, Lemma 5.4)
\begin{align}
&f_{\Pi_\Lambda,\Theta_\Sigma}(h)\\
=&\lim_{n\uparrow\infty}\lim_{\alpha\uparrow\infty}\lim_{\lambda\uparrow\infty}
\left(
\breve f^{(\alpha),\lambda}_{\Pi_\Lambda,\Theta_\Sigma}(h_{n})
+\check f^{(\alpha),\lambda}_{\Pi_\Lambda,\Theta_\Sigma}(h_{n})
-\langle M_\alpha\Sigma h_n,\Sigma h_n\rangle_{L^2(\Gamma)}\right)\\
=&\lim_{n\uparrow\infty}\lim_{\alpha\uparrow\infty}\lim_{\lambda\uparrow\infty}
\left(
\breve f^{(\alpha),\lambda}_{\Pi_\Lambda,\Theta_\Sigma}(h_{n})
+\check f^{\lambda}_{\Pi_\Lambda,\Theta_\Sigma}(h_{n})\right)\\
+&\lim_{n\uparrow\infty}\lim_{\alpha\uparrow\infty}\left(\langle M_\alpha\Sigma 1,\Sigma h^2_n\rangle_{L^2(\Gamma)}
-\langle M_\alpha\Sigma h_n,\Sigma h_n\rangle_{L^2(\Gamma)}\,\right)\\
=&\lim_{n\uparrow\infty}\lim_{\alpha\uparrow\infty}\lim_{\lambda\uparrow\infty}
\left(
\breve f^{(\alpha),\lambda}_{\Pi_\Lambda,\Theta_\Sigma}(h_{n})
+\check f^{\lambda}_{\Pi_\Lambda,\Theta_\Sigma}(h_{n})\right)\\
+&\lim_{n\uparrow\infty}\lim_{\alpha\uparrow\infty}
\left(\langle 1,K_{\alpha} h^2_n\rangle_{L^2(\Omega)}
 -\langle K_{0} h_n,K_{\alpha}h_n\rangle_{L^2(\Omega)}\,\right)\\
=&\lim_{n\uparrow\infty}\lim_{\alpha\uparrow\infty}\lim_{\lambda\uparrow\infty}
\left(
\breve f^{(\alpha),\lambda}_{\Pi_\Lambda,\Theta_\Sigma}(h_{n})
+\check f^{\lambda}_{\Pi_\Lambda,\Theta_\Sigma}(h_{n})\right)-(P_{0}h,h)_{-\frac12,\frac12}\,.
\end{align}
Hence
\be
\t f_{\Pi_\Lambda}^\Theta(h)=\lim_{n\uparrow\infty}\lim_{\alpha\uparrow\infty}\lim_{\lambda\uparrow\infty}
\left(
\breve f^{(\alpha),\lambda}_{\Pi_\Lambda,\Theta_\Sigma}(h_{n})
+\check f^{\lambda}_{\Pi_\Lambda,\Theta_\Sigma}(h_{n})\right)
\ee
and so $\t f_{\Pi_\Lambda}^\Theta$ is a Markovian form since both $\breve f^{(\alpha),\lambda}_{\Pi_\Lambda,\Theta_\Sigma}$ and 
$\check f^{\lambda}_{\Pi_\Lambda,\Theta_\Sigma}$ are Markovian forms.
\par\noindent
3.3. If $\t f_{\Pi_\Lambda}^\Theta$ is closable then $f_{\Pi_\Lambda,B_\Theta}$ is a Dirichlet form by Theorem \ref{closure}.  \par
The result on recurrence is an immediate consequence of 
\eqref{conserviff2}.
\end{proof}
\begin{corollary}\label{tilde} $\t F_{(\Pi,B)}$ in \eqref{FT} is a Dirichlet form in $L^{2}(\Omega)$ if and only if
  $f_{\Pi,B}$ is a Dirichlet form in $L^{2}(\Gamma)$. Equivalently $\t R_{\lambda}^{(\Pi,B)}$ in \eqref{RT} is a Markovian resolvent in $L^{2}(\Omega)$, i.e. is the resolvent of a Markovian self-adjoint extension of $A_{\min}$, if and only if  $f_{\Pi,B}$ is a Dirichlet form in $L^{2}(\Gamma)$.  \end{corollary}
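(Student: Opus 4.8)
The plan is to get one implication straight from Theorem \ref{sub} and the converse by applying Theorem \ref{converse} to $\t A_{(\Pi,B)}$. If $f_{\Pi,B}$ is a Dirichlet form on $L^{2}(\Gamma)$, then point 3 of Theorem \ref{sub} gives immediately $\t A_{(\Pi,B)}\in\ExM(A_\min)$; since $\D(\t F_{(\Pi,B)})\supseteq H^{1}_{0}(\Omega)$ is dense in $L^{2}(\Omega)$, Theorem \ref{dirichlet} then says $\t F_{(\Pi,B)}=F_{\t A_{(\Pi,B)}}$ is a densely defined Dirichlet form, and by Lemma \ref{LRT} the operator $\t R^{(\Pi,B)}_{\lambda}$ of \eqref{RT} is the associated Markovian resolvent.

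For the converse, assume $\t F_{(\Pi,B)}$ is a Dirichlet form. By points 1 and 2 of Theorem \ref{sub}, $\t A_{(\Pi,B)}=A_{(\Pi_{\Sigma},\Theta_{B})}\in\ExN(A_\min)$, and by Theorem \ref{dirichlet} this operator is Markovian, so $A_{(\Pi_{\Sigma},\Theta_{B})}\in\ExM(A_\min)$ and Theorem \ref{converse} applies to $(\Pi_{\Sigma},\Theta_{B})$. The main step --- which I expect to be the chief obstacle, although it has essentially already been carried out inside the proof of Lemma \ref{LRT} --- is to identify the objects $(\Pi_{\Sigma})_{\Lambda}$ and $\t f^{\Theta_{B}}_{(\Pi_{\Sigma})_{\Lambda}}$ that appear in Theorem \ref{converse}. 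Since $\R((\Pi_{\Sigma})_{\Lambda})$ is by definition the $L^{2}(\Gamma)$-closure of $\Lambda(\Sigma\D(f_{\Pi,B})\cap H^{1}(\Gamma))=\D(f_{\Pi,B})\cap H^{\frac12}(\Gamma)$, and the latter set is, by the core hypothesis built into $\t\E(L^{2}(\Gamma))$, a core of $f_{\Pi,B}$ (hence dense in $\R(\Pi)$), one gets $(\Pi_{\Sigma})_{\Lambda}=\Pi$. Unwinding the definitions of $(\Theta_{B})_{\Sigma}$ and of $\t f^{\Theta}_{\Pi_{\Lambda}}$, using $\Lambda\Sigma=\uno$ and the explicit formula for $f_{\Pi_{\Sigma},\Theta_{B}}$ in point 2 of Theorem \ref{sub}, the Dirichlet-to-Neumann terms cancel and one finds
\be
\t f^{\Theta_{B}}_{(\Pi_{\Sigma})_{\Lambda}}(h_{1},h_{2})=f_{\Pi,B}(h_{1},h_{2})\,,\qquad
\D\big(\t f^{\Theta_{B}}_{(\Pi_{\Sigma})_{\Lambda}}\big)=\D(f_{\Pi,B})\cap H^{\frac12}(\Gamma)\,,
\ee
i.e. $\t f^{\Theta_{B}}_{(\Pi_{\Sigma})_{\Lambda}}$ is precisely the restriction of the closed form $f_{\Pi,B}$ to one of its cores.

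From here everything is automatic. Point 3.2 of Theorem \ref{converse} gives that $\t f^{\Theta_{B}}_{(\Pi_{\Sigma})_{\Lambda}}$ is a Markovian form, and, being the restriction of a closed form to a core, it is closable with closure $f_{\Pi,B}$; hence by Theorem \ref{closure} (equivalently by point 3.3 of Theorem \ref{converse}) $f_{\Pi,B}$ is a Dirichlet form on $L^{2}(\Gamma)$, which proves the form version. The resolvent version then follows by combining Lemma \ref{LRT} (which identifies $\t R^{(\Pi,B)}_{\lambda}$ with the resolvent of $\t A_{(\Pi,B)}$), Theorem \ref{dirichlet}, and the equivalence --- recalled in the subsection on Dirichlet forms --- between the Markov property of an operator, of its semigroup, and of its resolvent: $\t R^{(\Pi,B)}_{\lambda}$ is Markovian $\iff$ $\t A_{(\Pi,B)}$ is Markovian $\iff$ $\t F_{(\Pi,B)}$ is a Dirichlet form $\iff$ $f_{\Pi,B}$ is a Dirichlet form. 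As indicated, the only non-routine point is the bookkeeping identification in the display above: carefully tracking the passages $\Pi\to\Pi_{\Sigma}\to(\Pi_{\Sigma})_{\Lambda}$ together with the associated forms, and recognizing that the core hypothesis defining $\t\E(L^{2}(\Gamma))$ is exactly what makes the argument close up.
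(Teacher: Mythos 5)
Your proof is correct and follows essentially the route the paper intends for this corollary: the forward implication from point 3 of Theorem \ref{sub}, and the converse by applying Theorem \ref{converse} to $\t A_{(\Pi,B)}=A_{(\Pi_{\Sigma},\Theta_{B})}$, using the identification $(\Pi_{\Sigma})_{\Lambda}=\Pi$ and $\t f^{\Theta_{B}}_{(\Pi_{\Sigma})_{\Lambda}}=f_{\Pi,B}|\D(f_{\Pi,B})\cap H^{\frac12}(\Gamma)$ already carried out in the proof of Lemma \ref{LRT}, so that closability onto the core recovers $f_{\Pi,B}$ via Theorem \ref{closure}. The bookkeeping step you flag as the only non-routine point is indeed exactly where the core hypothesis in $\t\E(L^{2}(\Gamma))$ is used, and your treatment of it matches the paper's.
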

By combining Theorem \ref{sub} with Theorem \ref{converse} one gets the analogue of 
Theorem \ref{toy}:
\begin{theorem}\label{finale} Let $A\in  \Ex(A_{\min})$. Then $A$ belongs to $\ExM(A_{\min})$ if and only if $\D(F_{A})\subseteq H^{1}(\Omega)$ and the bilinear form 
\be
f_{[A]}:(\gamma_{0}\D(F_{A}))\times (\gamma_{0}\D(F_{A}))\subseteq L^{2}(\Gamma)\times L^{2}(\Gamma)\to\RE\,,
\ee
\be
f_{[A]}(h_{1},h_{2}):=F_{A}(K_{0}h_{1},K_{0}h_{2})
\ee
is a Dirichlet form which admits the decomposition
\be
f_{[A]}(h_1,h_2)=
f_b(h_1,h_2)-(P_{0}h_1,h_2)_{-\frac12,\frac12}\,,
\ee
where 
\be
f_b:(\gamma_{0}\D(F_{A}))\times (\gamma_{0}\D(F_{A}))\subseteq L^{2}(\Gamma)\times L^{2}(\Gamma)\to\RE
\ee
is a Markovian form. $A$ is recurrent (equivalently conservative) if and only if  
\be
1\in  \gamma_{0}\D(F_{A})\quad\text{and}\quad  f_b(1)\equiv f_{[A]}(1)=
0\,.
\ee
\end{theorem}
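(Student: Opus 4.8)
The plan is to read off Theorem~\ref{finale} by combining Theorem~\ref{sub} (the ``if'' direction) with Theorem~\ref{converse} (the ``only if'' direction); the bulk of the work is bookkeeping, namely recognising the abstract forms $f_{\Pi_\Lambda,\Theta_\Sigma}$ and $\t f^\Theta_{\Pi_\Lambda}$ occurring there as, respectively, $f_{[A]}$ and $f_b$. Throughout I would write $A=A_{(\Pi,\Theta)}$ as in Theorem~\ref{estensioni} and use that \eqref{hyp} holds here (Remark~\ref{regularity}), so that Theorem~\ref{qf} and Corollary~\ref{lemmaforma1} are available.

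\smallskip\noindent\textbf{Necessity.} If $A\in\ExM(A_{\min})\subseteq\ExN(A_{\min})$ (Theorem~\ref{maximum} and minimality of $A_D$), then in particular $\D(F_A)\subseteq\D(F_N)=H^1(\Omega)$. By point~1 of Theorem~\ref{converse}, $\D(F_A)=\{u\in H^1(\Omega):\gamma_0u\in\D(f_{\Pi_\Lambda,\Theta_\Sigma})\}$ and $F_A(u,v)=F_N(u,v)+\t f^\Theta_{\Pi_\Lambda}(\gamma_0u,\gamma_0v)$. Since $A\preceq A_N$, Remark~\ref{remarkforma} gives $\D(f_{\Pi_\Lambda,\Theta_\Sigma})=\Lambda\D(f_{\Pi,\Theta})\subseteq H^{\frac12}(\Gamma)=\gamma_0H^1(\Omega)$, whence $\gamma_0\D(F_A)=\D(f_{\Pi_\Lambda,\Theta_\Sigma})$; and by Corollary~\ref{lemmaforma1} (taking $u_0=v_0=0$) one has $f_{\Pi_\Lambda,\Theta_\Sigma}(h_1,h_2)=F_A(K_0h_1,K_0h_2)$, i.e.\ $f_{[A]}=f_{\Pi_\Lambda,\Theta_\Sigma}$, which is a Dirichlet form by point~3.1 of Theorem~\ref{converse}. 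Setting $f_b:=\t f^\Theta_{\Pi_\Lambda}=f_{[A]}+(P_{0}\,\cdot\,,\cdot)_{-\frac12,\frac12}$ (the defining formula in Theorem~\ref{converse}), point~3.2 of Theorem~\ref{converse} says that $f_b$ is a Markovian form on $L^2(\Gamma)$, so $f_{[A]}=f_b-(P_{0}\,\cdot\,,\cdot)_{-\frac12,\frac12}$ is the asserted decomposition.

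\smallskip\noindent\textbf{Sufficiency.} Conversely, assume $\D(F_A)\subseteq H^1(\Omega)$, $f_{[A]}$ a Dirichlet form, and $f_{[A]}=f_b-(P_{0}\,\cdot\,,\cdot)_{-\frac12,\frac12}$ with $f_b$ a Markovian form. By Theorem~\ref{qf}, $\gamma_0\D(F_A)=\Lambda\D(f_{\Pi,\Theta})$ and $F_A(K_0h,K_0h)=f_{\Pi,\Theta}(\Sigma h,\Sigma h)$ for $h\in\gamma_0\D(F_A)$, so $f_{[A]}\ge0$ forces $f_{\Pi,\Theta}\ge0$, hence $\Theta\ge0$ and $A\in\Ex_{0}(A_{\min})$; Corollary~\ref{lemmaforma1} then applies and, as above, identifies $f_{[A]}=f_{\Pi_\Lambda,\Theta_\Sigma}$. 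Combining that corollary with \eqref{FNFD} and the assumed decomposition yields
\be
F_A(u,v)=F_N(u,v)+f_b(\gamma_0u,\gamma_0v)\,,\qquad
\D(F_A)=\{u\in H^1(\Omega):\gamma_0u\in\D(f_b)\}\,.
\ee
Here $F_A$ is closed and densely defined, being the form of the nonnegative self-adjoint operator $-A$ (one also has $H^1_0(\Omega)\subseteq\D(F_A)$), and it is Markovian: for $u\in\D(F_A)$ one has $u_{\#}\in H^1(\Omega)$ since $H^1(\Omega)$ is a Dirichlet space, $\gamma_0(u_{\#})=(\gamma_0u)_{\#}\in\D(f_b)$ by \eqref{gammacanc} and the Markovian property of $f_b$, hence $u_{\#}\in\D(F_A)$ with $F_A(u_{\#})=F_N(u_{\#})+f_b((\gamma_0u)_{\#})\le F_N(u)+f_b(\gamma_0u)=F_A(u)$, using that $F_N$ and $f_b$ are Markovian. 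Thus $F_A$ is a densely defined Dirichlet form and $A\in\ExM(A_{\min})$ by Theorem~\ref{dirichlet}.

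\smallskip\noindent For the recurrence claim one notes $K_{0}1=1$ and $P_{0}1=\hat\gamma_1K_{0}1=\gamma_11=0$, so $(P_{0}1,1)_{-\frac12,\frac12}=0$; hence $f_{[A]}(1)=f_b(1)$, $F_A(1)=F_N(1)+f_b(1)=f_b(1)$, and $1\in\gamma_0\D(F_A)\iff1\in\D(f_b)\iff1\in\D(F_A)$, so \eqref{conserviff2} (valid since $m(\Omega)<\infty$) gives that $A$ is recurrent (equivalently conservative) iff $1\in\gamma_0\D(F_A)$ and $f_b(1)=0$. The only step that is not routine bookkeeping is obtaining closedness of $F_A$ in the sufficiency direction: since $f_b$ is assumed merely Markovian, closedness cannot be inferred from $f_b$, and must instead be drawn from self-adjointness of $-A$ \emph{after} deducing $-A\ge0$ from $f_{[A]}\ge0$; once that is in place, the Markovian property of $F_A$ follows immediately from the stability of the unit contraction under $\gamma_0$ together with $F_N$ being a Dirichlet form.
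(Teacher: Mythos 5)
Your proof is correct and follows essentially the same route as the paper: necessity via Theorem \ref{maximum}, Corollary \ref{lemmaforma1} and Theorem \ref{converse} with $f_b=\t f^{\Theta}_{\Pi_\Lambda}$, and sufficiency via Corollary \ref{lemmaforma1}, \eqref{FNFD} and the unit-contraction argument of point 3 of Theorem \ref{sub}. Your explicit observations that $f_{[A]}\ge 0$ forces $\Theta\ge 0$ (so that Corollary \ref{lemmaforma1} is applicable in the converse direction) and that $P_{0}1=0$ yields $f_{[A]}(1)=f_b(1)$ in the recurrence statement are details the paper leaves implicit, but they do not alter the argument.
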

\begin{proof} By Theorem \ref{estensioni}, we know that   $A=A_{(\Pi,\Theta)}$ for some $(\Pi,\Theta)$ belonging to $\E(L^{2}(\Gamma))$. \par 
If $A_{(\Pi,\Theta)}\in \ExM(A_\min)$ then $\D(F_{A})\subseteq H^{1}(\Omega)$ by Theorem \ref{maximum} and  then  $f_{[A]}(h_{1},h_{2})=f_{\Pi_{\Lambda},\Theta_{\Sigma}}(h_{1},h_{2})$ by Corollary \ref{lemmaforma1}. The thesis is then a consequence of Theorem \ref{converse} by posing $f_b=\t f^\Theta_{\Pi_\Lambda}$. \par
Conversely, if  $\D(F_{A})\subseteq H^{1}(\Omega)$ then,  by Corollary \ref{lemmaforma1}, 
one gets  $f_{[A]}(h_{1},h_{2})=f_{\Pi_{\Lambda},\Theta_{\Sigma}}(h_{1},h_{2})$ and, proceeding as in the proof of point 1 of Theorem \ref{converse}, by $\D(f_{\Pi,\Theta})\subseteq  H^1(\Gamma) $, by Corollary \ref{lemmaforma1}, by Remark \ref{remarkforma}, 
and by \eqref{FNFD} one obtains
\be
F_{(\Pi,\Theta)}(u,v)=F_N(u,v)+\t f^\Theta_{\Pi_\Lambda}(\gamma_0 u,\gamma_0 v)\,.
\ee
By posing again $f_b=\t f^\Theta_{\Pi_\Lambda}$ and supposing 
$f_b$ is Markovian, one gets, as in the proof of point 3 in Theorem \ref{sub},
\begin{align}
F_{(\Pi,\Theta)}(u_{\#})=
F_N(u_{\#})+
f_b((\gamma_0 u)_{\#})
\le 
F_N( u)+f_b(\gamma_0u)
=
F_{(\Pi,\Theta)}(u)\,.
\end{align}
Thus $F_{(\Pi,\Theta)}$ is a Dirichlet form and $A_{(\Pi,\Theta)}\in \ExM(A_\min)$.
\end{proof}
Since in the proof of Theorem \ref{sub} the hypothesis requiring $f_{\Pi,B}$ closed was used there only 
to show that $\t F_{(\Pi,B)}$ is closed, Theorem \ref{finale} can be re-phrased in the following form:
\begin{theorem}\label{finale2}  
Let $F$ be a closed bilinear form on $L^{2}(\Omega)$. Then 
$F=F_{A}$ with  $A\in\ExM(A_{\min})$ if and only if $\D(F)\subseteq H^{1}(\Omega)$ and there exists a Markovian form 
\be 
f_{b}:\D(f_{b})\times\D(f_{b})\subseteq  L^{2}(\Gamma)\times L^{2}(\Gamma)\to\RE
\ee 
such that 
\be
\D(F)=\{u\in H^{1}(\Omega):\gamma_{0}u\in\D(f_{b})\}
\ee
 and
\be
 F(u,v)=F_{N}(u,v)+f_{b}(\gamma_{0}u,\gamma_{0}u)\,.
 \ee
$A$ is recurrent (equivalently conservative) if and only if  
\be
1\in  \D(f_{b})\quad\text{and}\quad  f_b(1)=
0\,.
\ee
\end{theorem}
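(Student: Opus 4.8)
The plan is to obtain Theorem~\ref{finale2} as a transcription of Theorems~\ref{sub} and~\ref{converse}: in the proof of Theorem~\ref{sub} the closedness hypothesis on $f_{\Pi,B}$ was invoked only to deduce that $\t F_{(\Pi,B)}$ is closed, so once $F$ is assumed closed from the outset the associated boundary form need not be.

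\emph{Only if.} Assume $F=F_A$ with $A\in\ExM(A_{\min})$. By Theorem~\ref{estensioni} write $A=A_{(\Pi,\Theta)}$ with $(\Pi,\Theta)\in\E(L^2(\Gamma))$. Since $A$ is Markovian it is negative, hence $A\in\ExP(A_{\min})$ and $A_D\preceq A$; by Theorem~\ref{maximum}, $A\preceq A_N$; therefore $A\in\ExN(A_{\min})$ and, in particular, $\D(F_A)\subseteq\D(F_N)=H^1(\Omega)$. Theorem~\ref{converse}, part~1, then gives $\D(F_A)=\{u\in H^1(\Omega):\gamma_0u\in\D(f_{\Pi_\Lambda,\Theta_\Sigma})\}$ and $F_A(u,v)=F_N(u,v)+\t f^\Theta_{\Pi_\Lambda}(\gamma_0u,\gamma_0v)$. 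Setting $f_b:=\t f^\Theta_{\Pi_\Lambda}$ (so $\D(f_b)=\D(f_{\Pi_\Lambda,\Theta_\Sigma})$), part~3.2 of Theorem~\ref{converse} says that $f_b$ is a Markovian form on $L^2(\Gamma)$, which yields the asserted representation; the recurrence clause follows from the last assertion of Theorem~\ref{converse} upon recalling $\t f^\Theta_{\Pi_\Lambda}=f_b$.

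\emph{If.} Suppose $F$ is closed, $\D(F)=\{u\in H^1(\Omega):\gamma_0u\in\D(f_b)\}$ and $F(u,v)=F_N(u,v)+f_b(\gamma_0u,\gamma_0v)$ with $f_b$ Markovian. Then $F\ge0$ (since $F_N\ge0$ and $f_b\ge0$) and $F$ is densely defined, because $H^1_0(\Omega)\subseteq\D(F)$ (as $\gamma_0$ annihilates $H^1_0(\Omega)$ and $0\in\D(f_b)$) and $H^1_0(\Omega)$ is dense in $L^2(\Omega)$. Let $-A$ be the self-adjoint operator associated with $F$. For $u_0\in C_c^\infty(\Omega)$ and $v_0\in H^1_0(\Omega)$ we have $\langle A_{\min}u_0,v_0\rangle_{L^2(\Omega)}=F_D(u_0,v_0)=F_N(u_0,v_0)=F(u_0,v_0)$, so $A|C_c^\infty(\Omega)=A_{\min}$ and, $F$ being positive, $A\in\ExP(A_{\min})$. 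Now, arguing exactly as in part~3 of the proof of Theorem~\ref{sub} — using that $H^1(\Omega)$ is a Dirichlet space and that the trace commutes with the unit contraction, $\gamma_0(u_\#)=(\gamma_0u)_\#$ by~\eqref{gammacanc} — one checks that $u\in\D(F)$ implies $u_\#\in\D(F)$ and
\[
F(u_\#)=F_N(u_\#)+f_b\big((\gamma_0u)_\#\big)\le F_N(u)+f_b(\gamma_0u)=F(u),
\]
so $F$ is a densely defined Dirichlet form and, by Theorem~\ref{dirichlet}, $A\in\ExM(A_{\min})$. Finally, since $m(\Omega)<\infty$, \eqref{conserviff2} gives that $A$ is recurrent iff $1\in\D(F)$ and $F(1)=0$; and $1\in\D(F)\iff 1\in\D(f_b)$, while $F(1)=F_N(1)+f_b(1)=f_b(1)$ because constants have vanishing $F_N$-energy.

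The only point requiring care is the commutation $\gamma_0(u_\#)=(\gamma_0u)_\#$ used in the ``if'' direction; it is exactly~\eqref{gammacanc}, obtained by density of $C^\infty(\bar\Omega)$ in $H^1(\Omega)$, continuity of $\gamma_0:H^1(\Omega)\to H^{\frac12}(\Gamma)$ and the fact that $H^{\frac12}(\Gamma)$ is a Dirichlet space, on which the unit contraction therefore acts continuously (Theorem~\ref{ancona}). Everything else is a direct rereading of the proofs of Theorems~\ref{sub} and~\ref{converse}.
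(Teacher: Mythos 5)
Your argument is correct and follows essentially the same route as the paper, which obtains Theorem \ref{finale2} by rereading Theorems \ref{sub} and \ref{converse} (equivalently Theorem \ref{finale}) after observing that closedness of the boundary form was only ever used to ensure closedness of $F$, now assumed. Both directions (Theorem \ref{maximum} plus Theorem \ref{converse} for necessity; the extension property on $C^{\infty}_{c}(\Omega)$ and the unit-contraction argument via \eqref{gammacanc} for sufficiency, then \eqref{conserviff2} for recurrence) match the paper's own reasoning.
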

\begin{remark} If there exists $u\in \D(F_{A})$ such that $F_{A}(u)=F_{N}(u)+f_{b}(\gamma_{0}u)=0$ then $u=c=\text{const.}$ and $f_{b}(c)$=0, so that $A$ is recurrent. Therefore
\be
\text{$A\in \ExM(A_\min)$ is transient $ \iff$ $A<0$.}
\ee 
\end{remark}
Next we give an equivalent version of Theorem \ref{finale} in terms of resolvents. \par A family $R_{\lambda}\in \B(L^{2}(\Omega))$, $\lambda>0$, of bounded symmetric linear operators is said to be a {\it resolvent family} in $L^{2}(\Omega)$ if it satisfies the resolvent identity
\be
R_{\lambda}-R_{\mu}=(\mu-\lambda)R_{\lambda}R_{\mu}
\ee
and the bounds
\be
\|R_{\lambda}u\|_{L^{2}(\Omega)}\le \frac 1\lambda\,\|u\|_{L^{2}(\Omega)}\,.
\ee
\begin{lemma}\label{LR} 
If $R_{\lambda}$, $\lambda>0$, is a resolvent family, then  
the bilinear form
\be
\D(f_{[R]}):=\{h\in L^{2}(\Gamma):  \lim_{\lambda \uparrow\infty}\,
\langle G_{0}h,\lambda(\uno-\lambda R_{\lambda})G_{0}h\rangle_{L^{2}(\Omega)}<\infty\}\,,
\ee
\be
f_{[R]}(h_{1},h_{2}):=\lim_{\lambda\uparrow\infty}\,\langle G_{0}h_{1},\lambda(\uno-\lambda R_{\lambda})G_{0}h_{2}\rangle_{L^{2}(\Omega)}\,,
\ee
is symmetric, positive and  closed.
\end{lemma}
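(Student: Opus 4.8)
The plan is to show that $R_\lambda$ is, up to its kernel, the resolvent of a non-negative self-adjoint operator, and then to read off symmetry, positivity and closedness from Theorem \ref{Res-Form} together with the boundedness of $G_0$.

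First I would identify the operator. From the resolvent identity one gets that $\K(R_\lambda)$ is independent of $\lambda$; call it $\N$, and let $Q$ be the orthogonal projection onto $\N^\perp=\overline{\R(R_\lambda)}$ (the equality using the symmetry of $R_\lambda$). On $\N^\perp$ the map $R_\lambda$ is injective with range $\R(R_\lambda)$ (also $\lambda$-independent), so one may set $-A+\lambda:=(R_\lambda|\N^\perp)^{-1}$ on $\D(-A):=\R(R_\lambda)$; the two forms of the resolvent identity give at once that $A$ does not depend on $\lambda$, and the symmetry of $R_\lambda$ makes $-A$ symmetric. The bound $\|\lambda R_\lambda u\|\le\|u\|$ reads, for $u\in\D(-A)$, as $\lambda^2\|u\|^2\le\|(-A+\lambda)u\|^2$; expanding and letting $\lambda\uparrow\infty$ yields $\langle -Au,u\rangle\ge 0$, so $-A\ge 0$. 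Finally, since $-A+\lambda$ is onto $\N^\perp$ for every $\lambda>0$ and $-A$ is bounded below, $-A$ is self-adjoint in the Hilbert space $\N^\perp$, with $R_\lambda|\N^\perp=(-A+\lambda)^{-1}$ and $R_\lambda|\N=0$; its form $F_A$ is then closed, with $\D(F_A)\subseteq\N^\perp$.

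Next I would compute, for $u\in L^2(\Omega)$, using $R_\lambda u=R_\lambda Qu$,
\[
\langle u,\lambda(\uno-\lambda R_\lambda)u\rangle_{L^2(\Omega)}
=F_A^\lambda(Qu)+\lambda\,\|(\uno-Q)u\|_{L^2(\Omega)}^2 ,
\]
where $F_A^\lambda$ is the Yosida form of $A$. Both terms are non-decreasing in $\lambda$ (the first by Theorem \ref{Res-Form}), so the limit defining $f_{[R]}$ always exists in $[0,+\infty]$; it is finite exactly when $(\uno-Q)G_0h=0$ and $QG_0h=G_0h\in\D(F_A)$, i.e. exactly when $G_0h\in\D(F_A)$, in which case, again by Theorem \ref{Res-Form},
\[
f_{[R]}(h_1,h_2)=F_A(G_0h_1,G_0h_2),\qquad \D(f_{[R]})=\{h\in L^2(\Gamma):G_0h\in\D(F_A)\}.
\]
Symmetry and positivity are then immediate. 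For closedness: if $h_n\to h$ in $L^2(\Gamma)$ and $f_{[R]}(h_n-h_m)\to 0$, then $G_0h_n\to G_0h$ in $L^2(\Omega)$ by continuity of $G_0$ and $F_A(G_0h_n-G_0h_m)\to 0$; since $F_A$ is closed, $G_0h\in\D(F_A)$ and $F_A(G_0h_n-G_0h)\to 0$, i.e. $h\in\D(f_{[R]})$ and $f_{[R]}(h_n-h)\to 0$.

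The main obstacle is the first step: extracting a bona fide self-adjoint operator from the abstract data ``resolvent identity $+$ a priori bound'', in particular the $\lambda$-independence of $A$ and the passage from surjectivity of $-A+\lambda$ plus semiboundedness to self-adjointness. A route that avoids the operator $A$ is also available: each bounded symmetric form $F^\lambda_{[R]}(u,v):=\langle u,\lambda(\uno-\lambda R_\lambda)v\rangle_{L^2(\Omega)}$ is closed, and $\lambda\mapsto F^\lambda_{[R]}(u)$ is non-decreasing because $\partial_\lambda F^\lambda_{[R]}(u)=\|(\uno-\lambda R_\lambda)u\|^2$ (using $R'_\lambda=-R_\lambda^2$), so $\lim_{\lambda\uparrow\infty}F^\lambda_{[R]}$ is a closed positive symmetric form on $L^2(\Omega)$ and $f_{[R]}$, being its pullback along the bounded injection $G_0$, inherits these properties; this variant, however, leans on the (standard) fact that a monotone limit of closed forms is closed, which is not recorded in the text.
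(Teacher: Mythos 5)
Your argument is correct, and your closing ``variant'' is in fact essentially the paper's own proof: the paper simply invokes \cite{[FOT]}, Theorem 1.3.2, for precisely the fact you flagged as unrecorded, namely that for a (not necessarily strongly continuous) symmetric contraction resolvent the map $\lambda\mapsto\langle u,\lambda(\uno-\lambda R_\lambda)u\rangle_{L^2(\Omega)}$ is non-decreasing and its limit $F_{[R]}$ is a positive, symmetric, closed (not necessarily densely defined) form on $L^2(\Omega)$; it then notes $f_{[R]}(h_1,h_2)=F_{[R]}(G_0h_1,G_0h_2)$ and concludes from the boundedness of $G_0$, exactly as in your last step. Your main route reaches the closedness of the ambient form differently: you reconstruct by hand the self-adjoint operator behind the pseudo-resolvent --- splitting off the common kernel $\N$, inverting $R_\lambda$ on $\N^\perp$, checking $\lambda$-independence, $-A\ge 0$, and self-adjointness from surjectivity of $-A+\lambda$ at a real point --- and then reduce to Theorem \ref{Res-Form} (applied in the Hilbert space $\N^\perp$) via the identity $\langle u,\lambda(\uno-\lambda R_\lambda)u\rangle=F^\lambda_A(Qu)+\lambda\|(\uno-Q)u\|^2$, whose second term correctly forces the domain of the limit form into $\N^\perp$; all of these steps check out. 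What your approach buys is self-containedness (only standard pseudo-resolvent facts plus results already quoted in the paper) at the cost of some operator-theoretic bookkeeping; the paper's proof is shorter but leans on the external [FOT] theorem covering resolvents that need not be strongly continuous. Both routes end with the same observation that $f_{[R]}$ is the pullback of a closed positive symmetric form along the bounded map $G_0$, from which symmetry, positivity and closedness follow identically.
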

\begin{proof}
By \cite{[FOT]}, Theorem 1.3.2, $\lambda\mapsto \langle u,\lambda(\uno-\lambda R_{\lambda})u\rangle_{L^{2}(\Omega)}$ is not decreasing and 
the bilinear form
\be
\D(F_{[R]}):=\{u\in L^{2}(\Omega): \lim_{\lambda \uparrow\infty}\,
\langle u,\lambda(\uno-\lambda R_{\lambda})u\rangle_{L^{2}(\Omega)}<\infty\}\,,
\ee
\be
F_{[R]}(u,v):=\lim_{\lambda\uparrow\infty}\,\langle u,\lambda(\uno-\lambda R_{\lambda})v\rangle_{L^{2}(\Omega)}\,,
\ee
is positive, symmetric and  closed. Hence 
\be
\D(f_{[R]})=\{h\in L^{2}(\Gamma):G_{0}h\in\D(F_{[R]})\}
\ee
and 
\be
f_{[R]}(h_{1},h_{2})=F_{[R]}(G_{0}h_{1},G_{0}h_{2})\,.
\ee
The thesis then follows by $G_{0}\in\B(L^{2}(\Omega),L^{2}(\Gamma))$.
\end{proof}
\begin{theorem}\label{finaleres} The resolvent family $R_{\lambda}\in \B(L^{2}(\Omega))$, $\lambda>0$, is the resolvent of  $A\in\ExM(A_{\min})$ if and only if the following 
conditions hold: 
\par\noindent
1. $\D(f_{[R]})\subseteq H^{1}(\Gamma)$ and the bilinear form
\be
\D(\t f_{[R]}):=\Lambda \D(f_{[R]})\,,\quad \t f_{[R]}(h_{1},h_{2}):=f_{[R]}(\Sigma h_{1},\Sigma{h_{2}})
\ee
is a Dirichlet form in $L^{2}(\Gamma)$ which admits the decomposition
\be
\t f_{[R]}(h_1,h_2)=f_b(h_1,h_2)-(P_{0}h_1,h_2)_{-\frac12,\frac12}\,,
\ee
where $f_{b}$, $\D(f_{b})\equiv\D(\t f_{[R]})$, is a Markovian form in $L^{2}(\Gamma)$;
\par\noindent
2. $R_{\lambda}$ admits the decomposition
\begin{equation}\label{RTTT}
R_{\lambda}= R_{\lambda}^{D}+G_{\lambda}\Pi_{[R]}(\Theta_{[R]}+
\lambda \Pi_{[R]} G^{*}_{0}G_{\lambda}\Pi_{[R]})^{-1}\Pi_{[R]} G^{*}_{\lambda}\,,
\end{equation}
where $\Pi_{[R]}$ denotes the orthogonal projector onto the $L^{2}(\Gamma)$-closure of $\D(f_{[R]})$ and $\Theta_{[R]}$ denotes the positive self-adjoint operator in $\R(\Pi_{[R]})$ associated to $f_{[R]}$. 
\end{theorem}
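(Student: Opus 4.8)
The plan is to derive Theorem~\ref{finaleres} from Theorem~\ref{finale} together with Kre\u\i n's formula~\eqref{R}, the only real content being the identification of the intrinsic objects $f_{[R]}$, $\Pi_{[R]}$, $\Theta_{[R]}$ attached by Lemma~\ref{LR} to a resolvent family with the boundary parameters $(\Pi,\Theta)\in\E(L^2(\Gamma))$ of the associated extension. The starting observation is that if $R_\lambda=R_\lambda^A$ for a negative self-adjoint $A$, then by Theorem~\ref{Res-Form} the Yosida form of $A$ is $F_A$, so Lemma~\ref{LR} gives $f_{[R]}(h_1,h_2)=F_A(G_0h_1,G_0h_2)$ on $\D(f_{[R]})=\{h:G_0h\in\D(F_A)\}$. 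If moreover $A=A_{(\Pi,\Theta)}$, then by Theorem~\ref{qf} together with the uniqueness of the decomposition $u=u_0+G_0\xi_u$ (guaranteed by \eqref{hypp}, which holds here by Remark~\ref{regularity}) one has $G_0h\in\D(F_{(\Pi,\Theta)})\iff h\in\D(f_{\Pi,\Theta})$ and $F_{(\Pi,\Theta)}(G_0h)=f_{\Pi,\Theta}(h)$ (equivalently, Remark~\ref{traslazioneforma} at $\alpha=0$, since $M_0=0$ by \eqref{WF}). Hence $f_{[R]}=f_{\Pi,\Theta}$ as forms, and consequently, using $K_0=G_0\Sigma$ and $\Sigma=\Lambda^{-1}$ together with the definitions, $\t f_{[R]}=f_{\Pi_\Lambda,\Theta_\Sigma}$ and $f_{[A]}=\t f_{[R]}$.

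For the direct implication, let $R_\lambda=R_\lambda^A$ with $A=A_{(\Pi,\Theta)}\in\ExM(A_{\min})$ (then $R_\lambda$ is trivially a resolvent family). By Theorem~\ref{maximum} we have $A\preceq A_N$, so $\D(F_A)\subseteq H^1(\Omega)$ and, by Remark~\ref{remarkforma}, $\D(f_{\Pi,\Theta})\subseteq H^1(\Gamma)$; since $\D(f_{[R]})=\D(f_{\Pi,\Theta})$ this gives the domain part of condition~1. As $f_{[R]}=f_{\Pi,\Theta}$ is the form of $\Theta\geq 0$, the $L^2(\Gamma)$-closure of its domain has range $\R(\Pi)$, so $\Pi_{[R]}=\Pi$ and $\Theta_{[R]}=\Theta$; substituting $z=\lambda>0$ into \eqref{R} then reproduces \eqref{RTTT}, i.e.\ condition~2. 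Finally Corollary~\ref{lemmaforma1} applies to $A$ (as $A\in\Ex_{0}(A_{\min})$ with $\D(F_A)\subseteq H^1(\Omega)$), and part~1 of Theorem~\ref{converse} yields $\t f_{[R]}=f_{\Pi_\Lambda,\Theta_\Sigma}=\t f^{\Theta}_{\Pi_\Lambda}-(P_{0}\,\cdot\,,\cdot)_{-\frac12,\frac12}$; by parts~3.1 and 3.2 of Theorem~\ref{converse}, $\t f_{[R]}$ is a Dirichlet form and $f_b:=\t f^{\Theta}_{\Pi_\Lambda}$ is a Markovian form, completing condition~1.

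For the converse, assume $R_\lambda$ is a resolvent family satisfying conditions~1 and 2. Since $\Theta_{[R]}\geq 0$ (Lemma~\ref{LR}), the extension $A_{(\Pi_{[R]},\Theta_{[R]})}$ lies in $\Ex_{0}(A_{\min})$, hence $\lambda>0$ belongs to $\rho(A_D)\cap\rho(A_{(\Pi_{[R]},\Theta_{[R]})})$ and \eqref{R} is available; comparing it term by term with \eqref{RTTT} for every $\lambda>0$ shows $R_\lambda=R_\lambda^{A}$ with $A:=A_{(\Pi_{[R]},\Theta_{[R]})}\in\Ex(A_{\min})$. It remains to apply Theorem~\ref{finale}: from $f_{[R]}=f_{\Pi_{[R]},\Theta_{[R]}}$ and condition~1 we get $\D(f_{\Pi_{[R]},\Theta_{[R]}})\subseteq H^1(\Gamma)$, whence $\D(F_A)\subseteq H^1(\Omega)$ by Remark~\ref{remarkforma}; and $f_{[A]}=\t f_{[R]}$ is, by condition~1, a Dirichlet form admitting the decomposition $f_{[A]}(h_1,h_2)=f_b(h_1,h_2)-(P_{0}h_1,h_2)_{-\frac12,\frac12}$ with $f_b$ Markovian. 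Theorem~\ref{finale} then gives $A\in\ExM(A_{\min})$, as required.

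There is no essential obstacle: the argument is a dictionary between the resolvent picture and the form picture, and all the hard analysis — the Fukushima--Watanabe bound $A\preceq A_N$, the Beurling--Deny/Yosida manipulations behind Theorem~\ref{converse}, and the closure arguments — has already been done. The only points demanding care are the two identifications $f_{[R]}=f_{\Pi,\Theta}$ and $\t f_{[R]}=f_{\Pi_\Lambda,\Theta_\Sigma}$, which rest on $M_0=0$ and on the substitution $K_0=G_0\Sigma$, and the verification that \eqref{R} with parameters $(\Pi_{[R]},\Theta_{[R]})$ and \eqref{RTTT} agree for all $\lambda>0$, so that $A=A_{(\Pi_{[R]},\Theta_{[R]})}$ is genuinely the self-adjoint extension generating $R_\lambda$.
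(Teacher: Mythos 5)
Your proposal is correct and follows essentially the same route as the paper: both directions rest on the identification $f_{[R]}=f_{\Pi,\Theta}$ via Theorem \ref{Res-Form} and Theorem \ref{qf}, Kre\u\i n's formula \eqref{R} for the resolvent decomposition, and Corollary \ref{lemmaforma1} together with Theorem \ref{converse} for the boundary-form decomposition. The only cosmetic difference is that you close the converse with Theorem \ref{finale} while the paper invokes its equivalent rephrasing Theorem \ref{finale2}, which changes nothing of substance.
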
 
\begin{proof} Suppose that  $R_{\lambda}=R^{A}_{\lambda}$ with 
$A\in\ExM(A_{\min})$. Then, by Theorem \ref{estensioni}, Theorem \ref{maximum} and Remark \ref{remarkforma}, $A=A_{(\Pi,\Theta)}$ for some $(\Pi,\Theta)\in \E(L^{2}(\Gamma))$ with $\D(f_{(\Pi,\Theta)})\subseteq H^{1}(\Gamma)$. Then, by Theorem \ref{qf} and Theorem \ref{Res-Form}, $f_{[R]}=f_{(\Pi,\Theta)}$. By \eqref{R}, $R_{\lambda}=R^{(\Pi,\Theta)}_{\lambda}$ and \eqref{RTTT} holds with $\Theta_{[R]}=\Theta$ and $\Pi_{[R]}=\Pi$. 
By Corollary \ref{lemmaforma1}, $\t f_{[R]}=f_{\Pi_{\Lambda},\Theta_{\Sigma}}$ and the decomposition $\t f_{[R]}(h_1,h_2)=f_b(h_1,h_2)-(P_{0}h_1,h_2)_{-\frac12,\frac12}$ follows by 
Theorem \ref{converse}.
\par
Conversely suppose that 1 and 2 hold true. Then by \eqref{RTTT} and by Theorem \ref{estensioni}, $R_{\lambda}$ is the resolvent of $A_{(\Pi_{[R]},\Theta_{[R]})}\in \Ex(A_{\min})$. By $\D(f_{[R]})\subseteq H^{1}(\Gamma)$, by Remark \ref{remarkforma} 
and by Corollary \ref{lemmaforma1}, one obtains
\be
F_{(\Pi_{[R]},\Theta_{[R]})}(u,v)=F_{D}(u_{0},v_{0})+
\t f_{[R]}(\gamma_{0}u,\gamma_{0}v)\,.
\ee
Thus 
\be
F_{(\Pi_{[R]},\Theta_{[R]})}(u,v)=F_{N}(u,v)+
f_{b}(\gamma_{0}u,\gamma_{0}v)
\ee
and so  $A_{(\Pi_{[R]},\Theta_{[R]})}\in \ExM(A_{\min})$ by  Theorem \ref{finale2}.
\end{proof}
\begin{remark}\label{4.25}
Suppose that $R_{\lambda}=R^{A}_{\lambda}$ with $A\in\ExM(A_{\min})$. 
Then, by Lemma \ref{lemmarisolvente}, \eqref{RTTT} can be rewritten as
\be
R_{\lambda}=R^{D}_{\lambda}+K_{\lambda}\t\Pi_{\Lambda}(\t\Theta_{[R]}+\lambda\t\Pi_{\Lambda}K_0^{\star}K_{\lambda}\t\Pi_{\Lambda})^{-1}\t\Pi_{\Lambda}K_{\lambda}^{\star}\,,
\ee
where $\t\Theta_{[R]}$ is the Markovian operator associated with the Dirichlet form $\t f_{[R]}$. 
This resolvent formula is our version of Fukushima's \eqref{FR}.
\end{remark}
\begin{remark}\label{closed} By Corollary \ref{tilde}, in order to get the analogue of 
Theorem \ref{teointervallo} 
one should prove that if $A_{(\Pi,\Theta)}\in\ExM(A_{\min})$ then $\t f^{\Theta}_{\Pi_{\Lambda}}$ is always closable. In this case
\be
\ExM(A_{\min})=\{\t A_{(\Pi,B)}\,:\, (\Pi,B)\in\t\E(L^{2}(\Gamma)\,,\ f_{\Pi,B}\ \text{is a Dirichlet form}\,\}
\ee
and 
\be
\t R^{(\Pi,B)}_{\lambda}=R^{D}_{\lambda}+K_{\lambda}\Pi(-\Pi P_{\lambda}\Pi\dotplus B)^{-1}\Pi K_{\lambda}^{\star}\,.
\ee
Equivalently in Theorem \ref{finale2} one could avoid the hypothesis ``$F$ closed'' and  
substitute ``$f_{b}$ Markovian form'' with ``$f_{b}$ Dirichlet form''. However proving that $\t f^{\Theta}_{\Pi_{\Lambda}}$ is closable is a not trivial problem since $\Theta$ is positive and always unbounded (by $\D(\Theta)\subseteq H^{1}(\Gamma)$)  while $P_{0}$ is negative and unbounded. 
\end{remark}

\section{Wentzell-type boundary conditions}
In order to describe the self-adjoint extension of $A_{\min}$ corresponding to the bilinear  form $F_{(\Pi,\Theta)}$ we introduce the extended trace operators $\hat \rho$ and $\hat \tau$ as in Remark \ref{aggiunto}. By $\hat\gamma_0\, G_0h=\Lambda h$ and 
by Remark \ref{aggiunto} one has
\be
\hat \rho:\D(A_\max)\to L^{2}(\Gamma )\,,\quad \hat\rho 
u:=\Sigma\hat\gamma_0 u
\ee
and
\begin{equation}\label{tau2}
\hat \tau:\D(A_\max)\to L^{2}(\Gamma )\,,\quad
\hat\tau u:=\tau(u-G_0\Sigma\hat\gamma_0 u)
\equiv \Lambda(\hat\gamma_1
u-P_{0}\hat\gamma_0 u)\,.
\end{equation} 
Notice that even if both $\hat\gamma_1
u$ and $P_{0}\hat\gamma_0 u$ are in $H^{-\frac12}(\Gamma)$, by \eqref{tau2} one gets
\be
(\hat\gamma_1
u-P_{0}\hat\gamma_0 u)\in H^{\frac12}(\Gamma)\,.
\ee
Then, by \eqref{DA1}, one
has the following (this is our version of Theorem 4.1 in 
\cite{[G68]}, Chapter III)
\begin{theorem}\label{elliptic} 
$\Ex (A_\min)=\{A_{(\Pi,\Theta)}\,,\ (\Pi,\Theta)\in 
\E(L^{2}(\Gamma ))\}$, where 
\be
A_{(\Pi,\Theta)}:\D(A_{(\Pi,\Theta)})
\subseteq L^2(\Omega)\to L^2(\Omega)\,,\quad 
A_{(\Pi,\Theta)}u=A_\max u\,,
\ee
\begin{align}
\D(A_{(\Pi,\Theta)})
=
\{
u\in\D(A_{\max})\,:\, \Sigma\hat\gamma_0
u\in\D(\Theta)\,,\ \Pi\Lambda(\hat\gamma_1
u-P_{0}\hat\gamma_0 u)=\Theta\Sigma\hat\gamma_{0}u\}\,.
\end{align}
\end{theorem}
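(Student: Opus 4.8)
The plan is to obtain Theorem \ref{elliptic} as a direct translation of the abstract description \eqref{DA1} of $A_{(\Pi,\Theta)}$ given in Remark \ref{aggiunto}, once the abstract boundary maps $\hat\rho,\hat\tau$ there are identified with the concrete trace expressions in the statement. In the present situation $S=A_{\min}$, $A_0=A_D$, $\fh=L^2(\Gamma)$, $\tau=\Lambda\gamma_1|(H^2(\Omega)\cap H^1_0(\Omega))$, one has $\K(\tau)=H^2_0(\Omega)=\D(S^{**})$ with $\tau$ surjective, and \eqref{hyp} holds by Remark \ref{regularity}; moreover $S^*=A_{\max}$, so by Remark \ref{aggiunto} every $u\in\D(A_{\max})$ decomposes uniquely as $u=u_0+G_0\xi_u$ with $u_0\in\D(A_D)$, $\xi_u\in L^2(\Gamma)$, and $\hat\rho u=\xi_u$, $\hat\tau u=\tau u_0$. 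The formula $A_{(\Pi,\Theta)}u=A_{\max}u$ is then immediate from Theorem \ref{estensioni}(ii): $A_{(\Pi,\Theta)}u=A_Du_0=\diff u_0=\diff u$ because $A_{\max}G_0\xi_u=0$. Surjectivity of $(\Pi,\Theta)\mapsto A_{(\Pi,\Theta)}$ onto $\Ex(A_{\min})$ is Theorem \ref{estensioni}(i). So everything reduces to rewriting the two conditions $\hat\rho u\in\D(\Theta)$ and $\Pi\hat\tau u=\Theta\hat\rho u$ from \eqref{DA1} in terms of $\hat\gamma_0,\hat\gamma_1,P_0,\Lambda,\Sigma$.

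First, applying $\hat\gamma_0$ to $u=u_0+G_0\xi_u$ and using $\gamma_0u_0=0$ (since $u_0\in H^2(\Omega)\cap H^1_0(\Omega)$) together with $\hat\gamma_0G_0h=\Lambda h$, which comes from the Dirichlet problem \eqref{dbvp}, one finds $\hat\gamma_0u=\Lambda\xi_u$, hence $\hat\rho u=\xi_u=\Sigma\hat\gamma_0u$. Next, since $G_0\Sigma=K_0$, one has $u_0=u-G_0\Sigma\hat\gamma_0u=u-K_0\hat\gamma_0u$; applying $\hat\gamma_1$ (which extends $\gamma_1$, is linear on $\D(A_{\max})$, and satisfies $K_0h\in\D(A_{\max})$ because $A_{\max}K_0h=0$) and using $P_0=\hat\gamma_1K_0$ yields $\gamma_1u_0=\hat\gamma_1u-\hat\gamma_1K_0\hat\gamma_0u=\hat\gamma_1u-P_0\hat\gamma_0u$. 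Therefore $\hat\tau u=\tau u_0=\Lambda\gamma_1u_0=\Lambda(\hat\gamma_1u-P_0\hat\gamma_0u)$, which is exactly \eqref{tau2}; in particular this exhibits the a priori distributional object $\hat\gamma_1u-P_0\hat\gamma_0u$ as the honest trace $\gamma_1u_0\in H^{\frac12}(\Gamma)$ of an $H^2$-function, so that $\Lambda$ carries it back into $\fh=L^2(\Gamma)$. Substituting these two identities into \eqref{DA1} gives precisely the domain of $A_{(\Pi,\Theta)}$ claimed in Theorem \ref{elliptic}.

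The only genuinely delicate point is the Sobolev-scale bookkeeping. One has to make sure the decomposition $u=u_0+G_0\xi_u$ is valid for an arbitrary $u\in\D(A_{\max})$ and not merely for $u\in H^1(\Omega)$ — here $G_0\xi_u=K_0\hat\gamma_0u$ need only lie in $L^2(\Omega)$, since $\hat\gamma_0u\in H^{-\frac12}(\Gamma)$ and $K_0:H^{-\frac12}(\Gamma)\to L^2(\Omega)$ — that $u_0=u-K_0\hat\gamma_0u$ indeed belongs to $\D(A_D)=H^2(\Omega)\cap H^1_0(\Omega)$ so that $\tau$ can legitimately be applied to it, and that the cancellation producing $\gamma_1u_0$ out of $\hat\gamma_1u-P_0\hat\gamma_0u$ is justified in the low-order space $H^{-\frac32}(\Gamma)$ before the sum is recognized as an element of $H^{\frac12}(\Gamma)$. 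All of this is supplied by the elliptic regularity of the Poisson operator $K_\lambda$ recalled in Remark \ref{regularity}, by the extended trace maps and Green's formula \eqref{Green}, and by the identity $\R(G_0)=\K(A_{\max})$.
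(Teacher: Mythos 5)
Your proposal is correct and follows essentially the same route as the paper: the paper also obtains Theorem \ref{elliptic} by identifying the abstract boundary maps of Remark \ref{aggiunto} as $\hat\rho u=\Sigma\hat\gamma_0 u$ and $\hat\tau u=\tau(u-G_0\Sigma\hat\gamma_0 u)=\Lambda(\hat\gamma_1 u-P_{0}\hat\gamma_0 u)$ (using $\hat\gamma_0G_0h=\Lambda h$, $G_0\Sigma=K_0$ and $P_0=\hat\gamma_1K_0$) and then substituting into \eqref{DA1}. Your extra remarks on the Sobolev-scale bookkeeping only make explicit what the paper leaves implicit via Remark \ref{regularity} and the extended traces.
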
  
\begin{remark} \label{remarkop}
Notice that, for all $s\in(0,2]$ 
\be
\D(A_{(\Pi,\Theta)})\subseteq H^s(\Omega)\quad\iff\quad 
\D(\Theta)\subseteq H^s(\Gamma )\,.
\ee
Indeed, 
since $u=u_0+G_0h$ with $u_0\in H^2(\Omega)\cap H^1_0(\Omega)$ 
and $h\in \D(\Theta)$, one has that $u\in H^s(\Omega)$ 
if and only if 
$G_0 h\in H^s(\Omega)$. By Remark \ref{regularity} 
$G_0 h\in H^s(\Omega)$ if and only if $h\in H^{s}(\Gamma )$.
\end{remark}
In the case $A\in \ExM (A_\min)$, the boundary conditions characterizing its domain are said  {\it Wentzell-type boundary conditions}. Such boundary conditions can be expressed by the 
boundary form $f_{b}$ appearing in Theorem \ref{finale}:
\begin{theorem}\label{tww} Let $A\in\widetilde{\Ex} (A_\min)$. Then $u\in \D(A)$ if and only if $u\in \D(A_{\max})\cap H^{1}(\Omega)$, $\gamma_{0}u\in \D(f_{b})$ and  
\begin{equation}\label{ww}
\forall h\in \D(f_{b})\,,\quad f_{b}(\gamma_{0}u,h)=(\hat\gamma_{1}u,h)_{-\frac12,\frac12}\,.
\end{equation}
The boundary conditions \eqref{ww} define a Markovian extension of $A_{\min}$, i.e. they 
are Wentzell-type, if and only if $f_{b}$ is a Markovian form on $L^{2}(\Gamma)$.
\end{theorem}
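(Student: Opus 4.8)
The plan is to exploit the fact that $A$ is nothing but the self-adjoint operator generated by the closed form $F_{A}$, and to feed into this the additive decomposition of $F_{A}$ already obtained in Theorem \ref{converse}. Since $A\in\widetilde{\Ex}(A_{\min})$ we have $A_{D}\preceq A\preceq A_{N}$; the inequality $A\preceq A_{N}$ forces $\D(F_{A})\subseteq \D(F_{N})=H^{1}(\Omega)$, and by Remark \ref{remarkforma} the corresponding boundary form has domain inside $H^{\frac12}(\Gamma)$. Hence Theorem \ref{converse}(1) applies and gives the positive symmetric form $f_{b}:=\t f^{\Theta}_{\Pi_{\Lambda}}$ with $\D(f_{b})\subseteq H^{\frac12}(\Gamma)$, $\D(F_{A})=\{u\in H^{1}(\Omega):\gamma_{0}u\in\D(f_{b})\}$, and
\[
F_{A}(u,v)=F_{N}(u,v)+f_{b}(\gamma_{0}u,\gamma_{0}v)\,,\qquad u,v\in\D(F_{A})\,.
\]
Recall also the abstract characterization: $u\in\D(A)$ if and only if $u\in\D(F_{A})$ and there is $w\in L^{2}(\Omega)$ with $F_{A}(u,v)=\langle w,v\rangle_{L^{2}(\Omega)}$ for all $v\in\D(F_{A})$, in which case $-Au=w$.

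The argument then runs in three steps. First I would test the identity $F_{A}(u,v)=\langle w,v\rangle_{L^{2}(\Omega)}$ against $v\in C^{\infty}_{c}(\Omega)\subseteq H^{1}_{0}(\Omega)\subseteq\D(F_{A})$; since $\gamma_{0}v=0$ this reads $\langle\nabla u,a\nabla v\rangle_{L^{2}(\Omega)}=\langle w,v\rangle_{L^{2}(\Omega)}$, i.e. $\diff u=-w$ in the distributional sense, so that $u\in\D(A_{\max})$ with $A_{\max}u=-w=Au$; together with $u\in\D(F_{A})\subseteq H^{1}(\Omega)$ this puts $u\in\D(A_{\max})\cap H^{1}(\Omega)$, and in particular $\hat\gamma_{1}u\in H^{-\frac12}(\Gamma)$. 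Second, for arbitrary $v\in\D(F_{A})\subseteq H^{1}(\Omega)$ I would use the ``half'' Green formula \eqref{halfGreen} to write $F_{N}(u,v)=\langle -A_{\max}u,v\rangle_{L^{2}(\Omega)}-(\hat\gamma_{1}u,\gamma_{0}v)_{-\frac12,\frac12}$; plugging this into the displayed decomposition and using $w=-A_{\max}u$, the defining identity collapses to
\[
f_{b}(\gamma_{0}u,\gamma_{0}v)=(\hat\gamma_{1}u,\gamma_{0}v)_{-\frac12,\frac12}\qquad\text{for all }v\in\D(F_{A})\,.
\]
Third, I would note that $\gamma_{0}$ maps $\D(F_{A})$ onto $\D(f_{b})$: given $h\in\D(f_{b})\subseteq H^{\frac12}(\Gamma)$, Remark \ref{regularity} yields $K_{0}h\in H^{1}(\Omega)$ with $\gamma_{0}K_{0}h=h$, so $K_{0}h\in\D(F_{A})$. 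Hence the last displayed condition is equivalent to $f_{b}(\gamma_{0}u,h)=(\hat\gamma_{1}u,h)_{-\frac12,\frac12}$ for all $h\in\D(f_{b})$, which is \eqref{ww}. The converse implication is obtained by reading Steps one and two backwards: if $u\in\D(A_{\max})\cap H^{1}(\Omega)$, $\gamma_{0}u\in\D(f_{b})$ and \eqref{ww} holds, then $u\in\D(F_{A})$ and, using \eqref{halfGreen} and \eqref{ww} with $h=\gamma_{0}v$, one computes $F_{A}(u,v)=\langle -A_{\max}u,v\rangle_{L^{2}(\Omega)}$ for every $v\in\D(F_{A})$, so $u\in\D(A)$ with $-Au=-A_{\max}u$.

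For the final assertion, recall that by Theorem \ref{converse}(3.2) the form $f_{b}=\t f^{\Theta}_{\Pi_{\Lambda}}$ is Markovian whenever $A\in\ExM(A_{\min})$, while conversely, by Theorem \ref{finale2} (applied to the closed form $F=F_{A}$, for which $\D(F)\subseteq H^{1}(\Omega)$ and the decomposition $F=F_{N}+f_{b}\circ\gamma_{0}$ holds), the form $F_{A}$ is a Dirichlet form — equivalently $A\in\ExM(A_{\min})$ — as soon as $f_{b}$ is Markovian; so \eqref{ww} defines a Markovian extension exactly when $f_{b}$ is a Markovian form on $L^{2}(\Gamma)$. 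I expect the only genuinely delicate point to be the domain bookkeeping behind Steps one and three — namely guaranteeing $u\in\D(A_{\max})\cap H^{1}(\Omega)$ so that \eqref{halfGreen} applies and $\hat\gamma_{1}u$ lies in $H^{-\frac12}(\Gamma)$, and the surjectivity $\gamma_{0}\D(F_{A})=\D(f_{b})$ — both of which rest on the inclusions $\D(F_{A})\subseteq H^{1}(\Omega)$ and $\D(f_{b})\subseteq H^{\frac12}(\Gamma)$ supplied by the hypothesis $A\preceq A_{N}$; the remaining computations are routine applications of Green's formula and the form–operator correspondence.
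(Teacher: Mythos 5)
Your proposal is correct, and it reaches \eqref{ww} by a genuinely different route than the paper. The paper's proof goes through the operator--domain description of Theorem \ref{elliptic}: for $A=A_{(\Pi,\Theta)}\in\widetilde{\Ex}(A_{\min})$ it takes the abstract boundary condition $\Pi\Lambda(\hat\gamma_1 u-P_{0}\hat\gamma_0 u)=\Theta\Sigma\hat\gamma_{0}u$ and converts it, by a duality computation with $\Lambda$, $\Sigma$, $\Theta$ and $P_{0}$, into the weak identity $f_{\Pi_{\Lambda},\Theta_{\Sigma}}(\gamma_0u,h)+(P_{0}\gamma_0u,h)_{-\frac12,\frac12}=(\hat\gamma_1u,h)_{-\frac12,\frac12}$, i.e. \eqref{ww} with $f_{b}=\t f^{\Theta}_{\Pi_{\Lambda}}$, and reads the argument backwards for the converse. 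You instead bypass Theorem \ref{elliptic} entirely: you start from the form decomposition of Theorem \ref{converse}(1) (together with $\D(f_{b})\subseteq H^{\frac12}(\Gamma)$ from Remark \ref{remarkforma}), use the first representation theorem for the closed form $F_{A}$, test first against $C^{\infty}_{c}(\Omega)$ to get $u\in\D(A_{\max})$ with $Au=A_{\max}u$, then use the half Green formula \eqref{halfGreen} for arbitrary $v\in\D(F_{A})$ and the surjectivity $\gamma_{0}\D(F_{A})=\D(f_{b})$ supplied by the Poisson operator $K_{0}$ (Remark \ref{regularity}). Both proofs settle the final Markovianity assertion identically, via Theorem \ref{converse}(3.2) in one direction and Theorem \ref{finale}/\ref{finale2} in the other. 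Your variational argument is somewhat more self-contained and avoids the $\Lambda/\Sigma$ bookkeeping, at the price of not exhibiting the explicit non-local boundary condition $\Pi\Lambda(\hat\gamma_1 u-P_{0}\hat\gamma_0 u)=\Theta\Sigma\hat\gamma_{0}u$, which the paper's route identifies directly and which is what ties \eqref{ww} to the parametrization $(\Pi,\Theta)$ used elsewhere (e.g. in Remark \ref{rem5.8}).
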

\begin{proof} By Theorem \ref{estensioni}, $A=A_{(\Pi,\Theta)}$ for some  $(\Pi,\Theta)\in \E(L^{2}(\Gamma))$. \par 
If $A_{(\Pi,\Theta)}\in\widetilde{\Ex} (A_\min)$ then $\D(A_{(\Pi,\Theta)})\subseteq H^{1}(\Omega)$ and so $(\gamma_{0}u,\hat\gamma_{1}u)\in H^{\frac12}(\Gamma)\times H^{-\frac12}(\Gamma)$ for any $u\in \D(A_{(\Pi,\Theta)})$. \par 
By Theorem \ref{elliptic}, for all $u\in  \D(A_{(\Pi,\Theta)})$ and for all $h\in \D(f_{\Pi_{\Lambda},\Theta_{\Sigma}})\equiv\D(f_{b})$, 
\begin{align}
\langle(\hat\gamma_1
u-P_{0}\gamma_0 u),h\rangle_{L^{2}(\Gamma)}=&
\,(\hat\gamma_1
u,h)_{-\frac12,\frac12}-(P_{0}\gamma_0u ,h)_{-\frac12,\frac12}
=\langle\Theta \Sigma\gamma_0 u,\Sigma h\rangle_{L^{2}(\Gamma)}
\\
=&\,f_{\Pi_{\Lambda},\Theta_{\Sigma}}(\gamma_0 u,h)
\,.
\end{align}
Relations \eqref{ww} are then consequence of the definition of $f_{b}\equiv \t f^{\Theta}_{\Pi_{\Lambda}}$ (see Theorem \ref{converse}).\par
Conversely, if \eqref{ww} hold true, by $f_{b}\equiv \t f^{\Theta}_{\Pi_{\Lambda}}$ and the definition of $\t f^{\Theta}_{\Pi_{\Lambda}}$ in Theorem \ref{converse}, for all $h\in \Sigma \D(f_{b})=\Sigma\D(f_{\Pi_{\Lambda},\Theta_{\Sigma}})=\D(f_{\Pi,\Theta})$ one has 
\begin{align}
f_{\Pi,\Theta}(\Sigma\gamma_{0}u,h)=&f_{\Pi_{\Lambda},\Theta_{\Sigma}}(\gamma_{0}u, \Lambda h)=\langle (\hat\gamma_{1}u-P_{0}\gamma_{0}u),\Lambda h\rangle_{L^{2}(\Gamma)}\\
=&\langle \Lambda(\hat\gamma_{1}u-P_{0}\gamma_{0}u),h\rangle_{L^{2}(\Gamma)}\,.
\end{align}
This gives 
\be
\Sigma\gamma_0
u\in\D(\Theta)\,,\quad  \Pi\Lambda(\hat\gamma_1
u-P_{0}\gamma_0 u)=\Theta\Sigma\gamma_{0}u\,.
\ee
Finally, by Theorem \ref{finale}, $A\in \ExM(A_{\min})$, i.e. the boundary conditions are Wenzell-type,  if and only if  $f_{b}$ is Markovian.
\end{proof}
\begin{corollary} \label{cww} Let $(\Pi,B)\in
\t\E(L^{2}(\Gamma))$. Then $u\in D(\t A_{(\Pi,B)})$ if and only if $u\in \D(A_{\max})\cap H^{1}(\Omega)$, $\gamma_{0}u\in \D(f_{\Pi,B})$ and  
\begin{equation}\label{www}
\forall h\in \D(f_{\Pi,B})\cap H^{\frac12}(\Gamma)\,,\quad f_{\Pi,B}(\gamma_{0}u,h)=(\hat\gamma_{1}u,h)_{-\frac12,\frac12}\,.
\end{equation}
 The boundary conditions \eqref{www} define a Markovian extensions of $A_{\min}$, i.e they are Wentzell-type, if and only if $f_{\Pi,B}$ is a Dirichlet form on $L^{2}(\Gamma)$.
\end{corollary}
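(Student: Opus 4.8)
The plan is to obtain Corollary~\ref{cww} from Theorem~\ref{tww} applied to $A=\t A_{(\Pi,B)}$, once the boundary form $f_b$ occurring there has been identified. First I would recall from part~2 of Theorem~\ref{sub} that $\t A_{(\Pi,B)}=A_{(\Pi_\Sigma,\Theta_B)}$ lies in $\ExN(A_\min)$; in particular $\D(\t A_{(\Pi,B)})\subseteq\D(\t F_{(\Pi,B)})\subseteq H^1(\Omega)$, and for every $u$ in the operator domain Theorem~\ref{elliptic} gives $u\in\D(A_\max)$ with $\t A_{(\Pi,B)}u=A_\max u$, while $u\in\D(A_\max)\cap H^1(\Omega)$ forces $\hat\gamma_1u\in H^{-\frac12}(\Gamma)$, so that the pairing in \eqref{www} is meaningful. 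Next I would unwind the definition of $f_b$: by Theorem~\ref{converse} this form is $\t f^{\Theta_B}_{(\Pi_\Sigma)_\Lambda}$, and using the identity $(\Pi_\Sigma)_\Lambda=\Pi$ (established inside the proof of Lemma~\ref{LRT}) together with the definition of $f_{\Pi_\Sigma,\Theta_B}$ in part~2 of Theorem~\ref{sub} and of the $\Lambda$--$\Sigma$ rescaled form in Corollary~\ref{lemmaforma1} one computes $f_{(\Pi_\Sigma)_\Lambda,(\Theta_B)_\Sigma}(h_1,h_2)=f_{\Pi,B}(h_1,h_2)-(P_0h_1,h_2)_{-\frac12,\frac12}$ on $\D(f_{\Pi,B})\cap H^{\frac12}(\Gamma)$. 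The extra $(P_0\,\cdot\,,\cdot)_{-\frac12,\frac12}$ term in $\t f^{\Theta_B}_{(\Pi_\Sigma)_\Lambda}$ then cancels, so $f_b$ is exactly the restriction of $f_{\Pi,B}$ to its core $\D(f_{\Pi,B})\cap H^{\frac12}(\Gamma)$.

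Granting this, the direction ``$u\in\D(\t A_{(\Pi,B)})\Rightarrow$ the listed conditions'' is immediate from Theorem~\ref{tww}: one already has $u\in\D(A_\max)\cap H^1(\Omega)$, the inclusion in $\D(\t F_{(\Pi,B)})=\{u\in H^1(\Omega):\gamma_0u\in\D(f_{\Pi,B})\}$ gives $\gamma_0u\in\D(f_{\Pi,B})$, and \eqref{ww} with $f_b$ as above is precisely \eqref{www}. For the converse I would work directly with the closed form $\t F_{(\Pi,B)}$. Assume $u\in\D(A_\max)\cap H^1(\Omega)$, $\gamma_0u\in\D(f_{\Pi,B})$ and \eqref{www}; then $u\in\D(\t F_{(\Pi,B)})$, and for any $v\in\D(\t F_{(\Pi,B)})$ the half Green's formula \eqref{halfGreen} yields
\[
\t F_{(\Pi,B)}(u,v)=\langle-A_\max u,v\rangle_{L^2(\Omega)}+\bigl(f_{\Pi,B}(\gamma_0u,\gamma_0v)-(\hat\gamma_1u,\gamma_0v)_{-\frac12,\frac12}\bigr).
\]
Since $\gamma_0:H^1(\Omega)\to H^{\frac12}(\Gamma)$ is surjective, as $v$ runs over $\D(\t F_{(\Pi,B)})$ its trace $\gamma_0v$ runs over all of $\D(f_{\Pi,B})\cap H^{\frac12}(\Gamma)$, so the bracketed term vanishes by \eqref{www}; the first representation theorem for the closed form $\t F_{(\Pi,B)}$ then gives $u\in\D(\t A_{(\Pi,B)})$ (with $\t A_{(\Pi,B)}u=A_\max u$, as $-A_\max u\in L^2(\Omega)$).

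Finally, the Wentzell statement follows from Corollary~\ref{tilde}: $\t A_{(\Pi,B)}$ is Markovian exactly when $\t F_{(\Pi,B)}$ is a Dirichlet form, which holds if and only if $f_{\Pi,B}$ is a Dirichlet form on $L^2(\Gamma)$; this is consistent with the criterion of Theorem~\ref{tww}, since $f_b$ is the restriction of $f_{\Pi,B}$ to a core, hence closable with closure $f_{\Pi,B}$, and a closable Markovian form has Dirichlet closure by Theorem~\ref{closure}. The main obstacle I expect is the first paragraph: carefully tracking the $\Lambda$--$\Sigma$ rescalings through $\Sigma\hat\gamma_0$, $\Lambda(\hat\gamma_1-P_0\hat\gamma_0)$, $f_{\Pi_\Sigma,\Theta_B}$ and $\t f^{\Theta_B}_{(\Pi_\Sigma)_\Lambda}$ so as to see the clean cancellation of the Dirichlet-to-Neumann terms and to check that the form $f_b$ of Theorem~\ref{tww} is literally $f_{\Pi,B}$ on its $H^{\frac12}$-part rather than some rescaled or smaller object; once that reduction is made everything else is routine.
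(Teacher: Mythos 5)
Your proposal is correct and follows the paper's intended route: Corollary \ref{cww} is meant to be read off from Theorem \ref{tww} applied to $\t A_{(\Pi,B)}$ once one checks, exactly as you do via part 2 of Theorem \ref{sub}, the identity $(\Pi_\Sigma)_\Lambda=\Pi$ from the proof of Lemma \ref{LRT} and the rescaling in Corollary \ref{lemmaforma1}, that the boundary form $f_b=\t f^{\Theta_B}_{(\Pi_\Sigma)_\Lambda}$ is precisely the restriction of $f_{\Pi,B}$ to its core $\D(f_{\Pi,B})\cap H^{\frac12}(\Gamma)$, the Markovian/Dirichlet equivalence being Corollary \ref{tilde}. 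Your direct verification of the converse through \eqref{halfGreen} and the representation theorem for the closed form $\t F_{(\Pi,B)}$ is a harmless (and correct) substitute for simply invoking the ``if'' direction of Theorem \ref{tww}.
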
 
\begin{remark}\label{rww} By Beurling-Deny decomposition, if $f_{B}$, $B\equiv(\uno,B)\in\t \E(L^{2}(\Gamma))$, is a regular Dirichlet form then the corresponding Wentzell-type boundary conditions are
\begin{align}
&
f^{(c)}_{B}(\gamma_{0}u,h)
+\int_{\Gamma\times\Gamma}(\widetilde{\gamma_{0}u}(x)-
\widetilde{\gamma_{0}u}(y))(\t h(x)-\t h(y))\,dJ(x,y)
+
\int_{\Gamma}\widetilde{\gamma_{0}u}(x)\t h(x)\,d\kappa(x)
\\
=
&
(\hat\gamma_{1}u,h)_{-\frac12,\frac12}\,.\nonumber
\end{align}  
These boundary conditions are similar (in weak form) to the ones appearing in Wentzell's 
seminal paper \cite{[we]} (also compare with the results given in \cite{[CF]}, Theorem 7.3.5).
\end{remark}
\begin{remark}
In the case $\Pi=\uno$ and $B$ is such that the form sum appearing in Lemma \ref{LRT} can be improved to an operator sum (this holds under the 
hypotheses given in the successive Remark \ref{tetaB}, see \cite{[P08]}, Example 5.5, \cite{[G68]}, Chapter III, Section 6) then the boundary conditions defining the domain of the extension $\t A_{(B)}\equiv \t A_{(\uno, B)}$ become
\be
\hat \gamma_{1} u=B\gamma_{0}u 
\ee
and the resolvent $\t R_{\lambda}^{(B)}$ of $A_{(B)}$ is given by the formula (see Lemma \ref{LRT} and \eqref{K*})
\be
\t R^{(B)}_{\lambda}=(\uno+K_{\lambda}(B-P_{\lambda} 
)^{-1}\gamma_{1} )R^{D}_{\lambda
}\,.
\ee
\end{remark}

\begin{remark}\label{tetaB} Suppose $B\in\B(H^{s}(\Gamma),H^{s-\alpha}(\Gamma))$ for all $s\ge-\frac 12$ and for some $\alpha>0$, and that
\be
B : H^{\alpha}(\Gamma)\subseteq L^{2}(\Gamma)\to L^{2}(\Gamma)
\ee
is self-adjoint and positive. Then $B\equiv(\uno,B)\in \t \E(L^{2}(\Gamma))$ and
\be
\D(f_{B})=H^{\frac{\alpha} 2}(\Gamma)\,,\quad f_{B}(h_{1},h_{2})=(Bh_{1},h_{2})_{-\frac{\alpha}2,\frac{\alpha} 2} \,.
\ee
Thus, by Corollary \ref{cww}, 
\be
\D(\t A_{(B)})=\{u\in\D(A_{\max})\cap H^{1}(\Omega):\gamma_{0}u\in H^{\frac{\alpha} 2}(\Gamma)\,,\ B\gamma_{0}u=\hat\gamma_{1}u\}\,.
\ee
If $\alpha<1$ then, by elliptic regularity, $u\in H^{\frac 32-\frac\alpha 2}(\Omega)$ and so in this case 
\be
\D(\t A_{(B)})=\{u\in\D(A_{\max})\cap H^{\frac 32-\frac\alpha 2}(\Omega): 
B\gamma_{0}u=\hat\gamma_{1}u\}\,.
\ee
If $\alpha\ge 1$ and supposing 
\be
\{h\in H^{-\frac 12}(\Gamma )
\,:\, Bh\in H^s(\Gamma )\}\subseteq 
H^{s+\alpha}(\Gamma )\,,
\ee
(this holds if $B$ is an elliptic pseudo-differential operator of order $\alpha$), then 
$\gamma_{0}u\in H^{-\frac12+\alpha}(\Omega)$ and so in this case 
\be
\D(\t A_{(B)})=\{u\in\D(A_{\max})\cap H^{2\wedge\alpha}(\Omega): 
B\gamma_{0}u=\hat\gamma_{1}u\}\,.
\ee
In particular, when $\alpha\ge 2$, one has 
\be
\D(\t A_{(B)})=\{u\in H^{2}(\Omega): 
B\gamma_{0}u=\gamma_{1}u\}\,.
\ee
By Theorem \ref{sub}, if  moreover $B$ is Markovian then $\t A_{(B)}$ is a Markovian extension of 
$A_{\min}$ and the boundary conditions $B\gamma_{0}u=\hat\gamma_{1}u$ are Wentzell-type. The corresponding Dirichlet form $\t F_{(B)}$ is given by 
\be
\D(\t F_{(B)} )=H^{1}(\Omega)\quad\text{whenever $\alpha\le 1$,}
\ee
\be
\D(\t F_{(B)} )=\{u\in H^{1}(\Omega):\gamma_{0}u\in H^{\frac\alpha 2}(\Gamma)\}\,,
\quad\text{whenever $\alpha> 1$,}
\ee
\be
\t F_{B}(u,v)=F_{N}(u,v)+(B\gamma_{0}u,\gamma_{0}v)_{-\frac{\alpha}2,\frac{\alpha} 2}\,. 
\ee
\end{remark}
\begin{remark}\label{rem5.8}
Let $\Pi:L^2(\Gamma)\to L^{2}(\Gamma)$ be an orthogonal projector  such that 
$\R(\Pi)\cap H^{\frac12}(\Gamma)$ is $L^2(\Gamma)$-dense in $\R(\Pi)$. Then for any positive $B\in\B(\R(\Pi))$ one has $(\Pi,B)\in \t\E(L^2(\Gamma))$. 
Since $\R(\Pi)\cap H^{\frac 12}(\Gamma)$ is a closed subspace of $H^{\frac 12}(\Gamma)$, 
we have the continuous projection $\Pi_{\circ}:H^{\frac 12}(\Gamma)\to H^{\frac 12}(\Gamma)$ such that $\R(\Pi_{\circ})=\R(\Pi)\cap H^{\frac 12}(\Gamma)$ and $\Pi_{\circ}|\R(\Pi)\cap H^{\frac 12}(\Gamma)=\Pi|\R(\Pi)\cap H^{\frac 12}(\Gamma)$. Denoting by $\Pi_{\circ}^{\star}:H^{-\frac 12}(\Gamma)\to H^{-\frac 12}(\Gamma)$ the dual
of $\Pi_{\circ}$ with respect to the duality $(\cdot,\cdot)_{-\frac 12,\frac 12}$, \eqref{www} gives $\Pi_{\circ}^{\star}\hat \gamma_1u\in \R(\Pi)$ and 
 the boundary conditions
\be
\Pi_{\circ}^{\star}\hat \gamma_1u=B\gamma_0u\,.
\ee
Hence in this case the operator domain of the corresponding self-adjoint extension is given by
\be
\D(\t A_{(\Pi,B)})=\{u\in \D(A_{\max})\cap H^{1}(\Omega)\,:\,\gamma_0 u\in \R(\Pi)\,,
\ \Pi_{\circ}^{\star}\hat \gamma_1u=B\gamma_0u\}\,.
\ee
In particular, in the case $\Pi=\uno$, one obtains, by elliptic regularity,
\be
\D(\t A_{(B)})=\{u\in \D(A_{\max})\cap H^{\frac 32}(\Omega)\,:\,\hat \gamma_1u=B\gamma_0u\}\,.
\ee
\end{remark}
\begin{example} Since $H^{s}(\Gamma)$ is a Dirichlet space for any $s\in[0,1]$, by Remark \ref{tetaB} one gets a Markovian 
extension $\t A_{(B)}$ by taking 
\be
B=-b_{1}\,\Delta_{LB}+b_{s} (-\Delta_{LB})^{s}+b_{0}\,,\quad 0<s<1\,,
\ b_{1},b_{s},b_{0}\ge 0\,.
\ee 
To such an extension correspond the Wentzell-type boundary conditions
\begin{equation}\label{WBC1}
b_{1}\,\Delta_{LB}\,\gamma_0 u-b_{s} (-\Delta_{LB})^{s}\gamma_0 u-b_{0}\gamma_0 u+\gamma_1 u=0\,.
\end{equation}
For the corresponding Markovian extension $\t A_{(B)}$ on has
\be
\D(\t A_{(B)})\subseteq H^{2}(\Omega)\quad\text{whenever  $b_{1}\not=0$}
\,,
\ee
\be
\D(\t A_{(B)})\subseteq H^{2s}(\Omega)\quad\text{whenever $b_{1}=0$, $b_{s}\not=0$ and 
$1/2\le s<1$}\,,
\ee
\be
\D(\t A_{(B)})\subseteq H^{\frac 32-s}(\Omega)\quad\text{whenever $b_{1}=0$, $b_{s}\not=0$ and 
$0<s\le 1/2$}\,.
\ee
Since $f_{B}$ is regular and satisfies the hypothesis given in Lemma \ref{Lregular} for any $0<s<1$ and $b_1,b_{s},b_{0}\ge 0$,  $\t F_{(B)}$ is a regular Dirichlet form. Moreover,
since the Beurling-Deny decomposition of the Dirichlet form $f_{B}$ is
\be
f^{(c)}_{B}(h_{1},h_{2})=b_{1}(-\Delta_{LB}h_{1},h_{2})_{-1,1}\,,\ 
\ee
\be
f^{(j)}_{B}(h_{1},h_{2})=b_{s}((-\Delta_{LB})^{s}h_{1},h_{2})_{-s,s}\,,\ 
\ee
\be
f^{(k)}_{B}(h_{1},h_{2})=b_{0}\langle h_{1},h_{2}\rangle_{L^{2}(\Gamma)}\,,
\ee
the Beurling-Deny decomposition of the Dirichlet form $\t F_{(B)}$ is (see Remark \ref{regular})
\be
\t F_{(B)}^{(c)}(u,v)=F_{N}(u,v)+b_{1}(-\Delta_{LB}\gamma_{0}u,\gamma_{0}v)_{-1,1}\,,
\ee
\be
\t F_{(B)}^{(j)}(u,v)=b_{s}((-\Delta_{LB})^{s}\gamma_{0}u,\gamma_{0}v)_{-s,s}\,,
\ee
\be
\t F_{(B)}^{(k)}(u,v)=b_{0}\langle \gamma_{0}u,\gamma_{0}v\rangle_{L^{2}(\Gamma)}\,.
\ee
Hence $\t F_{(B)}$ has the local property (and so $\Z_{\t A_{(B)}}$ is a Diffusion) if and only if $b_{s}=0$ and is strongly local  if and only 
if $b_{s}=b_{0}=0$.  The corresponding Markovian extension $\t A_{(B)}$ is recurrent if $b_{0}=0$ and transient otherwise.\par 
\end{example}
\begin{example} Let $\Omega\subset\RE^{n}$, $n\ge 2$, be open and bounded and such that $\Gamma$ is a smooth, compact, $n-1$ dimensional Lie group (for example  this is true if $\Omega$ is a solid torus, or a planar disc with $N\ge 0$ circular holes, or a four-dimensional ball).
Let $e$ denote the unit element and let $L_{1},\dots, L_{n-1}$ be a basis of left-invariant vector field in the corresponding 
Lie algebra.
Then there exist functions $\zeta_{i}\in C^{2}(\Gamma)$, 
$1\le i<n$, such that $\zeta_{i}(e)=0$, $L_{i}\zeta_{j}(e)=\delta_{ij}$ and 
$\zeta_{i}(x^{-1})=-\zeta_{i}(x)$. A measure $\nu$ on the Borel $\sigma$-algebra of $\Gamma$ 
is said to be a {\it L\'evy measure} whenever $\nu(\{e\})=0$ and 
$\int_{\Gamma}((\sum_{i=1}^{n-1}|\zeta_{i}(x)|^{2})\wedge 1)\,d\nu(x)<+\infty$, and is said to be symmetric if $\nu(E)=\nu(E^{-1})$ for any measurable $E$.\par
By Hunt's theorem (see \cite{[Hunt]}, \cite{[Liao]}; here we use the version provided in  \cite{[App]}, Theorem 2.1), any symmetric convolution semigroup of measures in $\Gamma$ has a generators given by a Markovian self-adjoint operator $B$ on $L^{2}(\Gamma)$ such that $C^{2}(\Gamma)\subset \D(B)$ and, for any $h\in C^{2}(\Gamma)$,
\begin{align}\label{hunt}
Bh(x)=\sum_{i,j=1}^{n-1}c_{ij}L_{i}L_{j}h(x)
+\frac12\int_{\Gamma}\left(h(xy)-2h(x)+h(xy^{-1})\right)d\nu(y)\,,
\end{align} 
where $c\equiv(c_{ij})$ is a constant, real-valued, not-negative-definite matrix and $\nu$ is a symmetric L\'evy measure. \par 
By Theorem \ref{sub} and Corollary \ref{cww}, the corresponding  Wentzell-type boundary conditions 
$B\gamma_{0}u=\hat\gamma_{1}u$ produce a Markovian extension $\t A_{(B)}$ which, since
$f_{B}(1)=0$, is recurrent. \par 
By the Beurling-Deny decomposition of $f_{B}$ (see \cite{[App]}, Theorem 2.4) and by Remark \ref{regular1}, the Beurling-Deny decomposition of the Dirichlet form $\t F_{(B)}$ is 
\be
\t F^{(c)}_{(B)}(u,v)=F_{N}(u,v)+\sum_{i,j=1}^{n-1}c_{ij}\int_{\Gamma}L_{i}\gamma_{0}u(x)L_{j}\gamma_{0}v(x)\,d\sigma(x)\,,
\ee
\be
\t F^{(j)}_{(B)}(u,v)=\int_{\Gamma\times \Gamma}\left(\gamma_{0}u(x)-\gamma_{0}u(y)\right)\left(\gamma_{0}v(x)-\gamma_{0}v(y)\right)dJ(x,y)\,,
\ee
\be
\t F^{(k)}_{(B)}(u,v)=0\,,
\ee
where the measure $J$ is defined by 
\be
J(E_{1}\times E_{2}):=\int_{E_{2}}\nu(y^{-1}E_{1})\,d\sigma(y)\,.
\ee
\end{example}
\begin{example} Given the decomposition  $\Gamma=\Gamma_0\cup 
\Gamma_1$, $\Gamma_0$ open, let $\Pi:L^{2}(\Gamma)\to L^{2}(\Gamma)$ be the orthogonal projector $\Pi\,h:=1_{\Gamma_{0}}h$ and let $B=0$. Then 
\be
\R(\Pi)=\{h\in L^2(\Gamma): \text{supp}(h)\subseteq \bar\Gamma_{0}\}\simeq L^2(\Gamma_0)\,.
\ee
Since $C^\infty_c(\Gamma_0)$ is dense in $L^2(\Gamma_0)$ and 
\be
C^\infty_c(\Gamma_0)\simeq\{h\in C^{\infty}(\Gamma): \text{supp}(h)\subset \Gamma_{0}\}\subseteq \R(\Pi)\cap H^{\frac12}(\Gamma)\,,
\ee 
$\R(\Pi)\cap H^{\frac12}(\Gamma)$ is dense in $\R(\Pi)$. Then, by Remark \ref{rem5.8}, one gets the self-adjoint extension $A_{DN}$ with domain
\be
\D(A_{DN})=\{u\in\D(A_{\max})\cap H^{1}(\Omega):\text{supp$(\gamma_0 u)\subseteq \bar\Gamma_0$,\ supp$(\hat\gamma_1u)
\subseteq\Gamma_1$}\}\,.
\ee
The conditions appearing in $\D(A_{DN})$ are a weak form of the 
mixed Dirichlet-Neumann boundary conditions
\be
\begin{cases}
\gamma_{0}u=0\,,& \text{on $\Gamma_{1}$,}\\ 
\gamma_{1}u=0\,,& \text{on $\Gamma_{0}$.}
\end{cases}
\ee
Since $(1_{\Gamma_{0}}h)_\#=1_{\Gamma_{0}}h_{\#}$, by Theorem \ref{sub} and Corollary \ref{cww} such boundary conditions are Wentzell-type and define a (transient) Markovian extension with 
associated  Dirichlet form $F_{DN}$ given by
\be
\D(F_{DN})=\{u\in H^{1}(\Omega)\,:\, \text{supp$(\gamma_0 u)\subseteq \bar\Gamma_0$}\}\,,\quad F_{DN}(u,v)=F_{N}(u,v)\,.
\ee
Even if $F_{DN}$ is not regular on $\bar\Omega$, it has an associated Markov process: it is the part process 
of the reflecting diffusion associated to $F_{N}$ killed upon hitting $\Gamma_{1}$ (see Example 6.6.12 (ii) in \cite{[CF]}). 
\end{example}
\begin{example} Let $\R(\Pi)\subset L^{2}(\Gamma)$ be the one-dimensional subspace corresponding to the orthogonal projector $\Pi=(|\Gamma|^{-\frac12}1)\otimes (|\Gamma|^{-\frac12}1)$, where $|\Gamma|$ denotes the volume of the boundary $\Gamma$, and let $B=b\,\uno:\R(\Pi)\to\R(\Pi)$, $b\ge 0$. Then, by Remark \ref{rem5.8},  denoting by $\langle h\rangle$ the mean of $h\in H^{-\frac 12}(\Gamma)$ over $\Gamma$, i.e. $\langle h\rangle:=|\Gamma|^{-1}(h,1)_{-\frac 12, \frac 12}$,  one gets 
\begin{align}
\D(\t A_{(\Pi,B)})=&\{u\in \D(A_{\max})\cap H^{1}(\Omega):b\,\gamma_{0}u=\langle
\hat \gamma_{1}u\rangle\}\\
=&\{u\in H^{2}(\Omega):b\,\gamma_{0}u=\langle\gamma_{1}u\rangle\}\,.
\end{align}
The corresponding bilinear form is
\be
\D(\t F_{(\Pi,B)})=\{u\in H^{1}(\Omega):\gamma_{0}u=\text{const.}\}\,.
\ee
\be
\t F_{(\Pi,B)}(u,v)=F_{N}(u,v)+b\,\langle\gamma_{0}u\rangle\langle\gamma_{0}v\rangle\,.
\ee
Since $f_{\Pi,B}$ is obviously a Dirichlet form, $\t F_{(\Pi,B)}$ is a (not regular) Dirichlet form 
and so $\t A_{(\Pi,B)}$ is a Markovian extension and the boundary conditions $b\,\gamma_{0}u=\langle\gamma_{1}u\rangle$ are Wentzell-type. $\t A_{(\Pi,B)}$ is recurrent whenever $b=0$, otherwise it is transient.
\end{example}
\vskip10pt\noindent
\section*{Acknowlegments}

The author is indebted to anonymous referees for useful remarks.

\end{document}